\newtheorem{theorem}{Theorem}[section]
\newtheorem{lemma}[theorem]{Lemma}
\newtheorem{proposition}[theorem]{Proposition}
\newtheorem{corollary}[theorem]{Corollary}
\newtheorem{hypothesis}[theorem]{Hypothesis}
\theoremstyle{definition}
\newenvironment{definition}[1][Definition]{\begin{trivlist}
\item[\hskip \labelsep {\bfseries #1}]}{\end{trivlist}}
\newenvironment{remark}[1][Remark]{\begin{trivlist}
\item[\hskip \labelsep {\bfseries #1}]}{\end{trivlist}}
\newcommand{\textcyr}[1]{{\fontencoding{OT2}\fontfamily{wncyr}\fontseries{m}\fontshape{n}\selectfont #1}}
\newcommand{\ilim}[1]{\displaystyle{\lim_{\genfrac{}{}{0pt}{}{\longleftarrow}{\scriptstyle #1}} }\;}
\newcommand{\Sha}{{\mbox{\textcyr{Sh}}}}
\newcommand{\ord}{{\operatorname{ord}}}
\newcommand{\Gal}{{\operatorname{Gal}}}
\newcommand{\unr}{{\operatorname{unr}}}
\newcommand{\coker}{{\operatorname{coker}}}
\newcommand{\Ind}{{\operatorname{Ind}}}
\newcommand{\Res}{{\operatorname{Res}}}
\newcommand{\Ram}{{\operatorname{Ram}}}
\newcommand{\Pic}{{\operatorname{Pic}}}
\newcommand{\Spec}{{\operatorname{Spec~}}}
\newcommand{\rec}{{\operatorname{rec}}}
\newcommand{\ab}{{\operatorname{ab}}}
\newcommand{\nrd}{{\operatorname{nrd}}}
\newcommand{\tors}{{\operatorname{tors}}}
\newcommand{\disc}{{\operatorname{disc}}}
\newcommand{\red}{{\operatorname{red}}}
\newcommand{\res}{{\operatorname{res}}}
\newcommand{\Stab}{{\operatorname{Stab}}}
\newcommand{\fin}{{\operatorname{unr}}}
\newcommand{\sing}{{\operatorname{sing}}}
\newcommand{\Ta}{{\operatorname{Ta}}}
\newcommand{\im}{{\operatorname{im}}}
\newcommand{\Div}{{\operatorname{Div}}}
\newcommand{\Supp}{{\operatorname{Supp}}}
\newcommand{\End}{{\operatorname{End}}}
\newcommand{\Frob}{{\operatorname{Frob}}}
\newcommand{\Hom}{{\operatorname{Hom}}}
\newcommand{\Sel}{{\operatorname{Sel}}}
\newcommand{\Fitt}{{\operatorname{Fitt}}}
\newcommand{\Aut}{{\operatorname{Aut}}}
\newcommand{\Spf}{{\operatorname{Spf}}}
\newcommand{\GL}{{\operatorname{GL_2}}}
\newcommand{\SL}{{\operatorname{SL_2}}}
\newcommand{\PGL}{{\operatorname{PGL_2}}}
\newcommand{\M}{{\operatorname{M_2}}}
\newcommand{\CM}{{\operatorname{CM}}}
\newcommand{\vnew}{{\operatorname{v-new}}}
\newcommand{\triv}{{\operatorname{triv}}}
\newcommand{\N}{\mathfrak{N}}
\newcommand{\ff}{{\bf{f}}}
\author{Jeanine Van Order}
\address{Section de Math\'ematiques\\ Ecole Polytechnique F\'ed\'erale de Lausanne\\ Lausanne 1015, Switzerland}
\email{jeanine.vanorder@epfl.ch}
\thanks{The author acknowledges support from the Swiss National Science Foundation (FNS) grant 200021-125291.}
\keywords{Iwasawa theory, Hilbert modular forms, abelian varieties}
\subjclass{Primary 11, Secondary 11G10, 11G18, 11G40}
\begin{document}

\title[On the dihedral main conjectures for Hilbert modular eigenforms]{On the dihedral main conjectures of Iwasawa theory for Hilbert modular eigenforms}

\begin{abstract}
We construct a bipartite Euler system in the sense of Howard for Hilbert modular eigenforms of parallel 
weight two over totally real fields, generalizing works of Bertolini-Darmon, Longo, Nekovar, Pollack-Weston 
and others. The construction has direct applications to Iwasawa main conjectures. For instance, it implies 
in many cases one divisibility of the associated dihedral or anticyclotomic main conjecture, at the same 
time reducing the other divisibility to a certain nonvanishing criterion for the associated $p$-adic $L$-functions. 
It also has applications to cyclotomic main conjectures for Hilbert modular forms over CM fields via the technique 
of Skinner and Urban. \end{abstract} 

\maketitle
\tableofcontents

\section{Introduction} 

Let $F$ be a totally real field of degree $d$, and fix a prime $\mathfrak{p}
\subset \mathcal{O}_F$ with underlying rational prime $p$. Let $\ff
\in \mathcal{S}_2(\N)$ be a cuspidal Hilbert modular eigenform of parallel
weight $2$, level $\N \subset \mathcal{O}_F$, and trival character. Assume 
that $\ff$ is $\mathfrak{p}$-ordinary, in the sense that its $T_{\mathfrak{p}}$-eigenvalue
is a $p$-adic unit with respect to any fixed embedding $\overline{{\bf{Q}}} \rightarrow
\overline{{\bf{Q}}}_p$. Assume as well that $\ord_{\mathfrak{p}}(\N)=1$, with $\ff$ being
either new of level $\N$, or else arising from a newform of level $\N/\mathfrak{p}$.
Let us always view $\ff$ is a $p$-adic modular form via a fixed embedding 
$\overline{{\bf{Q}}} \rightarrow \overline{{\bf{Q}}}_p$, writing $\mathcal{O}_0$
to denote the ${\bf{Z}}_p$-subalgebra of $\overline{\bf{Q}}_p$ generated 
by the Fourier coefficients of $\ff$, $\mathcal{O}$ the integral closure of $\mathcal{O}_0$
in its field of fractions $L$, and $\mathfrak{P}$ the maximal ideal of $\mathcal{O}$. We 
assume for simplicity that $\mathfrak{P}$ is contained in $\mathcal{O}_0$.
Fix a totally imaginary quadratic extension $K$ of $F$, with 
relative discriminant prime to $\N$. The choice of $K$ then determines 
the following factorization of $\N$ in $\mathcal{O}_F$:
\begin{align}\label{fact}\N &= \mathfrak{p} \mathfrak{N}^+ \mathfrak{N}^{-},\end{align}
where $\N^+$ is divisible only by primes that split in $K$, and $\N^{-}$ is divisible
only by primes that remain inert in $K$. Assume that $\N^{-}$ is the squarefree 
product of a number of primes congruent to $d$ mod $2$. In this setting, the root number 
of the Rankin-Selberg $L$-function $L(\ff, K, s)$ at its central value $s =1$ is equal to $1$. 
Moreover, the central value (as well as those of the associated twists by ring class characters)
can be described by the toric integral formula of Waldspurger \cite{Wa}, as generalized for instance by 
Yuan-Zhang-Zhang \cite{YZ^2}. Ultimately, this formula can be used to study the arithmetic behaviour 
of $\ff$ in the dihedral or anticyclotomic ${\bf{Z}}_p^{\delta}$-extension $K_{\mathfrak{p}^{\infty}}$ of $K$, where
$\delta$ denotes the index $[F_{\mathfrak{p}}:{\bf{Q}}_p]$. That is, let $G_{\mathfrak{p}^{\infty}}$ denote the 
Galois group $\Gal(K_{\mathfrak{p}^{\infty}}/K)$, with $\Lambda = \mathcal{O}[[G_{\mathfrak{p}^{\infty}}]]$ the 
associated $\mathcal{O}$-Iwasawa algebra. Using these toric integral formulae, as well as the
class field theoretic description of $G_{\mathfrak{p}^{\infty}}$, there is a natural construction
of the associated $p$-adic $L$-function $\mathcal{L}_{\mathfrak{p}}(\ff, K_{\mathfrak{p}^{\infty}})
\in \Lambda$, as shown in the prequel paper \cite{VO} (following the constructions of Bertolini-Darmon \cite{BD},
\cite{BD2}). In particular, in addition to satisfying the usual interpolation property, this $p$-adic $L$-function is nontrivial 
thanks to the nonvanishing theorem of Cornut and Vatsal \cite[Theorem 1.4]{CV}. The main purpose of the present work is to 
use this construction to prove the one divisibility of the associated dihedral or anticyclotomic main conjecture, as well as to 
outline some applications beyond this. To be more precise, let $\Sel (\ff, K_{\mathfrak{p}^{\infty}})$ denote the 
$\mathfrak{P}^{\infty}$-Selmer group of $\ff$ in $K_{\mathfrak{p}^{\infty}}/K$, with $X(\ff, K_{\mathfrak{p}^{\infty}})$ its Pontryagin dual. 
The Iwasawa main conjecture in this setting predicts that $X(\ff, K_{\mathfrak{p}^{\infty}})$ is a torsion $\Lambda$ module, and 
moreover that there is an equality of principal ideals $(\mathcal{L}_{\mathfrak{p}}(\ff, K_{\mathfrak{p}^{\infty}})) = 
(\operatorname{char}_{\Lambda}(X(\ff, K_{\mathfrak{p}^{\infty}})))$ in $\Lambda$. Here, 
$\operatorname{char}_{\Lambda}(X(\ff, K_{\mathfrak{p}^{\infty}}))$ is the $\Lambda$-characteristic power series of 
$X(\ff, K_{\mathfrak{p}^{\infty}})$, which exists (by the structure theorem of \cite{BB}) as $X(\ff, K_{\mathfrak{p}^{\infty}})$ 
is $\Lambda$-torsion. We show the following results towards this conjecture. Let us first impose the following hypotheses, writing 
$\rho_{\ff}: G_F \longrightarrow \GL(\mathcal{O})$ to denote the $\mathfrak{P}$-adic Galois representation associated to $\ff$ by the 
construction of Carayol \cite{Ca2}, Taylor \cite{Tay} and Wiles \cite{Wi2} (see Theorem \ref{CTW} below). Here, $G_F$ denotes the 
Galois group $\Gal(\overline{{\bf{Q}}}/F)$.

\begin{hypothesis}\label{galrep}~
\begin{itemize}
\item[(i)] The prime $p$ is odd.
\item[(ii)] The prime $\mathfrak{p} \subset \mathcal{O}_F$ is the unique 
prime above $p$ in $K_{\mathfrak{p}^{\infty}}.$
\item[(iii)] The eigenform ${\bf{f}} \in \mathcal{S}_2(\mathfrak{N})$ is 
$\mathfrak{p}$-ordinary.
\item[(iv)] The Galois representation $\rho_{\bf{f}}$ is 
residually irreducible.
\item[(v)] The image of the residual Galois representation 
$\overline{\rho}_{\bf{f}}$ contains $\SL({\bf{F}}_p)$.
\item[(vi)] The degree $d$ is either odd, or else even with the condition that
$\mathfrak{N}^{-} \neq \mathcal{O}_F$. \end{itemize}\end{hypothesis} 
 
\begin{remark} Thanks to Dimitrov \cite[Proposition 0.1]{D}, we have the
following generalizations of relevant results of Serre \cite{Se}
and Ribet \cite{Ri2} here: (i) for all but finitely many rational primes $p$, the 
Galois representation $\rho_{{\bf{f}}}$ 
is residually irreducible (\cite[Proposition 3.1]{D}), and (ii)
for all but finitely many rational primes $p$, there 
exists some power $q= p^a$ of $p$ such that the 
image of the residual Galois representation 
$\overline{\rho}_{{\bf{f}}}$ contains $\SL({\bf{F}}_q)$
(\cite[Proposition 3.8]{Di}). Thus, Hypotheses \ref{galrep} (iv)
and (v) are not prohibitively strong. \end{remark}

\begin{theorem}[Proposition \ref{4.3}, Corollary \ref{dmc}]\label{RESULT} 
Let $\ff\in \mathcal{S}_2(\N)$ be a cupsidal 
Hilbert eigenform as above, with $\N \subset \mathcal{O}_F$
having the factorization $(\ref{fact})$, and with the conditions of 
Hypothesis \ref{galrep}. Assume also that the following standard
hypotheses hold: \\

\begin{itemize}
\item[(A)] The totally real field $F$ is linearly disjoint from the cyclotomic field ${\bf{Q}}(\zeta_p)$,
\item[(B)]  The Galois representation $\rho_{\ff}$ satisfies a certain multiplicity one condition: Hypothesis \ref{freeness}.
\item[(C)]  A variant of Ihara's lemma for Shimura curves holds: Hypothesis \ref{ihara}. \\
\end{itemize} Then, $X(\ff, K_{\mathfrak{p}^{\infty}})$ is $\Lambda$-torsion, and there is an 
inclusion of ideals \begin{align}\label{mainconjecture}
\left(\mathcal{L}_{\mathfrak{p}}(\ff, K_{\mathfrak{p}^{\infty}}) \right) 
&\subseteq \left( \operatorname{char}_{\Lambda}(X(\ff, K_{\mathfrak{p}^{\infty}}))\right)
\text{ in $\Lambda$.}\end{align}\end{theorem}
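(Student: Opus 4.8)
The plan is to construct, following the template of Bertolini--Darmon \cite{BD}, \cite{BD2} and its refinements by Longo, Nekov\'a\v{r} and Pollack--Weston, a bipartite Euler system in the sense of Howard, and then to invoke his structure theorem. Fix a level $m\geq 1$. Call a prime $\ell$ of $\mathcal{O}_F$ \emph{$m$-admissible} if it is inert in $K$, prime to $p\N$, satisfies $\ell\not\equiv\pm 1\pmod{p}$, and has $a_{\ell}(\ff)^2\equiv(\ell+1)^2\pmod{\mathfrak{P}^m}$, so that level-raising at $\ell$ is available. The vertices of the Euler system graph will be the squarefree products $\mathfrak{n}$ of $m$-admissible primes; I will call $\mathfrak{n}$ \emph{definite} when the number of rational primes dividing $\N^-\mathfrak{n}$ is congruent to $d\bmod 2$ (so that, together with the $d$ archimedean places, the root number of $L(\ff,K,\cdot)$ along $K_{\mathfrak{p}^\infty}$ equals $+1$) and \emph{indefinite} otherwise; Hypothesis \ref{galrep}(vi) ensures the graph has the expected shape. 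To a definite vertex $\mathfrak{n}$ I attach, via Jacquet--Langlands, an eigenform $\phi_{\mathfrak{n}}$ on the totally definite quaternion algebra over $F$ of discriminant $\N^-\mathfrak{n}$, and integrating $\phi_{\mathfrak{n}}$ against CM points by $\mathcal{O}_K$-orders of $\mathfrak{p}$-power conductor along $K_{\mathfrak{p}^\infty}/K$, exactly as in the construction of \cite{VO}, yields a theta element $\theta(\mathfrak{n})\in\Lambda/\mathfrak{P}^m\Lambda$; for $\mathfrak{n}=\mathcal{O}_F$ this recovers the reduction of $\mathcal{L}_{\mathfrak{p}}(\ff,K_{\mathfrak{p}^\infty})$ modulo $\mathfrak{P}^m$. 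To an indefinite vertex $\mathfrak{n}$ the same data attach a Shimura curve over $F$ of discriminant $\N^-\mathfrak{n}$ and level $\mathfrak{p}\N^+$, whose CM points of $\mathfrak{p}$-power conductor, pushed into the $\mathfrak{P}^m$-torsion of its Jacobian by the Kummer map, projected to the $\ff$-isotypic quotient, and corestricted up the tower, produce a class $\kappa(\mathfrak{n})$ in the relevant $\Lambda/\mathfrak{P}^m\Lambda$-Selmer module of $\rho_{\ff}$.

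The technical heart of the argument will be the two explicit reciprocity laws of Bertolini--Darmon type linking these two families along the edges of the graph. Along an edge from a definite vertex $\mathfrak{n}$ to the indefinite vertex $\mathfrak{n}\ell$, the first reciprocity law will show that $\kappa(\mathfrak{n}\ell)$ is unramified away from $\ell$ (granting the prescribed local conditions at $\mathfrak{p}$ and at $\N^+$) and will identify its singular part at $\ell$ with a unit multiple of $\theta(\mathfrak{n})\bmod\mathfrak{P}^m$; along an edge from an indefinite vertex $\mathfrak{n}$ to the definite vertex $\mathfrak{n}\ell$, the second reciprocity law will identify the finite (unramified) part of $\kappa(\mathfrak{n})$ at $\ell$ with a unit multiple of $\theta(\mathfrak{n}\ell)\bmod\mathfrak{P}^m$. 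Both are to be proved by analyzing the reduction of the pertinent Shimura curve at the inert prime $\ell$ via the \v{C}erednik--Drinfeld uniformization and Jacquet--Langlands, identifying the character (component) group of the N\'eron model of the Jacobian with a space of quaternionic modular forms, and tracking CM points under specialization; Ribet-style level-raising over $F$ supplies the necessary congruences, the multiplicity-one/freeness input (Hypothesis \ref{freeness}) makes these Hecke-module identifications exact rather than correct only up to finite error, and the variant of Ihara's lemma for Shimura curves (Hypothesis \ref{ihara}) supplies the surjectivity of the reciprocity map that becomes Howard's surjectivity axiom.

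It then remains to check the axioms of a bipartite Euler system for the pair $(\{\theta(\mathfrak{n})\}_{\mathfrak{n}},\{\kappa(\mathfrak{n})\}_{\mathfrak{n}})$: the ordinary local condition at $\mathfrak{p}$ (Hypothesis \ref{galrep}(iii)), compatibility under corestriction and the global reciprocity law of class field theory (vanishing of the sum of local invariants), the large-image hypothesis provided by $\SL({\bf{F}}_p)\subseteq\im\overline{\rho}_{\ff}$ together with residual irreducibility (Hypothesis \ref{galrep}(iv)--(v)), the existence by Chebotarev of enough $m$-admissible primes to annihilate any prescribed class (here hypothesis (A), forcing $F\cap{\bf{Q}}(\zeta_p)={\bf{Q}}$ and hence the largest possible residual image over $K_{\mathfrak{p}^\infty}$, is used), and finally the nonvanishing of the base object $\theta(\mathcal{O}_F)=\mathcal{L}_{\mathfrak{p}}(\ff,K_{\mathfrak{p}^\infty})$, which holds by the theorem of Cornut--Vatsal \cite[Theorem 1.4]{CV}. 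Howard's structure theorem for bipartite Euler systems then delivers at once that $X(\ff,K_{\mathfrak{p}^\infty})$ is $\Lambda$-torsion and that $\operatorname{char}_{\Lambda}(X(\ff,K_{\mathfrak{p}^\infty}))$ divides $\mathcal{L}_{\mathfrak{p}}(\ff,K_{\mathfrak{p}^\infty})$ in $\Lambda$---that is, the inclusion (\ref{mainconjecture}). The main obstacle will be the first explicit reciprocity law: carrying out the \v{C}erednik--Drinfeld analysis of the Shimura curve over a totally real base at the inert prime, and fixing the integral normalizations of CM points and of the identification of the character group with quaternionic forms so that the singular part of $\kappa$ matches the mod $\mathfrak{P}^m$ theta element on the nose, is the step requiring genuinely new work beyond the case $F={\bf{Q}}$; extracting Howard's surjectivity axiom from the assumed form of Ihara's lemma is the next most delicate point.
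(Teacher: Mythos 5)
Your overall architecture---CM points on quaternionic Shimura curves, \v{C}erednik--Drinfeld uniformization, Jacquet--Langlands identifications of character groups, Ribet-style level raising, with Hypothesis \ref{freeness} making the Hecke-module identifications exact and Hypothesis \ref{ihara} supplying surjectivity---is exactly the paper's, and the two reciprocity laws you describe are Theorems \ref{erl1} and \ref{erl2}. Where you diverge is in how the theorem is closed. You propose to build the full bipartite Euler system over the graph of all squarefree products of admissible primes and then quote Howard's structure theorem. The paper instead proves the stated divisibility by the self-contained Bertolini--Darmon/Pollack--Weston induction: only the class $\zeta(v)$ attached to a single admissible prime and the level raising at two primes (Proposition \ref{gammaeigenform}) are ever constructed; for each specialization $\varphi:\Lambda\to\mathcal{O}'$ to a discrete valuation ring one bounds the length of $\Sel(\ff,K_{\mathfrak{p}^{\infty}})^{\vee}\otimes_{\varphi}\mathcal{O}'$ by the valuation of $\varphi(\mathcal{L}_{\mathfrak{p}}(\ff,K_{\mathfrak{p}^{\infty}}))$ (Proposition \ref{4.3}), and the inclusion of characteristic ideals follows from the Fitting-ideal criterion (Proposition \ref{3.1}), with torsionness coming from Cornut--Vatsal nonvanishing (Corollary \ref{dmc}). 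Howard's machinery enters the paper only afterwards (Theorem \ref{HOWARD}, Corollary \ref{criterion}), for the rank formula and the criterion for the opposite divisibility. Your route gives a cleaner conceptual package, but at the cost of a heavier construction: you need $\kappa_{\mathfrak{n}}$ and $\theta(\mathfrak{n})$ at every vertex, i.e.\ iterated level raising to arbitrary squarefree products together with the freeness input at all those levels, none of which the paper's induction requires.

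Three points in your plan need repair before it is a proof. First, Howard's structure theorem is proved over a one-variable Iwasawa algebra, whereas here $\Lambda=\mathcal{O}[[G_{\mathfrak{p}^{\infty}}]]$ with $G_{\mathfrak{p}^{\infty}}\cong{\bf{Z}}_p^{\delta}$ and $\delta=[F_{\mathfrak{p}}:{\bf{Q}}_p]$ possibly greater than $1$; to control $\ord_{\mathfrak{Q}}$ at height-one primes one must adapt his argument by convergent sequences of specializations as in \cite{Ho1}, which is precisely the content of the sketch given for Theorem \ref{HOWARD}. Second, Howard's bound reads $\ord_{\mathfrak{Q}}\left(\operatorname{char}_{\Lambda}X(\ff,K_{\mathfrak{p}^{\infty}})\right)\leq 2\,\ord_{\mathfrak{Q}}(\lambda^{\infty})$ with $\lambda^{\infty}=\mathcal{L}_{\ff}$, while the statement concerns $\mathcal{L}_{\mathfrak{p}}(\ff,K_{\mathfrak{p}^{\infty}})=\mathcal{L}_{\ff}\mathcal{L}_{\ff}^{*}$; since $\ord_{\mathfrak{Q}}(\mathcal{L}_{\ff}^{*})$ equals the order of $\mathcal{L}_{\ff}$ at the image of $\mathfrak{Q}$ under the involution, the two bounds agree only after invoking the invariance of the characteristic ideal under that involution, a step you should make explicit (the paper avoids it entirely by bounding directly against $\varphi(\mathcal{L}_{\ff}\mathcal{L}_{\ff}^{*})$). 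Third, you assign hypothesis (A) to the Chebotarev step, but that step needs only residual irreducibility and the large-image condition of Hypothesis \ref{galrep}; in the paper (A) is used for the level raising at two primes, via Lemma \ref{p-torsion} (torsion-freeness of the unipotent-mod-$\mathfrak{p}$ subgroups), which is what permits the application of Ihara's theorem in Proposition \ref{surjective} and hence, together with Hypothesis \ref{ihara}, yields the surjectivity you intend to feed into Howard's axioms.
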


\begin{remark}[Remark on hypotheses.] We refer the reader to the statements of Hypotheses \ref{freeness}
and \ref{ihara} below for more details. We state them here in this form not only for simplicity of 
exposition, but also because they are at present works in progress by others 
(see for instance \cite{Ch} and \cite{Ch2}). Condition (A) is used to prove a level raising 
at two primes result, see Proposition \ref{gammaeigenform} below. Condition (B) is a standard
hypothesis (cf. \cite[Proposition 6.3]{PW}) that is crucial to our arguments. It is proved 
with our hypotheses on $\rho_{\ff}$ given above granted that $p$ is unramified in $F$
by Cheng in \cite{Ch}. Condition (C) is also treated by Cheng in \cite{Ch2}, assuming
that the level $\N$ is sufficiently large, and that $p >d$. It is likely that these latter two 
technical hypotheses can be loosened. \end{remark}

\begin{remark}[Remark on $\mu$-invariants.]
We can also deduce from $(\ref{mainconjecture})$ one divisibility in the $\mu$-part of the 
main conjecture, following the characterization of the $\mu$-invariant associated
to $\mathcal{L}_{\mathfrak{p}}(\ff, K_{\mathfrak{p}^{\infty}})$ in the author's previous
work \cite{VO} (see \cite[Theorem 4.10]{VO}). Roughly, following the approach of 
Vatsal \cite{Va2}, we find that $\mu(\mathcal{L}_{\mathfrak{p}}({\bf{f}}, K_{\mathfrak{p}^{\infty}})) 
= 2 \nu$, where $\nu = \nu_{\bf{f}}$ is the largest integer such that ${\bf{f}}$ is congruent 
to a constant mod $\mathfrak{P}^{\nu}$. Following the line of argument of Pollack-Weston 
\cite[$\S 2.3$]{PW}, the hypothesis that $\rho_{\bf{f}}$ be residually irreducible should indicate
that $\mu(\mathcal{L}_{\mathfrak{p}}({\bf{f}}, K_{\mathfrak{p}^{\infty}})) =0$, and hence via
$(\ref{mainconjecture})$ that $\mu(\mathcal{L}_{\mathfrak{p}}({\bf{f}}, K_{\mathfrak{p}^{\infty}})) =
\mu( \operatorname{char}_{\Lambda}(X(\ff, K_{\mathfrak{p}^{\infty}})) ) =0$. We hope to take up
a more detailed study of this interesting and subtle issue in a later work, perhaps in the context
of Euler characteristic computations (cf. \cite{VO2}). \end{remark}

The strategy of proof is to generalize the refined Euler system method of Pollack-Weston 
\cite{PW} (following Bertolini-Darmon \cite{BD}) to the setting of totally real fields. 
In doing so, we construct a bipartite Euler system in the sense of Howard 
\cite[Definition 2.3.2]{Ho}. In particular, we obtain from \cite[Theorem 3.2.3]{Ho} the following
criterion for equality in $(\ref{mainconjecture})$, as explained in \cite[$\S$5]{VO}. Let us now
assume for simplicity that $\mathfrak{N}$ is prime to the relative discriminant of $K$ over $F$.
Fix a positive integer $k$. Define a set of admissible primes $\mathfrak{L}_k$ of $F$, each
being inert in $K$, by the condition that for any ideal $\mathfrak{n} \subset \mathcal{O}_F$
in the set $\mathfrak{S}_k$ of squarefree products of primes in $\mathfrak{L}_k$, there exists
a nontrivial eigenform  $\ff^{(\mathfrak{n})}$ of level $\mathfrak{n}\N$ such that the following
congruence on Hecke eigenvalues holds: \begin{align*} \ff^{(\mathfrak{n})} &\equiv \ff \mod 
\mathfrak{P}^k.\end{align*} Let $\mathfrak{S}_k^+ \subset \mathfrak{S}_k$ denote the subset of 
ideals $\mathfrak{n} \in \mathfrak{S}_k$ for which $\omega_{K/F}(\mathfrak{n}\mathfrak{N})= -1$, 
where $\omega_{K/F}$ denotes the quadratic Hecke character associated to $K/F$. 
Equivalently, $\mathfrak{S}_k^+ \subset \mathfrak{S}_k$ denotes the subset of 
ideals $\mathfrak{n} \in \mathfrak{S}_k$ for which the root number of the $L(\ff, K, s)$ is equal
to $+1$. Note that by our hypotheses of $\N$, this set $\mathfrak{S}_k^+$ includes the trivial ideal $\mathfrak{n} =
\mathcal{O}_F$. Let $\mathfrak{S}_k^- \subset \mathfrak{S}_k$ denote the subset of ideals $\mathfrak{n} \in \mathfrak{S}_k$ 
for which $\omega_{K/F}(\mathfrak{n}\mathfrak{N})= +1$, equivalently for which the root number of $L(\ff, K, s)$ is equal to $-1$.
Given an ideal $\mathfrak{n} \in \mathfrak{S}_k^+$, there is an associated $p$-adic $L$-function 
$\mathcal{L}_{\mathfrak{p}}({\bf{f}}^{(\mathfrak{n})}, K_{\mathfrak{p}^{\infty}})$ in $\Lambda$. As explained 
below, $\mathcal{L}_{\mathfrak{p}}({\bf{f}}^{(\mathfrak{n})}, K_{\mathfrak{p}^{\infty}}) = 
\mathcal{L}_{{\bf{f}}^{(\mathfrak{n})}} \mathcal{L}_{{\bf{f}}^{(\mathfrak{n})}}^{*}$, where 
$\mathcal{L}_{{\bf{f}}^{(\mathfrak{n})}} \in \Lambda$ is a completed group ring element 
constructed in a natural way from ${\bf{f}}^{(\mathfrak{n})}$, and 
$\mathcal{L}_{{\bf{f}}^{(\mathfrak{n})}}^{*}$ is the image of 
$\mathcal{L}_{{\bf{f}}^{(\mathfrak{n})}}$ under the involution $\Lambda \rightarrow \Lambda$
sending $\sigma$ to $\sigma^{-1}$ in $G_{\mathfrak{p}^{\infty}}$. Let us write 
$\lambda_{\mathfrak{n}}$ to denote this completed group ring element 
$\mathcal{L}_{{\bf{f}}^{(\mathfrak{n})}}$, which is only well defined up to multiplication
by elements of $G_{\mathfrak{p}^{\infty}}$. Given an ideal $\mathfrak{n} \in \mathfrak{S}_k^-$, 
there is an associated collection of CM points of $\mathfrak{p}$-power conductor on the 
quaternionic Shimura curve $\mathfrak{M}(\mathfrak{N}^{+}, v\mathfrak{n}\mathfrak{N}^{-})$,
where $v$ is a $k$-admissible prime with respect to ${\bf{f}}$, as we explain in $\S\S 7-11$ 
below. As we also explain below, these points can be used to construct classes in the cohomology 
group $H^1(K_{\mathfrak{p}^{\infty}}, T_{ {\bf{f}}, k })$, which we denote here by $\kappa_{\mathfrak{n}}$
(i.e. so that $\kappa_{v\mathfrak{n}} = \zeta(\mathfrak{n})$ in our notations below). We refer the reader to 
the discussion below for more explanation, as well as to $ \S \ref{GalRep}$ for a definition of the mod $\mathfrak{P}^k$
Galois representation $T_{\ff, k}$ associated to ${\bf{f}}$. Anyhow, we construct for each integer 
$k \geq 1$ a pair of families \begin{align}\label{BES} \lbrace \lambda_{\mathfrak{n}}  \in \Lambda/ \mathfrak{P}^k \Lambda  :
\mathfrak{n} \in \mathfrak{S}_k^+ \rbrace  ~~~\text{ and }~~~ \lbrace \kappa_{\mathfrak{n}} \in \widehat{H}^1
(K_{\mathfrak{p}^{\infty}}, T_{\ff, k}) : \mathfrak{n} \in \mathfrak{S}_k^- \rbrace \end{align} which, as $k$ varies, are compatible with 
respect to the inclusion $\mathfrak{S}_{k+1} \subset \mathfrak{S}_k$, as well as with respect to the natural maps $T_{\ff, k+1} \rightarrow 
T_{\ff, k}$ and $\Lambda/\mathfrak{P}^{k+1} \rightarrow \Lambda/ \mathfrak{P}^k$. We show here that these classes satisfy the following 
{\it{first and second explicit reciprocity laws}}: \\

\begin{remark}[The first explicit reciprocity law](Theorem \ref{ERL1}). For any $v\mathfrak{n} \in \mathfrak{S}_k^-$ with $v$ 
a prime, there is an isomorphism of $\Lambda$-modules \begin{align*} \widehat{H}^1_{\sing}(K_{\mathfrak{p}^{\infty}, v}, 
T_{\ff,k}) &\cong \Lambda / \mathfrak{P}^k \Lambda \end{align*} sending $\operatorname{loc}_v(\kappa_{v\mathfrak{n}})$ 
to $\lambda_{\mathfrak{n}}$, where $\operatorname{loc}_v$ denotes the localization map at $v$. \end{remark} ~~\\

\begin{remark}[The second explicit reciprocity law](Theorem \ref{ERL2}). For any $v\mathfrak{n} \in \mathfrak{S}_k^+$
with $v$ a prime, there is an isomorphism of $\Lambda$-modules \begin{align*} \widehat{H}^1_{\unr}(K_{\mathfrak{p}^{\infty}, v}, 
T_{\ff,k}) &\cong \Lambda / \mathfrak{P}^k \Lambda \end{align*} sending $\operatorname{loc}_v(\kappa_{\mathfrak{n}})$ to 
$\lambda_{v\mathfrak{n}}$, where $\operatorname{loc}_v$ denotes the localization map at $v$ \end{remark} ~~\\ Now, since the 
empty product lies in $\mathfrak{S}_k$ for each integer $k \geq 1$, we can construct a distinguished element 
\begin{align*} \begin{cases} \lambda^{\infty} \in \Lambda &\text{if $\omega_{K/F}(\N^-)= -1$} \\
\kappa^{\infty} \in \mathfrak{S}(\ff/K_{\mathfrak{p}^{\infty}}) &\text{if $\omega_{K/F}(\N^-)= +1$}\end{cases}
\end{align*} by taking the inverse limit of $\lambda_1$ or $\kappa_1$ as $k$ varies. Here, $\mathfrak{S}(\ff
/K_{\mathfrak{p}^{\infty}})$ denotes the compactified Selmer group of $\ff$ over $K_{\mathfrak{p}^{\infty}}$.
Note that while the element $\kappa^{\infty}$ has been studied independently by Howard in \cite{Ho1}, it can also be 
recovered directly from the construction given below. Note as well that by the nonvanishing theorems of Cornut and 
Vastal \cite{CV}, neither of these distinguished elements vanishes. Hence, we deduce that 
the pair of families $(\ref{BES})$ defines a nontrivial bipartite Euler system in the sense of Howard \cite[Definition 2.3.2]{Ho}. In 
particular, via Howard's theory of bipartite Euler systems, we obtain the following result. Here, given any eigenform 
$\ff \in \mathcal{S}_2(\N)$ with $\N \subset \mathcal{O}_F$ having the factorization $(\ref{fact})$, we assume Hypothesis 
\ref{galrep} along with the hypotheses (A), (B) and (C) of Theorem \ref{RESULT}.

\begin{theorem}\label{HOWARD} Let $X(\ff, K_{\mathfrak{p}^{\infty}})_{\tors}$ denote the $\Lambda$-torsion
submodule of $X(\ff, K_{\mathfrak{p}^{\infty}})$. 

\begin{itemize}
\item[(i)] We have the following rank formula:
\begin{align*}\operatorname{rank}_{\Lambda}\mathfrak{S}(\ff/ K_{\mathfrak{p}^{\infty}}) = 
\operatorname{rank}_{\Lambda} X(\ff,  K_{\mathfrak{p}^{\infty}}) =
\begin{cases} 0 &\text{if $\omega_{K/F}(\N^-)=-1$} \\
1 &\text{if $\omega_{K/F}(\N^-)=+1$.}\end{cases}\end{align*}

\item[(ii)] For each height one prime $\mathfrak{Q}$ of $\Lambda$,
\begin{align*} &\ord_{\mathfrak{Q}}\left(  \operatorname{char}_{\Lambda} \left( X(\ff, K_{\mathfrak{p}^{\infty}})_{\tors}  \right) \right) \leq \\
& 2 \times \begin{cases} \ord_{\mathfrak{Q}}(\lambda^{\infty})  &\text{if $\omega_{K/F}(\N^-)=-1$} \\
\ord_{\mathfrak{Q}} \left( \operatorname{char}_{\Lambda} \left( \mathfrak{S}(\ff/ K_{\mathfrak{p}^{\infty}})/\Lambda \kappa^{\infty} \right)\right)     
&\text{if $\omega_{K/F}(\N^-)=+1$.}\end{cases}\end{align*}

\item[(iii)] Equality holds in (ii) if the following condition is satisfied: there exists an integer $k_0$ such that for all integers $k \geq k_0$, 
the set \begin{align*} \lbrace  \lambda_{\mathfrak{n}} \in \Lambda / \mathfrak{P}^k \Lambda: \mathfrak{n} \in \mathfrak{S}_k^+  \rbrace 
\end{align*} contains at least one element with nontrivial image in $\Lambda/ (\mathfrak{Q}, \mathfrak{P}^{k_0}).$ In particular, 
equality in (ii) holds if one of the elements $\lambda_{\mathfrak{n}}$ is a unit in $\Lambda$.

\end{itemize}\end{theorem}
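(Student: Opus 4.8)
The plan is to deduce Theorem~\ref{HOWARD} from Howard's general theory of bipartite Euler systems \cite[\S 3]{Ho}. Concretely, once we know that the pair of families $(\ref{BES})$ constitutes a nonzero bipartite Euler system in the sense of \cite[Definition 2.3.2]{Ho}, the three assertions (i)--(iii) are precisely the conclusions of \cite[Theorem 3.2.3]{Ho}, transported to the present situation as indicated in \cite[\S 5]{VO}. Thus the substantive work is to verify Howard's axioms for $(\ref{BES})$ and to match up the abstract invariants appearing in \cite[Theorem 3.2.3]{Ho} with $X(\ff,K_{\mathfrak{p}^{\infty}})$, $\mathfrak{S}(\ff/K_{\mathfrak{p}^{\infty}})$, $\lambda^{\infty}$ and $\kappa^{\infty}$.

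First I would check the hypotheses of Howard's formalism on the Galois module $T_{\ff,k}$. Hypothesis~\ref{galrep}(iv),(v) guarantee that $\overline{\rho}_{\ff}$ is residually irreducible with large image, so that the relevant $H^{0}$ and $H^{2}$ vanish and, via the Chebotarev density theorem, the sets $\mathfrak{L}_k$ of $k$-admissible primes are infinite and plentiful in the sense needed to run the argument; this is also where hypothesis~(A) enters, through the level-raising input of Proposition~\ref{gammaeigenform}. Next, the two explicit reciprocity laws, Theorems~\ref{ERL1} and~\ref{ERL2}, provide exactly the ``edge relations'' of a bipartite Euler system: they identify the singular (resp. unramified) local cohomology at an admissible prime $v$ with $\Lambda/\mathfrak{P}^k\Lambda$, show these local conditions are free of rank one, and send $\operatorname{loc}_v(\kappa_{v\mathfrak{n}})\mapsto\lambda_{\mathfrak{n}}$ and $\operatorname{loc}_v(\kappa_{\mathfrak{n}})\mapsto\lambda_{v\mathfrak{n}}$; the freeness here is underwritten by the multiplicity-one Hypothesis~\ref{freeness} and the Ihara-type Hypothesis~\ref{ihara}. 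Together with the compatibility of the families as $k$ varies (already recorded above) and the nonvanishing of the distinguished elements $\lambda^{\infty}$ or $\kappa^{\infty}$ furnished by Cornut--Vatsal \cite{CV}, this shows that $(\ref{BES})$ is a nonzero bipartite Euler system.

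With the axioms in hand, Theorem~\ref{HOWARD} follows by feeding $(\ref{BES})$ into \cite[Theorem 3.2.3]{Ho}. For part (i), a nonzero bipartite Euler system forces the Pontryagin dual of the Selmer group to have generic rank equal to the parity of the system, which in our normalization is $0$ when $\omega_{K/F}(\N^{-})=-1$ (the empty product lying in $\mathfrak{S}_k^{+}$) and $1$ when $\omega_{K/F}(\N^{-})=+1$ (the empty product lying in $\mathfrak{S}_k^{-}$); by self-duality the corank of $\mathfrak{S}(\ff/K_{\mathfrak{p}^{\infty}})$ agrees. For part (ii), Howard's theorem bounds $\ord_{\mathfrak{Q}}\operatorname{char}_{\Lambda}(X(\ff,K_{\mathfrak{p}^{\infty}})_{\tors})$ at each height-one prime $\mathfrak{Q}$ by the $\mathfrak{Q}$-order of the $\lambda$-ideal (resp. by the order of the index $\operatorname{char}_{\Lambda}(\mathfrak{S}(\ff/K_{\mathfrak{p}^{\infty}})/\Lambda\kappa^{\infty})$), and the factor of $2$ is exactly the decomposition $\mathcal{L}_{\mathfrak{p}}(\ff^{(\mathfrak{n})},K_{\mathfrak{p}^{\infty}})=\mathcal{L}_{\ff^{(\mathfrak{n})}}\mathcal{L}_{\ff^{(\mathfrak{n})}}^{*}$ recorded above, since $\lambda_{\mathfrak{n}}=\mathcal{L}_{\ff^{(\mathfrak{n})}}$ is a square root of the $p$-adic $L$-function up to the involution. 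For part (iii), the equality criterion is Howard's nondegeneracy condition at $\mathfrak{Q}$ --- that some $\lambda_{\mathfrak{n}}$ survives modulo $(\mathfrak{Q},\mathfrak{P}^{k_0})$ for all large $k$ --- which makes the surjectivity in the reciprocity laws propagate and upgrades the divisibility of (ii) to an equality, the case of a unit $\lambda_{\mathfrak{n}}$ being immediate.

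The main obstacle I anticipate is not the invocation of \cite[Theorem 3.2.3]{Ho} but the verification that its hypotheses genuinely hold here: most delicate is the rank-one/freeness statement for $\widehat{H}^1_{\sing}(K_{\mathfrak{p}^{\infty},v},T_{\ff,k})$ and $\widehat{H}^1_{\unr}(K_{\mathfrak{p}^{\infty},v},T_{\ff,k})$ over the whole ${\bf Z}_p^{\delta}$-extension $K_{\mathfrak{p}^{\infty}}$, which requires Hypothesis~\ref{freeness} together with a uniform control of $H^1(K_{\mathfrak{p}^{\infty},v},T_{\ff,k})$; and one must also confirm that Howard's axioms, stated over ${\bf Q}$ in \cite{Ho}, transfer without change to a totally real base $F$, paying attention to the parity bookkeeping relating $d\bmod 2$, the number of primes dividing $\N^{-}$, and the sign $\omega_{K/F}(\N^{-})$ --- which is precisely why Hypothesis~\ref{galrep}(vi) is imposed.
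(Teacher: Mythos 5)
Your proposal follows essentially the same route as the paper: both deduce (i)--(iii) by verifying that the families $(\ref{BES})$ form a nonzero bipartite Euler system (via the explicit reciprocity laws, Hypotheses \ref{freeness} and \ref{ihara}, admissible-prime production from Hypothesis \ref{galrep}, and Cornut--Vatsal nonvanishing) and then invoking Howard's \cite[Theorem 3.2.3]{Ho}. The one point the paper makes explicit that you leave implicit is how to handle the $\delta$-variable Iwasawa algebra $\Lambda=\mathcal{O}[[{\bf{Z}}_p^{\delta}]]$: since \cite[Theorem 3.2.3]{Ho} is proved in the one-variable setting, one fixes a height-one prime $\mathfrak{Q}$ and a sequence of specializations $\varphi_i:\Lambda\rightarrow S$ converging to $\mathfrak{Q}$ in the sense of \cite{Ho1}, and reruns Howard's argument after tensoring with $S$, following \cite[$\S$3.3--3.4]{Ho1}.
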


\begin{proof} The result follows from the proof of Howard \cite[Theorem 3.2.3]{Ho}, which carries over to 
this setting with minor changes. See also the discussion in \cite{Ho1}. We sketch the deduction for lack of better reference. 
First, note that the general theory of Euler systems over Artinian ring developed in Howard \cite[$\S$2]{Ho} applies to this setting. 
In particular, \cite[Proposition 3.3.1]{Ho} (cf. \cite[Lemma 5.3.13]{MR}) and \cite[Proposition 3.3.3]{Ho}
carry over to this setting. The result of \cite[Lemma 3.3.2]{Ho} is also standard here, see for instance \cite[Theorem 7.1]{HV} , 
using the basic fact that $A(K_{\mathfrak{p}^{\infty}})_{p^{\infty}} \subset A(K_{\mathfrak{p}^{\infty}})_{\tors}$ is finite
for any abelian variety $A$ defined over $K_{\mathfrak{p}^{\infty}}$, in particular for the abelian variety $A_{\ff}$ 
associated to $\ff$ in Proposition \ref{hilbertAV} below. The proof of Howard \cite[Theorem 3.2.3 (c)]{Ho} can then 
be given by the argument of \cite[$\S$ 3.4]{Ho}, with minor modifications, following \cite[$\S$3.3]{Ho1}. 
That is, fix a height one prime ideal $\mathfrak{Q}$ of $\Lambda$. Fix a sequence of specializations $\phi_i: \Lambda \longrightarrow S$, 
in the sense of \cite[Definition 3.2.5]{Ho1}. Suppose that this sequence converges to $\mathfrak{Q}$, following \cite[Definition 3.3.3]{Ho1}. 
Note that such a sequence always exists by \cite[Proposition 3.3.3.]{Ho1}. The argument of 
\cite[$\S 3.4$ p. 21]{Ho} can then be modified by taking tensors $\otimes_{\Lambda} S$ as done in 
\cite[$\S  3.4$]{Ho1} to obtain the analogous result of \cite[Theorem 3.2.3]{Ho} in this setting.\end{proof} Combined, 
Theorems \ref{RESULT} and \ref{HOWARD} imply following criterion for equality in $(\ref{mainconjecture})$.

\begin{corollary}\label{criterion} 
Suppose that for each height one prime ideal $\mathfrak{Q}$ of $\Lambda$,
there exists a positive integer $k_0$ such that for each integer $k \geq k_0$,
the set $\mathfrak{S}_k^+$ contains an ideal $\mathfrak{n}$ for which the image of 
the associated completed group ring element $\lambda_{\mathfrak{n}}$ in the quotient
$\Lambda/(\mathfrak{Q}, \mathfrak{P}^{k_0})$ is not trivial. Then, there is an equality of ideals in 
$(\ref{mainconjecture})$, i.e. the full dihedral (or anticyclotomic) main conjecture of 
Iwasawa theory holds: \begin{align}\label{full} \left(\mathcal{L}_{\mathfrak{p}}(\ff, 
K_{\mathfrak{p}^{\infty}}) \right) &= \left( \operatorname{char}_{\Lambda}(X(\ff, 
K_{\mathfrak{p}^{\infty}}))\right) \text{ in $\Lambda$.}\end{align} In particular, if 
one of the completed group ring elements $\lambda_{\mathfrak{n}}$ is a unit in 
$\Lambda$, then the full main conjecture equality $(\ref{full})$ holds.\end{corollary}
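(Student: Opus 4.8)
The plan is to deduce the equality (\ref{full}) formally from Theorems \ref{RESULT} and \ref{HOWARD}, the essential observation being that the hypothesis imposed here on the elements $\lambda_{\mathfrak{n}}$ is, at each height one prime $\mathfrak{Q}$ of $\Lambda$, precisely the hypothesis of Theorem \ref{HOWARD}(iii). Two standing facts will be used without further comment. First, $\Lambda = \mathcal{O}[[G_{\mathfrak{p}^{\infty}}]] \cong \mathcal{O}[[T_1,\dots,T_{\delta}]]$ is a regular local ring, hence a unique factorization domain, so that inclusions and equalities of principal ideals may be tested by comparing the orders $\ord_{\mathfrak{Q}}$ at all height one primes $\mathfrak{Q}$. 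Second, our standing hypotheses on $\N^{-}$ force the root number of $L(\ff, K, s)$ at $s = 1$ to be $+1$, equivalently $\mathcal{O}_F \in \mathfrak{S}_k^+$, equivalently $\omega_{K/F}(\N^{-}) = -1$; so we lie in the branch of Theorem \ref{HOWARD} in which the distinguished element $\lambda^{\infty} \in \Lambda$ is defined (and, consistently, $\operatorname{rank}_{\Lambda} X(\ff, K_{\mathfrak{p}^{\infty}}) = 0$ by Theorem \ref{HOWARD}(i)).

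The first step is to reformulate the two inputs in terms of orders. Put $X = X(\ff, K_{\mathfrak{p}^{\infty}})$. By Theorem \ref{RESULT}, $X$ is $\Lambda$-torsion — so $X = X_{\tors}$ and $\operatorname{char}_{\Lambda}(X) = \operatorname{char}_{\Lambda}(X_{\tors})$ — and $\ord_{\mathfrak{Q}}(\operatorname{char}_{\Lambda} X) \le \ord_{\mathfrak{Q}}(\mathcal{L}_{\mathfrak{p}}(\ff, K_{\mathfrak{p}^{\infty}}))$ for every height one $\mathfrak{Q}$, this being a restatement of the inclusion (\ref{mainconjecture}). On the other side, our hypothesis is exactly the hypothesis of Theorem \ref{HOWARD}(iii) at every height one prime; together with Theorem \ref{HOWARD}(ii) in the case $\omega_{K/F}(\N^{-}) = -1$ it gives $\ord_{\mathfrak{Q}}(\operatorname{char}_{\Lambda} X_{\tors}) = 2\,\ord_{\mathfrak{Q}}(\lambda^{\infty})$ for every height one $\mathfrak{Q}$.

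The second step brings in the factorization of the $p$-adic $L$-function. Recall that $\mathcal{L}_{\mathfrak{p}}(\ff, K_{\mathfrak{p}^{\infty}}) = \mathcal{L}_{\ff}\,\mathcal{L}_{\ff}^{*}$, in which $\mathcal{L}_{\ff} = \lambda_{\mathcal{O}_F}$ is the member of the family attached to the empty product $\mathfrak{n} = \mathcal{O}_F \in \mathfrak{S}_k^+$ and $\lambda^{\infty}$ is its inverse limit over $k$; hence $\mathcal{L}_{\mathfrak{p}}(\ff, K_{\mathfrak{p}^{\infty}}) = \lambda^{\infty}\,(\lambda^{\infty})^{*}$, so that $\ord_{\mathfrak{Q}}(\mathcal{L}_{\mathfrak{p}}) = \ord_{\mathfrak{Q}}(\lambda^{\infty}) + \ord_{\iota\mathfrak{Q}}(\lambda^{\infty})$, where $\iota$ is the involution $\sigma \mapsto \sigma^{-1}$ of $\Lambda$ and $\iota\mathfrak{Q}$ denotes the height one prime $\iota(\mathfrak{Q})$. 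Substituting the equality of the first step into the inclusion (\ref{mainconjecture}) yields $2\,\ord_{\mathfrak{Q}}(\lambda^{\infty}) \le \ord_{\mathfrak{Q}}(\lambda^{\infty}) + \ord_{\iota\mathfrak{Q}}(\lambda^{\infty})$, i.e. $\ord_{\mathfrak{Q}}(\lambda^{\infty}) \le \ord_{\iota\mathfrak{Q}}(\lambda^{\infty})$, for every $\mathfrak{Q}$; applying this with $\mathfrak{Q}$ replaced by $\iota\mathfrak{Q}$ forces $\ord_{\mathfrak{Q}}(\lambda^{\infty}) = \ord_{\iota\mathfrak{Q}}(\lambda^{\infty})$ for every $\mathfrak{Q}$. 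Consequently $\ord_{\mathfrak{Q}}(\mathcal{L}_{\mathfrak{p}}) = 2\,\ord_{\mathfrak{Q}}(\lambda^{\infty}) = \ord_{\mathfrak{Q}}(\operatorname{char}_{\Lambda} X)$ at every height one prime, and by unique factorization $\left(\mathcal{L}_{\mathfrak{p}}(\ff, K_{\mathfrak{p}^{\infty}})\right) = \left(\operatorname{char}_{\Lambda}(X(\ff, K_{\mathfrak{p}^{\infty}}))\right)$, which is (\ref{full}). The final assertion is then immediate: a unit $\lambda_{\mathfrak{n}}$ of $\Lambda$ has nonzero image in each quotient $\Lambda/(\mathfrak{Q}, \mathfrak{P}^{k_0})$ — a nonzero ring, since $\mathfrak{Q}$ and $\mathfrak{P}^{k_0}\Lambda$ both lie in the maximal ideal of the local ring $\Lambda$ — so the displayed hypothesis holds trivially, with $k_0 = 1$, for every $\mathfrak{Q}$.

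The deduction is in the end only bookkeeping; all the substance is upstream, in Theorems \ref{RESULT} and \ref{HOWARD} and ultimately in the bipartite Euler system and its two explicit reciprocity laws. The one point that needs care — and the place I would be most careful — is the interplay of the factor $2$ of Theorem \ref{HOWARD} with the product decomposition $\mathcal{L}_{\mathfrak{p}} = \mathcal{L}_{\ff}\mathcal{L}_{\ff}^{*}$: one might expect that passing from the inclusion (\ref{mainconjecture}) to the equality (\ref{full}) requires knowing in advance that the ideal $(\lambda^{\infty})$, equivalently $(\operatorname{char}_{\Lambda} X)$, is stable under $\iota$, but as the argument above shows this invariance falls out for free upon combining the a priori inclusion of Theorem \ref{RESULT} with the equality of Theorem \ref{HOWARD}(iii), so no separate self-duality input is needed.
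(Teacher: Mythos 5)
Your deduction is correct and is essentially the argument the paper intends: Corollary \ref{criterion} is presented there as an immediate combination of Theorem \ref{RESULT} with Theorem \ref{HOWARD}(ii)--(iii), which is exactly what you carry out by comparing $\ord_{\mathfrak{Q}}$ at every height one prime of the regular (hence normal) ring $\Lambda$. Your additional observation --- that the invariance $\ord_{\mathfrak{Q}}(\lambda^{\infty}) = \ord_{\iota\mathfrak{Q}}(\lambda^{\infty})$ needed to reconcile the factor $2$ in Theorem \ref{HOWARD} with the factorization $\mathcal{L}_{\mathfrak{p}}(\ff, K_{\mathfrak{p}^{\infty}}) = \mathcal{L}_{\ff}\mathcal{L}_{\ff}^{*}$ falls out of combining the two theorems rather than requiring a separate self-duality input --- is correct and simply makes explicit a point the paper leaves implicit.
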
 Some further remarks 
are in order at this point. The result of Theorem \ref{RESULT} has many antecendents in the literature, among them 
the original work of Bertolini-Darmon \cite{BD}, as well as subsequent generalizations to totally real fields
by Longo (\cite{Lo}, \cite{L30} and \cite{Lo2}), Fouquet \cite{Fo} and Nekovar \cite{Nek}. The main novelty
here is that the remove the restrictive $p$-isolatedness hypotheses found in these works of Bertolini-Darmon
and Longo, following the approach of Pollack-Weston \cite{PW}. This innovation is not merely technical, as 
it allows is to invoke the theory of bipartite Euler systems due to Howard \cite{Ho} to both reduce the other 
divisibility of the main conjecture to a nonvanishing criterion for $p$-adic $L$-functions, as well as to treat both
definite and indefinite cases on the root number simultaneously. Perhaps more intriguingly, Theorem \ref{RESULT}  
above can also be combined with techniques of Skinner-Urban \cite{SU} to give a new proof of the associated cyclotomic 
main conjecture, which previously had only been accessible by the Euler system method of Kato \cite{KK}. Moreover, it seems 
that the techniques of Skinner-Urban \cite{SU} extend to the more general setting of totally real fields (by work in progress of 
\cite{Wan}), in which case the result of Theorem \ref{RESULT} would allow one to deduce the associated cyclotomic main 
conjecture for totally real fields, which at present is not accessible even by the method of Kato's Euler system. 

\begin{remark}[Application to modular abelian varieties.]

We obtain the following consequence for modular 
abelian varieties. Let $A$ be an abelian variety over $F$ of arithmetic 
conductor $\mathfrak{N} \subset \mathcal{O}_F$. Given any
integer $n \geq 1$ and any Galois extension $L$ over $F$ with 
$\mathfrak{P}\mid \mathfrak{p}$ a prime above $\mathfrak{p}$ in $L$, we can
associate to $A$ a residual Selmer group $\Sel_{\mathfrak{P}^n}(A/L),$ 
defined by its inclusion in the exact sequence \begin{align*}0 \longrightarrow 
\Sel_{\mathfrak{P}^n}(A/L)\longrightarrow H^1(L, A[\mathfrak{P}^n]) \longrightarrow
\bigoplus_v H^1(L_v, A[\mathfrak{P}^n])/\im(\mathfrak{K}_v).\end{align*} Here,
the sum runs over all primes $v \subset \mathcal{O}_L,$ and
\begin{align*}\mathfrak{K}_v: A(L_v)/\mathfrak{P}^n A(L_v) \longrightarrow
H^1(L_v, A[\mathfrak{P}^n])\end{align*} denotes the local Kummer map at $v$.
Now, an abelian variety $A/F$ is said to be of {\it{$\GL$-type}} if the
endomorphism algebra $\End(A) \otimes_{\bf{Z}} {\bf{Q}}$ contains a
number field $L$ of degree equal to $\dim(A)$. An abelian variety $A$ defined over $F$ 
of $\GL$-type is said to be {\it{modular}} if there exists a Hilbert modular 
eigenform ${\bf{f}} \in \mathcal{S}_2(\mathfrak{N})$ such
that the Galois representation \begin{align*}\rho_{A, \lambda}: G_F \longrightarrow \GL(\mathcal{O}_L) \cong
\Aut\left(\Ta_{\lambda}\left(A\right)\right)\end{align*} associated to the 
$\lambda$-adic Tate module $\Ta_{\lambda}(A)$ of $A$ is
equivalent to the Galois representation \begin{align*}\rho_{{\bf{f}}, \lambda}: G_F 
\longrightarrow \GL(\mathcal{O}_L)\end{align*} associated to ${\bf{f}}$ by 
the construction of Carayol \cite{Ca2}, Taylor \cite{Tay} and Wiles \cite{Wi2}
(Theorem \ref{CTW} below) for any prime $\lambda \subset \mathcal{O}_L$, where 
$\mathcal{O}_L$ contains all of the Fourier coefficients of ${\bf{f}}$.
One can make analogous definitions for the unramified and ordinary local cohomology groups
$H^1_{\fin}(K_v, A[\mathfrak{P}^n]) \subset H^1(K_v, A[\mathfrak{P}^n])$ 
and $H^1_{\ord}(K_v, A[\mathfrak{P}^n]) \subset H^1(K_v, A[\mathfrak{P}^n])$
as given below for Hilbert modular eigenforms. See for instance the discussion 
in \cite[$\S4$]{L30}. Rather than give them here, let us just state the following characterization. 

\begin{proposition}\label{mav} If $A/F$ is a modular abelian variety associated to 
an eigenform ${\bf{f}} \in \mathcal{S}_2(\mathfrak{N})$ as above, then we have
the following description of $\im(\mathfrak{K}_v)$:

\begin{itemize}
\item[(i)] $\im(\mathfrak{K}_v) = H_{\fin}^1(L_v, A_{{\bf{f}},n})$ if
$v \nmid \mathfrak{N} \subset \mathcal{O}_F$.
\item[(ii)] $\im(\mathfrak{K}_v) = H_{\ord}^1(L_v, A_{{\bf{f}},n})$ 
if $v = \mathfrak{p} \subset \mathcal{O}_F$.
\end{itemize} Here, $A_{\ff, n}$ is the mod $\mathfrak{P}^n$ Galois representation arising from 
the abelian variety $A_{\ff}$ associated to $\ff$, as defined in $ \S \ref{GalRep}$ below. \end{proposition}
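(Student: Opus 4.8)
The plan is to reduce the identification of $\im(\mathfrak{K}_v)$ to the corresponding local statement for the $\mathfrak{p}$-adic Galois representation $\rho_{\ff}$, using the modularity hypothesis $\rho_{A,\lambda} \cong \rho_{\ff,\lambda}$, and then to invoke the standard classification of local Kummer images for ordinary and good-reduction representations. First I would fix a place $v$ of $L$ and unwind the definitions: by definition of a modular abelian variety of $\GL$-type, the $\lambda$-adic Tate module $\Ta_\lambda(A)$ is isomorphic as a $G_F$-module (hence, after restriction, as a $G_{L_v}$-module) to the lattice underlying $\rho_{\ff,\lambda}$, and reducing mod $\mathfrak{P}^n$ identifies $A[\mathfrak{P}^n]$ with $A_{\ff,n} = T_{\ff,n}$ as $G_{L_v}$-modules. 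Since the local Kummer map, the finite (unramified) subspace $H^1_{\fin}$, and the ordinary subspace $H^1_{\ord}$ are all defined purely in terms of the $G_{L_v}$-module structure together with the integral structure coming from $A(L_v)$ resp. from the filtration on $\rho_{\ff}$, it suffices to prove the two assertions for the abelian variety $A_{\ff}$ itself (or, equivalently, to prove them at the level of Galois cohomology of $T_{\ff,n}$), which is exactly the content of the definitions recalled in $\S\ref{GalRep}$.

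For part (i), when $v \nmid \mathfrak{N}$, the abelian variety $A$ has good reduction at $v$ (its arithmetic conductor is $\mathfrak{N}$), so the Néron model is an abelian scheme over $\mathcal{O}_{L_v}$; the standard argument (e.g. via the exact sequence $0 \to A[\mathfrak{P}^n] \to A \to A \to 0$ and the fact that $A(L_v^{\unr})$ is $\mathfrak{P}$-divisible with $H^1$ of the inertia trivial on the divisible part) shows that the image of the Kummer map is precisely the unramified classes $H^1_{\fin}(L_v, A[\mathfrak{P}^n])$. Here one must note the usual caveat about $v \mid p$: since $\mathfrak{N}$ is divisible by $\mathfrak{p}$ and $\ord_{\mathfrak{p}}(\mathfrak{N})=1$, the primes $v$ appearing in (i) that lie over $p$ lie over primes of $F$ other than $\mathfrak{p}$, where $\ff$ — being $\mathfrak{p}$-ordinary and of the stated level — still has good ordinary (or at worst crystalline) reduction, and the good-reduction argument applies after possibly replacing "unramified" by "finite/flat" in the usual sense; this compatibility is exactly how $H^1_{\fin}$ was set up in $\S\ref{GalRep}$, so no new input is needed.

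For part (ii), at $v = \mathfrak{p}$, the eigenform $\ff$ is $\mathfrak{p}$-ordinary, so by the local description of $\rho_{\ff}$ at $\mathfrak{p}$ (Theorem \ref{CTW} and the ordinarity hypothesis) the restriction $\rho_{\ff}|_{G_{F_{\mathfrak{p}}}}$ is reducible, sitting in an exact sequence with an unramified quotient; the ordinary subspace $H^1_{\ord}(L_v, A_{\ff,n})$ is by definition the preimage of the unramified cohomology of that quotient. On the abelian-variety side, $\mathfrak{p}$-ordinarity forces $A$ to have (potentially) ordinary reduction at $v$, so the connected-étale sequence of the $\mathfrak{P}^n$-torsion of the Néron model splits the relevant local module, and the classical computation of the Kummer image for an ordinary abelian variety over a local field (the image consists exactly of classes that are "flat along the formal group part and unramified along the étale part") matches $H^1_{\ord}$ term by term. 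I expect the main obstacle to be precisely this last matching: one must check that the filtration on $A[\mathfrak{P}^n]$ coming from the connected-étale sequence of the Néron model agrees, under the modularity isomorphism, with the filtration on $T_{\ff,n}$ coming from the ordinary structure of $\rho_{\ff}$ at $\mathfrak{p}$. This is a compatibility between the geometric integral structure and the Galois-theoretic one; it is known (it underlies the $p$-ordinary theory of Hilbert modular forms, cf. the references in $\S\ref{GalRep}$ and \cite[$\S 4$]{L30}), but it is the only genuinely non-formal point, and I would handle it by citing the construction of $A_{\ff}$ in Proposition \ref{hilbertAV} together with the local behaviour of $\rho_{\ff}$ at $\mathfrak{p}$ recorded there, rather than reproving it.
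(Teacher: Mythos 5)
Your proposal is correct and amounts to the standard argument that the paper itself does not spell out but simply delegates to the references: the paper's proof of Proposition \ref{mav} is the one-line citation to Coates--Greenberg \cite{CG} and Longo \cite[$\S$4.1]{L30}, and your sketch (transfer via the modularity isomorphism, good reduction giving the unramified image $H^1_{\fin}$ at $v \nmid \mathfrak{N}$, and ordinarity plus the connected--\'etale filtration at $\mathfrak{p}$ giving $H^1_{\ord}$) is exactly the content of those references. The two points you flag as delicate --- the finite/flat versus unramified issue at primes above $p$ and the compatibility of the geometric filtration on $A[\mathfrak{P}^n]$ with the ordinary filtration on $T_{\ff,n}$ --- are precisely what \cite{CG} handles, so citing rather than reproving them is consistent with the paper's treatment.
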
 

\begin{proof}
The result is well known, see for instance \cite{CG} or \cite[$\S$4.1]{L30}.
\end{proof} Let $$\Sel_{\mathfrak{P}^{\infty}}
(A/K_{\mathfrak{p}^{\infty}}) = \varinjlim_n
\Sel_{\mathfrak{P}^n}(A/K_{\mathfrak{p}^{\infty}}),$$
where the limit is taken with respect to the natural maps $A[\mathfrak{P}^n] 
\rightarrow A[\mathfrak{P}^{n+1}]$. Let $X(A/K_{\mathfrak{p}^{\infty}}) =
\Hom \left(\Sel_{\mathfrak{P}^{\infty}}(A/K_{\mathfrak{p}^{\infty}}),
{\bf{Q}}_p/{\bf{Z}}_p  \right)$. By Proposition \ref{mav}, we can identify 
$\Sel_{\mathfrak{P}^{\infty}}(A/K_{\mathfrak{p}^{\infty}}) = \Sel
({\bf{f}}, K_{\mathfrak{p}^{\infty}})$ to obtain the following result.

\begin{corollary}\label{dmcav} Let $A/F$ be a modular abelian variety. If the
eigenform ${\bf{f}}$ associated to $A$ satisfies all of the conditions of Theorem \ref{RESULT} 
above, then the dual Selmer group $X(A/K_{\mathfrak{p}^{\infty}})$ is $\Lambda$-torsion, 
and there is an inclusion of ideals  
\begin{align*} \left( \mathcal{L}_{\mathfrak{p}}({\bf{f}}, K_{\mathfrak{p}^{\infty}})  \right) \subseteq 
\left( \operatorname{char}_{\Lambda} X(A/
K_{\mathfrak{p}^{\infty}}) \right)~\text{in $\Lambda$}.
\end{align*}\end{corollary}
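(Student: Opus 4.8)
The plan is to deduce the statement directly from Theorem \ref{RESULT} by showing that the dual Selmer group of $A$ over $K_{\mathfrak{p}^{\infty}}$ is isomorphic, as a $\Lambda$-module, to the dual Selmer group $X(\ff, K_{\mathfrak{p}^{\infty}})$ of the associated eigenform. The first step would be to pin down the integral Galois-module structures. Since $\ff$ satisfies Hypothesis \ref{galrep}, in particular $\rho_{\ff}$ is residually irreducible; hence the $G_F$-stable $\mathcal{O}_L$-lattice in the rational Galois representation attached to $A$ is unique up to homothety, and the equivalence $\rho_{A,\lambda} \cong \rho_{\ff,\lambda}$ refines to isomorphisms of the mod $\mathfrak{P}^n$ representations $A[\mathfrak{P}^n] \cong A_{\ff,n}$ for every $n \geq 1$, compatibly with the inclusions $A[\mathfrak{P}^n] \hookrightarrow A[\mathfrak{P}^{n+1}]$ on one side and the transition maps defining $T_{\ff, n}$ on the other.

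Next I would match the local conditions place by place. Proposition \ref{mav} identifies $\im(\mathfrak{K}_v)$ with $H^1_{\fin}(L_v, A_{\ff,n})$ for $v \nmid \N$ and with $H^1_{\ord}(L_v, A_{\ff,n})$ for $v = \mathfrak{p}$, and these are precisely the local conditions cutting out the $\mathfrak{P}^n$-Selmer group of $\ff$ over $K_{\mathfrak{p}^{\infty}}$ (here Hypothesis \ref{galrep}(ii) guarantees that $\mathfrak{p}$ is the only prime above $p$, so the ordinary condition is imposed at exactly one place). Consequently $\Sel_{\mathfrak{P}^n}(A/K_{\mathfrak{p}^{\infty}}) = \Sel_{\mathfrak{P}^n}(\ff, K_{\mathfrak{p}^{\infty}})$ inside $H^1(K_{\mathfrak{p}^{\infty}}, T_{\ff,n})$, compatibly in $n$; taking the direct limit over $n$ gives $\Sel_{\mathfrak{P}^{\infty}}(A/K_{\mathfrak{p}^{\infty}}) = \Sel(\ff, K_{\mathfrak{p}^{\infty}})$, and Pontryagin duality then yields an isomorphism of $\Lambda$-modules $X(A/K_{\mathfrak{p}^{\infty}}) \cong X(\ff, K_{\mathfrak{p}^{\infty}})$.

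With this identification in hand the conclusion is immediate: since by assumption the eigenform $\ff$ associated to $A$ satisfies all the hypotheses of Theorem \ref{RESULT}, that theorem shows that $X(\ff, K_{\mathfrak{p}^{\infty}})$ is $\Lambda$-torsion and that $(\mathcal{L}_{\mathfrak{p}}(\ff, K_{\mathfrak{p}^{\infty}})) \subseteq (\operatorname{char}_{\Lambda} X(\ff, K_{\mathfrak{p}^{\infty}}))$ in $\Lambda$. Transporting these assertions along the isomorphism above gives that $X(A/K_{\mathfrak{p}^{\infty}})$ is $\Lambda$-torsion with the same characteristic ideal, which is exactly the claim.

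The step I expect to need the most care is the lattice-matching in the first paragraph: a priori the Carayol--Taylor--Wiles construction and the geometric Tate module of $A$ agree only after $\otimes_{\mathcal{O}_L} L$, so residual irreducibility (Hypothesis \ref{galrep}(iv), equivalently (v)) is genuinely needed to conclude that the two lattices coincide up to scaling and hence that $A[\mathfrak{P}^n]$ and $A_{\ff,n}$ are isomorphic as $G_F$-modules. Once this is in place, the rest is the bookkeeping of local conditions already recorded in Proposition \ref{mav} together with the definition of $\Sel(\ff, K_{\mathfrak{p}^{\infty}})$ from \S\ref{GalRep}. If one prefers to avoid relying on lattice uniqueness, an alternative is to work instead with the abelian variety $A_{\ff}$ of Proposition \ref{hilbertAV}: it is $F$-isogenous to $A$ by Faltings, and the isogeny induces a $\Lambda$-pseudo-isomorphism of dual Selmer groups, which already suffices for the equality of characteristic ideals.
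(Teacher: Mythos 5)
Your proposal is correct and follows essentially the same route as the paper, which simply invokes the identification $\Sel_{\mathfrak{P}^{\infty}}(A/K_{\mathfrak{p}^{\infty}}) = \Sel({\bf{f}}, K_{\mathfrak{p}^{\infty}})$ furnished by Proposition \ref{mav} and then applies Theorem \ref{RESULT}. Your extra care about lattice uniqueness under residual irreducibility is a reasonable elaboration of the same identification, not a different argument.
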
 Note as well that the analogous 
formulations of Theorem \ref{HOWARD} and Corollary \ref{criterion} carry over to the setting 
of modular abelian varieties. Now, consider the short exact descent sequence 
\begin{align}\label{tsav} 0 \longrightarrow
A(K_{\mathfrak{p}^{\infty}})\otimes {\bf{Q}}_p/ {\bf{Z}}_p \longrightarrow
\Sel_{\mathfrak{P}^{\infty}}(A/K_{\mathfrak{p}^{\infty}})
\longrightarrow \Sha(A/K_{\mathfrak{p}^{\infty}})[\mathfrak{P}^{\infty}]
\longrightarrow 0. \end{align} Here, $\Sha(A/K_{\mathfrak{p}^{\infty}})[\mathfrak{P}^{\infty}]$ denotes
the $\mathfrak{P}$-primary part of the Tate-Shafarevich group $\Sha(A/K_{\mathfrak{p}^{\infty}})$ of $A$ over $K_{\mathfrak{p}^{\infty}}$.

\begin{corollary}\label{finitecomponent} Let $A/F$ be a modular abelian variety. If the
eigenform ${\bf{f}}$ associated to $A$ satisfies all of the
conditions of Theorem \ref{RESULT} above, then for $\rho$ any finite order
character of $G_{\mathfrak{p}^{\infty}}$ for which the specialization
$\rho^{-1}\left( \mathcal{L}_{\mathfrak{p}}({\bf{f}},K_{\mathfrak{p}^{\infty}})
\right)$ does not vanish, the components $A(K_{\mathfrak{p}^{\infty}})^{\rho}$,
$\Sel_{\mathfrak{P}^{\infty}}({\bf{f}}, K_{\mathfrak{p}^{\infty}})^{\rho}$, and
$\Sha(A/K_{\mathfrak{p}^{\infty}})[\mathfrak{P}^{\infty}]^{\rho}$ are finite.
\end{corollary}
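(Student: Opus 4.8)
The plan is to deduce Corollary \ref{finitecomponent} from Corollary \ref{dmcav} together with a standard control/descent argument, using the descent sequence $(\ref{tsav})$. Fix a finite order character $\rho$ of $G_{\mathfrak{p}^\infty}$ with $\rho^{-1}(\mathcal{L}_{\mathfrak{p}}(\ff, K_{\mathfrak{p}^\infty})) \neq 0$. By Corollary \ref{dmcav}, the dual Selmer group $X(A/K_{\mathfrak{p}^\infty})$ is $\Lambda$-torsion and $(\mathcal{L}_{\mathfrak{p}}(\ff, K_{\mathfrak{p}^\infty})) \subseteq (\operatorname{char}_\Lambda X(A/K_{\mathfrak{p}^\infty}))$; hence $\rho^{-1}$ applied to a generator of $\operatorname{char}_\Lambda X(A/K_{\mathfrak{p}^\infty})$ is nonzero as well, since $\rho^{-1}$ is a ring homomorphism $\Lambda \to \overline{\mathbf{Q}}_p$ and the image of the $L$-function is a $\Lambda$-multiple of the image of the characteristic element. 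The point is that the nonvanishing of $\rho^{-1}(\operatorname{char}_\Lambda X(A/K_{\mathfrak{p}^\infty}))$ forces the $\rho$-component (equivalently, the $\ker(\rho)$-coinvariants or the associated specialization) of $X(A/K_{\mathfrak{p}^\infty})$ to be finite.

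First I would make precise the notation $M^\rho$ for a discrete $\Lambda$-module $M$: since $\rho$ has finite order, it factors through $\Gal(K_n/K)$ for some layer $K_n \subset K_{\mathfrak{p}^\infty}$, and $M^\rho$ is the $\rho$-isotypic component of the $\Gal(K_n/K)$-module $M^{\Gal(K_{\mathfrak{p}^\infty}/K_n)}$ after extending scalars to $\mathcal{O}[\rho]$. Dually, $X(A/K_{\mathfrak{p}^\infty})^\rho$ (in the Pontryagin-dual sense) is a quotient of $X(A/K_{\mathfrak{p}^\infty})$ by the height-one-prime-adjacent ideal $P_\rho = \ker(\rho^{-1})\Lambda$; a finitely generated torsion $\Lambda$-module $X$ has finite $\rho$-specialization precisely when $\operatorname{char}_\Lambda(X)$ is not contained in $P_\rho$, i.e. when $\rho^{-1}(\operatorname{char}_\Lambda X) \neq 0$. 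Applying this with $X = X(A/K_{\mathfrak{p}^\infty})$ and invoking the divisibility of Corollary \ref{dmcav} gives that $\Sel_{\mathfrak{P}^\infty}(A/K_{\mathfrak{p}^\infty})^\rho = \Sel(\ff, K_{\mathfrak{p}^\infty})^\rho$ is finite (using the identification from Proposition \ref{mav} noted before Corollary \ref{dmcav}).

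Next I would pass the finiteness through the descent sequence $(\ref{tsav})$. Taking $\rho$-components is exact up to the usual finite error terms coming from the higher cohomology of the finite group $\Gal(K_n/K)$ acting on the (at worst finite) kernels; since $p$ is odd (Hypothesis \ref{galrep}(i)) and one may work with $\mathcal{O}[\rho] \otimes -$, these error terms vanish on the relevant isotypic pieces, so the sequence
\begin{align*} 0 \longrightarrow \bigl(A(K_{\mathfrak{p}^\infty}) \otimes \mathbf{Q}_p/\mathbf{Z}_p\bigr)^\rho \longrightarrow \Sel_{\mathfrak{P}^\infty}(A/K_{\mathfrak{p}^\infty})^\rho \longrightarrow \Sha(A/K_{\mathfrak{p}^\infty})[\mathfrak{P}^\infty]^\rho \longrightarrow 0 \end{align*}
is exact. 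Finiteness of the middle term then forces finiteness of both outer terms, which is exactly the assertion of Corollary \ref{finitecomponent}. As a final bookkeeping point I would note that $\bigl(A(K_{\mathfrak{p}^\infty}) \otimes \mathbf{Q}_p/\mathbf{Z}_p\bigr)^\rho$ being finite is equivalent to $A(K_{\mathfrak{p}^\infty})^\rho$ (the $\rho$-part of the Mordell--Weil group, after extending scalars) being finite, since the divisible part of $A(K_{\mathfrak{p}^\infty}) \otimes \mathbf{Q}_p/\mathbf{Z}_p$ has corank equal to the $\mathcal{O}$-rank of the corresponding piece of $A(K_{\mathfrak{p}^\infty})$.

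The main obstacle is the control-theoretic input: justifying that the $\rho$-specialization of the dual Selmer group $X(A/K_{\mathfrak{p}^\infty})$ is computed, up to finite groups, by $\rho^{-1}(\operatorname{char}_\Lambda X)$, and that taking $\rho$-isotypic components commutes with the descent sequence $(\ref{tsav})$ up to finite error. This requires a Mazur-style control theorem for the Selmer group $\Sel(\ff, K_{\mathfrak{p}^\infty})$ along the finite layers $K_n/K$ — in particular that the kernels and cokernels of the restriction maps $\Sel(\ff, K_n) \to \Sel(\ff, K_{\mathfrak{p}^\infty})^{\Gal(K_{\mathfrak{p}^\infty}/K_n)}$ are finite and bounded independently of $n$. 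This follows from the fact (used already in the proof of Theorem \ref{HOWARD}) that $A(K_{\mathfrak{p}^\infty})_{p^\infty}$ is finite, together with the local conditions in Proposition \ref{mav}; the semi-local term at $\mathfrak{p}$ needs Hypothesis \ref{galrep}(ii) (the unique prime above $p$) to control the ordinary local condition. Once the control theorem is in hand the rest is the purely algebraic structure theory of finitely generated torsion $\Lambda$-modules and the elementary observation above about $\rho$-specializations of characteristic ideals.
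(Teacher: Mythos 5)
Your overall route is the paper's own: the published proof is a single line (``direct consequence of Corollary \ref{dmcav} applied to $(\ref{tsav})$''), and your elaboration --- nonvanishing of the specialization forces finiteness of the $\rho$-part of the dual Selmer group, which is then pushed through the descent sequence --- is exactly what that line means. However, one step is stated in a form that fails when $\delta=[F_{\mathfrak{p}}:{\bf{Q}}_p]>1$: for a finitely generated torsion module $X$ over the $\delta$-variable algebra $\Lambda$ it is \emph{not} true that finiteness of the $\rho$-specialization is equivalent to (or even implied by) $\rho^{-1}(\operatorname{char}_{\Lambda}X)\neq 0$, because pseudo-null modules intervene; for instance $\Lambda/(T_1,T_2)$ with $\delta=2$ has unit characteristic ideal yet infinite specialization at the trivial character. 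So the bare ideal containment of Corollary \ref{dmcav} does not formally yield the finiteness you want in the multi-variable case.

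The repair is to bypass characteristic ideals and quote the specialization-wise bound the paper actually establishes: Proposition \ref{4.3}, as exploited in the proof of Corollary \ref{dmc}, shows that for every homomorphism $\varphi:\Lambda\rightarrow\mathcal{O}'$ to a discrete valuation ring (choosing $t_0$ larger than the relevant valuation, as there), the $\mathcal{O}'$-length of $\Sel({\bf{f}},K_{\mathfrak{p}^{\infty}})^{\vee}\otimes_{\varphi}\mathcal{O}'$ is bounded by the $\mathcal{O}'$-valuation of $\varphi\left(\mathcal{L}_{\mathfrak{p}}({\bf{f}},K_{\mathfrak{p}^{\infty}})\right)$. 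Taking $\varphi=\rho^{-1}$, the nonvanishing hypothesis gives finiteness of the $\rho$-component of the dual Selmer group directly, with no structure-theory caveats. With that substitution, your remaining steps --- the identification $\Sel_{\mathfrak{P}^{\infty}}(A/K_{\mathfrak{p}^{\infty}})=\Sel({\bf{f}},K_{\mathfrak{p}^{\infty}})$ via Proposition \ref{mav}, exactness of $\rho$-parts of $(\ref{tsav})$ for odd $p$, and the comparison of $A(K_{\mathfrak{p}^{\infty}})^{\rho}$ with the $\rho$-part of $A(K_{\mathfrak{p}^{\infty}})\otimes{\bf{Q}}_p/{\bf{Z}}_p$ --- are standard and consistent with the paper's intent.
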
 \begin{proof} This is a direct consequence of Corollary \ref{dmcav} applied
to $(\ref{tsav})$.  \end{proof} Note that by the nonvanishing theorem of Cornut-Vatsal 
\cite[Theorem 1.4]{CV}, the nonvanishing hypothesis of Corollary \ref{finitecomponent} 
is satisfied for for all but finitely many finite order characters 
$\rho$ of $G_{\mathfrak{p}^{\infty}}$, as can be deduced as can be deduced from the 
algebraicity theorem of Shimura \cite{Sh2}. \end{remark}

\begin{remark}[Notations.] We write ${\bf{A}}_F$ to denote the adeles of $F$, with
${\bf{A}} = \bf{A}_{\bf{Q}}$, and ${\bf{A}}_f$ the finite adeles of ${\bf{Q}}$. We shall 
sometimes write $\widehat{F}^{\times}$ to denote the finite adeles of $F$. Given 
a finite prime $v$ of $F$, we fix a uniformizer $\varpi_v$ of $F_v$. We let $\kappa_v$
denote the residue fields of $F_v$ at $v$, with $q = q_v$ its cardinality, which is
not to be confused with the cohomology class constructed in $(\ref{zetaclass})$
below. Throughout, we write $\mathcal{O}_0$ to denote the 
${\bf{Z}}_p$-algebra generated by the images of the Fourier coefficients of $\ff$,
and $\mathcal{O}$ the integral closure of $\mathcal{O}_0$ in its fraction field $L$.
We let $\mathfrak{P}$ denote the maximal ideal of $\mathcal{O}$, and for each 
integer $n \geq 1$ put $\mathfrak{P}_n = \mathfrak{P}^n \cap \mathcal{O}_0$.
\end{remark}

\section{Automorphic forms}

\begin{remark}[Hilbert modular forms.] 

Given an ideal $\mathfrak{N} \subset \mathcal{O}_F$, let $\mathcal{S}_2(\N)$ 
denote the space of cuspidal Hilbert modular forms of parallel weight $2$, 
level $\N$, and trivial character. The space $\mathcal{S}_2(\N)$ comes equipped
 with the action of standard (classically or adelically defined) operators $T_v$ 
for each prime $v \nmid \N \subset \mathcal{O}_F$ and $U_v$ for each prime $v \mid 
\N \subset \mathcal{O}_F$. Let ${\bf{T}}(\N)$ denote the ${\bf{Z}}$-algebra generated 
by these operators. Given a Hilbert modular form $ \ff \in \mathcal{S}_2(\N)$, let 
$a_{\mathfrak{m}}(\ff)$ denote the normalized Fourier coefficient of $\ff$ at an ideal 
$\mathfrak{m}$ of $F$. We refer to \cite{Ga} \cite{Ge}, or \cite{Go} for precise
definitions and further background.

\begin{definition} A Hilbert modular form $\ff \in \mathcal{S}_2(\N)$ is said to 
be a {\it{normalized eigenform}} if it is a simultaneous eigenvector for all of 
the Hecke operators $T_v$ and $U_v$, with $T_v \ff  = a_v(\ff) \cdot \ff$ if $v 
\nmid \N$, $U_v \ff  = a_v(\ff) \cdot \ff$ if $v \mid \N$, and $a_{\mathcal{O}_F}(\ff) =1$.
\end{definition}

\begin{definition} A normalized eigenform $\ff \in \mathcal{S}_2(\N)$ is said to be a 
{\it{newform}} if there does not exist a form ${\bf{g}} \in \mathcal{S}_2(\mathfrak{M})$ 
for $\mathfrak{M} \mid \N \subset \mathcal{O}_F$ an ideal not equal to $\N$ such that 
$a_{\mathfrak{n}}(\ff) = a_{\mathfrak{n}}({\bf{g}})$ for all ideals $\mathfrak{n} \subset 
\mathcal{O}_F$ prime to $\N$. Given a prime $\mathfrak{q} \mid \N$, we say that 
$\ff \in \mathcal{S}_2(\N)$ is {\it{new at $\mathfrak{q}$}} if it does not arise from 
another form ${\bf{g}} \in \mathcal{S}_2(\N/\mathfrak{q})$ in this way.\end{definition} 
In general, given a normalized eigenform $\ff \in \mathcal{S}_2(\N)$, we fix a factorization 
$\N = \mathfrak{N}^{+}\mathfrak{N}^{-}$ such that $\ff$ is new at all primes dividing 
$\mathfrak{N}^-$. We then write $\mathcal{S}_2(\N^+, \N^{-})$ to denote the space of 
Hilbert modular cusp forms of parallel weight $2$ and level $\N$ that are new at all 
primes dividing $\N^-$, with ${\bf{T}}(\N^+, \N^{-})$ the corresponding algebra of Hecke 
operators. Finally, let us also make the following

\begin{definition} An eigenform ${\bf{f}} \in 
\mathcal{S}_2(\mathfrak{N})$ is {\it{$\mathfrak{p}$-ordinary}} 
if its $T_{\mathfrak{p}}$-eigenvalue $a_{\mathfrak{p}}({\bf{f}})$ is a $p$-adic 
unit with respect to any fixed embedding $\overline{\bf{Q}} \rightarrow 
\overline{{\bf{Q}}}_p$. In this case, if $\mathfrak{p} \nmid \mathfrak{N}$, then there exists a $p$-adic 
unit root $\alpha_{\mathfrak{p}}({\bf{f}})$ to the polynomial \begin{align}\label{char} 
x^2 - a_{\mathfrak{p}}({\bf{f}})x + q,\end{align} where $q$ denotes the cardinality 
of the residue field at $\mathfrak{p}$. \end{definition} 
In what follows, let us always view $\ff \in \mathcal{S}_2(\N)$ as a $p$-adic modular 
form via a fixed embedding $\overline{{\bf{Q}}} \rightarrow \overline{\bf{Q}}_p$. Let 
$\mathcal{O}_0$ denote the ${\bf{Z}}_p$-subalgebra of $\overline{\bf{Q}}_p$ generated 
by the Fourier coefficients of $\ff$, let $\mathcal{O}$ denote the integral closure of 
$\mathcal{O}_0$ in its field of fractions $L$, and let $\mathfrak{P}$ denote the maximal 
ideal of $\mathcal{O}$. \end{remark}

\begin{remark}[The Jacquet-Langlands correspondence.]  

Let $B$ denote a quaternion algebra defined over $F$, with 
$\Ram(B)$ the set of places of $F$ where $B$ is ramified. The theorem 
of Jacquet and Langlands \cite{JL} establishes a bijection from the space 
of automorphic representations of $(B \otimes {\bf{A}}_F)^{\times}$ of 
dimension greater than $1$ to the space of cuspidal automorphic representations 
of $\GL({\bf{A}}_F)$ that are discrete series (i.e. square integrable) at each place 
$v \in \Ram(B)$. This injection being a bijection on its image, we obtain the following 
more concrete result. Fix a compact open subgroup $H \subset \widehat{B}^{\times}$. Let 
$\mathbb{S}_2(H, B)$ denote the space of automorphic forms of weight $2$ and level $H$ on 
$B$ (to be defined precisely later), with the additional assumption that the forms are cuspidal 
if $B$ is an indefinite quaternion algebra. The space $\mathbb{S}_2(H, B)$ comes equipped with 
actions of standard Hecke operators at each prime $v \subset \mathcal{O}_F$ (to be defined 
precisely later). Let $\mathbb{T}(H, B)$ denote the ${\bf{Z}}$-algebra generated by these 
operators. Suppose that the discriminant $\disc(B)$ of $B$ is equal to some ideal 
$\N^{-} \subset \mathcal{O}_F$, and that  the level of $H$ is equal to some ideal 
$\N^{+} \subset \mathcal{O}_F$. Let us then write $\mathbb{S}_2(\N^+, \N^{-}) = \mathbb{S}_2(H,B)$ 
with $\mathbb{T}(\N^+, \N^{-}) = \mathbb{T}(H, B)$. The correspondence of Jacquet and Langlands 
then establishes an isomorphism of Hecke modules $\mathbb{S}_2(\N^+, \N^{-}) 
\cong \mathcal{S}_2(\N^+, \N^{-}).$ \end{remark}

\begin{remark}[Automorphic forms on totally definite quaternion algebras.]

Let $\mathcal{O}$ be any ring. Let $D$ be any totally definite quaternion algebra defined over 
$F$. Fix a compact open subgroup $U \subset \widehat{D}^{\times}$. Let $\mathbb{S}_2(U; 
\mathcal{O}) = \mathbb{S}_2(U, D; \mathcal{O})$ denote the space of $\mathcal{O}$-valued 
automorphic forms of weight $2$ and level $U$ on $D$, i.e. the space of functions 
\begin{align*} \Phi: D^{\times}\backslash \widehat{D}^{\times}/U 
&\longrightarrow \mathcal{O}\end{align*} such that $\Phi(dgu) = \Phi(g)$ for all 
$d \in D^{\times}$, $g \in \widehat{D}^{\times}$, and $u \in U$. Let 
$\mathbb{S}_2(U;\mathcal{O})_{\triv} \subset \mathbb{S}(U; \mathcal{O})$ denote the subspace of 
functions that factor through the adelization of the reducted norm homomorphism, 
$\nrd: \widehat{D}^{\times} \longrightarrow F^{\times}$.

\begin{definition}
Let $\mathcal{S}_2(U; \mathcal{O}) 
= \mathbb{S}_2(U;\mathcal{O})/\mathbb{S}_2(U;\mathcal{O})_{\triv}.$ 
Functions in this space are called {\it{$\mathcal{O}$-valued modular 
forms of weight $2$ and level $U$ on $D$}}. \end{definition} The space 
$\mathbb{S}(U; \mathcal{O})$ comes equipped with actions of Hecke operators, 
defined via double coset operators as follows. Given two compact open subgroups 
$U, U' \subset \widehat{D}^{\times}$ and an element $g \in \widehat{D}^{\times}$, the 
group $gUg^{-1}$ is commensurable with $U'$.  Fixing a decomposition of $U'$ into a 
disjoint union of cosets $\coprod_i \alpha_i (U' \cap g U g^{-1})$ gives an identification 
$U'gU = \coprod_i g_i U$, where $g_i = \alpha_i g$. The associated 
double coset operator $[U'gU]$ is then given by the linear map \begin{align*} [U'gU]: 
\mathbb{S}_2(U; \mathcal{O}) &\longrightarrow \mathbb{S}_2(U'; \mathcal{O}), ~ 
\left( [U'gU]  \Phi  \right)(x) = \sum_i \Phi(x g_i). \end{align*}

\begin{definition} Fix a compact open subgroup $U \subset \widehat{D}^{\times}$. 
Fix a finite set of places $S \supset \Ram(D)$ of $F$ such that $U$ admits a 
decomposition $U_S \times U^S$. The Hecke algebra $\mathbb{T}^S(U)= \mathbb{T}^S(U,B)$ 
is the (commutative) subring of ${\bf{Z}}[U\backslash \widehat{D}^{\times}/U]$ generated 
by double coset operators $[UgU]$ with $g \in (\widehat{D}^S)^{\times}$. It is isomorphic as 
a ring to ${\bf{Z}}[T_w, S_w, S_w^{-1}: w \notin S]$, where $T_w$ and $S_w$ are the standard 
Hecke operators $T_w = [U \eta_w U]$ and $S_w = [U \varpi_w U]$. \end{definition}

Fix a prime $v \notin S$ of $F$. Hence, $D$ is split at $v$, and we may fix an isomorphism 
$D_v \cong \M(F_v)$. Let us assume additionally that this isomorphism sends the component 
$U_v$ to $\GL(\mathcal{O}_{F_v})$. In what follows, we shall make the identification 
$U_v \cong \GL(\mathcal{O}_{F_v})$ implicitly. Let $U(v) \subset U$ be the subgroup defined 
by \begin{align*} U(v) &= \lbrace u \in U: u_v \equiv 
\left( \begin{array}{cccc}  * & * \\ 0 & *  \end{array}\right) \mod \varpi_v \rbrace. 
\end{align*} We have a pair of natural degeneracy maps \begin{align*}
\alpha^* = [U(v){\bf{1}} U] : \mathbb{S}_2(U; \mathcal{O}) &\longrightarrow 
\mathbb{S}_2(U(v); \mathcal{O}), ~(\alpha^*\Phi)(g) = \Phi(g) \\ 
\beta^* = [U(v) \eta_vU] : \mathbb{S}_2(U; \mathcal{O}) &\longrightarrow 
\mathbb{S}_2(U(v); \mathcal{O}), ~(\beta^*\Phi)(g) = \Phi(g\eta_v),\end{align*} 
as well as a pair of associated trace maps \begin{align*}\alpha_* = [U{\bf{1}} U(v)] : 
\mathbb{S}_2(U(v); \mathcal{O}) &\longrightarrow \mathbb{S}_2(U; \mathcal{O})\\ 
\beta_* = [U\eta_vU(v)] : \mathbb{S}_2(U(v); \mathcal{O}) &\longrightarrow 
\mathbb{S}_2(U; \mathcal{O}). \end{align*} These degeneracy and trace maps commute 
with the actions of Hecke operators $T_w$ (for $w \notin \Ram(D) \cup \lbrace v \rbrace$) 
on $\mathbb{S}_2(U; \mathcal{O})$ and $\mathbb{S}_2(U(v);\mathcal{O})$, as well 
as with the action by the centre of $\overline{D}^{\times}$.

\begin{definition} The {\it{$v$-new susbspace of $\mathbb{S}_2(U(v); \mathcal{O})$}} is 
the subspace defined by $$\begin{CD}\mathbb{S}_2(U(v); \mathcal{O})^{\vnew} = 
\ker ( \mathbb{S}_2(U(v); \mathcal{O}) @>{\alpha_*, \beta_*}>> \mathbb{S}_2(U; 
\mathcal{O})^{\oplus 2} ).  \end{CD}$$ This subspace is stable under the actions of Hecke 
operators $T_w$ (for $w \notin \Ram(D) \cup \lbrace v \rbrace$), as well as under the 
action of the centre of $\widehat{D}^{\times}$. \end{definition} \end{remark}

\begin{remark}[Shimura curves.]

Fix a place $\tau_1$ in the set of archimedean places
$\lbrace \tau_1, \ldots, \tau_d \rbrace$ of $F$. Let $B$ 
be any quaternion algebra over $F$ that is split
at $\tau_1$ and ramified at the remaining set of 
real places $\lbrace \tau_2, \ldots, \tau_d \rbrace$. 
Let $X = {\bf{C}}- {\bf{R}}$, which is two
copies of the Poincar\'e upper-half plane. The group 
$B^{\times} \subset B^{\times}_{\tau_1} \cong 
\GL({\bf{R}})$ acts naturally on $X$, via fractional 
linear transformation. Let $H \subset \widehat{B}^{\times}$ be 
any compact open subgroup. The diagonal left action of 
$B^{\times}$ on $ \widehat{B}^{\times} /H 
\times X$ defines a Riemann surface 
\begin{align}\label{Sh}M_H ({\bf{C}}) &= M_H(B,X)({\bf{C}})= 
B^{\times} \backslash \widehat{B}^{\times} \times X /H.\end{align}
Shimura proved that the curve $M_H({\bf{C}})$ has a
canonical model defined over the totally real field $F$. We 
adopt the standard convention of writing $M_H$ to 
denote this model, whose complex points are identified with 
the Riemann surface $M_H({\bf{C}})$. The curve $M_H$ 
is irreducible, but not necessarily geometrically irreducible.
Indeed, by strong approximation and the theorem of the norm,
the reduced norm homomorphism $\nrd: B \longrightarrow F$ is 
seen to induce a bijection of finite sets \begin{align}
\label{components} \pi_0\left(M_H ({\bf{C}}) \right) &\cong 
F^{\times}_{+}\backslash \widehat{F}^{\times}
/\nrd(H),\end{align} as explained for instance 
in \cite[$\S$ 1.2]{Ca}. 

We have the following description of Hecke operators acting on $M_H$. Given 
$g \in \widehat{B}^{\times}$, fix a pair of compact open subgroups $H, H' \subset  
\widehat{B}^{\times}$ such that $g^{-1}Hg \subset H'$. Multiplication on the right by 
$g$ induces natural maps $M_H({\bf{C}}) \rightarrow M_{H'}({\bf{C}})$ which descend 
to finite flat morphisms $M_H \rightarrow M_{H'}$. We may then define from these
maps Hecke operators via double coset operators. To be more precise, given a 
complex point $x = [g, h] \in M_H({\bf{C}})$, the double coset operator $[HgH]$ 
acts as \begin{align*} \left[ HgH\right](x) &= \sum_i [x g_i, h] \in \Div(M_H({\bf{C}})),
\end{align*} where $HgH = \coprod_i g_i H$. The algebra of Hecke operators \
\begin{align*} \mathbb{T}_H &= \End_{ {\bf{Z}}[\widehat{B}^{\times}] } \left( 
{\bf{Z}}[\widehat{B}^{\times}/H] \right) \cong {\bf{Z}}[H \backslash \widehat{B}^{\times}/H] 
\end{align*} has a natural left action on the jacobian $J_H$ of $M_H$. Hence, 
we also write $\mathbb{T}_H$ to denote the subring of $\End(J_H)$ generated
by these Hecke operators. \end{remark}

\begin{remark}[Automorphic forms on indefinite quaternion algebras.]

Let us now fix an indefinite quaternion algebra $B$ as above, ramified at all but 
but one real place $\tau_1$ of $F$. Let us also fix a compact open
subgroup $H \subset \widehat{B}^{\times}$. Recall that we let $M_H({\bf{C}})$
denote the associated complex Shimura curve $(\ref{Sh})$, with $M_H$ its 
canonical model defined over $F$.

\begin{definition} Let $\mathbb{S}_2(H, B)$ denote the space of functions
$\Phi: (B \otimes {\bf{A}}_F)^{\times} \longrightarrow {\bf{C}}$ such that 
\begin{itemize}

\item[(i)] $\Phi$ is left $B^{\times}$-invariant.
\item[(ii)] $\Phi$ is right invariant under ${\bf{R}}^{\times} \times \prod_{i=2}^d
 B_{\tau_i} ^{\times} \subset \left( B \otimes {\bf{R}} \right)^{\times}$.
\item[(iii)] $\Phi$ is right invariant under $H$.
\item[(iv)] For each $g \in \left( B \otimes {\bf{A}}_F \right)^{\times}$ and 
$\theta \in {\bf{R}}$, \begin{align*} \Phi   \left( g  \left[ 
\left( \begin{array}{cccc}  \cos \theta & - \sin \theta \\ \sin \theta & \cos \theta   
\end{array}\right), 1, \ldots, 1 \right]  \right) &= \exp(2 i \theta) \cdot \Phi(g).
\end{align*} \item[(v)] For each $g \in \left( B \otimes {\bf{A}}_F \right)^{\times}$, 
the function defined by \begin{align*} z = x + iy &\longmapsto \Phi(g, z) := 
\frac{1}{y} \cdot \Phi \left(g \left[ \left( \begin{array}{cccc}  -y & x \\ 0 & 1   
\end{array}\right), 1, \ldots, 1 \right]  \right)\end{align*} is holomorphic on the 
lower half plane $\mathfrak{H}^{-}$. \end{itemize} Note that there is a left action of 
$g \in \widehat{B}^{\times}$ on $\mathbb{S}_2(H, B)$ via
the rule $(g \cdot \Phi)(x) = \Phi(xg)$. We refer the reader to 
\cite[$\S 3.6$]{CV} for more details. \end{definition} Let $\Omega_H$ denote 
the sheaf of differentials on the Shimura curve $M_H$, with $\Omega_H({\bf{C}})$ 
its pullback to $M_H({\bf{C}})$. Hence, $\Omega_H({\bf{C}})$ is the sheaf of holomorphic 
$1$-forms on $M_H({\bf{C}})$. Let $\Gamma(\Omega_H({\bf{C}}))$ denote the global 
sections of $\Omega_H({\bf{C}})$.

\begin{proposition}\label{CV3.10} There is a $\widehat{B}^{\times}$-equivariant bijection 
of ${\bf{C}}$ -vector spaces $\Gamma(\Omega_H({\bf{C}})) \cong \mathbb{S}_2(H, B)$. 
\end{proposition}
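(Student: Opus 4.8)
The plan is to identify both sides as spaces of classical holomorphic modular forms on the Shimura curve $M_H$ via the standard dictionary between sections of the automorphic line bundle and functions on $(B\otimes\mathbf{A}_F)^\times$ satisfying the weight-$2$ equivariance conditions (i)--(v). Concretely, I would construct the map in one direction and then exhibit an inverse.

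First I would recall the complex-uniformization description $M_H(\mathbf{C}) = B^\times\backslash \widehat B^\times \times X / H$ with $X = \mathbf{C} - \mathbf{R}$, and note that a holomorphic $1$-form $\omega \in \Gamma(\Omega_H(\mathbf{C}))$ pulls back along the quotient map $\widehat B^\times \times X \to M_H(\mathbf{C})$ to a $B^\times$-invariant, $H$-invariant holomorphic $1$-form on $\widehat B^\times \times X$, which in each $X$-variable is of the form $F(g,z)\,dz$ for a function $F$ holomorphic in $z$. Restricting to the lower half-plane component $\mathfrak{H}^-$ and setting $\Phi(g) := F(g, z)\cdot y \cdot(\text{cocycle factor})$ — more precisely, unwinding the transformation $z \mapsto \gamma z$ under the $B^\times_{\tau_1}\cong\GL(\mathbf{R})$-action so that $dz$ picks up the automorphy factor $(cz+d)^{-2}$ — one checks that the resulting $\Phi$ satisfies exactly conditions (i)--(v): left $B^\times$-invariance and right $H$-invariance come from invariance of $\omega$; the $\prod_{i\geq 2} B_{\tau_i}^\times$-invariance is automatic since these factors act trivially on $X$; condition (iv), the $\mathrm{SO}(2)$-equivariance with character $\exp(2i\theta)$, is precisely the statement that $dz$ transforms with weight $2$; and condition (v) is holomorphy of $\omega$. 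Conversely, given $\Phi \in \mathbb{S}_2(H,B)$, conditions (iv) and (v) let one define $\Phi(g,z)$ as in the definition and then set $\omega = \Phi(g,z)\,dz$ on $\widehat B^\times \times \mathfrak{H}^-$; conditions (i)--(iii) guarantee this descends to a well-defined holomorphic $1$-form on $M_H(\mathbf{C})$. These two constructions are manifestly inverse to one another and $\mathbf{C}$-linear, and the compatibility with the right $\widehat B^\times$-action — which on $\Gamma(\Omega_H(\mathbf{C}))$ is induced by the Hecke/translation action on the tower of Shimura curves, and on $\mathbb{S}_2(H,B)$ is $(g\cdot\Phi)(x) = \Phi(xg)$ — follows because both actions are induced from right translation on $\widehat B^\times$.

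I would then remark that this is the quaternionic analogue of the classical identification $S_2(\Gamma) \cong \Gamma(\Omega^1_{X(\Gamma)})$ for modular curves, the only new feature being the bookkeeping of the extra archimedean places $\tau_2,\dots,\tau_d$ where $B$ is ramified (these contribute nothing, since $B_{\tau_i}^\times$ is compact mod center and the forms are required to be invariant there) and the passage between the two connected components of $X = \mathbf{C}-\mathbf{R}$ (handled by working consistently on $\mathfrak{H}^-$, as dictated by condition (v)). A clean reference for the precise normalization is \cite[\S3.6]{CV}, to which I would defer for the routine verification of the cocycle identities.

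The main obstacle, such as it is, is purely notational: keeping track of automorphy factors and the choice of lower- versus upper-half-plane so that the weight-$2$ transformation law in (iv)--(v) matches the transformation of $dz$, and ensuring the descent conditions (i)--(iii) are exactly what is needed for well-definedness on the quotient $M_H(\mathbf{C})$. There is no genuine analytic or arithmetic difficulty here — the canonical model and its complex uniformization are already in hand from the preceding discussion, and holomorphy is built into both definitions — so the proof is essentially a matter of writing down the two mutually inverse maps and citing \cite{CV} for the verification.
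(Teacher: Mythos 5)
Your proposal is correct and is essentially the same as the paper's treatment: the paper simply declares the identification standard and cites \cite[Proposition 3.10]{CV}, and your argument is precisely the standard dictionary (pulling back a holomorphic $1$-form along the complex uniformization and matching the weight-$2$ conditions (i)--(v), with the converse descent) that the citation encapsulates. No gap; the only difference is that you write out the routine verification the paper delegates to \cite{CV}.
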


\begin{proof} The identification is standard, see for instance \cite[Proposition 3.10]{CV}.
\end{proof} Let $J_H$ denote the Jacobian of $M_H$, with $J_H^*(0)$ the complex 
cotangent space of $J_H$ at $0$.

\begin{corollary} There is a $\widehat{B}^{\times}$-equivariant bijection
of ${\bf{C}}$-vector spaces $J_H^*(0) \cong \mathbb{S}_2(H,B)$. In particular,
there is an identification of the associated Hecke algebras: $\mathbb{T}_H = 
\mathbb{T}(H, B)$. \end{corollary}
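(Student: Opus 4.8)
The plan is to combine Proposition \ref{CV3.10} with the classical identification of the cotangent space of a Jacobian at the origin with the space of global holomorphic differentials on the underlying curve. For the smooth projective curve $M_H$ over ${\bf C}$ with Jacobian $J_H = \Pic^0(M_H)$ one has canonically $\operatorname{Lie}(J_H) \cong H^1(M_H, \mathcal{O}_{M_H})$, and dually, by Serre duality (equivalently the Hodge decomposition of $H^1$ of the curve), $J_H^*(0) \cong H^0(M_H, \Omega_H) = \Gamma(\Omega_H({\bf C}))$. The first step is thus to record the composite ${\bf C}$-linear isomorphism $J_H^*(0) \cong \Gamma(\Omega_H({\bf C})) \cong \mathbb{S}_2(H,B)$ obtained by following this with the bijection of Proposition \ref{CV3.10}, and to check that it is independent of the auxiliary choices entering Proposition \ref{CV3.10}.

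The second step is $\widehat{B}^{\times}$-equivariance. For $g \in \widehat{B}^{\times}$, the right-multiplication morphisms $M_H \to M_{H'}$ (with $g^{-1}Hg \subset H'$) induce, by Picard functoriality, pullback maps $J_{H'} \to J_H$, and hence on cotangent spaces the ordinary pullback of holomorphic $1$-forms; under $J_H^*(0) \cong \Gamma(\Omega_H({\bf C}))$ this is the map on global sections induced by pulling back forms, which under Proposition \ref{CV3.10} corresponds to the left action $(g \cdot \Phi)(x) = \Phi(xg)$ on $\mathbb{S}_2(H,B)$ already introduced. Decomposing $HgH = \coprod_i g_i H$ and summing, the correspondence $[HgH]$ on $\Div(M_H({\bf C}))$ induces an endomorphism of $J_H$ whose action on $J_H^*(0)$ is $\sum_i g_i$ on $\Gamma(\Omega_H({\bf C}))$, which matches the double coset operator $[HgH]$ on $\mathbb{S}_2(H,B)$. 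This gives the equivariant isomorphism.

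For the Hecke-algebra statement, recall that in characteristic zero the natural map $\End(J_H) \to \End(J_H^*(0))$ is injective, since an endomorphism of a complex abelian variety is determined by its induced ${\bf C}$-linear map on $\operatorname{Lie}$. Hence $\mathbb{T}_H$, the subring of $\End(J_H)$ generated by the operators $[HgH]$, embeds into $\End(J_H^*(0))$, and by the previous paragraph its image, transported through $J_H^*(0) \cong \mathbb{S}_2(H,B)$, is the subring of $\End(\mathbb{S}_2(H,B))$ generated by the double coset operators, namely $\mathbb{T}(H,B)$. This yields $\mathbb{T}_H = \mathbb{T}(H,B)$.

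The main obstacle is the bookkeeping in the second step: one must reconcile the Picard- versus Albanese-functoriality convention implicit in defining $\mathbb{T}_H$ as a subring of $\End(J_H)$ with the normalization of the double coset operators on $\mathbb{S}_2(H,B)$, and verify that the correspondence-induced endomorphism of $J_H$ genuinely acts on $J_H^*(0)$ by $\sum_i g_i$ rather than by its transpose under the canonical principal polarization (the two differ by a Rosati/Atkin--Lehner type twist). Once the conventions are pinned down, the remaining verifications are routine.
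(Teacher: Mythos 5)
Your argument is correct and is essentially the paper's proof: the corollary there is deduced precisely from the canonical identification $J_H^*(0) \cong \Gamma(\Omega_H({\bf{C}}))$ followed by Proposition \ref{CV3.10}, exactly as in your first step. The additional detail you supply on $\widehat{B}^{\times}$-equivariance, the injectivity of $\End(J_H) \rightarrow \End(J_H^*(0))$ in characteristic zero, and the functoriality conventions is a careful elaboration of what the paper leaves implicit, not a different route.
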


\begin{proof} This follows from the (canonical) identification of 
$J_H^*(0)$ with $\Gamma(\Omega_H({\bf{C}}))$. \end{proof}
\end{remark}

\section{$p$-adic $L$-functions}

We sketch here the construction of $p$-adic $L$-functions given in \cite{VO}.

\begin{remark}[The integer factorization.]

Recall that we fix a prime $\mathfrak{p}\subset \mathcal{O}_F$, with 
$p$ the underlying rational prime. Fix an integral ideal $\N_0$
with $\ord_{\mathfrak{p}}(\N_0)\leq 1$. Let \begin{align}\label{N}
\N &= \begin{cases}\N_0 &\text{if $\mathfrak{p}\mid \N_0$}\\
\mathfrak{p}\N_0 &\text{if $\mathfrak{p}\nmid \N_0$}.\end{cases}
\end{align} Hence, $\ord_{\mathfrak{p}}(\N) =1$. Fix a totally 
imaginary quadratic extension $K$ of $F$, with relative discriminant
prime to $\N/\mathfrak{p}$. The choice of $K$ then determines a 
unique factorization \begin{align*}\N &= \mathfrak{p} \N^+\N^{-}
\end{align*} of $\N$ in $\mathcal{O}_F$, with $v \mid \N^+$ if and
only if $v$ is split in $K$, and $v \mid \N^{-}$ if and only if $v$
is inert in $K$. Let us assume additionally that $\N^{-}$ is the 
squarefree product of a number of primes congruent to $d \mod 2$.
Hence, there exists a totally definite quaternion algebra $D$ say 
of discriminant $\N^{-}$ defined over $F$. Observe that $D$ is split
at $\mathfrak{p}$ by hypothesis. Hence, we can and do fix an isomorphism
$\iota_{\mathfrak{p}}: D_{\mathfrak{p}} \cong \M(F_{\mathfrak{p}})$. Fix a cuspidal 
Hilbert modular eigenform $\ff \in \mathcal{S}_2(\N)$. Let us assume that
$\ff$ is either a newform, or else that it arises from a newform
of level $\N/\mathfrak{p}$ via the process of $\mathfrak{p}$-stablization.
Fix a compact open subgroup $U \subset \widehat{D}^{\times}$ of level
$\mathfrak{M} \subset \mathcal{O}_F$ prime to $\N^{-}$ (we shall
often just take $\mathfrak{M} = \mathfrak{p}\N^+$), assumed to be 
maximal at $\mathfrak{p}$. \end{remark}

\begin{remark}[Ring class towers.]

Given an ideal $\mathfrak{c} \subset \mathcal{O}_F$, let 
$\mathcal{O}_{\mathfrak{c}} = \mathcal{O}_F + \mathfrak{c}\mathcal{O}_K$
denote the $\mathcal{O}_F$-order of conductor $\mathfrak{c}$ of $K$.
Let $K[\mathfrak{c}]$ denote the abelian extension of $K$ characterized
by class field theory via the isomorphism: \begin{align*}\begin{CD}
\widehat{K}^{\times}/\widehat{\mathcal{O}}_{\mathfrak{c}}^{\times}K^{\times} 
@>{\rec_K}>> \Gal(K[\mathfrak{c}]/K).\end{CD}\end{align*} Here, $\rec_K$ 
denotes the Artin reciprocity map, normalized to send uniformizers to geometric 
Frobenius elements. Write $G[\mathfrak{c}]$ to denote the Galois group 
$\Gal(K[\mathfrak{c}]/K)$. Let $K[\mathfrak{p}^{\infty}]= \bigcup_{n \geq 0} 
K[\mathfrak{p}^n]$ with Galois group $G[\mathfrak{p}^{\infty}] = 
\Gal(K[\mathfrak{p}^{\infty}])$. Hence, $G[\mathfrak{p}^{\infty}]$ has 
the structure of a profinite group, $G[\mathfrak{p}^{\infty}] = \varprojlim_n
G[\mathfrak{p}^n]$. It is well known that the torsion subgroup 
$G[\mathfrak{p}^{\infty}]_{\tors} \subset G[\mathfrak{p}^{\infty}]$ is finite,
and moreover that the quotient $G[\mathfrak{p}^{\infty}]/G[\mathfrak{p}^{\infty}]
_{\tors}$ is topologically isomorphic to ${\bf{Z}}_p^{\delta}$, where $\delta = 
[F_{\mathfrak{p}}: {\bf{Q}}_p]$ (see for instance \cite[Corollary 2.2]{CV}).
Let $G_{\mathfrak{p}^{\infty}} =G[\mathfrak{p}^{\infty}]/G[\mathfrak{p}^{\infty}]
_{\tors}$ denote the ${\bf{Z}}_p^{\delta}$ quotient of $G[\mathfrak{p}^{\infty}]$. 
Let $K_{\mathfrak{p}^{\infty}}$ denote the dihedral or anticyclotomic 
${\bf{Z}}_p^{\delta}$ of $K$, so that $G_{\mathfrak{p}^{\infty}} = 
\Gal(K_{\mathfrak{p}^{\infty}}/K) \cong {\bf{Z}}_p^{\delta}.$ Given a positive
integer $n$, let $K_{\mathfrak{p}^n}$ denote the extension of $K$ for which
$G_{\mathfrak{p}^n} = \Gal(K_{\mathfrak{p}^n}/K) \cong \left({\bf{Z}}/
p^n{\bf{Z}}\right)^{\delta},$ so that $G_{\mathfrak{p}^{\infty}}= \varprojlim_n 
G_{\mathfrak{p}^{n}}$.

\end{remark}

\begin{remark}[Strong approximation.]

Fix a set of representatives $\lbrace x_i \rbrace_{i = 1}^h$ for the modified
class group $\mathfrak{Cl}_F/F_{\mathfrak{p}}^{\times}$, where $\mathfrak{Cl}_F$
denotes the narrow class group \begin{align*} \mathfrak{Cl}_F &= F_+^{\times}\backslash
\widehat{F}^{\times}/\widehat{\mathcal{O}}_F^{\times} \end{align*} of $F$,
with the condition that $(x_i)_{\mathfrak{p}}= 1$ for each $i = 1, \ldots h$.
A standard consequence of the strong approximation theorem (\cite[$\S$ 
III.4, Thm. 4.3]{Vi}) with the theorem of the norm (\cite[$\S$ III.4, Thm. 4.1]{Vi})
shows that there is a canonical bijection \begin{align*}\begin{CD}
\coprod_{i=1}^h D^{\times} \xi_i D_{\mathfrak{p}}^{\times}U @>{\eta_{\mathfrak{p}}}>>   
\widehat{D}^{\times}. \end{CD}\end{align*} Here, each $\xi_i$ is an element of 
$\widehat{D}^{\times}$ such that $(\xi_i)_{\mathfrak{p}}=1$ and $\nrd(\xi_i)=x_i$. 
For each $i = 1, \ldots h$, let us then define a subgroup \begin{align}\label{gamma_i} 
\Gamma_i &= \lbrace d \in D^{\times}: d_v \in (\xi_{i, v})U_v (\xi_{i, v})^{-1} \text{ for all } v \nmid 
\mathfrak{p} \rbrace \subset D^{\times}. \end{align} A standard argument shows 
that these subgroups $\Gamma_i \subset D^{\times}$ embed discretely into 
$D_{\mathfrak{p}}^{\times}$. Hence, via our fixed isomorphism $\iota_{\mathfrak{p}}: 
D_{\mathfrak{p}}^{\times} \cong \GL(F_{\mathfrak{p}})$, we can and do view these subgroups 
as discrete subgroups of $\GL(F_{\mathfrak{p}})$. Now, $\eta_{\mathfrak{p}}$ induces a 
canonical bijection (which we also denote by $\eta_{\mathfrak{p}}$) 
\begin{align*}\begin{CD} \coprod_{i=1}^h \Gamma_i \backslash 
D_{\mathfrak{p}}^{\times}/U_{\mathfrak{p}} @>{\eta_{\mathfrak{p}}}>> D^{\times}\backslash
\widehat{D}^{\times}/U \end{CD} \end{align*} via the map given on each component by 
$[d] \mapsto [\xi_i \cdot d ]$. Hence, we can view each modular form 
$\Phi \in \mathcal{S}_2(H; \mathcal{O}) = \mathcal{S}_2^D(H; \mathcal{O})$ as 
an $h$-tuple of functions $\left( \phi^i \right)_{i=1}^h $ on $\GL(F_{\mathfrak{p}})$
such that \begin{align*} \phi^i(\gamma d u z ) &= \phi^i(d) \end{align*}
for each $i = 1, \ldots, h$, with $\gamma \in \Gamma_i$, $d \in D_{\mathfrak{p}}^{\times}$,
$u \in U_{\mathfrak{p}}$, and $z \in \widehat{F}_{\mathfrak{p}}^{\times}$. A simple argument 
shows that these functions $\phi^i$ factor through homothety classes of full rank lattices
of $F_{\mathfrak{p}}\oplus F_{\mathfrak{p}}$, and hence can be viewed as functions on the
edgeset of the Bruhat-Tits tree of $D_{\mathfrak{p}}^{\times}/F_{\mathfrak{p}}^{\times} \cong
\PGL(F_{\mathfrak{p}}).$ To be more precise, let $\mathcal{T}_{\mathfrak{p}} =
(\mathcal{V}_{\mathfrak{p}}, \mathcal{E}_{\mathfrak{p}})$ denote the
Bruhat-Tits tree of $B_{\mathfrak{p}}^{\times}/F_{\mathfrak{p}}^{\times} \cong
\PGL(F_{\mathfrak{p}})$, which is the tree of maximal
orders of $\M(F_{\mathfrak{p}}) \cong B_{\mathfrak{p}}$ such that

\begin{itemize}

\item[(i)] The vertex set $\mathcal{V}_{\mathfrak{p}}$ is indexed by maximal
orders of $\M(F_{\mathfrak{p}})$.

\item[(ii)] The edgeset $\mathcal{E}_{\mathfrak{p}}$ is indexed by Eichler
orders of level $\mathfrak{p}$ of $\M(F_{\mathfrak{p}}).$

\item[(iii)] The edgeset $\mathcal{E}_{\mathfrak{p}}$ has an orientation, 
i.e. a pair of maps \begin{align*}s, t: \mathcal{E}_{\mathfrak{p}} \longrightarrow
\mathcal{V}_{\mathfrak{p}}, ~~~ \mathfrak{e} \mapsto
(s(\mathfrak{e}), t(\mathfrak{e}))\end{align*} that assigns to each edge
$\mathfrak{e} \in \mathcal{E}_{\mathfrak{p}}$ a {\it{source}}
$s(\mathfrak{e})$ and a {\it{target}} $t(\mathfrak{e})$. Once such a
choice of orientation is fixed, let us write
$\mathcal{E}_{\mathfrak{p}}^{*}$ to denote the so-called ``directed" edgeset
of $\mathcal{T}_{\mathfrak{p}}$.\end{itemize} The group $D_{\mathfrak{p}}^{\times}/
F_{\mathfrak{p}}^{\times}$ acts naturally by conjugation on $\mathcal{T}_{\mathfrak{p}}$.
It is a standard result that this action is transitive, and moreover that there
is an identification $\mathcal{V}_{\mathfrak{p}} \cong \PGL(F_{\mathfrak{p}})/\PGL(
\mathcal{O}_{F_{\mathfrak{p}}})$. In particular, we see that the discrete subgroups 
$\Gamma_i \subset D_{\mathfrak{p}}^{\times} \cong \GL(F_{\mathfrak{p}})$ modulo 
$F_{\mathfrak{p}}^{\times}$ act transitively by conjugation on $\mathcal{T}_{\mathfrak{p}}$. 
Now, each quotient graph $\Gamma_i \backslash \mathcal{T}_{\mathfrak{p}}$ is a finite graph.
Hence, we may consider the disjoint union of finite quotient graphs 
\begin{align*}\coprod_{i = 1}^{h} \Gamma_i \backslash \mathcal{T}_{\mathfrak{p}} =
\left(\coprod_{i = 1}^{h} \Gamma_i \backslash
\mathcal{V}_{\mathfrak{p}}, \coprod_{i = 1}^{h} \Gamma_i
\backslash \mathcal{E}_{\mathfrak{p}}^{*} \right).\end{align*}

\begin{definition} Let $\mathcal{S}_2\left( \coprod_{i=1}^{h} \Gamma_i
\backslash \mathcal{T}_{\mathfrak{p}} ; \mathcal{O} \right)$ denote the space
of vectors  $\left(\phi^i \right)_{i=1}^{h}$ of $\mathcal{O}$-valued, 
$\left(\Gamma_i\right)_{i=1}^{h}$-invariant functions 
on $\mathcal{T}_{\mathfrak{p}}.$ \end{definition}

\begin{remark}
Here, it is understood that $\Phi \in \mathcal{S}_2\left(
\coprod_{i=1}^{h}\Gamma_i \backslash \mathcal{T}_{\mathfrak{p}}; 
\mathcal{O} \right)$ is a function on $\coprod_{i=1}^{h}\Gamma_i 
\backslash \mathcal{V}_{\mathfrak{p}}$ if $\mathfrak{p} \nmid 
\mathfrak{M}$, and a function on $\coprod_{i=1}^{h}\Gamma_i \backslash
\mathcal{E}_{\mathfrak{p}}^{*}$ if $\mathfrak{p} \mid \mathfrak{M}$. We 
refer the reader to the discussion in \cite[$\S$ 3]{VO} for more explanation.
\end{remark} A simple argument (\cite[Proposition 3.6]{VO}) shows that 
the canonical bijection $\eta_{\mathfrak{p}}$ induces a bijection of $\mathcal{O}$-modules 
\begin{align}\label{SAvs}\mathcal{S}_2\left( \coprod_{i=1}^{h} \Gamma_i \backslash 
\mathcal{T}_{\mathfrak{p}} ; \mathcal{O} \right) &\longrightarrow \mathcal{S}_2(U;\mathcal{O}).
\end{align} Let us for simplicity of notation write $\Phi$ to denote both 
a modular form in $\mathcal{S}_2(U; \mathcal{O})$, as well as its correponding
vector of functions $(\phi^i)_{i=1}^h$ in the space $\mathcal{S}_2\left( \coprod_{i=1}^{h} 
\Gamma_i \backslash \mathcal{T}_{\mathfrak{p}} ; \mathcal{O} \right)$. We obtain from
$(\ref{SAvs})$ the following combinatorial description of the standard Hecke 
operators $T_{\mathfrak{p}}$ and $U_{\mathfrak{p}}$ acting on 
$\mathcal{S}_2(H;\mathcal{O})$. Here, $U_{\mathfrak{p}}$ denotes 
the standard Hecke operator defined in \cite[$\S$ 3]{VO}, and {\it{not}} the 
compact open subgroup $U \subset \widehat{D}^{\times}$ defined above.

\begin{remark} [Case I: $\mathfrak{p} \nmid \mathfrak{M}$.]
Let $\Phi(\mathfrak{v})$ denote evaluation of the corresponding 
$h$-tuple of functions $\left(\phi^i \right)_{i=1}^{h} $ at a vertex 
$\mathfrak{v} \in \mathcal{V}_{\mathfrak{p}}$. Then, 

\begin{align} \left( T_{\mathfrak{p}} \Phi \right) ({\bf{\mathfrak{v}}}) =
\sum_{ \mathfrak{w} \rightarrow \mathfrak{v} }
c_{\Phi}(\mathfrak{w}).
\end{align} Here, the sum ranges over all $q+1$ vertices
$\mathfrak{w}$ adjacent to $\mathfrak{v}$. \end{remark}

\begin{remark} [Case II: $\mathfrak{p} \mid \mathfrak{M}$.] 

Let $\Phi(\mathfrak{e})$ denote evaluation of the corresponding 
$h$-tuple of functions $\left(\phi^i \right)_{i=1}^{h} $ at a 
directed edge $\mathfrak{e} \in \mathcal{E}_{\mathfrak{p}}^*$. Then, 

\begin{align} \left( U_{\mathfrak{p}} \Phi \right) ({\bf{\mathfrak{e}}}) =
\sum_{s({\bf{\mathfrak{e}}}') = t({\bf{\mathfrak{e}}})}
c_{\Phi}({\bf{\mathfrak{e}}}'). \end{align} Here, the sum runs over
the $q+1$ edges $\mathfrak{e}' \in \mathcal{E}_{\mathfrak{p}}^{*}$
such that $s(\mathfrak{e}') = t(\mathfrak{e})$, minus the edge
obtained by reversing orientation. \end{remark} We also
obtain the following explicit version of the Jacquet-Langlands
correspondence induced by the bijection $\eta_{\mathfrak{p}}$.
That is, fix an integral ideal $\mathfrak{N} \subset \mathcal{O}_F$
as defined in $(\ref{N})$ above, with underlying integral ideal 
$\mathfrak{N}_0 \subset \mathcal{O}_F$. Fix a Hilbert modular eigenform 
${\bf{f}}_0 \in \mathcal{S}_2(\mathfrak{N}_0)$ that is new at all primes
dividing the ideal $\mathfrak{N}^-$. 

\begin{definition} Let $\mathfrak{N}_0 \subset \mathcal{O}_F$ be an integral
ideal that is not divisible by $\mathfrak{p}$. 
The {\it{$\mathfrak{p}$-stabilization}}
${\bf{f}} \in \mathcal{S}_2(\mathfrak{N})$ of ${\bf{f}}_0 \in 
\mathcal{S}_2(\mathfrak{N}_0)$ is the
eigenform given by \begin{align} \label{p-stab} {\bf{f}} =
{\bf{f}}_0 - \beta_{\mathfrak{p}}({\bf{f}}_0)\cdot \left(
T_{\mathfrak{p}}{\bf{f}}_0\right),
\end{align} where $\beta_{\mathfrak{p}}({\bf{f}}_0)$ denotes the
non-unit root to $(\ref{char})$. This is a $\mathfrak{p}$-ordinary
eigenform in $\mathcal{S}_2(\mathfrak{N})$ with
$U_{\mathfrak{p}}$-eigenvalue $\alpha_{\mathfrak{p}} 
= \alpha_{\mathfrak{p}}({\bf{f}}_0)$. \end{definition}
We now consider an eigenform ${\bf{f}} 
\in \mathcal{S}_2(\mathfrak{N})$ that is given by ${\bf{f}}_0$ if 
$\mathfrak{p}$ divides $\mathfrak{N}_0$, or given by the 
$\mathfrak{p}$-stabilization of ${\bf{f}}_0$ if $\mathfrak{p}$ 
does not divide $\mathfrak{N}_0$. We have the following 
quaternionic description of ${\bf{f}}$ in either case.
To be consistent with the notations above, let us write $U_v$ to
denote the Hecke operator $T_v$ acting on $S_2^B(H; \mathcal{O})$
at a prime $v \mid \mathfrak{N}^+$ (again not to be confused with the
component at $v$ of the level structure $U \subset \widehat{D}^{\times}$).

\begin{proposition}[Jacquet-Langlands]\label{p-JLC} 

Given an eigenform ${\bf{f}} \in \mathcal{S}_2(\mathfrak{N})$ 
as defined above, there exists a function $\Phi \in \mathcal{S}_2
\left( \coprod_{i=1}^{\mathfrak{h}}\Gamma_i \backslash \mathcal{T}_{\mathfrak{p}}; 
{\bf{C}} \right)$ such that

\begin{itemize}
\item $T_v \Phi = a_v({\bf{f}}) \cdot \Phi$ for all $v
\nmid \mathfrak{N}.$
\item $U_v \Phi = \alpha_v({\bf{f}}) \cdot \Phi$ for all $v
\mid \mathfrak{N}^{+}.$
\item $U_{\mathfrak{p}} \Phi = \alpha_{\mathfrak{p}} \cdot \Phi.$
\end{itemize} This function is unique up to multiplication by
non-zero complex numbers. Conversely, given an eigenform $\Phi \in
\mathcal{S}_2\left( \coprod_{i=1}^{\mathfrak{h}} \Gamma_i \backslash
\mathcal{T}_{\mathfrak{p}}; {\bf{C}} \right)$, there exists an
eigenform ${\bf{f}} \in \mathcal{S}_2(\mathfrak{N})$ such that

\begin{itemize}
\item $T_v {\bf{f}} = a_v(\Phi) \cdot {\bf{f}}$ for all $v
\nmid \mathfrak{N}.$
\item $U_v {\bf{f}} = \alpha_v(\Phi) \cdot {\bf{f}}$ for all $v
\mid \mathfrak{N}^{+}.$
\item $U_{\mathfrak{p}} {\bf{f}} = \alpha_{\mathfrak{p}}(\Phi) \cdot {\bf{f}}.$
\end{itemize} Here, $a_v(\Phi)$ denotes the eigenvalue for $T_v$ of
$\Phi$ if $v \nmid \mathfrak{N}$, and $\alpha_v(\Phi)$ the
eigenvalue for $U_v$ of $\Phi$ if $v \mid \mathfrak{N}$.
\end{proposition}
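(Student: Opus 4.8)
The plan is to assemble the statement from three ingredients already in place: the abstract Jacquet--Langlands isomorphism $\mathbb{S}_2(\mathfrak{N}^{+},\mathfrak{N}^{-})\cong\mathcal{S}_2(\mathfrak{N}^{+},\mathfrak{N}^{-})$ of Hecke modules, the strong approximation identification $(\ref{SAvs})$ of $\mathcal{S}_2(U;\mathbf{C})$ with the space of $(\Gamma_i)_i$-invariant vectors of functions on the tree $\mathcal{T}_{\mathfrak{p}}$, and the combinatorial descriptions of $T_{\mathfrak{p}}$ and $U_{\mathfrak{p}}$ on $\mathcal{T}_{\mathfrak{p}}$ recorded in Cases I and II above. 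Both of these maps respect the Hecke operators $T_w$ for $w\nmid\mathfrak{N}$ and $U_v$ for $v\mid\mathfrak{N}^{+}$, so matching those eigenvalues is automatic; the substance of the argument is to track the behaviour at the prime $\mathfrak{p}$.

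First suppose $\mathfrak{p}\mid\mathfrak{N}_0$, so $\ff=\ff_0$ has level $\mathfrak{N}$ with $\ord_{\mathfrak{p}}(\mathfrak{N})=1$. Since $\ff$ is new at every prime dividing $\mathfrak{N}^{-}=\disc(D)$, the automorphic representation it generates is square-integrable at each place of $\Ram(D)$, so by Jacquet--Langlands it transfers to a nonzero element of $\mathcal{S}_2(U;\mathbf{C})$, unique up to a scalar, on which the Hecke operators act through the eigensystem of $\ff$, the $U_{\mathfrak{p}}$-eigenvalue being $\alpha_{\mathfrak{p}}=a_{\mathfrak{p}}(\ff)$, a $p$-adic unit by $\mathfrak{p}$-ordinarity. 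Transporting this line through $(\ref{SAvs})$ yields the desired $\Phi=(\phi^i)_i$; since the level structure is an Eichler order of level $\mathfrak{p}$ at $\mathfrak{p}$, i.e.\ $\mathfrak{p}\mid\mathfrak{M}$, the vector $\Phi$ is a function on $\coprod_i\Gamma_i\backslash\mathcal{E}_{\mathfrak{p}}^{*}$, and the formula of Case II exhibits it as a $U_{\mathfrak{p}}$-eigenform with the asserted eigenvalue.

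Now suppose $\mathfrak{p}\nmid\mathfrak{N}_0$, so that $\ff$ is the $\mathfrak{p}$-stabilization $(\ref{p-stab})$ of $\ff_0\in\mathcal{S}_2(\mathfrak{N}_0)$. Applying the previous step to $\ff_0$, but with a level structure that is maximal at $\mathfrak{p}$ (so $\mathfrak{p}\nmid\mathfrak{M}$), one obtains $\Phi_0=(\phi_0^i)_i$, a function on $\coprod_i\Gamma_i\backslash\mathcal{V}_{\mathfrak{p}}$ which by Case I is a $T_{\mathfrak{p}}$-eigenform with eigenvalue $a_{\mathfrak{p}}(\ff_0)$ and which carries the remaining eigenvalues of $\ff_0$. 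One then builds $\Phi$ on $\coprod_i\Gamma_i\backslash\mathcal{E}_{\mathfrak{p}}^{*}$ as the suitable linear combination, directly modeled on $(\ref{p-stab})$, of the pullbacks of $\Phi_0$ along the source and target maps $s,t\colon\mathcal{E}_{\mathfrak{p}}^{*}\to\mathcal{V}_{\mathfrak{p}}$; using Case II and the Vieta relations $\alpha_{\mathfrak{p}}\beta_{\mathfrak{p}}=q$, $\alpha_{\mathfrak{p}}+\beta_{\mathfrak{p}}=a_{\mathfrak{p}}(\ff_0)$, one checks that $U_{\mathfrak{p}}\Phi=\alpha_{\mathfrak{p}}\Phi$ while all other eigenvalues are unchanged, which is precisely the eigensystem of $\ff$. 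Uniqueness up to scalars follows because $\{a_v(\ff)\}_{v\nmid\mathfrak{N}}$ determines $\ff$ by strong multiplicity one on $\GL$, hence determines its Jacquet--Langlands transfer, which is realized with multiplicity one at the given level; the two isomorphisms above then pin $\Phi$ down. The converse is obtained by running the same chain backwards: an eigenform $\Phi$ on $\coprod_i\Gamma_i\backslash\mathcal{T}_{\mathfrak{p}}$ corresponds via $(\ref{SAvs})$ to an eigenform in $\mathcal{S}_2(U;\mathbf{C})$, which by Jacquet--Langlands comes from an eigenform of level $\mathfrak{N}$ in $\mathcal{S}_2(\mathfrak{N})$, and Cases I and II once more reconcile the eigenvalue at $\mathfrak{p}$.

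The main obstacle is the second case: one must verify that the tree-theoretic analogue of the classical $\mathfrak{p}$-stabilization $(\ref{p-stab})$, assembled from the source and target maps on $\mathcal{E}_{\mathfrak{p}}^{*}$, genuinely produces a $U_{\mathfrak{p}}$-eigenvector with the unit root $\alpha_{\mathfrak{p}}$ rather than the other root of $(\ref{char})$, and that the vertex-to-edge passage is compatible with the degeneracy and trace maps cutting out the $\mathfrak{p}$-new part. This forces one to match the explicit adjacency combinatorics of Cases I and II against the classical stabilization identity, including all orientation and normalization conventions; everything else in the proof is formal once $(\ref{SAvs})$ and the abstract Jacquet--Langlands correspondence are in hand.
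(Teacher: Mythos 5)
Your proposal is correct and follows essentially the same route as the paper, which simply cites \cite[Proposition 3.7]{VO} (itself a generalization of \cite[Proposition 1.3]{BD}): transfer the newform via Jacquet--Langlands to the definite quaternion algebra, pass through the strong approximation identification $(\ref{SAvs})$, and handle the prime $\mathfrak{p}$ by the edge/vertex combinatorics of Cases I and II, with the tree-theoretic $\mathfrak{p}$-stabilization built from the source and target maps and the quadratic $(\ref{char})$. Your closing verification at $\mathfrak{p}$ is exactly the computation underlying the cited construction, so no further comparison is needed.
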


\begin{proof} See \cite[Proposition 3.7]{VO}, which is a direct generalization
of \cite[Proposition 1.3]{BD} to totally real fields.  \end{proof}
\end{remark}

\begin{remark}[Construction of measures.]

We now sketch the construction of $p$-adic measures given in 
\cite{VO}, which generalizes that of \cite[$\S$ 1.2]{BD}. 
Fix an integral ideal $\N \subset \mathcal{O}_F$ having the 
factorization $(\ref{N})$. Recall that we write $D$ to denote the 
totally definite quaternion algebra of discriminant $\N^{-}$ defined 
over $F$. Let $Z$ denote the maximal $\mathcal{O}_F[\frac{1}
{\mathfrak{p}}]$-order of $K$. Let $R \subset D$ denote an Eichler 
$\mathcal{O}_F[\frac{1}{\mathfrak{p}}]$-order of level $\N^+$. Let 
us fix an {\it{optimal embedding}} $\Psi$ of $Z$ into $R$, i.e. 
an injective $F$-algebra homomorphism $\Psi: K \longrightarrow D$ 
such that $\Psi(K) \cap R = \Psi(Z)$. Such an embedding exists if and only
if $K$ is split at all primes dividing the level of $R$ (\cite[$\S$ II.3]{Vi}). 
Hence, such an embedding exists by our choice of integer factorization $(\ref{N})$.

The Galois group $G[\mathfrak{p}^{\infty}]$ has a natural action on the 
directed edgeset $\mathcal{E}_{\mathfrak{p}}^*$ of $\mathcal{T}_{\mathfrak{p}}$.
That is, the reciprocity map $\rec_K$ induces a bijection
\begin{align*}\begin{CD} \widehat{K}^{\times}/\left(K^{\times} \prod_{v \nmid \mathfrak{p}} 
Z_{v}^{\times}\right) @>{r_K}>> G[\mathfrak{p}^{\infty}].\end{CD}\end{align*} 
Passing to the adelization, the optimal embedding $\Psi$ induces an embedding 
\begin{align*}\begin{CD} \widehat{K}^{\times}/ \left(K^{\times}
 \prod_{v \nmid \mathfrak{p}} Z_{v}^{\times}\right) @>{\widehat{\Psi}}>> 
D^{\times}\backslash \widehat{D}^{\times} / \prod_{v\nmid \mathfrak{p}} R_{v}^{\times}. \end{CD}\end{align*} 
Consider the compact open subgroup of $\widehat{D}^{\times}$ defined by 
$\prod_{v \nmid \mathfrak{p}}R_{v}^{\times}$, with associated subgroups $\Gamma_{i}$ as defined 
in $(\ref{gamma_i})$. As explained above, strong approximation 
induces a canonical bijection \begin{align*}\begin{CD} \coprod_{i=1}^{\mathfrak{h}} 
\Gamma_{i}\backslash D_{\mathfrak{p}}^{\times}/ F_{\mathfrak{p}}^{\times}
@>{\eta_{\mathfrak{p}}}>> D^{\times} \backslash \widehat{D}^{\times}
/ \prod_{v \nmid \mathfrak{p}} R_{v}^{\times}.
\end{CD}\end{align*} The composition $\eta_{\mathfrak{p}}^{-1} \circ \widehat{\Psi} \circ
r_{K}^{-1}$ then gives rise to a natural action $\star$ 
of the Galois group $G[\mathfrak{p}^{\infty}]$ on the Bruhat-Tits tree
$\mathcal{T}_{\mathfrak{p}} = (\mathcal{V}_{\mathfrak{p}},
\mathcal{E}_{\mathfrak{p}}^*)$: it is the induced conjugation action on
maximal orders of $D_{\mathfrak{p}} \cong \M(F_{\mathfrak{p}})$. This action
factors through that of $K_{\mathfrak{p}}^{\times}/F_{\mathfrak{p}}^{\times}$
on $\mathcal{T}_{\mathfrak{p}}$ via the local optimal 
embedding $\Psi_{\mathfrak{p}}:K_{\mathfrak{p}} \longrightarrow D_{\mathfrak{p}}$.
Moreover, it fixes a single vertex if $\mathfrak{p}$ is inert in $K$, or 
no vertex if $\mathfrak{p}$ is split in $K$. Given an integer $n \geq 1$, 
let us write $\mathcal{U}_n$ to denote the standard compact open subgroup of 
level $n$ of $K_{\mathfrak{p}}^{\times}/F_{\mathfrak{p}}^{\times}$, \begin{align*} 
\mathcal{U}_n &= \left(1 + \mathfrak{p}^n\mathcal{O}_K \otimes \mathcal{O}_{F_p}
\right)^{\times}/\left(1 + \mathfrak{p}^n\mathcal{O}_{F_p}\right)^{\times}.\end{align*} 
We can then fix a sequence of consecutive edges $\lbrace \mathfrak{e}_j \rbrace_{j \geq 1}$ 
in $\mathcal{E}_{\mathfrak{p}}^*$ such that \begin{align*} \Stab_{K_{\mathfrak{p}}^{\times}/ 
F_{\mathfrak{p}}^{\times}}(\mathfrak{e}_j) &= \mathcal{U}_j.\end{align*} Now, the choice of 
an eigenform $\Phi \in \mathcal{S}_2(\coprod_{i=1}^h \Gamma_i \backslash 
\mathcal{E}_{\mathfrak{p}}^*; \mathcal{O})$ determines a pairing \begin{align*}
\label{pairing}\left[~,~\right]_{\Phi}: G[\mathfrak{p}^{\infty}] \times 
\mathcal{E}_{\mathfrak{p}}^*&\longrightarrow \mathcal{O} \\ (\sigma, \mathfrak{e}) 
&\longmapsto\Phi \left( \eta_{\mathfrak{p}}^{-1}\circ \widehat{\Phi} \circ r_K^{-1}(\sigma)\star 
\mathfrak{e}\right).\end{align*} Let $\mathcal{H}_{\infty}$ 
denote the group $\rec_K^{-1}(G[\mathfrak{p}^{\infty}])$, with profinite structure
$\mathcal{H}_{\infty} = \varprojlim_n \mathcal{H}_n$, where $\mathcal{H}_n = 
\mathcal{H}_{\infty}/\mathcal{U}_n$. Since the $U_{\mathfrak{p}}$-eigenvalue $\alpha_{\mathfrak{p}}$
is invertible in the ring of values $\mathcal{O}$, the pairing $\left[~,~\right]_{\Phi}$
can be shown to give rise to a natural $\mathcal{O}$-valued measure $\vartheta_{\Phi}$  
on $\mathcal{H}_{\infty}$ via the rule \begin{align*}\vartheta_{\Phi}(\sigma 
\mathcal{U}_j) &= \alpha_{\mathfrak{p}}^{-j} \cdot \left[\sigma, \mathfrak{e}_j\right]_{\Phi}
\end{align*} for all compact open subgroups of $\mathcal{H}_{\infty}$ of the form
$\sigma \mathcal{U}_j$, with $\sigma \in \mathcal{H}_{\infty}$. This distribution
gives rise to an element $\mathcal{L}_{\Phi}$ in the completed group ring 
$\mathcal{O}[[G[\mathfrak{p}^{\infty}]]]$ via the rule 
\begin{align*} (\mathcal{L}_{\Phi})_n &= \sum_{h \in \mathcal{H}_n} 
\vartheta_{\Phi}(h \mathcal{U}_n) \cdot h .\end{align*} Let us commit an abuse of 
notation in also writing $\mathcal{L}_{\Phi}$ to denote the image of this element in the 
Iwasawa algebra $\Lambda = \mathcal{O}[[G_{\mathfrak{p}^{\infty}}]]$. This image element 
is not well defined, since a different choice of sequence of consecutive edges 
$\lbrace \mathfrak{e}_j \rbrace_{j \geq 1}$ has the effect of multiplying $\mathcal{L}_{\Phi}$ 
by an element of $G_{\mathfrak{p}^{\infty}}$. Hence, let $\mathcal{L}_{\Phi}^*$ denote the image 
of $\mathcal{L}_{\Phi}$ under the involution $\Lambda \rightarrow \Lambda$ induced
by inversion $\sigma \mapsto \sigma^{-1} \in G_{\mathfrak{p}^{\infty}}$. We then let

\begin{align} \mathcal{L}_{\mathfrak{p}}(\Phi, K_{\mathfrak{p}^{\infty}}) 
&= \mathcal{L}_{\Phi}\mathcal{L}_{\Phi}^{*}. \end{align} Observe that 
$\mathcal{L}_{\mathfrak{p}}(\Phi,K_{\mathfrak{p}^{\infty}})$ is then a well-defined element 
of $\Lambda$. We shall refer to this element $\mathcal{L}_{\mathfrak{p}}(\Phi, 
K_{\mathfrak{p}^{\infty}})\in \Lambda$ as the {\it{$p$-adic $L$-function 
associated to $\Phi$}}. Moreover, if $\Phi$ is associated to a Hilbert modular eigenform 
$\ff \in \mathcal{S}_2(\mathfrak{N})$ via the Jacquet-Langlands correspondence (as 
described above), then we shall also write $\mathcal{L}_{\mathfrak{p}}(\ff,
K_{\mathfrak{p}^{\infty}}) = \mathcal{L}_{\mathfrak{p}}(\Phi, K_{\mathfrak{p}^{\infty}})$, 
with $\mathcal{L}_{\mathfrak{p}}(\ff, K_{\mathfrak{p}^{\infty}}) = \mathcal{L}_{\ff} {\mathcal{L}_{\ff}}^*$.
\end{remark}

\begin{remark}[Interpolation properties.] The $p$-adic $L$-function 
$\mathcal{L}_{\mathfrak{p}}(\Phi, K_{\mathfrak{p}^{\infty}})$ satisfies the
following rough interpolation property; we refer the reader to 
\cite[Theorem 4.7]{VO} for a more precise version. Given a finite order 
character $\rho$ of $G_{\mathfrak{p}^{\infty}}$, let \begin{align*} \rho\left(
\mathcal{L}_{\mathfrak{p}}(\Phi, K_{\mathfrak{p}^{\infty}}) \right) &= 
\int_{G_{\mathfrak{p}^{\infty}}} \rho(\sigma) d \mathcal{L}_{\mathfrak{p}}
(\Phi, K_{\mathfrak{p}^{\infty}})(\sigma)\end{align*} denote the specialization of 
$\mathcal{L}_{\mathfrak{p}}(\Phi, K_{\mathfrak{p}^{\infty}})$ to $\rho$. 
Here, $d\mathcal{L}_{\mathfrak{p}}(\Phi, K_{\mathfrak{p}^{\infty}})$ denotes 
the measure of $\Lambda$ defined by $\mathcal{L}_{\mathfrak{p}}(\Phi, 
K_{\mathfrak{p}^{\infty}})$. Let $\langle \Phi, \Phi \rangle$ denote the 
Petersson inner product of $\Phi$. Fix embeddings $\overline{\bf{Q}}
\rightarrow \overline{\bf{Q}}_p$ and $\overline{\bf{Q}}_p \rightarrow 
{\bf{C}}$. Fix a finite order character $\rho$ of $G_{\mathfrak{p}^{\infty}}$.
Suppose that $\rho$ factors through $G_{\mathfrak{p}^m}$ for some integer
$m \geq 1$. Let us view the values of $\rho$ and 
$d\mathcal{L}_{\mathfrak{p}}(\Phi, K_{\mathfrak{p}^{\infty}})$ as complex values
via the embedding $\overline{\bf{Q}}_p \rightarrow {\bf{C}}$, in which 
case we write $\vert \rho \left(\mathcal{L}_{\mathfrak{p}}(\Phi, 
K_{\mathfrak{p}^{\infty}})\right)\vert$ to denote the complex absolute
value of the specialization $\mathcal{L}_{\mathfrak{p}}(\Phi, K_{\mathfrak{p}^{\infty}})$. 
We then have the following interpolation formula: 
\begin{align}\label{interpolation} \vert \rho \left(\mathcal{L}_{\mathfrak{p}}(\Phi, 
K_{\mathfrak{p}^{\infty}})\right){\vert}^2 &= \alpha_{\mathfrak{p}}^{-4m} \cdot 
\kappa(\Phi, F) \cdot L(\Phi \times \rho, 1). \end{align}
Here, $\kappa(\Phi, F)$ is a nonvanishing product of algebraic 
constants (which can be given precisely in terms of certain special
values of some related $L$-functions), and $L(\Phi \times \rho, 1)$
is the central value of the Rankin-Selberg $L$-function of $\Phi$ 
times the twisted theta series associated to $\rho$. Moreover, both 
sides of $(\ref{interpolation})$ belong to $\overline{{\bf{Q}}}_p$. This result
is deduced from the generalization of Waldspurger's formula 
shown in Yuan-Zhang-Zhang \cite{YZ^2}. In particular, we see from this
that the specialization $ \rho \left(\mathcal{L}_{\mathfrak{p}}(\Phi, 
K_{\mathfrak{p}^{\infty}})\right)$ vanishes if and only if the central
value $L(\Phi \times \rho, 1)$ vanishes. Hence, we deduce from the
nonvanishing theorem of Cornut-Vatsal \cite[Theorem 1.4]{CV} that the 
$p$-adic $L$-function $\mathcal{L}_{\mathfrak{p}}(\Phi, K_{\mathfrak{p}^{\infty}})$
does not vanish identically.\end{remark}

\begin{remark}[The dihedral $\mu$-invariant.] Recall that we let 
$\mathfrak{P}$ denote the maximal ideal of the local ring 
$\mathcal{O}$. Given an element $\lambda \in \Lambda$, we define
the {\it{dihedral $mu$-invariant $\mu(\lambda)$ of $\lambda$}} to
be the largest exponent $c$ such that $\lambda \in \mathfrak{P}^c
\Lambda$. Following the method of Vatsal \cite{Va2} it can be
shown (\cite[Theorem 4.10]{VO}) that $\mu(\mathcal{L}_{\mathfrak{p}}
(\Phi, K_{\mathfrak{p}^{\infty}})) = 2 \nu$, where $\nu = \nu_{\Phi}$
is defined to be  the largest positive integer such that $\Phi$ is 
congruent to a constant modulo $\mathfrak{P}^{\nu}$.
\end{remark}

\section{Galois representations}\label{GalRep}

\begin{remark}[Galois representations associated to Hilbert modular forms.]

Recall that we write $G_F$ denote the absolute Galois group $\Gal(\overline{\bf{Q}}/F)$.

\begin{theorem}[Carayol-Taylor-Wiles]\label{CTW}

Fix an eigenform $\ff \in \mathcal{S}_2(\N)$, with $ \pi_{\ff}$ its associated automorphic 
representation of $\GL(F)$. Let $\mathcal{O}_{\ff}$ be the ring of
integers of any number field such that there exists a morphism \begin{align*}
 \theta_{\ff}: {\bf{T}}(\N) &\longrightarrow \mathcal{O}_{\ff} \end{align*} with
 $\ff \vert T = \theta_{\ff}(T) \ff$ for any Hecke operator $T \in {\bf{T}}(\N)$. Then,
 for each prime $\lambda \subset \mathcal{O}_{\ff}$, there exists a continuous 
 representation \begin{align*} \rho_{\ff, \lambda}: G_F &\longrightarrow \GL(\mathcal{O}_{\ff, \lambda})
\end{align*} such that the following property holds: for any prime $v \subset \mathcal{O}_F$
of residue characteristic not equal to that of $\lambda$, the restriction of the representation 
$\rho_{\ff, \lambda}$ to the decomposition subgroup at $v$ is conjugate to the $\lambda$-adic 
representation of associated by the local Langlands correspondence to the local component of 
 $\pi_{\ff}$ at $v$. Here, $\mathcal{O}_{\ff, \lambda}$ denotes the localization at $\lambda$ of
 $\mathcal{O}_{\ff}$. \end{theorem}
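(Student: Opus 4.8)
The plan is to realize $\rho_{\ff,\lambda}$ inside the $\lambda$-adic \'etale cohomology of a suitable Shimura curve, and to reduce the general case to this one by congruences; this is the content of Carayol \cite{Ca2}, Taylor \cite{Tay} and Wiles \cite{Wi2}, which we outline.

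First I would treat the case in which either $d$ is odd, or $d$ is even and $\pi_{\ff}$ is square-integrable at some finite place. Then one may fix a quaternion algebra $B/F$ split at exactly one archimedean place and ramified at the remaining $d-1$ archimedean places together with a finite (possibly empty) set of finite places at which $\pi_{\ff}$ is discrete series, chosen so that the total number of ramified places of $B$ is even. Transferring $\pi_{\ff}$ to $B^{\times}$ by Jacquet--Langlands \cite{JL}, the resulting automorphic representation contributes to the first \'etale cohomology $H^1_{\text{\'et}}(M_H \times_F \overline{{\bf{Q}}}, \mathcal{O}_{\ff,\lambda})$ of the Shimura curve $M_H$ of $(\ref{Sh})$ (for suitable level $H$). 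One then extracts the $\theta_{\ff}$-isotypic summand; multiplicity one for $B^{\times}$ shows it is free of rank two over $\mathcal{O}_{\ff,\lambda}$, and the Eichler--Shimura congruence relation applied to the good reduction of $M_H$ at a prime $v \nmid \N\lambda$ shows that geometric Frobenius $\Frob_v$ acts with characteristic polynomial $x^2 - \theta_{\ff}(T_v)x + q_v$. The heart of this case is local--global compatibility at the primes $v \mid \N$ with $v \nmid \lambda$, which follows from Carayol's analysis \cite{Ca2} of the integral models of $M_H$ and the Galois action on their vanishing cycles; by the \v{C}ebotarev density theorem this pins down $\rho_{\ff,\lambda}$ up to isomorphism at all $v \nmid \lambda$.

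Next I would handle the remaining case: $d$ even and $\pi_{\ff,v}$ a (twist of a) principal series for every finite $v$, so that no quaternionic Shimura curve carries the representation. Here I would argue by congruence, following Wiles \cite{Wi2} in the ordinary case relevant to us and Taylor \cite{Tay} in general: for each $n$, raise the level to produce a Hilbert eigenform $\ff_n$ with $\ff_n \equiv \ff \pmod{\lambda^n}$ (on Hecke eigenvalues away from an auxiliary prime $\mathfrak{q}$) whose local component at $\mathfrak{q}$ is special; apply the first case to each $\ff_n$; and patch the reductions $\Tr \overline{\rho}_{\ff_n,\lambda} \pmod{\lambda^n}$ into a compatible system, obtaining a continuous pseudo-representation of $G_F$ over $\mathcal{O}_{\ff,\lambda}$ of dimension two with the prescribed traces of $\Frob_v$ for almost all $v$. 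Converting this pseudo-representation into an honest representation and propagating local--global compatibility through the congruences then completes the construction.

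The hard part will be this last case. With no geometric realization at hand, one is driven entirely through the level-raising and pseudo-representation machinery, and it is there that parallel weight two is used most essentially --- it guarantees that the relevant cohomology lies in degree one (Hodge--Tate weights $0$ and $1$) and that the forms whose levels one raises remain classical of weight two, so that the first case genuinely applies to them. The residual irreducibility and big-image hypotheses imposed later in Hypothesis \ref{galrep} are not needed to construct $\rho_{\ff,\lambda}$, but they simplify the patching and will be used freely in the sequel.
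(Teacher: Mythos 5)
Your outline is correct and is essentially the paper's own proof: the paper simply cites Carayol \cite{Ca2}, Taylor \cite{Tay} and Wiles \cite{Wi2} specialized to parallel weight two, and your two-case sketch (geometric realization in $H^1$ of a quaternionic Shimura curve with Eichler--Shimura and Carayol's vanishing-cycle analysis when $d$ is odd or $\pi_{\ff}$ is square-integrable at some finite place, then level-raising congruences and pseudo-representations \`a la Taylor--Wiles in the remaining even-degree case) is an accurate summary of exactly what those references do. Nothing further is needed beyond the citation, which is all the paper itself supplies.
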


\begin{proof} This results follows from the specializations of works of Carayol \cite{Ca2}, Taylor \cite{Tay}
and Wiles \cite{Wi2} to parallel weight $2$.  \end{proof}  Recall that we view $\ff \in \mathcal{S}_2(\N)$ 
as a $p$-adic modular form via a fixed embedding $\overline{{\bf{Q}}} \rightarrow \overline{\bf{Q}}_p$, writing
$\mathcal{O}_0$ to denote the ${\bf{Z}}_p$-subalgebra of $\overline{\bf{Q}}_p$ generated 
by the Fourier coefficients of $\ff$, $\mathcal{O}$ the integral closure of 
$\mathcal{O}_0$ in its field of fractions $L$, and $\mathfrak{P}$ to denote the maximal 
ideal of $\mathcal{O}$. We then write \begin{align*} \rho_{\ff} = \rho_{\ff, \mathfrak{P}}:G_F 
&\longrightarrow \GL(\mathcal{O})\end{align*} to denote the $\mathfrak{P}$-adic Galois 
representation associated to $\ff$ by Theorem \ref{CTW}. Let $T_{\ff}$ be the lattice $\mathcal{O}^2$, 
together with the action of $G_F$ given by $\rho_{\ff}$. If (as we shall always assume) the residual 
representation $T_{\ff}/\mathfrak{P}$ is irreducible, then $T_{\ff}$ is the unique $G_F$-stable sublattice 
of $L^2$ up to homothety. We then define $A_{\ff} = \left( T_{\ff} \otimes L \right)/T_{\ff} \cong 
\left( L/ \mathcal{O}\right)^2.$ We also define $G_F$-modules \begin{align*} T_{\ff, n} = T_{\ff}/ 
\mathfrak{P}T_{\ff} ~~~\text{and}~~~ A_{\ff, n} = A_{\ff}[\mathfrak{P}^n].\end{align*} 
These modules are of course isomorphic. However, we maintain a notational distinction
as these modules form respective projective and injective systems \begin{align}\label{limits} T_{\ff} = 
\varprojlim_n T_{\ff,n} ~~~&\text{and}~~~A_{\ff} = \varinjlim_n A_{\ff, n}. \end{align} \end{remark}

\begin{remark}[Abelian varieties associated to Hilbert modular forms.]

We now explain how to associate to $\ff \in \mathcal{S}_2(\N)$ an abelian variety over $F$,
following the construction of Carayol \cite{Ca2}. Recall we assume for simplicity that 
$\mathfrak{P}$ is contained in $\mathcal{O}_0$.

\begin{proposition}\label{hilbertAV} Fix an eigenform $\ff \in \mathcal{S}_2(\N)$, with
$\pi_{\ff}$ the associated automorphic representation of $\GL({\bf{A}}_F)$. Assume that either 
$d$ is odd, or else $d$ is even with the condition that there exists a finite place $v \subset 
\mathcal{O}_F$ at which $\pi_{\ff}$ is either special or supercuspidal. Then, we can associate
to $\ff$ an abelian variety $A$ defined over $F$. Moreover, there is a $G_F$-module isomorphism
$\Ta_{\mathfrak{P}}A \otimes_{\mathcal{O}_0} L \cong A_{\ff} \otimes_{\mathcal{O}} L$. \end{proposition}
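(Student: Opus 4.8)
The plan is to construct the abelian variety $A$ following Carayol's work \cite{Ca2}, which realizes $A$ as a quotient of the Jacobian of a suitable Shimura curve (in the odd degree case, or in the even degree case under the special/supercuspidal hypothesis, which guarantees the existence of an indefinite quaternion algebra $B$ over $F$ split at exactly one real place and ramified at a finite place). First I would fix such a quaternion algebra $B$ and a level $H \subset \widehat{B}^\times$ matching the level $\N$ (adjusted at the ramified primes), so that the Shimura curve $M_H$ is defined over $F$ with Jacobian $J_H$ carrying a faithful action of the Hecke algebra $\mathbb{T}_H \cong \mathbb{T}(\N^+,\N^-) \cong \mathbb{T}(\N)$ via Jacquet--Langlands. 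The eigenform $\ff$ determines a maximal ideal (equivalently, a ring homomorphism $\theta_\ff : \mathbb{T}(\N) \to \mathcal{O}_0$), and I would define $A = A_\ff$ as the abelian subvariety (or quotient) of $J_H$ cut out by the kernel of $\theta_\ff$, i.e. $A_\ff = J_H / (\ker\theta_\ff) J_H$, which inherits an action of $\mathcal{O}_0$ and has dimension equal to $[L_0 : \mathbf{Q}]$ where $L_0 = \mathrm{Frac}(\mathcal{O}_0)$; this is the abelian variety of $\GL$-type attached to $\ff$.

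Next I would establish the Tate module isomorphism. The $\mathfrak{P}$-adic Tate module $\Ta_{\mathfrak{P}} A$ is an $\mathcal{O}_0 \otimes_{\mathbf{Z}} \mathbf{Z}_p$-module, free of rank $2$ after localizing at $\mathfrak{P}$, and carries a continuous $G_F$-action. The key input is Carayol's computation of the $\ell$-adic (and in particular $p$-adic) realization of the cohomology of $M_H$: the $\ff$-isotypic part of $H^1_{\mathrm{et}}(M_{H,\overline{F}}, \mathbf{Q}_\ell)$ is, as a $G_F$-representation, the representation $\rho_{\ff,\lambda}$ provided by Theorem \ref{CTW}, with the expected local-global compatibility at all finite places of residue characteristic $\neq \ell$. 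Dualizing appropriately (the Tate module of $J_H$ is the $G_F$-dual of $H^1_{\mathrm{et}}$, up to a Tate twist that is trivial in weight $2$), I would deduce that $\Ta_{\mathfrak{P}} A \otimes_{\mathcal{O}_0} L$ is $G_F$-isomorphic to the $2$-dimensional $L$-representation underlying $\rho_\ff$, which is by definition $T_\ff \otimes_{\mathcal{O}} L$. Since $A_\ff = (T_\ff \otimes L)/T_\ff$, the isomorphism $\Ta_{\mathfrak{P}} A \otimes_{\mathcal{O}_0} L \cong A_\ff \otimes_{\mathcal{O}} L$ follows by passing to the associated $p$-divisible groups / divisible modules. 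The residual irreducibility hypothesis (Hypothesis \ref{galrep}(iv)) ensures $T_\ff$ is the unique $G_F$-stable lattice up to homothety, which pins down the integral statement on the nose rather than merely up to isogeny.

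The main obstacle I expect is the bookkeeping at the primes dividing $\N^-$ (and at $\mathfrak{p}$), where the Shimura curve $M_H$ has bad reduction and the comparison between the automorphic side (via Jacquet--Langlands) and the Galois side requires Carayol's non-abelian Lubin--Tate theory and the careful choice of level structure $H$ so that the isotypic quotient has the right dimension and is stable under the Hecke action. In particular, matching the local components of $\pi_\ff$ at the ramified primes with the local Galois representation (special vs. supercuspidal, and the $U_{\mathfrak{p}}$-eigenvalue encoding the ordinary filtration at $\mathfrak{p}$) is exactly the content of \cite[local-global compatibility]{Ca2}, and I would cite this rather than reprove it. A secondary technical point is the identification of endomorphism rings: one must check $\mathcal{O}_0 \hookrightarrow \End_F(A) \otimes \mathbf{Q}$ acts with $A$ of $\GL$-type, which follows from the faithfulness of the Hecke action on $J_H$ together with the multiplicity-one statement for the $\ff$-isotypic component, itself a consequence of strong multiplicity one for $\GL_2$ over $F$ combined with Jacquet--Langlands. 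All of these ingredients are available in the cited literature, so the proof is essentially a matter of assembling them in the totally real setting, exactly as Carayol carried it out; hence I would keep the write-up to a pointer to \cite{Ca2} with the dimension and Tate-module assertions spelled out.
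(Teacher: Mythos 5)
Your proposal matches the paper's own argument: the paper likewise takes the quaternion algebra $B$ split at exactly one real place (ramified at an auxiliary finite place when $d$ is even), forms the Shimura curve $M_H$ and its Jacobian $J_H$, defines $A = J_H/I_\Phi J_H$ where $I_\Phi$ is the kernel of the Hecke eigenvalue homomorphism attached to the Jacquet--Langlands transfer of $\ff$, and then cites Carayol \cite{Ca2} for the identification of the $\mathfrak{P}$-adic Tate module with $\rho_{\ff}$. Your additional remarks on duality, dimension, and the endomorphism action are consistent elaborations of the same construction, so the approach is essentially identical.
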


\begin{proof} The result is presumably well known, see for instance \cite[$\S$ 3]{CV}. 
We sketch the construction for lack of a better reference. 

Fix positive integers $k \geq 2$ and $w$ having the same parity. Let $D_{k,w}$ denote the representation of 
$\GL({\bf{R}})$ that occurs via unitary induction as $\Ind(\mu, \nu)$, where $\mu$ and $\nu$ are the characters 
on ${\bf{R}}^{\times}$ given by

\begin{align*}
\mu(t) &= \vert t \vert ^{\frac{1}{2}(k-1-w)}\operatorname{sgn}(t)^k \\
\nu(t) &=  \vert t \vert ^{\frac{1}{2}(-k+1-w)}.
\end{align*} Fix integers $k_1, \ldots k_d$ all having the same parity. Let
$\pi \cong \bigotimes_v \pi_v$ be a cuspidal automorphic representation of 
$\GL({\bf{A}}_F)$ such that  for each real place $\tau_i$ of $F$, there is an
isomorphism $\pi_{\tau_i} \cong D_{k_i, w}$. It is well know that such 
representations correspond to holomorphic Hilbert modular 
forms of weight ${\bf{k}}=(k_1, \ldots, k_d)$. If $d$ is even, then we assume 
that there exists a finite prime $v \subset \mathcal{O}_F$ where the local component 
$\pi_v$ is either a special or supercuspidal representation of $\GL(F_v)$. 

Let $B$ be a quaternion algebra over $F$ that is ramified at 
$\lbrace \tau_2, \ldots, \tau_d \rbrace$ if $d$ is odd, and ramified at 
$\lbrace \tau_2, \ldots, \tau_d, v \rbrace$ 
if $d$ is even. Let $G = \Res_{F/{\bf{Q}}}(B^{\times})$ denote the associated
algebraic group over ${\bf{Q}}$. Hence, we have an isomorphism
$G({\bf{R}}) \cong \GL({\bf{R}}) \times \left( \mathbb{H}^{\times} \right)^{d-1},$
where $\mathbb{H}$ denotes the Hamiltonian quaternions. Let $\overline{D}_{k,w}$ denote 
the representation of $\mathbb{H}^{\times}$ corresponding to $D_{k,w}$ via the 
Jacquet-Langlands correspondence . We then consider cuspidal automorphic 
representations $\pi' = \bigotimes_v \pi_v'$  of $G({\bf{A}}_F)$ such that 
$\pi_{\tau_1}' \cong D_{k_1, w}$ and $\pi_{\tau_i} \cong  \overline{D}_{k_i, w}$ for 
$i = 2, \ldots, d$. Let us now fix such a representation $\pi'$ associated to 
$\pi = \pi_{\ff}$. Hence, ${\bf{k}} = (2, \ldots, 2)$. Fix a vector $\Phi \in \pi'$. 
Hence, $\Phi$ is seen to be a function in the space $\mathbb{S}_2(H, B)$ for some 
compact open subgroup $H \subset G({\bf{A}}_f)$. By Proposition \ref{CV3.10}, we can 
identify $\Phi$ with a section of the sheaf of holomorphic $1$-forms $\Omega_H({\bf{C}})$ 
on the complex Shimura curve $M_H({\bf{C}})= M_{H}(B, X)({\bf{C}})$. Recall that we let 
$J_H$ denote the Jacobian of the canonical model $M_H$. Let $\mathbb{T}$ denote the 
subalgebra of $\End(J_H)$ generated by Hecke correspondences acting on $J_H$. By 
Proposition \ref{CV3.10}, we deduce the identification $\mathbb{T} = \mathbb{T}(H,B)$.
Consider the homorphism \begin{align*} \theta_{\Phi}: \mathbb{T} &\longrightarrow 
\mathcal{O} \end{align*} that sends each operator in $\mathbb{T}$ the the eigenvalue 
for its action on $\Phi$. Let $I_{\Phi} = \ker(\theta_{\Phi}).$ Consider the quotient 
abelian variety defined by \begin{align*} A_H &= J_H/I_{\Phi} J_H. \end{align*} Hence, 
we have constructed from $\ff \in \mathcal{S}_2(\N)$ an abelian variety $A = A_H$ 
defined over $F$. Now, by the construction of Carayol \cite{Ca2}, we claim that the 
Galois representation $\rho_{\ff}$ is equivalent to the Galois representation 
arising from the $\mathfrak{P}$-adic Tate module of $A$. Hence, we deduce that there 
is an identification of $G_F$-modules $\Ta_{\mathfrak{P}}A \otimes_{\mathcal{O}_0} 
L \cong A_{\ff} \otimes_{\mathcal{O}} L$.  \end{proof}

\begin{corollary}\label{AVpairing} For each integer $n \geq 1$, there is a canonical, 
nondegenerate $G_F$-equivariant pairing \begin{align*} T_{\ff, n} \times A_{\ff, n} 
&\longrightarrow \mu_{p^n}.\end{align*}\end{corollary}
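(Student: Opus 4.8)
I will extract the pairing from Weil--Cartier duality on the abelian variety $A = A_{\ff}$ of Proposition \ref{hilbertAV}, so the first step is to pin down the modules. Since $\rho_{\ff}$ is residually irreducible (Hypothesis \ref{galrep}(iv)), $T_{\ff}$ is the unique $G_F$-stable $\mathcal{O}$-lattice in $L^2$ up to homothety; comparing this with the identification of the $\mathfrak{P}$-adic Tate module of $A$ with $T_{\ff}\otimes L$ from Proposition \ref{hilbertAV}, one gets $G_F$-equivariant identifications $T_{\ff} \cong \Ta_{\mathfrak{P}}A$ and $A_{\ff} \cong A[\mathfrak{P}^{\infty}]$, hence $A_{\ff,n} \cong A[\mathfrak{P}^{n}]$ and $T_{\ff,n} \cong \Ta_{\mathfrak{P}}A/\mathfrak{P}^{n}\Ta_{\mathfrak{P}}A$. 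Under Hypothesis \ref{galrep}(v) the residue field $\mathcal{O}/\mathfrak{P}$ is ${\bf{F}}_p$, so $\mathcal{O}/\mathfrak{P}^{n}$ is a ${\bf{Z}}/p^{n}$-algebra and the relevant torsion subgroups are killed by $p^{n}$, which is what will let the pairing land in $\mu_{p^{n}}$.

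Next I would build the pairing. Fix a polarization $\lambda\colon A \to A^{\vee}$ defined over $F$; the attached Rosati involution stabilises the image $L$ of the Hecke algebra in $\End(A)\otimes{\bf{Q}}$ and, being a positive involution of the totally real field $L$, acts on $L$ as the identity, so $\lambda$ is $\mathcal{O}_0$-linear and induces $\mathcal{O}_0$-equivariant maps $A[\mathfrak{P}^{n}] \to A^{\vee}[\mathfrak{P}^{n}]$. Composing the canonical perfect $G_F$-equivariant Weil pairing $A[\mathfrak{P}^{n}] \times A^{\vee}[\mathfrak{P}^{n}] \to \mu_{p^{n}}$ with $1\times\lambda$, and reading the first factor as $T_{\ff,n}$ via Step~1, yields a $G_F$-equivariant, $\mathcal{O}_0$-bilinear pairing $T_{\ff,n}\times A_{\ff,n}\to\mu_{p^{n}}$. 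Alternatively, and more transparently, one may bypass $\lambda$ entirely: as $\ff$ has parallel weight $2$ and trivial character, $\det\rho_{\ff}$ is the $p$-adic cyclotomic character (Theorem \ref{CTW}), so $\wedge^{2}_{\mathcal{O}}T_{\ff}\cong\mathcal{O}(1)$ and the determinant map $T_{\ff}\times T_{\ff}\to\mathcal{O}(1)$ is a perfect alternating $G_F$-equivariant $\mathcal{O}$-bilinear pairing; reducing modulo $\mathfrak{P}^{n}$, identifying the second factor with $A_{\ff,n}$, and projecting $(\mathcal{O}/\mathfrak{P}^{n})(1)\to\mu_{p^{n}}$ along a ${\bf{Z}}_p$-linear splitting of ${\bf{Z}}_p\hookrightarrow\mathcal{O}/\mathfrak{P}^{n}$ gives the required pairing. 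In both constructions $G_F$-equivariance is immediate and nondegeneracy descends from the perfectness of the Weil pairing (respectively of the wedge pairing).

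The point I expect to require the most care is the bookkeeping that makes the output land exactly in $\mu_{p^{n}}$ and stay nondegenerate after all of these identifications: this is precisely the interplay between $\mathcal{O}$, $\mathcal{O}_0$, and their conductor and different at $p$ (where the simplifying hypothesis $\mathfrak{P}\subset\mathcal{O}_0$ and the fact $\mathcal{O}/\mathfrak{P}={\bf{F}}_p$ are used), together with checking that the contribution of $\deg\lambda$ at $\mathfrak{P}$ is a unit — arranged either by choosing $\lambda$ of degree prime to $p$, or by Zarhin's trick, or simply by using the intrinsic wedge pairing, which has no $\lambda$ in it. The conceptual inputs, namely $\det\rho_{\ff}=\chi_{\cyc}$ and the triviality of the Rosati involution on the totally real Hecke field $L$, are standard, and it is these that make the whole identification $T_{\ff,n}\cong\Hom(A_{\ff,n},\mu_{p^{n}})$ go through.
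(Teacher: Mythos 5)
Your first construction is essentially the paper's own proof: the pairing comes from the Weil pairing on the abelian variety $A_{\ff}$ of Proposition \ref{hilbertAV} composed with a suitable polarization, the point being that the Rosati involution is trivial on the totally real Hecke field so the pairing respects the $\mathfrak{P}$-primary decomposition and stays nondegenerate there; the paper simply refers to \cite[$\S$ 2.3]{Ho1} for exactly this argument. Two small corrections: Hypothesis \ref{galrep}(v) does not give $\mathcal{O}/\mathfrak{P}={\bf{F}}_p$, but this is not needed since $p^n\mathcal{O}\subseteq\mathfrak{P}^n$ in any case; and in your alternative wedge-pairing route the map $(\mathcal{O}/\mathfrak{P}^n)(1)\to\mu_{p^n}$ must be a ${\bf{Z}}_p$-linear functional generating the Pontryagin dual of $\mathcal{O}/\mathfrak{P}^n$ (e.g. built from the trace and the inverse different), not a ``splitting'' of ${\bf{Z}}_p\to\mathcal{O}/\mathfrak{P}^n$, which need not exist when $\mathcal{O}$ is ramified or has larger residue field and would in general produce a degenerate pairing.
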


\begin{proof}
The pairing is induced from the Weil pairing, after composition with a suitable choice of 
polarization map. See for instance the discussion in \cite[$\S$ 2.3]{Ho1}.
\end{proof}  Given a prime $w \subset \mathcal{O}_F$, let us
choose a decomposition subgroup of $G_F$ above $w$, which we can and will 
identify with the Galois group $G_{F_{w}} = 
\Gal(\overline{F}_{w}/F_{w})$ for some choice of 
algebraic closure $\overline{F}_{w}$. Let $I_w = I_{F_{w}}$ 
denote the inertia subgroup at $w$. 

\begin{lemma}\label{ord}
For each prime $w \mid \mathfrak{N}^{-} \subset \mathcal{O}_F$, 
the maximal $I_w$-invariant submodule of $A_{\bf{f}}$ is divisible 
of $\mathcal{O}$-corank one.\end{lemma}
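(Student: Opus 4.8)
The plan is to analyze the local component $\pi_{\mathbf{f},w}$ at a prime $w \mid \mathfrak{N}^{-}$ and translate its structure into a statement about $A_{\mathbf{f}}$ as a module over the decomposition group $G_{F_w}$. Since $\mathbf{f}$ is new at every prime dividing $\mathfrak{N}^{-}$ and $\mathfrak{N}^{-}$ is squarefree, the local automorphic representation $\pi_{\mathbf{f},w}$ is (up to an unramified twist by a quadratic character, which is trivial here since the character of $\mathbf{f}$ is trivial) the Steinberg representation $\mathrm{St}_w$. Via the local Langlands correspondence invoked in Theorem~\ref{CTW}, the restriction $\rho_{\mathbf{f}}\vert_{G_{F_w}}$ is therefore (up to unramified twist) an extension
\begin{align*}
0 \longrightarrow L(1) \longrightarrow \rho_{\mathbf{f}}\vert_{G_{F_w}} \longrightarrow L \longrightarrow 0,
\end{align*}
a non-split extension of the trivial character by the cyclotomic character, realized integrally on $T_{\mathbf{f}} \cong \mathcal{O}^2$. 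First I would make this precise: by parallel weight two and trivial character, the determinant is the cyclotomic character, and the Steinberg-type local structure forces $\rho_{\mathbf{f}}\vert_{I_w}$ to be unipotent and non-trivial, i.e.\ conjugate to $\left(\begin{smallmatrix} 1 & * \\ 0 & 1\end{smallmatrix}\right)$ with the upper-right entry surjecting onto $\mathcal{O}$ (after the tame/$\ell$-adic monodromy identification, $\ell = \mathrm{char}\,\kappa_w \neq p$).

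Next I would pass from $T_{\mathbf{f}}$ to $A_{\mathbf{f}} = (T_{\mathbf{f}} \otimes L)/T_{\mathbf{f}}$. Dualizing the above filtration, the sub $L(1) \subset \rho_{\mathbf{f}}\vert_{G_{F_w}}$ on which $I_w$ acts through the cyclotomic character contributes, after tensoring with $L/\mathcal{O}$, a quotient of $A_{\mathbf{f}}$ on which $I_w$ acts by $\chi_{\mathrm{cyc}}$; dually the trivial quotient $L$ gives an $I_w$-invariant sub of $A_{\mathbf{f}}$. Concretely, writing $A_{\mathbf{f}} \cong (L/\mathcal{O})^2$ with the same unipotent $I_w$-action, an element $(a,b)$ is $I_w$-fixed if and only if $b$ is killed by the image of the ramification (so $b$ ranges over the full $L/\mathcal{O}$ since the monodromy operator acts by a uniformizer times a unit on the relevant quotient — here I must be slightly careful) and $a$ is arbitrary in $L/\mathcal{O}$. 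The upshot is that $A_{\mathbf{f}}^{I_w}$ sits in an exact sequence $0 \to L/\mathcal{O} \to A_{\mathbf{f}}^{I_w} \to (\text{bounded torsion}) \to 0$, hence is divisible of $\mathcal{O}$-corank exactly one. I would phrase this cleanly using the snake lemma applied to multiplication-by-$\mathfrak{P}^n$ on the short exact sequence of $G_{F_w}$-modules $0 \to \mathcal{O}(1) \to T_{\mathbf{f}} \to \mathcal{O} \to 0$ and its injective-limit analogue, reading off $H^0(I_w, A_{\mathbf{f}})$ from $H^0(I_w, L/\mathcal{O})$, $H^0(I_w,(L/\mathcal{O})(1))$ and the connecting map, which is governed by the (non-zero) monodromy.

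The main obstacle will be the integral bookkeeping: showing that the $I_w$-invariants are genuinely \emph{divisible} of corank one and not merely of corank one up to a finite group, which amounts to checking that the cokernel of the relevant monodromy map is actually zero on the divisible part — equivalently that the extension class of $T_{\mathbf{f}}\vert_{G_{F_w}}$ generates (rather than lies in a proper ideal of) $H^1(G_{F_w}/I_w \cdot(\text{stuff}), \mathcal{O}(1))$. This is exactly the statement that $\mathbf{f}$ is \emph{new} (not just special up to congruence) at $w$, so I would isolate it as the crux and cite the integral local Langlands / Weil--Deligne description together with the fact that $\mathrm{ord}_w(\mathfrak{N}^-) = 1$; for the residual non-vanishing one also uses that $\overline{\rho}_{\mathbf{f}}$ is ramified at $w$, which follows from residual irreducibility together with Hypothesis~\ref{galrep}(v). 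Once that is in hand, the corank-one divisibility of $A_{\mathbf{f}}^{I_w}$ is immediate, and I would remark that the same argument shows the maximal $I_w$-invariant submodule of $A_{\mathbf{f}}$ is in fact isomorphic to $L/\mathcal{O}$ as an $\mathcal{O}$-module, with $G_{F_w}/I_w$ acting on it through the unramified quotient by the eigenvalue $a_w(\mathbf{f}) = \pm 1$.
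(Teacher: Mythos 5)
Your route is genuinely different from the paper's: you argue purely Galois-theoretically from the special (Steinberg) shape of $\rho_{\ff}\vert_{G_{F_w}}$ given by local--global compatibility, whereas the paper works geometrically, passing to the abelian variety of Proposition \ref{hilbertAV} and invoking its purely toric reduction at $w \mid \N^-$ (semistability of the Shimura curve at $w$ via Cerednik--Drinfeld/Varshavsky, plus the theory of N\'eron models, citing \cite{BD}, \cite{L30}, \cite{Nek}). That difference of route is fine; the problem is the justification of the step you yourself isolate as the crux. Newness of $\ff$ at $w$ with $\ord_w(\N)=1$ gives, via Theorem \ref{CTW}, a unipotent inertia action on $T_{\ff}$ whose monodromy is some \emph{nonzero} $a \in \mathcal{O}$; the computation you sketch then gives $A_{\ff}^{I_w} \cong L/\mathcal{O} \oplus (L/\mathcal{O})[\mathfrak{P}^{v(a)}]$ (the divisible piece being the image of the twisted subrepresentation, on which $I_w$ acts trivially since $w \nmid p$), so divisibility is equivalent to $a \in \mathcal{O}^{\times}$, i.e.\ to $\overline{\rho}_{\ff}$ being ramified at $w$. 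Your claim that this residual ramification follows from residual irreducibility together with Hypothesis \ref{galrep}(v) is false: both are conditions on the global residual image and impose nothing on $\overline{\rho}_{\ff}(I_w)$. The paper's own level-raising results furnish the counterexample pattern: the eigenforms $\ff_v$ of Theorem \ref{raiseonefree} and ${\bf{g}}$ of Proposition \ref{gammaeigenform} are new at the $n$-admissible primes $v$, yet their mod $\mathfrak{P}^n$ representation is $T_{\ff,n}$, which is \emph{unramified} at $v$ by the very definition of admissibility, and these forms have the same big, irreducible residual image as $\ff$. Classically this is exactly the Ribet/Jarvis--Rajaei level-lowering phenomenon: a form can be new at $w$ while $\overline{\rho}$ is unramified there.

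So as written your argument yields corank one, but divisibility only up to a finite group of order $\#(\mathcal{O}/\mathfrak{P}^{v(a)})$; the missing ingredient is an input forcing $\overline{\rho}_{\ff}$ to be ramified at every $w \mid \N^-$ (equivalently, that the $p$-part of the component group/Tamagawa-type factor at $w$ vanishes). This cannot be deduced from the hypotheses you list; it is precisely the kind of condition that Pollack--Weston \cite{PW} impose explicitly (ramification of $\overline{\rho}$ at primes exactly dividing $N^-$ that are congruent to $\pm 1$ mod $p$). To repair your proof you must either add such a hypothesis, or supply a genuine argument for the integrality (unit-ness) of the Tate parameter/monodromy at $w$ --- which is also the content implicitly carried by the paper's appeal to purely toric reduction and \cite[Remark 1]{BD}, rather than something that follows formally from newness plus big residual image.
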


\begin{proof}
If ${\bf{f}}$ is associated to a modular abelian variety 
defined over $F$ of arithmetic conductor $\mathfrak{N}$ having good reduction 
outside of $\mathfrak{N}$, ordinary reduction at $\mathfrak{p}$, and 
purely toric reduction at each prime $w \mid \mathfrak{N}^{-}$, 
then this condition is satisfied (cf. \cite[Remark 1, p. 14]{BD} with the relevant
sections of \cite{L30} or \cite{Lo2}). Granted that ${\bf{f}}$ is $\mathfrak{p}$-ordinary, 
we can deduce from Proposition \ref{hilbertAV} that the associated abelian variety
$A$ always has these properties. In particular, the toric reduction 
of $A$ at primes $w \mid \mathfrak{N}^{-}$ can be deduced from
the general theory of N\'eron models given in the standard reference 
\cite[Ch. 9]{BLR}, using the fact that the associated Shimura curve has a 
semistable reduction at $w$. See for instance the discussion in 
\cite[1.6.4]{Nek} along with the description of integral models given below.
\end{proof}

\end{remark}

\section{Selmer groups}

\begin{remark}[Global cohomology.]

Given integers $m,n \geq 1$, we define continuous cohomology
groups of $G_{K_{\mathfrak{p}^m}} = \Gal(\overline{\bf{Q}}/K_{\mathfrak{p}^m})$
with coefficients in the modules $T_{\bf{f}} = \varprojlim_n T_{{\bf{f}}, n}$
and $A_{\bf{f}}= \varinjlim_n A_{{\bf{f}},n},$ 
\begin{align*} H^1(K_{\mathfrak{p}^m}, T_{\bf{f}}) &= \varprojlim_n
H^1(K_{\mathfrak{p}^m}, T_{{\bf{f}},n}) \\
H^1(K_{\mathfrak{p}^m}, A_{{\bf{f}}}) &= \varinjlim_n
H^1(K_{\mathfrak{p}^m}, A_{{\bf{f}},n}).\end{align*}
Note that these identifications can be justified
(see \cite[Proposition 2.2]{Ta2}). 
We also define cohomology groups of
$G_{K_{\mathfrak{p}^{\infty}}} = \Gal(\overline{{\bf{Q}}}/
K_{\mathfrak{p}^{\infty}}),$ \begin{align*}
\widehat{H}^1(K_{\mathfrak{p}^{\infty}}, T_{\bf{f}}) &=
\varprojlim_m H^1(K_{\mathfrak{p}^m}, T_{\bf{f}}), \\
H^1(K_{\mathfrak{p}^{\infty}}, A_{\bf{f}}) &= \varinjlim_m
H^1(K_{\mathfrak{p}^m}, A_{\bf{f}}).\end{align*} Here, the direct limit is
taken with respect to natural restriction maps, and the inverse
limit with respect to natural corestriction maps. Note that 
the compatible actions of the group rings 
$\mathcal{O}[G_{\mathfrak{p}^m}]$ on the cohomology groups 
$H^1(K_{\mathfrak{p}^m}, T_{\bf{f}})$ and 
$H^1(K_{\mathfrak{p}^m}, A_{\bf{f}})$ for each integer $m\geq 1$
induce an action of the Iwasawa algebra $\Lambda = 
\mathcal{O}[[G_{\mathfrak{p}^{\infty}}]]$ on the cohomology
groups $\widehat{H}^1(K_{\mathfrak{p}^{\infty}}, T_{\bf{f}})$
and $H^1(K_{\mathfrak{p}^{\infty}}, A_{\bf{f}})$. \end{remark}

\begin{remark}[Local cohomology.] 

Fix an integer $m \geq 1$. Given a finite prime $v$ of $F$, 
let us for notational simplicity write \begin{align*} 
K_{\mathfrak{p}^m, v} &= K_{\mathfrak{p}^m} \otimes F_v
= \bigoplus_{\mathfrak{v} \mid v} K_{\mathfrak{p}^m, \mathfrak{v}}\end{align*}
to denote the direct sum over completions of $K_{\mathfrak{p}^m}$ at
each prime $\mathfrak{v}$ above $v$ in $K_{\mathfrak{p}^m}$. 
Hence, we can define local cohomology groups 
\begin{align*} \widehat{H}^1(K_{\mathfrak{p}^{\infty},  v} T_{{\bf{f}},n}) 
&= \varprojlim_m \bigoplus_{\mathfrak{v}\mid v} 
H^1(K_{\mathfrak{p}^m, \mathfrak{v}}, T_{{\bf{f}},n}), \\
 \widehat{H}^1(K_{\mathfrak{p}^{\infty},  v} T_{{\bf{f}}}) 
&= \varprojlim_m \bigoplus_{\mathfrak{v}\mid v}
 H^1(K_{\mathfrak{p}^m, \mathfrak{v}}, T_{{\bf{f}}}),\\
 H^1(K_{\mathfrak{p}^{\infty}, v}, A_{{\bf{f}},n}) 
&= \varinjlim_m \bigoplus_{\mathfrak{v}
\mid v} H^1(K_{\mathfrak{p}^m, \mathfrak{v}}, A_{{\bf{f}},n}),\\
 H^1(K_{\mathfrak{p}^{\infty}, v}, A_{{\bf{f}}}) 
&= \varinjlim_m \bigoplus_{\mathfrak{v}
\mid v} H^1(K_{\mathfrak{p}^m, \mathfrak{v}},
A_{{\bf{f}}}).\end{align*} Here (as in the global case), the direct
limits are taken with respect to natural restriction maps, and the
inverse limits with respect to natural corestriction maps.  
Taking appropriate limits from $(\ref{limits})$
again, we then define
 \begin{align*}\widehat{H}^1(K_{\mathfrak{p}^{\infty},v}, T_{\bf{f}})& =
\varprojlim_n H^1(K_{\mathfrak{p}^{\infty}, v}, T_{{\bf{f}},n})\\
H^1(K_{\mathfrak{p}^{\infty},v}, A_{\bf{f}}) &= \varinjlim_n
H^1(K_{\mathfrak{p}^{\infty},v}, A_{{\bf{f}},n}).\end{align*} These
identifications as before can be justified (see 
\cite[Proposition 2.2]{Ta}). As in the global case, 
the Iwasawa algebra $\Lambda$ acts on the local cohomology 
groups $\widehat{H}^1(K_{\mathfrak{p}^{\infty},v}, T_{\bf{f}})$ 
and $H^1(K_{\mathfrak{p}^{\infty},v},A_{\bf{f}})$ in such a way that is 
compatible with respect to the respective corestriction 
and restriction maps. \end{remark}

\begin{remark}[Local Tate pairings.]

Recall that for each integer $n \geq 1$, we have an isomorphism of
$G_F$-modules $T_{{\bf{f}}, n} \cong A_{{\bf{f}},n}.$ Recall as well
that by Corollary \ref{AVpairing} (cf. \cite[$\S 2.3$]{Ho1}), there exists a canonical, $G_F$-equivariant pairing 
\begin{align*}T_{{\bf{f}},n} \times A_{{\bf{f}},n}\longrightarrow  
{\bf{Z}}/p^n {\bf{Z}}(1) = \mu_{p^n}.\end{align*} Composition with the cup product 
of local cohomology then gives a collection of local Tate pairings
\begin{align*}\langle~,~\rangle_{m,v}: \widehat{H}^{1}(K_{\mathfrak{p}^m, v}, 
T_{{\bf{f}},n}) \times H^1(K_{\mathfrak{p}^m, v}, A_{{\bf{f}},n}) &\longrightarrow  
{\bf{Q}}_p / {\bf{Z}}_p.\end{align*} Passage to the limit(s) then induces a perfect pairing
\begin{align}\label{plt} \langle~,~\rangle_{v}: \widehat{H}^{1}(K_{\mathfrak{p}^{\infty}, v}, 
T_{{\bf{f}},n})\times H^1(K_{\mathfrak{p}^{\infty},v}, A_{{\bf{f}},n})
&\longrightarrow  {\bf{Q}}_p / {\bf{Z}}_p.\end{align} We refer the reader
to \cite[$\S$ 1]{Mi}, \cite{BD5} or \cite{Ho1} for relevant background on local Tate
duality. The main fact we shall use is that $(\ref{plt})$ 
induces an isomorphism of $\Lambda$-modules
\begin{align*}\widehat{H}^1(K_{\mathfrak{p}^{\infty},v}, T_{{\bf{f}},n}) 
&\cong H^1(K_{\mathfrak{p}^{\infty}, v}, A_{{\bf{f}},n})^{\vee}. \end{align*} 
Here, $H^1(K_{\mathfrak{p}^{\infty}, v}, A_{{\bf{f}},n})^{\vee}$ denotes
the Pontryagin dual of $H^1(K_{\mathfrak{p}^{\infty}, v},
A_{{\bf{f}},n})$, endowed with the usual $\Lambda$-module structure.
(cf. \cite[$\S$2]{BD}). \end{remark}

\begin{remark}[Singular/unramified structures.]

Given a prime $v \subset \mathcal{O}_F$, let
$ I_{m,v} = \bigoplus_{\mathfrak{v} \mid v} 
I_{m, \mathfrak{v}}$ denote
the direct sum over all primes $\mathfrak{v}$ above $v$ in
$K_{\mathfrak{p}^m}$ of the inertia subgroups 
$I_{m, \mathfrak{v}} = I_{K_{\mathfrak{p}^m}, \mathfrak{v}}$ in $ G_{K_{\mathfrak{p}^m}}$.

\begin{definition} Let $v \nmid \mathfrak{N} \subset \mathcal{O}_F$ be 
a prime that does not divide the residue characteristic of $\mathfrak{p}$. 
Let $M_{{\bf{f}},n}$ denote either $A_{{\bf{f}}, n}$ or $T_{{\bf{f}},n}$.

\begin{itemize} \item[(i)] The {\it{singular structure}}
$H^{1}_{\sing}(K_{\mathfrak{p}^m, v}, M_{{\bf{f}},n}) \subset 
H^{1}(K_{\mathfrak{p}^m, v}, M_{{\bf{f}},n})$ is 
\begin{align*}H^{1}_{\sing}(K_{\mathfrak{p}^m, v}, M_{{\bf{f}},n}) 
&= H^{1}(I_{m,v}, M_{{\bf{f}},n})^{G_{K_v}}.\end{align*}
\item[(ii)] The {\it{residue map}} $\partial_{v}$ is the
natural restriction map \begin{align*} \partial_{v}: 
H^{1}(K_{\mathfrak{p}^m,v}, M_{{\bf{f}},n}) &\longrightarrow 
H^{1}_{\sing}(K_{\mathfrak{p}^m,v}, M_{{\bf{f}},n}).\end{align*}
\item[(iii)] The {\it{unramified structure}}
$H^{1}_{\fin}(K_{\mathfrak{p}^m, v}, M_{{\bf{f}},n}) \subset 
H^{1}(K_{\mathfrak{p}^m,v}, M_{{\bf{f}},n})$ is the
kernel of the residue map $\partial_v$.\end{itemize}\end{definition}
Analogous definitions hold under passage to projective limits in the
case that $M_{{\bf{f}},n} = T_{{\bf{f}},n}$, and under inductive
limits in the case that $M_{{\bf{f}},n} = A_{{\bf{f}},n}$. Let us
also write $\partial_v$ to denote the induced residue maps on
$G_{K_{\mathfrak{p}^{\infty}}}$-cohomology. \end{remark}

\begin{remark}[Ordinary structures.]  

Recall that for each prime divisor $w \mid \mathfrak{N}^{-}$,
Lemma \ref{ord} shows that the maximal $I_{w}$-invariant 
submodule of $A_{\bf{f}}$ is divisible of $\mathcal{O}$-corank one. 
Hence, we have an exact sequence of $I_w$-modules  
\begin{align}\label{ordsequence} 0 \longrightarrow
A_{\bf{f}}^{(w)} \longrightarrow A_{\bf{f}}
\longrightarrow A_{\bf{f}}^{(1)} \longrightarrow 0,
\end{align} where $A_{{\bf{f}}}^{(1)}$ is the maximal
submodule of $A_{{\bf{f}}}$ on which $I_w$ acts trivially,
giving trivial isomorphisms of $I_{w}$-modules
\begin{align}\label{ordisom} A_{\bf{f}}^{(w)} \cong A_{\bf{f}}^{(1)}
\cong L/\mathcal{O}.\end{align}  
Suppose now that we consider the prime $\mathfrak{p} \subset 
\mathcal{O}_F$. Recall we assume that ${\bf{f}}$ is 
$\mathfrak{p}$-ordinary, in which case it is known that there is an 
exact sequence of $I_{\mathfrak{p}}$-modules 
\begin{align}\label{ordsequencep} 0 \longrightarrow
A_{\bf{f}}^{(\mathfrak{p})} \longrightarrow A_{\bf{f}}
\longrightarrow A_{\bf{f}}^{(1)} \longrightarrow 0,
\end{align} where $I_{\mathfrak{p}}$ acts on $A_{\bf{f}}^{(\mathfrak{p})}$ by the 
cyclotomic character $\varepsilon_{p}:G_F \longrightarrow \Aut(\mu_{p^{\infty}})$ 
times a multiplicative factor of $\pm 1$.

\begin{definition} Given a prime $w \mid \mathfrak{p}\mathfrak{N}^{-} 
\subset \mathcal{O}_F$, we define the {\it{ordinary structure}} 
$H^{1}_{\ord}(K_{\mathfrak{p}^{\infty}, \mathfrak{q}},A_{{\bf{f}},n}) 
\subset H^1(K_{\mathfrak{p}^{\infty}, \mathfrak{q}},A_{{\bf{f}},n})$
as follows.

\begin{itemize}
\item[(i)] If $w \mid \mathfrak{N}^{-} \subset \mathcal{O}_F$, then
it is the unramified cohomology
\begin{align*} H^{1}_{\ord}(K_{\mathfrak{p}^{\infty}, \mathfrak{q}},
A_{{\bf{f}},n}) = H^{1}(K_{\mathfrak{p}^{\infty}, \mathfrak{q}},
A_{{\bf{f}},n}^{(w)}).\end{align*}
\item[(ii)] At the prime $\mathfrak{p} \subset \mathcal{O}_F$, 
\begin{align*} H^{1}_{\ord}(K_{\mathfrak{p}^{\infty},\mathfrak{p}}, A_{{\bf{f}},n})
= \res_{\mathfrak{p}}^{-1} H^1\left( I_{K_{\mathfrak{p}^{\infty},
\mathfrak{p}}}, A_{{\bf{f}},n}^{(\mathfrak{p})}\right).\end{align*} Here,
$\res_{\mathfrak{p}}: H^1(K_{\mathfrak{p}^{\infty},\mathfrak{p}},
A_{{\bf{f}},n}) \longrightarrow H^1(I_{K_{\mathfrak{p}^{\infty},
\mathfrak{p}}}, A_{{\bf{f}},n})$ denotes the map induced from the
restriction at the prime above $\mathfrak{p}$ in
$K_{\mathfrak{p}^{\infty}}.$
\end{itemize}
\end{definition} Note that we do not define ordinary parts at primes 
$w \mid \mathfrak{N}^{+} \subset \mathcal{O}_F$, as 
these groups are seen easily to vanish by variant 
of the argument given in Corollary \ref{2.4/2.5} 
below (cf. \cite[5.2.2]{Lo2}). Note as well that we may also 
define ordinary cohomology groups for the $G_F$-modules 
$A_{\bf{f}}$ and $T_{\bf{f}}$ by taking the limits 
$(\ref{limits})$. \end{remark}

\begin{remark}[Admissible primes.] 

Here, we define the notation of an $n$-admissible prime with respect to 
$\ff$, for $n \geq 1$ an integer. As we shall see below in Proposition \ref{3.2}, 
the set of $n$-admissible primes controls the Selmer group $\Sel(\ff, K_{\mathfrak{p}^{\infty}})$.

\begin{definition} A prime $v \subset \mathcal{O}_F$ is siad to be {\it{$n$-admissible with 
respect to ${\bf{f}}$ in $K$}} for some integer $n\geq 1$ if
\begin{itemize}
\item[(i)] $v \nmid \mathfrak{p}\mathfrak{N}$.
\item[(ii)] $v$ is inert in $K$.
\item[(iii)] $\mathfrak{P}$ does not divide ${\bf{N}}(v)^2 -1.$
\item[(iv)] $\mathfrak{P}_n$ divides one of ${\bf{N}}(v) + 1 - a_v({\bf{f}})$ or
${\bf{N}}(v) + 1 + a_v({\bf{f}})$.
\end{itemize}
\end{definition} We shall use the following two facts repeatedly 
throughout.
\begin{itemize}
\item[(1)] If $v \subset \mathcal{O}_F$ is an $n$-admissible prime 
with respect to ${\bf{f}}$, then the associated mod $\mathfrak{P}^n$ Galois
representation $T_{{\bf{f}},n}$ is unramified at $v$. Moreover, the
arithmetic Frobenius at $v$ acts semisimply on
$T_{{\bf{f}},n}$ with eigenvalues $1$ and $ {\bf{N}}(v)^2,$ both of
which are distinct mod $\mathfrak{P}^n$.

\item[(2)] If $v \subset \mathcal{O}_F$ is an $n$-admissible prime with 
respect to ${\bf{f}}$, then by condition (ii) it is inert in $K$.
We commit an abuse of notation in writing $v$ to also denote the 
prime above $v$ in $K$. Hence, $K_v$ denotes the localization at 
the prime above $v$ in $F$, which is isomorphic to the quadratic 
unramified extension of $F_v$. Writing $F_{v^2}$ to denote the 
quadratic unramified extension of $F_v$, we shall then always 
make the implicit identification $K_v \cong F_{v^2}$. \end{itemize} \end{remark}

\begin{remark}[Some identifications.]

Here, we give some identifications for the finite, singular and
ordinary structures of the local Galois cohomology groups defined above.
The results here are analogous to the case of $F={\bf{Q}}$ (cf. \cite[$\S$2]{BD}).

\begin{proposition}\label{localtate}
Let $v \subset \mathcal{O}_F$ be a finite prime. 
\begin{itemize} \item[(i)] If $v \nmid \mathfrak{N}$, 
then the cohomology groups 
$\widehat{H}^{1}_{\fin}(K_{\mathfrak{p}^{\infty},
v}, T_{{\bf{f}},n})$ and $H^1_{\fin}(K_{\mathfrak{p}^{\infty}, v},
A_{{\bf{f}},n})$ annihilate each other under the local Tate pairing
$\langle~,~\rangle_v$.
\item[(ii)] If $v\mid\mathfrak{N}$ with $\ord_v(\mathfrak{N}) = 1$, 
then the cohomology groups $\widehat{H}^{1}_{\ord}
(K_{\mathfrak{p}^{\infty}, v},T_{{\bf{f}},n})$ and $H^1_{\ord}(K_{\mathfrak{p}^{\infty}, v},
A_{{\bf{f}},n})$ annihilate each other under the local Tate pairing
$\langle~,~\rangle_v$.
\end{itemize} In particular, the local cohomology groups
$\widehat{H}^{1}_{\sing}(K_{\mathfrak{p}^{\infty}}, T_{{\bf{f}},n})$
and $H^1_{\fin}(K_{\mathfrak{p}^{\infty}}, A_{{\bf{f}},n})$ are
Pontryagin dual to each other.\end{proposition}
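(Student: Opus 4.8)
The plan is to deduce both assertions from local Tate duality at finite level and then pass to the limit. Since the local conditions and the pairing $\langle~,~\rangle_v$ of $(\ref{plt})$ are defined componentwise over $K_{\mathfrak{p}^m,v}=\bigoplus_{\mathfrak{v}\mid v}K_{\mathfrak{p}^m,\mathfrak{v}}$, and since $\langle~,~\rangle_v$ is, by construction, the limit of the finite-level Tate pairings $\langle~,~\rangle_{m,v}$ --- an inverse limit along corestriction on the $T_{\ff,n}$-side, a direct limit along restriction on the $A_{\ff,n}$-side --- with the finite and ordinary structures over $K_{\mathfrak{p}^{\infty}}$ equal to the corresponding limits of their finite-level analogues, it suffices to establish \emph{exact} orthogonal complementarity at a single place $\mathfrak{v}\mid v$ of $K_{\mathfrak{p}^m}$ for every $m$. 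The passage to the limit is then routine, an inverse limit of exact annihilators being the exact annihilator of the corresponding direct limit; here one uses that every class of the direct-limit side already comes from some finite level.

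For part (i): fix $m$ and $\mathfrak{v}\mid v$. As $v\nmid\mathfrak{N}$ and $v$ does not lie above $p$ (which the singular structure requires) and $K_{\mathfrak{p}^{\infty}}/K$ is ramified only at $\mathfrak{p}$, the extension $K_{\mathfrak{p}^m,\mathfrak{v}}/K_v$ is unramified and $T_{\ff,n},A_{\ff,n}$ are unramified $G_{K_{\mathfrak{p}^m,\mathfrak{v}}}$-modules, Cartier dual via the pairing of Corollary \ref{AVpairing}, over a local field of residue characteristic prime to $p$. Two standard ingredients then give (i) at level $m$. First, the cup product of two classes in $H^{1}_{\fin}$ factors through $H^2$ of the maximal unramified quotient, which is $H^2$ of $\widehat{{\bf{Z}}}$ and so vanishes, $\widehat{{\bf{Z}}}$ having cohomological dimension one; hence $H^{1}_{\fin}(K_{\mathfrak{p}^m,\mathfrak{v}},T_{\ff,n})$ and $H^{1}_{\fin}(K_{\mathfrak{p}^m,\mathfrak{v}},A_{\ff,n})$ annihilate one another. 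Second, for a finite unramified module $M$ one has $\#H^{1}_{\fin}(K_{\mathfrak{p}^m,\mathfrak{v}},M)=\#\bigl(M/(\Frob-1)M\bigr)=\#M^{\Frob}=\#H^0(K_{\mathfrak{p}^m,\mathfrak{v}},M)$, while local duality and the triviality of the local Euler characteristic at a place above $v\nmid p$ give $\#H^1(K_{\mathfrak{p}^m,\mathfrak{v}},T_{\ff,n})=\#H^0(K_{\mathfrak{p}^m,\mathfrak{v}},T_{\ff,n})\cdot\#H^0(K_{\mathfrak{p}^m,\mathfrak{v}},A_{\ff,n})$; combining, $\#H^{1}_{\fin}(T_{\ff,n})\cdot\#H^{1}_{\fin}(A_{\ff,n})=\#H^1(K_{\mathfrak{p}^m,\mathfrak{v}},T_{\ff,n})$. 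By perfectness of the local Tate pairing at $\mathfrak{v}$ the annihilator of $H^{1}_{\fin}(A_{\ff,n})$ has exactly the order of $H^{1}_{\fin}(T_{\ff,n})$, hence equals it; taking limits proves (i).

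For part (ii) I would treat the primes $w\mid\mathfrak{N}^{-}$ and the prime $\mathfrak{p}$ --- the only places at which an ordinary structure is defined --- separately. At $w\mid\mathfrak{N}^{-}$ (so $w\nmid p$): the exact sequence $(\ref{ordsequence})$ of $I_w$-modules furnished by Lemma \ref{ord} has $A^{(w)}_{\ff,n}$ unramified and $I_w$ acting trivially on the quotient; since $w\nmid p$, $I_w$ acts trivially on $\mu_{p^n}$ as well, so the Weil pairing is compatible with this filtration and with the dual filtration $0\to T^{(w)}_{\ff,n}\to T_{\ff,n}\to T^{(1)}_{\ff,n}\to 0$ obtained by orthogonal complementation, whose graded pieces are again unramified at $w$. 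As $H^{1}_{\ord}(\cdot,A_{\ff,n})$ is the image of $H^1(\cdot,A^{(w)}_{\ff,n})$ and $H^{1}_{\ord}(\cdot,T_{\ff,n})$ the analogous image from $T^{(w)}_{\ff,n}$, the verification reduces --- exactly as in part (i) --- to the vanishing of cup products (landing in $H^2$ of unramified groups) together with an order count on the graded pieces, the local Euler characteristic being trivial since $w\nmid p$. For $v=\mathfrak{p}$ one uses instead the exact sequence $(\ref{ordsequencep})$ coming from $\mathfrak{p}$-ordinarity, in which $I_{\mathfrak{p}}$ acts on $A^{(\mathfrak{p})}_{\ff,n}$ through the cyclotomic character $\varepsilon_p$ (up to a sign) and trivially on the quotient: this is Greenberg's ordinary local condition, and that $H^{1}_{\ord}(K_{\mathfrak{p}^m,\mathfrak{p}},T_{\ff,n})$ and $H^{1}_{\ord}(K_{\mathfrak{p}^m,\mathfrak{p}},A_{\ff,n})$ are exact orthogonal complements under the Tate pairing is standard (cf. \cite[\S 2]{BD}, and \cite{Lo2}, \cite{Nek} over totally real fields); the Weil pairing again respects the two filtrations, with $A^{(\mathfrak{p})}_{\ff,n}$ Cartier dual to the unramified quotient of $T_{\ff,n}$, and orthogonality together with the matching order count follows from the unramified computation of part (i) for the quotient pieces, from local duality for the inertia cohomology of the sub pieces, and from the local Euler characteristic over $K_{\mathfrak{p}^m,\mathfrak{p}}$. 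I expect this $\mathfrak{p}$-adic case to be the main obstacle: there the residue characteristic is $p$, so the local Euler characteristic is no longer trivial, and one must rely on the divisibility and $\mathcal{O}$-corank-one structure furnished by Lemma \ref{ord} and its analogue in $(\ref{ordsequencep})$ for the counting to come out exactly.

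For the final assertion, let $v\nmid p\mathfrak{N}$. For any unramified module the inflation--restriction sequence together with the vanishing of $H^2(\widehat{{\bf{Z}}},-)$ shows that the residue map $\partial_v$ is surjective onto $\widehat{H}^{1}_{\sing}(K_{\mathfrak{p}^{\infty},v},T_{\ff,n})$ with kernel $\widehat{H}^{1}_{\fin}(K_{\mathfrak{p}^{\infty},v},T_{\ff,n})$, so that $\widehat{H}^{1}_{\sing}(K_{\mathfrak{p}^{\infty},v},T_{\ff,n})\cong\widehat{H}^{1}(K_{\mathfrak{p}^{\infty},v},T_{\ff,n})/\widehat{H}^{1}_{\fin}$. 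By part (i) the subgroup $\widehat{H}^{1}_{\fin}(K_{\mathfrak{p}^{\infty},v},T_{\ff,n})$ is the exact annihilator of $H^{1}_{\fin}(K_{\mathfrak{p}^{\infty},v},A_{\ff,n})$ under the perfect pairing $(\ref{plt})$; hence $(\ref{plt})$ descends to a perfect pairing between $\widehat{H}^{1}_{\sing}(K_{\mathfrak{p}^{\infty},v},T_{\ff,n})$ and $H^{1}_{\fin}(K_{\mathfrak{p}^{\infty},v},A_{\ff,n})$, exhibiting them as Pontryagin duals.
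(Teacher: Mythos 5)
Your proposal is correct and follows essentially the same route as the paper: the paper's proof simply invokes the finite-layer statement as a standard consequence of local Tate duality (citing \cite[Proposition 2.3]{BD} and \cite[$\S$2.1]{BD0}) and then passes to limits, which is exactly your structure, with the unramified cup-product vanishing, Euler-characteristic counting, and the ordinary filtrations of $(\ref{ordsequence})$ and $(\ref{ordsequencep})$ being the details hidden in those citations. You have merely written out explicitly what the paper delegates to the references, including the correct caveat that the case $v=\mathfrak{p}$ is the delicate one.
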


\begin{proof}
The result over finite layers $K_{\mathfrak{p}^m,v}$ is a standard
consequence of local Tate duality, see \cite[Proposition 2.3]{BD} or
\cite[$\S 2.1$]{BD0}. Passage to limits then proves the claim.
 \end{proof} Let us from now on write \begin{align}\label{ltp}\langle ~,~\rangle_v:
\widehat{H}^{1}_{\sing}(K_{\mathfrak{p}^{\infty}, v},
T_{{\bf{f}},n}) \times H^1_{\fin}(K_{\mathfrak{p}^{\infty}, v},
A_{{\bf{f}},n}) &\longrightarrow  {\bf{Q}}_p / {\bf{Z}}_p \end{align} to
denote the perfect pairing induced by the local Tate duality 
of Proposition \ref{localtate}. 

\begin{lemma}\label{cft}
Let $v \nmid p$ be any finite prime of $\mathcal{O}_F$.
If $v$ is inert in $K$, then $v$ splits completely in
$K_{\mathfrak{p}^m}$ for any integer $m \geq 1$. \end{lemma}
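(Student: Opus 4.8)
The plan is to analyze the local behaviour at $v$ through the anticyclotomic tower, using the fact that $v$ is inert in $K$ and that (by Hypothesis \ref{galrep} (ii)) $\mathfrak{p}$ is the unique prime above $p$ in $K_{\mathfrak{p}^\infty}$. First I would recall that $K_{\mathfrak{p}^\infty}/K$ is the (maximal) anticyclotomic $\mathbf{Z}_p^\delta$-extension, and in particular is contained in the compositum of the ring class fields $K[\mathfrak{p}^n]$. So it suffices to show that $v$ splits completely in each ring class field $K[\mathfrak{p}^n]$, or rather in the sub-extension $K_{\mathfrak{p}^m}$. Since $v \nmid p$ and $v$ is prime to the conductor $\mathfrak{p}^n$ (as $v \nmid \mathfrak{p}$), the prime $v$ is unramified in $K[\mathfrak{p}^n]/K$, so the question is purely about the Frobenius conjugacy class: $v$ splits completely in $K_{\mathfrak{p}^m}$ if and only if the image of a Frobenius at $v$ in $G_{\mathfrak{p}^m} = \Gal(K_{\mathfrak{p}^m}/K)$ is trivial.

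The key step is the class field theory computation. By the reciprocity description $\widehat{K}^\times/(K^\times \widehat{\mathcal{O}}_{\mathfrak{c}}^\times) \cong G[\mathfrak{c}]$, and passing to the $\mathbf{Z}_p^\delta$-quotient, the Frobenius at the inert prime $v$ corresponds to the class of a local uniformizer $\varpi_v \in K_v^\times = F_{v^2}^\times$ sitting inside the idele group. The crucial observation is that the anticyclotomic extension is the part of the abelian extensions of $K$ on which complex conjugation (the nontrivial element of $\Gal(K/F)$) acts by $-1$. Now for a prime $v$ that is \emph{inert} in $K/F$, complex conjugation acts on the local factor $K_v^\times$ by the nontrivial automorphism of $F_{v^2}/F_v$; the norm-one subgroup and the $F_v^\times$-part behave oppositely under this involution, but the uniformizer $\varpi_v$ (which we may take to lie in $F_v$) is \emph{fixed} by the Galois action $K_v/F_v$, hence lies in the $+1$-eigenspace for complex conjugation. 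Therefore its image in the anticyclotomic quotient $G_{\mathfrak{p}^m}$ — the $-1$-part — is trivial. That forces Frobenius at $v$ to act trivially, i.e.\ $v$ splits completely.

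Concretely I would argue as follows: the global reciprocity map sends the idele which is $\varpi_v$ at $v$ and $1$ elsewhere to $\Frob_v \in G[\mathfrak{p}^m]$ (up to the normalization conventions fixed after the definition of $K[\mathfrak{c}]$). The involution induced by $\Gal(K/F)$ on $G[\mathfrak{p}^m]$ corresponds on ideles to the component-wise Galois action; since $v$ is inert, this sends our idele at the place $v$ via the nontrivial element of $\Gal(F_{v^2}/F_v)$, which fixes $\varpi_v \in F_v^\times \subset F_{v^2}^\times$. Hence $\Frob_v$ is fixed by this involution. But on the quotient $G_{\mathfrak{p}^\infty}$ the involution acts by inversion (this is exactly what makes the extension dihedral/anticyclotomic), so $\Frob_v = \Frob_v^{-1}$ in $G_{\mathfrak{p}^m}$, giving $\Frob_v^2 = 1$. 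Finally, since $G_{\mathfrak{p}^m} \cong (\mathbf{Z}/p^m\mathbf{Z})^\delta$ has odd order (as $p$ is odd by Hypothesis \ref{galrep} (i)), the only element of order dividing $2$ is the identity, so $\Frob_v = 1$ and $v$ splits completely in $K_{\mathfrak{p}^m}$.

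The main obstacle I anticipate is purely bookkeeping: getting the normalization of the reciprocity map consistent (uniformizers $\mapsto$ geometric Frobenius, as fixed in the text), and checking carefully that the involution on $G[\mathfrak{p}^\infty]$ coming from $\Gal(K/F)$ genuinely descends to inversion on the $\mathbf{Z}_p^\delta$-quotient $G_{\mathfrak{p}^\infty}$ — this is standard for ring class fields (it is essentially the statement that $K[\mathfrak{c}]/F$ is generalized dihedral), but one should cite it cleanly, e.g.\ via \cite[Corollary 2.2]{CV} or the discussion of ring class towers above. Once the involution-acts-by-inversion fact and the oddness of $p$ are in hand, the splitting is immediate. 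One should also note the harmless abuse (already flagged in the text) of writing $v$ for the unique prime of $K$ above the inert prime $v$ of $F$.
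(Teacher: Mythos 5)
Your proof is correct and is essentially the argument the paper has in mind: the paper simply cites global class field theory together with the generalized dihedral structure of $K_{\mathfrak{p}^m}/F$ (via \cite[Proposition 2.3]{Ta}), and your explicit computation --- Frobenius at the inert prime is the class of a uniformizer $\varpi_v \in F_v^{\times}$, fixed by the involution, which acts by inversion on $G_{\mathfrak{p}^m}$, so $\Frob_v^2=1$ and hence $\Frob_v=1$ since $p$ is odd --- is exactly the standard proof behind that citation. The only cosmetic difference is that the classical dihedral argument (computing $\Frob_v^2$ inside $\Gal(K_{\mathfrak{p}^m}/F)$ using an order-two lift of $\tau$) avoids invoking the oddness of $p$, but since $p$ odd is a standing hypothesis this changes nothing.
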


\begin{proof} This is a standard consequence of global 
class field theory. See for instance \cite[Proposition 2.3]{Ta},
using that $K_{\mathfrak{p}^m}$ is of generalized dihedral type
over $F$.  \end{proof}

\begin{corollary}\label{2.4/2.5}
Let $v \nmid \mathfrak{N}$ be a finite prime of $\mathcal{O}_F$.
\begin{itemize}
\item[(i)] If $v$ splits in $K$, then we have identifications
\begin{align*} \widehat{H}^{1}_{\sing}(K_{\mathfrak{p}^{\infty}, v}, 
T_{{\bf{f}},n}) &=H^1_{\fin}(K_{\mathfrak{p}^{\infty}, v}, A_{{\bf{f}},n}) 
= 0.\end{align*}
\item[(ii)] If $v$ is inert in $K$ with $v \nmid \mathfrak{N}$,
then we have identifications
\begin{align*} \widehat{H}^{1}_{\sing}(K_{\mathfrak{p}^{\infty}, v},
T_{{\bf{f}},n}) &\cong H^1_{\sing}(K_v, T_{{\bf{f}},n})
\otimes \Lambda,\\ H^{1}_{\fin}(K_{\mathfrak{p}^{\infty}, v}, A_{{\bf{f}},n})
&\cong \operatorname{Hom}\left( H^1_{\sing}(K_v,
T_{{\bf{f}},n}) \otimes \Lambda,{\bf{Q}}_p/{\bf{Z}}_p\right).\end{align*}
\end{itemize}\end{corollary}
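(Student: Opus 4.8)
The plan is to reduce both parts to a single computation of $\widehat{H}^1_{\sing}(K_{\mathfrak{p}^{\infty}, v}, T_{\ff,n})$: the assertions about $H^1_{\fin}(K_{\mathfrak{p}^{\infty}, v}, A_{\ff,n})$ will then follow formally from the perfect local Tate pairing of Proposition \ref{localtate}, under which $H^1_{\fin}(K_{\mathfrak{p}^{\infty}, v}, A_{\ff,n})$ is the Pontryagin dual of $\widehat{H}^1_{\sing}(K_{\mathfrak{p}^{\infty}, v}, T_{\ff,n})$ (an inverse limit over corestriction maps dualizes to a direct limit over restriction maps). Since $v \nmid \mathfrak{N}$ and $v \nmid p$, the module $T_{\ff,n}$ is unramified at every prime $\mathfrak{w}$ of $K_{\mathfrak{p}^m}$ above $v$, so that $H^1_{\sing}(K_{\mathfrak{p}^m,\mathfrak{w}}, T_{\ff,n}) = H^1(I_{\mathfrak{w}}, T_{\ff,n})^{\Frob_{\mathfrak{w}}}$ is canonically $\left(T_{\ff,n}(-1)\right)^{\Frob_{\mathfrak{w}}}$, where $\Frob_{\mathfrak{w}}$ denotes the Frobenius of the residue field at $\mathfrak{w}$. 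The whole question is therefore about how this residue field — and the norm (corestriction) maps between consecutive residue layers — behave along the tower $K_{\mathfrak{p}^{\infty}}/K$.

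For part (ii), with $v$ inert in $K$: by Lemma \ref{cft} the prime of $K$ above $v$ splits completely in each $K_{\mathfrak{p}^m}$, so the primes $\mathfrak{w}\mid v$ in $K_{\mathfrak{p}^m}$ are permuted simply transitively by $G_{\mathfrak{p}^m}$, each with completion $K_v$ and residue Frobenius $\Frob_v$. A Shapiro-type identification then gives $\bigoplus_{\mathfrak{w}\mid v} H^1_{\sing}(K_{\mathfrak{p}^m,\mathfrak{w}}, T_{\ff,n}) \cong H^1_{\sing}(K_v, T_{\ff,n}) \otimes_{\mathcal{O}} \mathcal{O}[G_{\mathfrak{p}^m}]$, compatibly in $m$ with corestriction on the left and the natural projection $\mathcal{O}[G_{\mathfrak{p}^{m+1}}] \to \mathcal{O}[G_{\mathfrak{p}^m}]$ on the right. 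As $H^1_{\sing}(K_v, T_{\ff,n})$ is finite, taking the inverse limit yields $H^1_{\sing}(K_v, T_{\ff,n}) \otimes_{\mathcal{O}} \Lambda$, and dualizing gives the stated description of $H^1_{\fin}(K_{\mathfrak{p}^{\infty}, v}, A_{\ff,n})$.

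For part (i), with $v$ split in $K$: the key step is to show that each prime $\mathfrak{v}$ of $K$ above $v$ has infinite decomposition group in $\Gal(K_{\mathfrak{p}^{\infty}}/K) \cong {\bf{Z}}_p^{\delta}$, i.e. that $\Frob_{\mathfrak{v}}$ is nontrivial (equivalently of infinite order, since ${\bf{Z}}_p^{\delta}$ is torsion-free). Using the ring class field description of $G[\mathfrak{p}^{\infty}]$, write $\mathfrak{v}^d = (\gamma)$ with $d$ the order of $[\mathfrak{v}]$ in the class group; then $\gamma^k \notin F^{\times}$ for every $k \geq 1$ (otherwise $\mathfrak{v}^{dk}$ would be conjugation-stable, contradicting $\mathfrak{v}\neq\overline{\mathfrak{v}}$), hence $\gamma^k \notin F_{\mathfrak{p}}$ for every $k \geq 1$ because $\mathfrak{p}$ is non-split in $K$ by Hypothesis \ref{galrep}(ii) (so $K_{\mathfrak{p}}$ is a field and $K\cap F_{\mathfrak{p}} = F$); this forces the image of $\gamma$ in $\varprojlim_m G[\mathfrak{p}^m]$ to have infinite order, hence nontrivial image in $G_{\mathfrak{p}^{\infty}}$. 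It follows that the residue degree at $\mathfrak{w}\mid v$ in $K_{\mathfrak{p}^m}/K$ grows without bound through powers of $p$, so that for large $m$ one has $\Frob_{\mathfrak{w}} = \Frob_v^{p^{m-c}}$; the finite groups $\left(T_{\ff,n}(-1)\right)^{\Frob_v^{p^j}}$ then stabilize for $j \gg 0$ to a fixed finite abelian $p$-group $V$, and the corestriction transition maps become multiplication by $p$ on $V$. Hence $\widehat{H}^1_{\sing}(K_{\mathfrak{p}^{\infty}, v}, T_{\ff,n}) = \varprojlim\left(V \xleftarrow{p} V \xleftarrow{p} \cdots\right) = 0$, and $H^1_{\fin}(K_{\mathfrak{p}^{\infty}, v}, A_{\ff,n}) = 0$ by duality.

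The case $F = {\bf{Q}}$ of all of this is Corollaries 2.4 and 2.5 of Bertolini--Darmon \cite[\S 2]{BD}, and the arguments transcribe with only notational changes. I expect the main obstacle to be in part (i): proving cleanly that a prime of $K$ that is split over $F$ and prime to $\mathfrak{p}$ is never completely decomposed in the anticyclotomic ${\bf{Z}}_p^{\delta}$-tower, and — relatedly — checking precisely that corestriction along the unramified extensions $K_{\mathfrak{p}^{m+1},\mathfrak{w}'}/K_{\mathfrak{p}^m,\mathfrak{w}}$ is eventually multiplication by $p$ on the stabilized singular cohomology, so that the inverse limit genuinely vanishes.
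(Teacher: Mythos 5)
Your argument for part (ii) is the paper's argument: Lemma \ref{cft} gives complete splitting of the inert prime in the tower, a choice of compatible primes above $v$ gives the identification with $H^1(K_v,T_{{\bf{f}},n})\otimes\mathcal{O}[G_{\mathfrak{p}^m}]$ compatibly in $m$, and the statement for $A_{{\bf{f}},n}$ falls out of the duality in Proposition \ref{localtate}. For part (i) you and the paper prove the same fact from the same key input (the Frobenii at the two primes of $K$ above a split $v$ have infinite order in $G_{\mathfrak{p}^{\infty}}$, so the local tower contains an infinite unramified pro-$p$ extension), but you run the computation on the compact side and dualize at the end, whereas the paper shows directly that $H^1_{\fin}(K_{\mathfrak{p}^{\infty},v},A_{{\bf{f}},n})=0$ — any unramified class killed by $\mathfrak{P}^n$ restricts to zero at a finite layer of the unramified $p$-tower — and then reads off the vanishing of $\widehat{H}^1_{\sing}(K_{\mathfrak{p}^{\infty},v},T_{{\bf{f}},n})$ from Proposition \ref{localtate}. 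The discrete-side route is slightly cleaner, and here is the one place where your version is stated too coarsely: when $\delta>1$ the decomposition group of a prime above $v$ is procyclic, hence of infinite index in ${\bf{Z}}_p^{\delta}$, so the number of primes of $K_{\mathfrak{p}^m}$ above $v$ grows with $m$ and the inverse limit is not literally $\varprojlim\bigl(V\xleftarrow{p}V\xleftarrow{p}\cdots\bigr)$; this is repaired either by observing that in the corestriction compatibility every local term factors through a norm map of $p$-power degree tending to infinity, hence annihilates the $\mathfrak{P}^n$-torsion module term by term, or simply by dualizing first, where the direct limit only ever sees one chain of primes at a time.

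On the class field theory step you correctly isolate as the delicate point: your sketch is in the right direction, and the paper itself offers no more than an appeal to ``a similar application of global class field theory as used in Lemma \ref{cft}''. To make it airtight you must also account for the global units: triviality of $\Frob_{\mathfrak{v}}^{dk}$ in $G[\mathfrak{p}^m]$ only forces $\gamma^{k}\epsilon_m$ to be congruent to an element of $\mathcal{O}_{F_{\mathfrak{p}}}$ modulo $\mathfrak{p}^m$ for some $\epsilon_m\in\mathcal{O}_K^{\times}$ depending on $m$, not $\gamma^{k}$ itself; since $\mathcal{O}_K^{\times}/\mu_K\mathcal{O}_F^{\times}$ is finite for the CM field $K$, the standard way to absorb this is to work with $\theta=\gamma/\overline{\gamma}$, which is not a root of unity because $\mathfrak{v}\neq\overline{\mathfrak{v}}$, and whose powers cannot land in $F_{\mathfrak{p}}^{\times}$ since $\theta\overline{\theta}=1$. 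With these two touch-ups your proof is complete and is essentially the paper's, mirrored through local duality in part (i).
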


\begin{proof} The first assertion is shown in \cite[Lemma 2.4]{BD}.
That is, it suffices by nondegeneracy of the local Tate pairing
(\ref{ltp}) to show that $H^1_{\fin}(K_{\mathfrak{p}^{\infty}, v},
A_{{\bf{f}},n})$ vanishes. If $v\mathcal{O}_K = v_1 v_2$, then 
a similar application of global class field theory as used in
Lemma \ref{cft} shows that that the Frobenius at each $v_i$ 
is the topological generator of a finite index subgroup 
of $G_{\mathfrak{p}^{\infty}} \cong {\bf{Z}}_p^{\delta}$. We can
then view $K_{\mathfrak{p}^{\infty}, v}$ as the direct sum of 
copies of the maximal unramified $p$-extension of $F_v$. 
Since $A_{{\bf{f}},n} = A_{\bf{f}}[\mathfrak{P}^n]$
has exponent $\mathfrak{P}^n$, we deduce that any unramified
class in $H^1(K_{\mathfrak{p}^m,v}, A_{{\bf{f}},n})$ must have trivial
restriction to $H^1(K_{\mathfrak{p}^{m'}, v}, A_{{\bf{f}},n})$ for
$m'$ sufficiently large. Hence, $H^1_{\fin}(K_{\mathfrak{p}^{\infty}, v}, 
A_{{\bf{f}},n})=0$. The second assertion is shown in \cite[Lemma 2.5]{BD}. 
That is, since $v$ splits completely in $K_{\mathfrak{p}^{\infty}}$ by Lemma \ref{cft},
any choice of prime $\mathfrak{v}_m$ above $v$ in $K_{\mathfrak{p}^m}$ determines
an isomorphism \begin{align*} H^1(K_{\mathfrak{p}^m,v}, T_{{\bf{f}},n}) 
&\longrightarrow H^1(K_v,T_{{\bf{f}},n}) \otimes \mathcal{O}[G_{\mathfrak{p}^m}].\end{align*}
A compatible system of choices of primes $\mathfrak{v}_m$ above $v$ in
$K_{\mathfrak{p}^{\infty}}$ then determines an isomorphism
\begin{align*} \widehat{H}^1(K_{\mathfrak{p}^{\infty}, v}, T_{{\bf{f}},n}) 
&\cong H^1(K_v,T_{{\bf{f}},n}) \otimes \Lambda.\end{align*} 
Passage to the singular cohomology then proves the claim, with the latter
isomorphism being a consequence of Proposition \ref{localtate}.
 \end{proof}

\begin{lemma}\label{2.6/2.7}

If $v \subset \mathcal{O}_F$ is an $n$-admissible prime with respect
to ${\bf{f}}$, then the local cohomology groups $H^1_{\sing}(K_v,
T_{{\bf{f}},n})$ and $H^1_{\fin}(K_v, A_{{\bf{f}},n})$ are both
isomorphic to $\mathcal{O}/\mathfrak{P}^n$. Moreover, the local
cohomology groups $\widehat{H}^1_{\sing}(K_{\mathfrak{p}^{\infty},
v}, T_{{\bf{f}},n})$ and $H^1_{\fin}(K_{\mathfrak{p}^{\infty}, v},
A_{{\bf{f}},n})$ are both free of rank one over
$\Lambda/\mathfrak{P}^n.$
\end{lemma}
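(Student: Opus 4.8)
The plan is to follow the template of \cite[Lemmas~2.6 and~2.7]{BD}, reducing the assertion to a Frobenius cohomology computation over the single local field $K_v$ and then propagating to the $\mathfrak{p}^{\infty}$-tower via Corollary~\ref{2.4/2.5}(ii). Write $R=\mathcal{O}/\mathfrak{P}^{n}$. Since $v$ is $n$-admissible it is inert in $K$ with $v\nmid\mathfrak{p}\mathfrak{N}$, so $K_v\cong F_{v^2}$ is the unramified quadratic extension of $F_v$ and, by fact~(1) of the discussion of admissible primes, $T_{\mathbf{f},n}$ is an unramified $G_{K_v}$-module on which arithmetic Frobenius $\Phi:=\Frob_{K_v}=\Frob_{F_v}^{2}$ acts semisimply with eigenvalues $1$ and $\mathbf{N}(v)^{2}$, distinct modulo $\mathfrak{P}$ by admissibility condition~(iii). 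As $1-\mathbf{N}(v)^{2}\in R^{\times}$, the idempotents $(\Phi-\mathbf{N}(v)^{2})/(1-\mathbf{N}(v)^{2})$ and $(\Phi-1)/(\mathbf{N}(v)^{2}-1)$ split $T_{\mathbf{f},n}$; reducing modulo $\mathfrak{P}$ and doing a length count (using that both eigenvalues actually occur in the residual representation, a consequence of admissibility~(iv)) shows that each $\Phi$-eigenspace is free of rank one over $R$, so
\begin{align*} T_{\mathbf{f},n}\;\cong\;A_{\mathbf{f},n}\;\cong\;R\oplus R(1) \end{align*}
as $G_{K_v}$-modules, with $R$ carrying the trivial unramified action and $R(1)$ the Tate twist on which $\Phi$ acts by $\mathbf{N}(v)^{2}$.

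Next I would compute the unramified and singular structures summand by summand from the inflation--restriction sequence attached to $1\to I_v\to G_{K_v}\to\widehat{\mathbf{Z}}\to 1$, using that $\widehat{\mathbf{Z}}$ has cohomological dimension one: for a Frobenius module $M$ one has $H^1_{\fin}(K_v,M)\cong M/(\Phi-1)M$ and $H^1_{\sing}(K_v,M)=H^1(I_v,M)^{\Phi}\cong\bigl(M\otimes_{\mathbf{Z}_p}\mathbf{Z}_p(-1)\bigr)^{\Phi=1}$, the latter because the pro-$p$ part of tame inertia is $\mathbf{Z}_p(1)$. For $M=R$ this yields $H^1_{\fin}\cong R$ and $H^1_{\sing}=0$ (multiplication by $\mathbf{N}(v)^{2}-1$ is invertible), and for $M=R(1)$ it yields, symmetrically, $H^1_{\fin}=0$ and $H^1_{\sing}\cong R$. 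Summing the two contributions gives
\begin{align*} H^1_{\fin}(K_v,A_{\mathbf{f},n})\;\cong\;\mathcal{O}/\mathfrak{P}^{n}\qquad\text{and}\qquad H^1_{\sing}(K_v,T_{\mathbf{f},n})\;\cong\;\mathcal{O}/\mathfrak{P}^{n}, \end{align*}
which is the first assertion; alternatively, once $H^1_{\fin}(K_v,A_{\mathbf{f},n})\cong\mathcal{O}/\mathfrak{P}^{n}$ is in hand, the local Tate duality of Proposition~\ref{localtate} identifies $H^1_{\sing}(K_v,T_{\mathbf{f},n})$ with its Pontryagin dual and gives the same answer.

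For the $\mathfrak{p}^{\infty}$-level statement I would feed this into Corollary~\ref{2.4/2.5}(ii), applicable since $v$ is inert in $K$ and prime to $\mathfrak{N}$: it gives $\widehat{H}^1_{\sing}(K_{\mathfrak{p}^{\infty},v},T_{\mathbf{f},n})\cong H^1_{\sing}(K_v,T_{\mathbf{f},n})\otimes_{\mathcal{O}}\Lambda\cong\Lambda/\mathfrak{P}^{n}\Lambda$, free of rank one over $\Lambda/\mathfrak{P}^{n}$, and $H^1_{\fin}(K_{\mathfrak{p}^{\infty},v},A_{\mathbf{f},n})\cong\Hom\bigl(H^1_{\sing}(K_v,T_{\mathbf{f},n})\otimes_{\mathcal{O}}\Lambda,\,\mathbf{Q}_p/\mathbf{Z}_p\bigr)$, the Pontryagin dual of $\Lambda/\mathfrak{P}^{n}\Lambda$; passing to the compact realization $\widehat{H}^1_{\fin}$ (equivalently, using the identification $A_{\mathbf{f},n}\cong T_{\mathbf{f},n}$ of $G_{K_v}$-modules together with Proposition~\ref{localtate}) one sees that this module is likewise free of rank one over $\Lambda/\mathfrak{P}^{n}$. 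The one step that demands care is the decomposition of $T_{\mathbf{f},n}$ into rank-one $\Phi$-eigenspaces over the Artinian ring $R$, together with the accompanying twist bookkeeping --- so that the singular part of the twisted summand emerges untwisted and accounts for the exponent $n$ exactly --- which is precisely where admissibility conditions~(iii)--(iv) enter; everything downstream of that is formal.
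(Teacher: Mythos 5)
Your proposal is correct and takes essentially the same route as the paper: the first assertion is exactly the Bertolini--Darmon Lemma 2.6 computation (unramifiedness of $T_{\ff,n}$ at $v$, the semisimple Frobenius action with eigenvalues $1$ and ${\bf{N}}(v)^2$ distinct mod $\mathfrak{P}$ splitting the module into rank-one eigenspaces, then computing $H^1_{\fin}$ and $H^1_{\sing}$ summand by summand), and the passage to $K_{\mathfrak{p}^{\infty}}$ is via Corollary \ref{2.4/2.5}(ii), precisely as in the paper's proof.
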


\begin{proof} The first assertion follows from the same proof as
given in \cite[Lemma 2.6]{BD}, using the identification
$H_{\sing}^1(K_v, T_{{\bf{f}},n}) = H^1(I_{K_v}, T_{{\bf{f}},n}
)^{G_{K_v}}$ along with the fact that $T_{{\bf{f}},n}$ is unramified
at $v$. The second assertion then follows
directly from the second part of Lemma \ref{2.4/2.5} above (cf.
\cite[Lemma 2.7]{BD}). 
\end{proof}\end{remark}

\begin{remark}[Residual Selmer groups.] 

Recall that we defined the residue maps $\partial_v$ on local 
cohomology to be the natural restriction maps
\begin{align*}\partial_v: H^1(K_{\mathfrak{p}^{\infty},v}, A_{ {\bf{f}},n}) 
&\longrightarrow H^1_{\sing}(K_{\mathfrak{p}^{\infty},v},  A_{ {\bf{f}},n}) \\
\partial_v: \widehat{H}^1(K_{\mathfrak{p}^{\infty},v}, T_{ {\bf{f}},n}) 
&\longrightarrow \widehat{H}^1_{\sing}(K_{\mathfrak{p}^{\infty},v},  
T_{ {\bf{f}},n}). \end{align*} Let us commit an abuse 
of notation in also writing $\partial_v$ to denote the 
composition of these maps with the restriction from 
$K_{\mathfrak{p}^{\infty}}$ to $K_{\mathfrak{p}^{\infty},v}$,
which gives residue maps on global cohomology 
\begin{align*}\partial_v: H^1(K_{\mathfrak{p}^{\infty}}, A_{ {\bf{f}},n}) 
&\longrightarrow H^1_{\sing}(K_{\mathfrak{p}^{\infty},v},  A_{ {\bf{f}},n}) \\
\partial_v: \widehat{H}^1(K_{\mathfrak{p}^{\infty}}, T_{ {\bf{f}},n}) 
&\longrightarrow \widehat{H}^1_{\sing}(K_{\mathfrak{p}^{\infty},v},  
T_{ {\bf{f}},n}). \end{align*} Let us establish for future 
reference the following notations:
\begin{itemize}
\item If $\partial_v(c) = 0$ for a class $c \in
\widehat{H}^{1}(K_{\mathfrak{p}^{\infty}}, T_{{\bf{f}},n})$, then
$\vartheta_v(c)$ denotes the image of $c$ in
$\widehat{H}^{1}_{\fin}(K_{\mathfrak{p}^{\infty}, v},
T_{{\bf{f}},n})$.

\item If $\partial_v(c) = 0$ for a class $c
\in H^{1}(K_{\mathfrak{p}^{\infty}}, A_{{\bf{f}},n})$, then
$\vartheta_v(c)$ denotes the image of $c$ in
$H^{1}_{\fin}(K_{\mathfrak{p}^{\infty}, v}, A_{{\bf{f}},n})$.
\end{itemize} Recall that the integer factorization 
$\mathfrak{N} = \mathfrak{p}\mathfrak{N}^{+}\mathfrak{N}^{-}
\subset \mathcal{O}_F$ of $(\ref{N})$ is assumed. 
Let us write $s_v$ denote the image of a class $s$ under 
the restriction from $K_{\mathfrak{p}^{\infty}} $ to $K_{\mathfrak{p}^{\infty}, v}$.

\begin{definition}
The {\it{residual Selmer group}}
$\Sel_{{\bf{f}},n}(K_{\mathfrak{p}^{\infty}})$ associated to
$({\bf{f}}, n, K_{\mathfrak{p}^{\infty}})$ is defined to be the
group of classes $s \in H^1(K_{\mathfrak{p}^{\infty}},
A_{{\bf{f}},n})$ such that
\begin{itemize}
\item[(i)] The residue $\partial_v(s)$ vanishes at all primes
$v \nmid \mathfrak{N}$.
\item[(ii)] The restriction $s_v$ is ordinary at all primes
$v \mid \mathfrak{p}\mathfrak{N}^{-}$.
\item[(iii)] The restriction $s_v$ is trivial at all primes
$v \mid \mathfrak{N}^{+}$.
\end{itemize}\end{definition} Observe that the residual Selmer group 
$\Sel_{{\bf{f}},n}(K_{\mathfrak{p}^{\infty}})$ depends only on the 
mod $\mathfrak{P}^n$ Galois representation $T_{ {\bf{f}}, n }$ associated to 
${\bf{f}}$, and not on $T_{\bf{f}}$ itself! \end{remark}

\begin{remark}[Compactified Selmer groups.] We now define compactified
Selmer groups.

\begin{definition} Let $\mathfrak{S} \subset \mathcal{O}_F$ be
any integral ideal prime to $\mathfrak{N}$. The {\it{compactified
Selmer group}} $\widehat{H}^{1}_{\mathfrak{S}}(K_{\mathfrak{p}^{\infty}},
T_{{\bf{f}},n})$ associated to $({\bf{f}}, n,
K_{\mathfrak{p}^{\infty}})$ is defined to be the group of classes
$\mathfrak{s} \in \widehat{H}^1(K_{\mathfrak{p}^{\infty}},
T_{{\bf{f}},n})$ such that
\begin{itemize}
\item[(i)] The residue $\partial_v(\mathfrak{s})$ vanishes at all primes
$v \nmid \mathfrak{S}\mathfrak{N}$.
\item[(ii)] The restriction $\mathfrak{s}_{v}$ is ordinary at all primes
$v \mid \mathfrak{p}\mathfrak{N}^{-}$.
\item[(iii)] The restriction $\mathfrak{s}_{v}$ is arbitrary at all primes $v \mid
\mathfrak{S}\mathfrak{N}^{+}$ .\end{itemize}
\end{definition}\end{remark} 

\begin{remark}[Admissible sets.] Let us also for future reference define
the notion of an $n$-admissible set with respect to ${\bf{f}}$.

\begin{definition}
A finite set of primes $\mathfrak{S}$ of $\mathcal{O}_F$ is said to
be {\it{$n$-admissible with respect to ${\bf{f}}$}} if
\begin{itemize}
\item[(i)] Each prime $v\in \mathfrak{S}$ is $n$-admissible with respect to ${\bf{f}}$.
\item[(ii)] The natural map $\Sel_{{\bf{f}},n}(K) \longrightarrow
\bigoplus_{v\mid \mathfrak{S}}H^{1}_{\fin}(K_v, A_{{\bf{f}},n})$ is
injective.
\end{itemize}\end{definition}

\begin{theorem}\label{3.3}
If $\mathfrak{S}$ in an $n$-admissible set of primes of
$\mathcal{O}_F$ with respect to ${\bf{f}}$, then
$\widehat{H}^{1}_{\mathfrak{S}}(K_{\mathfrak{p}^{\infty}},
T_{{\bf{f}},n})$ is free of rank $\vert \mathfrak{S} \vert$ over
$\Lambda/\mathfrak{P}^n.$
\end{theorem}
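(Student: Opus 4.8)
The plan is to run the refined Euler system bookkeeping of Bertolini--Darmon \cite[\S 3, Theorem 3.2]{BD} (compare \cite[\S 3]{Ho1} and \cite{Lo2}) in the present setting, the new inputs being the local lemmas already established in the totally real case and the control/descent step along $K_{\mathfrak{p}^\infty}/K$. Write $R_n = \Lambda/\mathfrak{P}^n\Lambda$, a complete Noetherian local ring with maximal ideal $\mathfrak{m}_n$ and residue field $\mathbf{F} = \mathcal{O}/\mathfrak{P}$, put $M = \widehat{H}^1_{\mathfrak{S}}(K_{\mathfrak{p}^\infty}, T_{{\bf{f}},n})$, and for $m \geq 0$ let $I_m = \ker(\Lambda \to \mathcal{O}[G_{\mathfrak{p}^m}])$, with $H^1_{\mathfrak{S}}(K_{\mathfrak{p}^m},T_{{\bf{f}},n})$ the finite-layer Selmer group defined by the same local conditions. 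Since $M$ is a finitely generated $R_n$-module, it suffices to prove: (a) $\dim_{\mathbf{F}} M/\mathfrak{m}_n M = |\mathfrak{S}|$, so that Nakayama furnishes a surjection $R_n^{|\mathfrak{S}|}\twoheadrightarrow M$; and (b) $\#(M/I_m M) = \#((R_n/I_m)^{|\mathfrak{S}|})$ for all $m$ — for then, because $R_n^{|\mathfrak{S}|}$ is $I_m$-adically separated, any nonzero kernel of the surjection in (a) would make $\#(M/I_m M)$ strictly smaller than $\#((R_n/I_m)^{|\mathfrak{S}|})$ for $m \gg 0$, a contradiction; hence $M$ is free of rank $|\mathfrak{S}|$. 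Here I would use the standard control isomorphism $M/I_m M \cong H^1_{\mathfrak{S}}(K_{\mathfrak{p}^m}, T_{{\bf{f}},n})$, valid by residual irreducibility (Hypothesis \ref{galrep}(iv),(v)).

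Both (a) and (b) will come from the Greenberg--Wiles global Euler characteristic formula applied over $K_{\mathfrak{p}^m}$ to the module $T_{{\bf{f}},n}$, which is self-dual in the sense that $T_{{\bf{f}},n} \cong A_{{\bf{f}},n} \cong \Hom(T_{{\bf{f}},n},\mu_{p^n})$ by Corollary \ref{AVpairing}; since $K$, hence each $K_{\mathfrak{p}^m}$, is totally imaginary and $p$ is odd, the archimedean terms are harmless. By Proposition \ref{localtate} the Cartier-dual Selmer structure on $A_{{\bf{f}},n}$ attached to the conditions defining $H^1_{\mathfrak{S}}$ is: unramified away from $\mathfrak{N}$, ordinary at $\mathfrak{p}\mathfrak{N}^-$, trivial at $\mathfrak{N}^+$, and \emph{trivial} at every prime above $\mathfrak{S}$; in other words, the dual Selmer group is the subgroup $\mathfrak{D}_m \subseteq \Sel_{{\bf{f}},n}(K_{\mathfrak{p}^m})$ of classes whose localizations along $\mathfrak{S}$ all vanish. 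In the Euler characteristic identity the unramified local factors cancel, the factors at $\mathfrak{p}\mathfrak{N}^-$ and $\mathfrak{N}^+$ are trivial, and — by Lemma \ref{cft} each prime of $\mathfrak{S}$ splits completely in $K_{\mathfrak{p}^m}$, and by Lemma \ref{2.6/2.7} (via Lemma \ref{2.4/2.5}) each of the $\#G_{\mathfrak{p}^m}$ resulting local factors contributes $\#(\mathcal{O}/\mathfrak{P}^n)$ from relaxing the unramified condition — one obtains $\#H^1_{\mathfrak{S}}(K_{\mathfrak{p}^m}, T_{{\bf{f}},n}) = \#\mathfrak{D}_m \cdot \#(\mathcal{O}/\mathfrak{P}^n)^{|\mathfrak{S}|\,\#G_{\mathfrak{p}^m}}$.

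The crux is therefore the vanishing $\mathfrak{D}_m = 0$ for all $m \geq 0$. For $m = 0$ this is exactly condition (ii) in the definition of an $n$-admissible set, namely that $\Sel_{{\bf{f}},n}(K) \to \bigoplus_{v \mid \mathfrak{S}} H^1_{\fin}(K_v, A_{{\bf{f}},n})$ is injective. For general $m$ I would propagate it by a Mazur-style control argument: $\mathfrak{D}_m$ is a module over the local ring $(\mathcal{O}/\mathfrak{P}^n)[G_{\mathfrak{p}^m}]$, so if nonzero it has nonzero $G_{\mathfrak{p}^m}$-invariants; since the residual representation remains absolutely irreducible over every $K_{\mathfrak{p}^m}$ (Hypothesis \ref{galrep}(iv),(v), using that $K_{\mathfrak{p}^\infty}/F$ has uniformly bounded prime-to-$p$ degree) we have $A_{{\bf{f}},n}^{G_{K_{\mathfrak{p}^m}}} = 0$, whence inflation--restriction identifies these invariants with a subgroup of $\Sel_{{\bf{f}},n}(K)$ contained in $\mathfrak{D}_0 = 0$, a contradiction. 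Granting $\mathfrak{D}_m = 0$, the displayed order formula gives (b), and specializing it to $m = 0$, $n = 1$ and invoking the control isomorphism gives $\dim_{\mathbf{F}} M/\mathfrak{m}_n M = |\mathfrak{S}|$, which is (a).

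The step I expect to be the main obstacle is precisely this descent from $K_{\mathfrak{p}^m}$ to $K$: one must verify that the relaxed condition at $\mathfrak{S}$ and the ordinary condition at $\mathfrak{p}\mathfrak{N}^-$ (and the triviality condition at $\mathfrak{N}^+$) are all compatible with restriction, so that the control maps on the ambient cohomology restrict to control maps on the Selmer subgroups with no spurious defect. For the primes of $\mathfrak{S}$ this uses complete splitting in $K_{\mathfrak{p}^m}$ (Lemma \ref{cft}) together with the identifications of Lemma \ref{2.4/2.5}; at $\mathfrak{p}\mathfrak{N}^-$ it uses the explicit ordinary structures of \S\ref{GalRep}; everywhere it uses the big-image hypotheses to kill the control kernels and cokernels. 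Once these compatibilities are in place, the argument above closes.
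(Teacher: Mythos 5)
Your argument is in substance the paper's own: the proof there is a citation of \cite[Theorem 3.3]{BD}, which rests on the group-ring Poitou--Tate/Greenberg--Wiles computation of \cite[Theorem 3.2]{BD5} --- exactly the duality count you run layer by layer, with admissibility condition (ii) killing the dual (strict-at-$\mathfrak{S}$) Selmer group at the bottom and residual irreducibility propagating the vanishing up the tower, followed by Nakayama over $\Lambda/\mathfrak{P}^n$. The control compatibilities you flag as the main obstacle (behaviour of the ordinary, strict and relaxed conditions under restriction and corestriction along $K_{\mathfrak{p}^{\infty}}/K$, and the identification of $M/I_mM$ with the finite-layer Selmer group) are precisely what the group-ring formulation of \cite[Theorem 3.2]{BD5} packages, so your outline closes along the same route rather than a different one.
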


\begin{proof} See \cite[Theorem 3.3]{BD}. The proof in the more
general setting follows in the same way from \cite[Theorem
3.2]{BD5}, which is given for arbitrary abelian extensions over $K$.

\end{proof} Now, recall that the Galois group $G_{\mathfrak{p}^{\infty}}$ is topologically
isomorphic to ${\bf{Z}}_p^{\delta}$ with $\delta = [F_{\mathfrak{p}}: {\bf{Q}}_p],$
and hence pro-$p$. Hence, the Iwasawa algebra 
$\Lambda$ is a local ring of dimension $\delta +1$. Let $\mathfrak{m}_{\Lambda}$ 
denote the maximal ideal of $\Lambda$. We have the following result.

\begin{theorem} \label{3.4} ~
\begin{itemize}
\item[(i)] The natural map $H^1(K, A_{{\bf{f}},1}) \longrightarrow
H^1(K_{\mathfrak{p}^{\infty}},A_{{\bf{f}},1})[\mathfrak{m}_{\Lambda}]$ induced by
restriction is an isomorphism.
\item[(ii)] If $\mathfrak{S}$ is an $n$-admissible set of primes with respect
to ${\bf{f}}$, then the natural map $H^1(K, T_{{\bf{f}},1})
\longrightarrow H^1(K_{\mathfrak{p}^{\infty}},
T_{{\bf{f}},1})/\mathfrak{m}_{\Lambda}$ induced by
corestriction is an injection.
\end{itemize}
\end{theorem}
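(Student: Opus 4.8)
The plan is to derive both parts of Theorem~\ref{3.4} from the single input that $\overline{\rho}_{\bf{f}}$ has no nonzero $G_{K_{\mathfrak{p}^{\infty}}}$-fixed vectors, i.e.
\begin{align*} H^0(K_{\mathfrak{p}^{\infty}}, A_{{\bf{f}},1}) = H^0(K_{\mathfrak{p}^{\infty}}, T_{{\bf{f}},1}) = 0 \end{align*}
(recall $A_{{\bf{f}},1} \cong T_{{\bf{f}},1}$ as $G_F$-modules). To establish this I would argue in the style of Serre~\cite{Se} and Ribet~\cite{Ri2}, as generalized by Dimitrov~\cite{D}: by Hypothesis~\ref{galrep}(v) the image of $\overline{\rho}_{\bf{f}}$ contains $\SL({\bf{F}}_p)$, while $\Gal(K_{\mathfrak{p}^{\infty}}/F)$ is an extension of $\Gal(K/F) \cong {\bf{Z}}/2$ by $G_{\mathfrak{p}^{\infty}} \cong {\bf{Z}}_p^{\delta}$, hence pro-metabelian; since (for $p \geq 5$) $\SL({\bf{F}}_p)$ has no nontrivial solvable quotient, the normal subgroup $\overline{\rho}_{\bf{f}}(G_{K_{\mathfrak{p}^{\infty}}}) \trianglelefteq \overline{\rho}_{\bf{f}}(G_F)$ still contains $\SL({\bf{F}}_p)$, which acts on $A_{{\bf{f}},1}$ without nonzero fixed vectors. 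Condition~(A), that $F$ is linearly disjoint from ${\bf{Q}}(\zeta_p)$, keeps the argument in force, with some extra care required when $p = 3$. It then follows that $H^0(K_{\mathfrak{p}^m}, T_{{\bf{f}},1}) = 0$ for every $m \geq 0$ as well, since $G_{K_{\mathfrak{p}^{\infty}}} \subset G_{K_{\mathfrak{p}^m}}$.

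For part~(i) I would first note that $H^1(K_{\mathfrak{p}^{\infty}}, A_{{\bf{f}},1})[\mathfrak{m}_{\Lambda}] = H^1(K_{\mathfrak{p}^{\infty}}, A_{{\bf{f}},1})^{G_{\mathfrak{p}^{\infty}}}$: the module $A_{{\bf{f}},1}$ is already killed by $\mathfrak{P}$, and since $G_{\mathfrak{p}^{\infty}} \cong {\bf{Z}}_p^{\delta}$ is pro-$p$, a class is killed by the augmentation ideal of $\Lambda$ exactly when it is $G_{\mathfrak{p}^{\infty}}$-invariant. Inflation--restriction for $K_{\mathfrak{p}^{\infty}}/K$ then gives
\begin{align*} 0 \longrightarrow H^1\!\left(G_{\mathfrak{p}^{\infty}}, A_{{\bf{f}},1}^{G_{K_{\mathfrak{p}^{\infty}}}}\right) \longrightarrow H^1(K, A_{{\bf{f}},1}) &\longrightarrow H^1(K_{\mathfrak{p}^{\infty}}, A_{{\bf{f}},1})^{G_{\mathfrak{p}^{\infty}}} \\ &\longrightarrow H^2\!\left(G_{\mathfrak{p}^{\infty}}, A_{{\bf{f}},1}^{G_{K_{\mathfrak{p}^{\infty}}}}\right), \end{align*}
and since $A_{{\bf{f}},1}^{G_{K_{\mathfrak{p}^{\infty}}}} = 0$ by the above, both outer terms vanish and restriction is the asserted isomorphism.

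For part~(ii) I would present the statement as the Pontryagin dual of part~(i). By Corollary~\ref{AVpairing} the Weil pairing yields, for each $n$, a perfect $G_F$-pairing $T_{{\bf{f}},n} \times A_{{\bf{f}},n} \to \mu_{p^n}$; combining this with local and global Tate duality as in \cite[$\S 2$]{BD} and \cite{BD5}, and passing to the limit in $m$ and $n$ as in the proof of Theorem~\ref{3.3}, one identifies the corestriction-induced map $H^1(K, T_{{\bf{f}},1}) \to \widehat{H}^1(K_{\mathfrak{p}^{\infty}}, T_{{\bf{f}},1})/\mathfrak{m}_{\Lambda}$ with the Pontryagin dual of the restriction map $H^1(K, A_{{\bf{f}},1}) \to H^1(K_{\mathfrak{p}^{\infty}}, A_{{\bf{f}},1})[\mathfrak{m}_{\Lambda}]$; injectivity of the former is then immediate from the surjectivity (indeed bijectivity) of the latter. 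Alternatively, one can proceed directly layer by layer: Hochschild--Serre for $K_{\mathfrak{p}^m}/K$ together with $H^0(K_{\mathfrak{p}^m}, T_{{\bf{f}},1}) = 0$ gives a restriction isomorphism $H^1(K, T_{{\bf{f}},1}) \cong H^1(K_{\mathfrak{p}^m}, T_{{\bf{f}},1})^{G_{\mathfrak{p}^m}}$, which is then fed into the description of $\widehat{H}^1(K_{\mathfrak{p}^{\infty}}, T_{{\bf{f}},1})$ as $\varprojlim_m H^1(K_{\mathfrak{p}^m}, T_{{\bf{f}},1})$ and its $G_{\mathfrak{p}^{\infty}}$-coinvariants.

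The homological bookkeeping above is routine; the two points that will take real care are the no-fixed-vectors statement over the anticyclotomic tower --- especially in small residue characteristic, where one must use condition~(A) and the precise structure of $\Gal(K_{\mathfrak{p}^{\infty}}/F)$ --- and, in part~(ii), the identification of the map ``induced by corestriction'' with the dual of the map in part~(i), which amounts to tracking normalizations through local and global Tate duality and through the inverse limit. I expect this last matching to be the main obstacle to a clean exposition.
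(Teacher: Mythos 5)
Your treatment of part (i) is correct and is essentially the paper's own argument (the paper simply cites \cite[Theorem 3.4]{BD}): since $A_{{\bf{f}},1}$ is killed by $\mathfrak{P}$, the $\mathfrak{m}_{\Lambda}$-torsion is the $G_{\mathfrak{p}^{\infty}}$-invariants, and inflation--restriction together with $A_{{\bf{f}},1}^{G_{K_{\mathfrak{p}^{\infty}}}}=0$ gives the isomorphism; your large-image argument for that vanishing is fine (and the $p=3$ case you flag is easily closed, since the quotient $\Gal(K_{\mathfrak{p}^{\infty}}/F)$ is metabelian and the second derived subgroup of $\SL({\bf{F}}_3)$ is $\{\pm 1\}$, which already has no nonzero fixed vectors).

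Part (ii), however, has a genuine gap. The statement is really about the compactified Selmer group $\widehat{H}^1_{\mathfrak{S}}$ attached to the $n$-admissible set $\mathfrak{S}$, with the corestriction-induced map going from the coinvariants $\widehat{H}^1_{\mathfrak{S}}(K_{\mathfrak{p}^{\infty}},T_{{\bf{f}},1})/\mathfrak{m}_{\Lambda}$ to $H^1(K,T_{{\bf{f}},1})$ (this is how it is used in the proof of Proposition \ref{4.3}); your proposal never uses $\mathfrak{S}$ at all, which already signals that the key input is missing. Your first route rests on identifying this map with the Pontryagin dual of the restriction map of part (i), but no such duality exists for global cohomology: the pairing $(\ref{plt})$ and Tate duality are local statements, and globally one only has Poitou--Tate-type relations between Selmer groups cut out by complementary local conditions, never a duality between the full groups $H^1(\cdot,T_{{\bf{f}},1})$ and $H^1(\cdot,A_{{\bf{f}},1})$ under which corestriction-coinvariants is adjoint to restriction-invariants. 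Your second route only reproduces the invariants statement; the actual content of (ii) is the passage to coinvariants, i.e.\ that the kernel of the map induced by the augmentation is no larger than $\mathfrak{m}_{\Lambda}\widehat{H}^1_{\mathfrak{S}}$. When $\delta=[F_{\mathfrak{p}}:{\bf{Q}}_p]=1$ and no local conditions are imposed this follows from principality of the augmentation ideal, but here $\delta$ may exceed one and the local conditions at $\mathfrak{S}\mathfrak{N}$ are essential, and this is precisely where the paper's stated input enters: Theorem \ref{3.3}, the freeness of $\widehat{H}^1_{\mathfrak{S}}(K_{\mathfrak{p}^{\infty}},T_{{\bf{f}},n})$ over $\Lambda/\mathfrak{P}^n$ (which itself depends on the admissibility of $\mathfrak{S}$). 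Since your argument never invokes this freeness, the decisive step of (ii) is missing.
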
 

\begin{proof} See \cite[Theorem 3.4]{BD}. The proof given there
carries over here with the same argument by using Proposition
\ref{3.3} above. 
\end{proof}\end{remark}

\begin{remark}[Relations between Selmer groups.] 

Let us start with some motivation. Our method of approach to dihedral main
conjectures generalizes the Euler system argument of Bertolini-Darmon
\cite{BD}. More precisely, it generalizes the refinement of this argument 
given by Pollack-Weston in \cite{PW}. As such, it requires the construction 
of classes in $\widehat{H}_{ v}^1(K_{\mathfrak{p}^{\infty}}, T_{{\bf{f}}, n})$ 
indexed by $n$-admissible primes $v \subset \mathcal{O}_F$ with respect to
${\bf{f}}$. The residues of these classes can be related
to their corresponding group ring elements 
$\mathcal{L}_{\Phi} \in \Lambda/\mathfrak{P}^n$ via the first and second explicit
reciprocity laws introduced above (Theorems \ref{erl1} and \ref{erl2} below). 
Here, $\Phi$ denotes the mod $\mathfrak{P}^n$ quaternionic eigenform 
corresponding to ${\bf{f}}$ mod $\mathfrak{P}^n$ under the 
Jacquet-Langlands correspondence. Recall that we write 
$L_{\mathfrak{p}}({\bf{f}}, K_{\mathfrak{p}^{\infty}})$  to denote 
the associated $\mathfrak{p}$-adic $L$-function 
$\mathcal{L}_{\mathfrak{p}}(\Phi, K)= \mathcal{L}_{\Phi}
\mathcal{L}_{\Phi}^{*} \in \Lambda/\mathfrak{P}^n$.
The explicit reciprocity laws, which a priori only give relations in the compactified 
Selmer group $\widehat{H}_{ v}^1(K_{\mathfrak{p}^{\infty}}, T_{{\bf{f}}, n})$, in 
fact give relations in the dual residual Selmer group
\begin{align*}\Sel_{{\bf{f}},n}(K_{\mathfrak{p}^{\infty}})^{\vee} = \
\Hom(\Sel_{{\bf{f}},n}(K_{\mathfrak{p}^{\infty}}),
{\bf{Q}}_p/{\bf{Z}}_p) \end{align*} thanks to the following result.

\begin{proposition}\label{cftpairing}
If $\mathfrak{s} \in \widehat{H}^{1}_v(K_{\mathfrak{p}^{\infty}},
T_{{\bf{f}},n})$, then for all $s \in
\Sel_{{\bf{f}},n}(K_{\mathfrak{p}^{\infty}}),$
\begin{align*}\langle
\partial_v\left(\mathfrak{s}\right), \vartheta_v(s) \rangle_v=
0.\end{align*}
\end{proposition}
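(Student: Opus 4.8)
The statement is a reciprocity law relating the residue of a compactified Selmer class against the value of a Selmer class under the local pairing, and it follows from the standard global reciprocity (sum-of-local-invariants) formula of class field theory, specialized to the finite layers $K_{\mathfrak{p}^m}$ and then passed to the limit. The plan is as follows. First I would fix an integer $m \geq 1$ and work over the finite layer: by Corollary \ref{AVpairing} there is a perfect $G_F$-equivariant pairing $T_{\ff,n} \times A_{\ff,n} \to \mu_{p^n}$, so cup product together with the invariant maps $\operatorname{inv}_w : H^2(K_{\mathfrak{p}^m, w}, \mu_{p^n}) \to \mathbf{Z}/p^n\mathbf{Z}$ gives, for classes $\mathfrak{s} \in H^1(K_{\mathfrak{p}^m}, T_{\ff,n})$ and $s \in H^1(K_{\mathfrak{p}^m}, A_{\ff,n})$, the global reciprocity identity $\sum_{w} \langle \mathfrak{s}_w, s_w \rangle_{m,w} = 0$, the sum being over all places $w$ of $K_{\mathfrak{p}^m}$ (finite by the usual compact-support argument). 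This is exactly \cite[Proposition 2.2]{Mi} applied to our coefficient module; I would cite Milne for the finite-level version.

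Next I would show that every term in this sum vanishes except the one at $v$. For a prime $w \nmid \mathfrak{p}\mathfrak{N}$ with $w \neq v$: the hypothesis $\mathfrak{s} \in \widehat{H}^1_v(K_{\mathfrak{p}^\infty}, T_{\ff,n})$ forces $\partial_w(\mathfrak{s}) = 0$, so $\mathfrak{s}_w$ lies in $\widehat{H}^1_{\fin}(K_{\mathfrak{p}^\infty, w}, T_{\ff,n})$, while $s \in \Sel_{\ff,n}(K_{\mathfrak{p}^\infty})$ gives $\partial_w(s) = 0$, so $s_w$ lies in $H^1_{\fin}(K_{\mathfrak{p}^\infty, w}, A_{\ff,n})$; these annihilate each other by Proposition \ref{localtate}(i). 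For $w \mid \mathfrak{p}\mathfrak{N}^-$: the compactified-Selmer condition (ii) and the residual-Selmer condition (ii) place $\mathfrak{s}_w$ and $s_w$ respectively in the ordinary structures, which annihilate each other by Proposition \ref{localtate}(ii) (using $\ord_w(\mathfrak{N}) = 1$, which holds at $\mathfrak{p}$ by $(\ref{N})$ and at each $w \mid \mathfrak{N}^-$ since $\mathfrak{N}^-$ is squarefree). For $w \mid \mathfrak{N}^+$: condition (iii) of the residual Selmer group forces $s_w = 0$, killing that term outright. Hence the only surviving term is at $v$, giving $\langle \mathfrak{s}_v, s_v \rangle_{m,v} = 0$ at each finite layer.

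Finally I would pass to the limit. The class $\mathfrak{s} \in \widehat{H}^1_v(K_{\mathfrak{p}^\infty}, T_{\ff,n}) = \varprojlim_m H^1(K_{\mathfrak{p}^m}, T_{\ff,n})$ has residue zero at $v$ by definition of the compactified Selmer group, so $\vartheta_v(\mathfrak{s}) = \partial_v(\mathfrak{s}) = 0$ is not quite what we want — rather, $\mathfrak{s}_v$ already lies in $\widehat{H}^1_{\fin}(K_{\mathfrak{p}^\infty, v}, T_{\ff,n})$, and for $s \in \Sel_{\ff,n}(K_{\mathfrak{p}^\infty})$ one has $\partial_v(s) = 0$ so $\vartheta_v(s) \in H^1_{\fin}(K_{\mathfrak{p}^\infty, v}, A_{\ff,n})$ is defined; I would then note that $\langle \partial_v(\mathfrak{s}), \vartheta_v(s)\rangle_v$ is computed via the perfect pairing $(\ref{ltp})$ between $\widehat{H}^1_{\sing}$ and $H^1_{\fin}$, and by compatibility of the local Tate pairings with the corestriction/restriction maps under $\varprojlim_m$ and $\varinjlim_m$, the vanishing at each layer propagates. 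The main subtlety — and the only real obstacle — is the limit-compatibility bookkeeping: one must check that the local invariant maps at the (possibly several) primes $\mathfrak{v} \mid v$ in each $K_{\mathfrak{p}^m}$ are compatible with the transition maps defining $\widehat{H}^1(K_{\mathfrak{p}^\infty, v}, -)$, and that the global reciprocity identity is preserved under the limit (rather than, say, acquiring a boundary contribution). This is routine given \cite[Proposition 2.3]{Ta} and the setup in the ``Local Tate pairings'' paragraph above, but it is where care is genuinely needed; everything else is a direct transcription of \cite[Theorem 3.2 / Lemma 3.3]{BD}.
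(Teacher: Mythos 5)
Your overall architecture coincides with the paper's: apply the global reciprocity law of class field theory to the pair $(\mathfrak{s}, s)$, and use the local conditions defining $\widehat{H}^{1}_{v}(K_{\mathfrak{p}^{\infty}}, T_{\ff,n})$ and $\Sel_{\ff,n}(K_{\mathfrak{p}^{\infty}})$ together with Proposition \ref{localtate} to kill every local term away from $v$ (finite against finite at $w \nmid v\mathfrak{N}$, ordinary against ordinary at $w \mid \mathfrak{p}\mathfrak{N}^{-}$, and $s_w = 0$ at $w \mid \mathfrak{N}^{+}$). The paper suppresses your finite-layer/limit bookkeeping by stating the reciprocity identity directly for the limit pairings $\langle~,~\rangle_v$, but that part of your plan is sound.

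However, your final step contains a genuine error. You assert that $\mathfrak{s} \in \widehat{H}^{1}_{v}(K_{\mathfrak{p}^{\infty}}, T_{\ff,n})$ ``has residue zero at $v$ by definition of the compactified Selmer group,'' so that $\mathfrak{s}_v$ lies in $\widehat{H}^1_{\fin}(K_{\mathfrak{p}^{\infty},v}, T_{\ff,n})$. This misreads the definition: for $\mathfrak{S} = v$, condition (i) only forces $\partial_w(\mathfrak{s}) = 0$ at primes $w \nmid \mathfrak{S}\mathfrak{N} = v\mathfrak{N}$, while condition (iii) leaves the restriction at $v$ completely arbitrary, so $\partial_v(\mathfrak{s})$ is in general nonzero; indeed the first explicit reciprocity law (Theorem \ref{ERL1}) gives $\partial_v(\zeta(v)) = \mathcal{L}_{\ff}$, which does not vanish. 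If your claim were correct, the proposition would be vacuous and would say nothing about the Euler system classes. The correct closing observation is the opposite one: it is $s$, not $\mathfrak{s}$, that is unramified at $v$ (since $v \nmid \mathfrak{N}$ and $s$ is a residual Selmer class), so $s_v = \vartheta_v(s)$ lies in $H^1_{\fin}(K_{\mathfrak{p}^{\infty},v}, A_{\ff,n})$; by Proposition \ref{localtate}(i) the finite parts annihilate each other, so in the one surviving term of the reciprocity sum only the singular projection of $\mathfrak{s}_v$ contributes, that is $\langle \mathfrak{s}_v, s_v\rangle_v = \langle \partial_v(\mathfrak{s}), \vartheta_v(s)\rangle_v$ under the pairing $(\ref{ltp})$. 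With this substitution your argument closes and agrees with the paper's proof, which runs the same computation for a general ideal $\mathfrak{S}$ prime to $\mathfrak{N}$ and then specializes to $\mathfrak{S} = v$.
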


\begin{proof} By direct generalization of \cite[Proposition
2.10]{BD}. That is, fix classes $\mathfrak{s}$ and $s$ as above. The
global reciprocity law of class field theory implies that
\begin{align} \label{rlgcft}\sum_{v} \langle \partial_v(\mathfrak{s}),
\vartheta_v(s) \rangle_v= 0.\end{align} Here, the sum runs over all
finite primes $v \subset \mathcal{O}_F$. Let $\mathfrak{S} \subset
\mathcal{O}_F$ be any integral ideal prime to $\mathfrak{N}$. If
$\mathfrak{s} \in \widehat{H}^{1}_{\mathfrak{S}}(K_{\mathfrak{p}^{\infty}},
T_{{\bf{f}},n})$ and $s \in \Sel_{{\bf{f}},n}(K_{\mathfrak{p}^{\infty}}),$ 
then $\langle \partial_v(\mathfrak{s}), \vartheta_v(s) \rangle_v=0$ for all
$v \nmid \mathfrak{S}$ by local conditions defining these groups,
and $\partial_v\left( s \right) = 0 $ for all $v \mid \mathfrak{S}$.
It follows from (\ref{rlgcft}) that \begin{align*}\sum_{v \mid \mathfrak{S}} 
\langle \partial_v(\mathfrak{s}), \vartheta_v(s)\rangle_v &= 0.\end{align*} 
Taking $\mathfrak{S} = v$ then proves the claim. 
\end{proof} Finally, we make the following

\begin{definition} Let \begin{align}\label{selmer}
\Sel({\bf{f}}, K_{\mathfrak{p}^{\infty}}) = \varinjlim_n
\Sel_{{\bf{f}},n}(K_{\mathfrak{p}^{\infty}}),\end{align} where the
limits are taken with respect to those in $(\ref{limits}).$ We claim
as before that these identifications can be justified, for instance
by \cite[Theorem 2.2]{Ta2} (cf. also \cite[Proposition 3.6]{PW}). \end{definition}\end{remark}

\section{Control theorems}

We shall use the following results to prove the main conjecture
divisibility $(\ref{mainconjecture})$.

\begin{remark}[The Fitting ideals criterion.]

Given $R$ a ring, and $X$ a finitely-presented $R$-module,
let $\Fitt_R(X)$ denote the Fitting ideal of $X$ over $R$. We refer
the reader to \cite[Appendix]{MW} for instance for definitions and background
on Fitting ideals. 

\begin{proposition} \label{3.1} Suppose that $X$ is a finitely-generated 
$\Lambda$-module, and that $\mathcal{L}$ is an element of $\Lambda$. 
If $\varphi(\mathcal{L}) \subset \Fitt_{\mathcal{O}'}\left( 
X \otimes_{\varphi} \mathcal{O}'\right)$ for all homomorphisms 
$\varphi: \Lambda \longrightarrow \mathcal{O}'$ with $\mathcal{O}'$ 
any discrete valuation ring, then $\mathcal{L} \subset 
\operatorname{char}_{\Lambda}\left(X\right).$\end{proposition}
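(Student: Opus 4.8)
The plan is to reduce the containment $\mathcal{L} \subset \operatorname{char}_{\Lambda}(X)$ to a height-one-prime-by-height-one-prime check, exploiting the fact that $\Lambda$ is a local normal domain (a power series ring over $\mathcal{O}$, hence a Krull domain of dimension $\delta+1$), so that $\operatorname{char}_{\Lambda}(X)$ is a product over height one primes $\mathfrak{Q}$ of $\mathfrak{Q}^{\operatorname{length}_{\Lambda_{\mathfrak{Q}}}(X_{\mathfrak{Q}})}$, and an element $\mathcal{L}$ lies in this ideal if and only if $\operatorname{ord}_{\mathfrak{Q}}(\mathcal{L}) \geq \operatorname{length}_{\Lambda_{\mathfrak{Q}}}(X_{\mathfrak{Q}})$ for every such $\mathfrak{Q}$. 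First I would fix a height one prime $\mathfrak{Q}$, and observe that $\Lambda_{\mathfrak{Q}}$ is a discrete valuation ring; the residue field map $\Lambda \to \Lambda_{\mathfrak{Q}}$ is not quite of the form required by the hypothesis, so the key maneuver is to approximate $\mathfrak{Q}$ by a sequence of specializations $\varphi_i : \Lambda \to \mathcal{O}_i'$ to discrete valuation rings whose kernels are height one primes $\mathfrak{Q}_i$ converging to $\mathfrak{Q}$ (e.g. via the usual device of choosing $\mathcal{O}_i'$ to be the ring of integers in a suitable finite extension and $\varphi_i$ a continuous $\mathcal{O}$-algebra map; such sequences exist by a Weierstrass-preparation/approximation argument on $\Lambda$, cf. the specialization lemmas of Howard).

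Next I would use the semicontinuity of Fitting ideals under base change: for each $i$, the hypothesis gives $\varphi_i(\mathcal{L}) \in \operatorname{Fitt}_{\mathcal{O}_i'}(X \otimes_{\varphi_i} \mathcal{O}_i')$, and since $\mathcal{O}_i'$ is a DVR this Fitting ideal equals $(\pi_i)^{\ell_i}$ where $\pi_i$ is a uniformizer and $\ell_i = \operatorname{length}_{\mathcal{O}_i'}(X \otimes_{\varphi_i} \mathcal{O}_i')$; thus $v_{\pi_i}(\varphi_i(\mathcal{L})) \geq \ell_i$. The point is then to pass to the limit: as $\mathfrak{Q}_i \to \mathfrak{Q}$ one has $v_{\pi_i}(\varphi_i(\mathcal{L})) \to \operatorname{ord}_{\mathfrak{Q}}(\mathcal{L})$ and $\ell_i \to \operatorname{length}_{\Lambda_{\mathfrak{Q}}}(X_{\mathfrak{Q}})$ — here one uses that $X_{\mathfrak{Q}}$ has finite length over the DVR $\Lambda_{\mathfrak{Q}}$ (so $\mathcal{L} \neq 0$ forces $X$ to be $\Lambda$-torsion, which is part of the setup in which this criterion is applied) and that for $i$ large the specialization $X \otimes_{\varphi_i}\mathcal{O}_i'$ has the same length as $X_{\mathfrak{Q}_i}$ up to the ramification of $\mathcal{O}_i'$, which cancels in the inequality. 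Combining these gives $\operatorname{ord}_{\mathfrak{Q}}(\mathcal{L}) \geq \operatorname{length}_{\Lambda_{\mathfrak{Q}}}(X_{\mathfrak{Q}})$ for all height one $\mathfrak{Q}$, which is exactly the statement that $\mathcal{L} \in \operatorname{char}_{\Lambda}(X)$.

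The main obstacle I anticipate is the limiting step — making precise that the lengths $\ell_i$ and valuations $v_{\pi_i}(\varphi_i(\mathcal{L}))$ track $\operatorname{length}_{\Lambda_{\mathfrak{Q}}}(X_{\mathfrak{Q}})$ and $\operatorname{ord}_{\mathfrak{Q}}(\mathcal{L})$ respectively as the specializations converge. This requires a clean comparison between the localization $X_{\mathfrak{Q}_i}$ and the base change $X \otimes_{\varphi_i}\mathcal{O}_i'$ (controlling the discrepancy caused by the finite residue extension and by $X$ possibly having $\Lambda$-torsion supported on $\mathfrak{Q}_i$), together with the standard but slightly delicate fact that the function $\mathfrak{Q}' \mapsto \operatorname{length}_{\Lambda_{\mathfrak{Q}'}}(X_{\mathfrak{Q}'})$ is locally constant on a suitable dense set of height one primes and the relevant $p$-adic valuation of $\mathcal{L}$ behaves continuously there. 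Everything else — the reduction to height one primes via the structure theory of $\Lambda$-modules (available by \cite{BB}), the Fitting ideal computation over a DVR, and the characterization of $\operatorname{char}_{\Lambda}$ as a product over height one primes — is routine. Once the limiting comparison is in hand, the inequality at $\mathfrak{Q}$ follows and, $\mathfrak{Q}$ being arbitrary, so does the containment $\mathcal{L} \subset \operatorname{char}_{\Lambda}(X)$.
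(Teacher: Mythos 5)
Your overall strategy --- reduce to height one primes, probe $\Lambda$ with specializations $\varphi:\Lambda\to\mathcal{O}'$, and use that over a DVR the Fitting ideal is $(\pi')^{\ell}$ with $\ell$ the length --- is the same family of argument as the proofs the paper actually relies on (the paper itself only cites \cite[Proposition 3.1]{BD} for $\delta=1$ and \cite[Proposition 7.4]{L30} in general). But the limiting step, as you formulate it, is not correct and would fail if implemented literally. For specializations $\varphi_i$ with kernels $\mathfrak{Q}_i$ ``converging'' to a fixed height one prime $\mathfrak{Q}$, it is not true that $v_{\pi_i}(\varphi_i(\mathcal{L}))\to\ord_{\mathfrak{Q}}(\mathcal{L})$, nor that $\ell_i\to\operatorname{length}_{\Lambda_{\mathfrak{Q}}}(X_{\mathfrak{Q}})$: both quantities diverge, and, more importantly, $\ell_i=\operatorname{length}_{\mathcal{O}'_i}(X\otimes_{\varphi_i}\mathcal{O}'_i)$ is a \emph{global} invariant of $X$, not ``$\operatorname{length}(X_{\mathfrak{Q}_i})$ up to ramification'': generically $\mathfrak{Q}_i\notin\Supp(X)$, so $X_{\mathfrak{Q}_i}=0$ while $\ell_i$ is large, receiving contributions from every prime in $\Supp(X)$. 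The comparison that actually drives the cited proofs is: if $\ker\varphi$ avoids $\Supp(X)$ (and, when $\delta>1$, the support of the pseudo-null discrepancy between $X$ and an elementary module --- pseudo-null no longer means finite, which is exactly the point where the general case needs extra care), then
$$\operatorname{length}_{\mathcal{O}'}\bigl(X\otimes_{\varphi}\mathcal{O}'\bigr)\ \geq\ \ord_{\pi'}\bigl(\varphi(f_X)\bigr)-c,$$
where $f_X$ generates $\operatorname{char}_{\Lambda}(X)$ and $c$ is bounded independently of $\varphi$.

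Granting that lemma, the hypothesis gives $\ord_{\pi'}(\varphi(\mathcal{L}))\geq \ord_{\pi'}(\varphi(f_X))-c$ for all admissible $\varphi$, and one isolates a given prime $\mathfrak{Q}=(g)$ not by taking limits of values but by choosing $\varphi$ so that the $g$-contribution dominates: highly ramified $\mathcal{O}'$ when $g$ is the uniformizer of $\mathcal{O}$ (the $\mu$-part), and, otherwise, values of the variables $\mathfrak{P}$-adically close to a zero of $g$ but away from the zeros of the prime-to-$g$ parts of $\mathcal{L}$ and $f_X$; then $\ord_{\pi'}(\varphi(g))\to\infty$ while the valuations of those prime-to-$g$ factors stay bounded, and comparing leading terms yields $\ord_{\mathfrak{Q}}(\mathcal{L})\geq\ord_{\mathfrak{Q}}(f_X)$, hence $\mathcal{L}\in\operatorname{char}_{\Lambda}(X)$. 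You should also treat the non-torsion case directly rather than assuming torsionness is part of the setup: if $X$ has positive $\Lambda$-rank, every $X\otimes_{\varphi}\mathcal{O}'$ has positive $\mathcal{O}'$-rank, so $\Fitt_{\mathcal{O}'}(X\otimes_\varphi\mathcal{O}')=0$, forcing $\varphi(\mathcal{L})=0$ for all $\varphi$ and hence $\mathcal{L}=0=\operatorname{char}_{\Lambda}(X)$. In short: right approach and the same skeleton as the references the paper quotes, but the ``convergence of lengths and valuations'' you propose is not the actual mechanism, and without replacing it by the global length-versus-$\varphi(f_X)$ comparison (with the $\delta>1$ pseudo-null issue addressed) the proof does not go through.
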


\begin{proof} See \cite[Proposition 3.1]{BD}, which proves the
claim for the case of $F={\bf{Q}}$ (i.e. with $\delta =1$), and 
\cite[Proposition 7.4]{L30} for the general case. \end{proof}\end{remark}

\begin{remark}[Control of Selmer.] 

Recall that for each finite prime $v \nmid \mathfrak{N} \subset \mathcal{O}_F$ not 
dividing the residue characteristic of $\mathfrak{p}$, we have a natural residue map 
$\partial_v: H^1(K_v, A_{{\bf{f}},1}) \longrightarrow H^1_{\sing}(K_v, A_{{\bf{f}}, 1})$.
Recall as well that we commit a minor abuse of notation in also writing $\partial_v$ to denote 
the composition of maps \begin{align*} H^1(K, A_{{\bf{f}}, 1}) \longrightarrow 
H^1(K_v, A_{{\bf{f}},1}) \longrightarrow H^1_{\sing}(K_v, A_{{\bf{f}}, 1}). \end{align*}

\begin{theorem}\label{3.2} Given a nonzero class
$s \in H^1(K, A_{{\bf{f}}, 1})$, there exist infinitely many
$n$-admissible primes $v \subset \mathcal{O}_F$ with respect to
${\bf{f}}$ in $K$ such that $\partial_v(s) = 0$ and $\vartheta_v(s)\neq 0$.
\end{theorem}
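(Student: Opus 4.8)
The plan is to run the Chebotarev argument of \cite[Theorem 3.2]{BD} (compare \cite[\S 3]{PW}) in the present setting. Fix $n \geq 1$; write $F_1 = F(A_{{\bf f},1})$ and $F_n = F(A_{{\bf f},n})$ for the extensions of $F$ cut out by $T_{{\bf f},1}$ and $T_{{\bf f},n}$, and put $\widetilde{F}_1 = KF_1$. I would first show that $s$ restricts to a \emph{surjective} homomorphism $\overline{s} = s\vert_{G_{\widetilde F_1}}\colon G_{\widetilde F_1} \to A_{{\bf f},1}$: by Hypothesis \ref{galrep} (iv)--(v), the image of $\overline\rho_{\bf f}$ contains $\SL({\bf F}_p)$, whence the standard vanishing $H^1(\Gal(\widetilde F_1/K), A_{{\bf f},1}) = 0$ (using $p$ odd); the inflation--restriction sequence then forces $\overline{s} \neq 0$, and since $\overline{s}$ is $\Gal(\widetilde F_1/K)$-equivariant with irreducible target, it is onto. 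Let $F_1' = \overline{\widetilde F_1}^{\,\ker\overline s}$, so $\Gal(F_1'/\widetilde F_1) \cong A_{{\bf f},1}$ as $\Gal(\widetilde F_1/K)$-modules, and set $E = F_1' F_n$.

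Two disjointness facts let me prescribe a Frobenius: since $\disc(K/F)$ is prime to $\mathfrak{p}\mathfrak{N}$, $K$ is linearly disjoint from $F_n$ over $F$; and since $\Gal(F_1'/\widetilde F_1) \cong A_{{\bf f},1}$ is the standard two-dimensional representation while the Jordan--H\"older constituents of $\Gal(F_n/F_1)$ under conjugation are of adjoint type (for $p$ not too small), $F_1'$ is linearly disjoint from $F_n K$ over $\widetilde F_1$. Using Hypothesis \ref{galrep} (v) and hypothesis (A) --- which makes $\det\overline\rho_{\bf f}$ (the mod $p$ cyclotomic character on $G_F$) surjective onto $({\bf Z}/p)^\times$ --- I would pick $u$ in the image of the mod $\mathfrak{P}^n$ cyclotomic character with $u \not\equiv \pm 1 \bmod \mathfrak{P}$, and then $\gamma \in \Gal(F_n K/F)$ whose $F_n$-component acts on $T_{{\bf f},n}$ as a conjugate of $\operatorname{diag}(1,u)$ --- available because the reduction mod $\mathfrak{P}^n$ of the image of $\rho_{\bf f}$ contains $\SL(\mathcal{O}/\mathfrak{P}^n)$ (standard lifting lemma) together with an element of determinant $u$ --- and whose $K$-component is nontrivial. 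Lifting $\gamma$ to $\widetilde\gamma \in \Gal(E/F)$, the disjointness statements say $\widetilde\gamma$ may be varied freely over a coset of $\Gal(F_1'/\widetilde F_1) \cong A_{{\bf f},1}$; by the cocycle relation, $s(\widetilde\gamma^2) = (1+\overline\gamma)\,s(\widetilde\gamma)$, and as $s(\widetilde\gamma)$ runs over all of $A_{{\bf f},1}$ (surjectivity of $\overline s$) this runs over $(1+\overline\gamma)A_{{\bf f},1} = A_{{\bf f},1}$ (invertible, as $u \not\equiv -1$). Hence I may fix $\widetilde\gamma$ with $s(\widetilde\gamma^2)$ \emph{not} in the $u^2$-eigenline $(\overline\gamma^2 - 1)A_{{\bf f},1}$ of $\overline\gamma^2$.

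By Chebotarev there are infinitely many primes $v$ of $F$ --- unramified in $E$ and outside the finite set dividing $p\mathfrak{N}$ or the conductor of $s$ --- with Frobenius (at a prime of $E$) equal to $\widetilde\gamma$. For such $v$: it is inert in $K$ (as $\widetilde\gamma\vert_K \neq 1$) and prime to $p\mathfrak{N}$; by the local--global compatibility of Theorem \ref{CTW}, $\Frob_v$ has characteristic polynomial $X^2 - a_v({\bf f})X + {\bf N}(v) \equiv (X-1)(X-{\bf N}(v)) \bmod \mathfrak{P}^n$ with ${\bf N}(v) \equiv u$, so $\mathfrak{P}_n \mid {\bf N}(v)+1-a_v({\bf f})$ and $\mathfrak{P} \nmid {\bf N}(v)^2 - 1$; thus $v$ is $n$-admissible. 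Because $v$ is unramified in $E$ (which contains the field cut out by $s$), $\operatorname{loc}_v(s)$ is unramified, i.e.\ $\partial_v(s) = 0$; and $\vartheta_v(s)$, its image in $H^1_{\fin}(K_v, A_{{\bf f},1}) \cong A_{{\bf f},1}/(\Frob_v^2 - 1)A_{{\bf f},1}$, equals the class of $s(\widetilde\gamma^2)$, which is nonzero by construction. The main obstacle is exactly the disjointness bookkeeping of the second paragraph: separating the independence of $K$, $F_n$, and the $s$-extension $F_1'$ so that a single conjugacy class simultaneously meets conditions (i)--(iv) of $n$-admissibility while keeping $\vartheta_v(s)$ nonzero. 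This is where Hypotheses \ref{galrep} (iv)--(v), hypothesis (A), and the tameness of $K/F$ are used, and it goes through as in \cite[\S 3]{BD} and \cite[\S 3]{PW}.
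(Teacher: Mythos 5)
Your overall strategy is the paper's: as in \cite[Theorem 3.2]{BD}, you apply Chebotarev in a compositum of $K$, the field cut out by the mod $\mathfrak{P}^n$ representation, and the field cut out by $s$, prescribing the Frobenius so that conditions (i)--(iv) of admissibility hold while the abelian layer forces $\vartheta_v(s)\neq 0$; your explicit verification that $\overline{s}$ is surjective (inflation--restriction plus the big-image hypothesis) is a point the paper leaves implicit. The gap is in the nonvanishing step. The relation $s(\widetilde\gamma^2)=(1+\overline\gamma)\,s(\widetilde\gamma)$ is not meaningful as written: $s$ is a class in $H^1(K,A_{{\bf f},1})$ and $\widetilde\gamma$ does not fix $K$, so $s(\widetilde\gamma)$ is undefined. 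The correct computation, varying the lift by $a\in\Gal(E/F_nK)\cong A_{{\bf f},1}$, changes $s(\widetilde\gamma^2)$ by $\overline{s}(a)$ plus a term involving the conjugate class $s^{\tau}$; to control it you must first reduce to the case that $s$ is an eigenvector for complex conjugation, $s^{\tau}=\varpi s$ with $\varpi\in\{\pm1\}$ (a reduction that itself needs a word: at an inert prime the chosen place is $\tau$-stable, so the $\pm$ components of $s$ localize into the $\pm$ eigenspaces of $\tau$ on $H^1_{\fin}(K_v,A_{{\bf f},1})$ and cannot cancel). Without that reduction your field $F_1'$, hence $E$, need not even be Galois over $F$, and the coset/Chebotarev bookkeeping you describe does not apply. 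This is exactly why the paper's proof first places $s$ in a $\tau$-eigenspace and works with the semidirect product $\Gal(L_s/F)=A_{{\bf f},1}\rtimes\Gal(L/F)$ equipped with the $\varpi$-twisted action $(\tau^j,T)(a)=\varpi^j\overline{T}a$.

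Second, even after this reduction the operator governing your sweep is $1+\varpi\overline\gamma$, not $1+\overline\gamma$. With your choice of Frobenius eigenvalues $(1,u)$ it is invertible only when $\varpi=+1$; when $\varpi=-1$ its image is the $u$-eigenline of $\overline\gamma$, which is precisely the line $(\overline\gamma^2-1)A_{{\bf f},1}$ you must avoid, so varying the lift gains nothing and you have no control over the base value of $s(\widetilde\gamma^2)$. The paper's device repairs exactly this: take the Frobenius to act with eigenvalues $\varpi$ and $\lambda$ with $\lambda\not\equiv\pm1$ (in your normalization, eigenvalues $(\varpi,\varpi u)$ rather than $(1,u)$, which still gives conditions (iii) and (iv), now with sign $\varepsilon=\varpi$), and choose the $A_{{\bf f},1}$-component $a$ of the Frobenius inside the $\varpi$-eigenspace of $T$; then the Frobenius at a prime of $L$ above $v$ is $ea$ with $e$ prime to $p$, so the restriction of $\overline{s}$ there is nonzero and $\vartheta_v(s)\neq 0$ follows directly, with no sweeping argument needed. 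With these two corrections your argument becomes the paper's proof.
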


\begin{proof} By direct generalization of \cite[Theorem 3.2]{BD}. That is, 
fix a class $s \in H^1(K, A_{{\bf{f}},n})$. Let $F(A_{{\bf{f}}, n})$ denote the 
extension of $F$ fixed by the kernel of the $G_F$-representation 
$A_{{\bf{f}}, n}$. Let $L$ denote the compositum extension $K F(A_{{\bf{f}}, n})$.
Since we assume that the relative discriminant $\mathfrak{D}_{K/F}$ 
is prime to the level $\mathfrak{N}$, we claim that 
the extensions $F(A_{{\bf{f}}, n})$ and $K$ are 
linearly disjoint over $F$. Granted this property, we obtain the following 
description of the Galois group 
of $L$ over $F$: \begin{align*} \Gal(L/F) &= \Gal(K/F) 
\times \Gal(F(A_{{\bf{f}},n})/F) \\
&\subseteq \lbrace {\bf{1}}, \tau \rbrace \times
\Aut_{\mathcal{O}/\mathfrak{P}^n}(A_{{\bf{f}},n}).
\end{align*} Here, $\tau \in \Gal(K/F)$ denotes the complex conjugation
automorphism. Hence, any element of $\Gal(L/F)$ can be
written as a pair $(\tau^j, T)$, with $j \in \lbrace 0,1 \rbrace
$ and $T \in \Aut_{\mathcal{O}_F/\mathfrak{P}^n}(A_{{\bf{f}},n}).$
Let $\overline{s}$ denote the image of $s$ under restriction to the cohomology group
\begin{align*} H^1(L, A_{{\bf{f}}, 1}) = \Hom(\Gal(\overline{{\bf{Q}}}/L), 
A_{{\bf{f}}, 1}).\end{align*} Let $L_{s}$ denote the extension of $L$ cut out by this class. 
Assume without loss of generality that $s$ belongs to a fixed 
eigenspace for the action of the complex conjugation automorphism $\tau$. 
Let us then write $\varpi$ to denote the eigenvalue of $\tau$ acting on $s$, so 
that we have the relation $\tau \cdot s = \varpi s,$ where $ \varpi \in \lbrace \pm 1 \rbrace $.
It follows from this assumption that $L_s$, a priori only Galois over $K$, 
is in fact Galois over $F$. Moreover, since $A_{{\bf{f}},1}$ is an irreducible 
$G_F$-module by Hypothesis \ref{galrep} (iii),  we can and will make the 
following identification: 
\begin{align*}\Gal(L_s/F) = A_{{\bf{f}},1} \rtimes \Gal(L/F).\end{align*}
Here, $\Gal(L/F)$ acts on the normal abelian subgroup
$A_{{\bf{f}},1}$ by the rule \begin{align}\label{rule} (\tau^j, T)(a) 
&= \varpi^j \overline{T}a, \end{align} where $a$ denotes an element 
of $ A_{{\bf{f}}, 1}$, and $\overline{T}$ denotes the image of $T$ in 
$\Aut_{\mathcal{O}/\mathfrak{P}}(A_{{\bf{f}}, 1})$. Since the image of 
$\overline{\rho}_{\bf{f}}$ contains $\SL({\bf{F}}_p)$ by Hypothesis 
\ref{galrep}, we can and will identify $\Aut_{\mathcal{O}/\mathfrak{P}}(A_{{\bf{f}},1})$ with 
$\SL({\bf{F}}_p)$. We deduce from this description that $\Gal(L_s/F)$ 
contains at least one element $(a, \tau, T)$ such that the following
conditions hold:

\begin{itemize}
\item[1.] The automorphism $T$ has distinct eigenvalues 
$\varpi$ and $\lambda$, where the eigenvalue 
$\lambda$ lies in $\left( \mathcal{O} / \mathfrak{P}\right)^{\times}$, 
has order prime to $p$, and satisfies the property that 
${\bf{N}}(\lambda)$ is not congruent to $ \pm 1$ mod $ p$. 
\item[2.] The vector $a \in A_{{\bf{f}}, 1}$ belongs to the $\varpi$-eigenspace 
for the action of $\overline{T}$.\end{itemize} 
Let us now take $v \nmid \mathfrak{N}$ to be {\it{any}} prime of $F$
that is unramified in the extension $L_s$, with the additional 
condition that \begin{align}\label{frob} \Frob_v(L_s/F) &= (a, \tau, T). \end{align}
Observe that infinitely many such primes exist by the Cebotarev density theorem. 
We deduce from $(\ref{frob})$ that $ \Frob_v(L/F) = (\tau, T)$, and in particular 
that $v$ is $n$-admissible with respect to ${\bf{f}}$. We now argue 
that $\vartheta_v(s) \neq 0$. To see this, fix a prime $\mathfrak{v}$ above
$v$ in $L$. Let $e$ denote the degree of the corresponding residue field. Note 
that $e$ is necessarily even, as $L_{\mathfrak{v}}$ contains the quadratic unramified 
extension of $F_v$. Using $(\ref{rule})$ along with condition $2.$ 
for $(a, \tau, T)$, we find that 
\begin{align*} \Frob_{\mathfrak{v}}(L_s/L) &= (a, \tau, T)^e 
= a + \varpi \overline{T}a  + \overline{T}^2a + \ldots \varpi 
\overline{T}^{e-1}a  = ea. \end{align*} Here, the addition symbol 
denotes group multiplication. Recall that we let $\overline{s}$ 
denote the image of $s$ in $H^1(L, A_{{\bf{f}}, 1}) = 
\Hom(\Gal(\overline{\bf{Q}}/L), A_{{\bf{f}},1})$ 
under restriction. Since $e$ is prime to $p$ by Hypothesis \ref{galrep} (i), we find that 
\begin{align*} \overline{s}\left(\Frob_v(L_s/L) \right) 
&= e \cdot \overline{s}(a) \neq 0. 
\end{align*} Hence, the restriction at $\mathfrak{v}$ of $\overline{s}$ 
does not vanish. Hence, $\vartheta_v(s)$ does not
vanish, as required.  \end{proof}\end{remark}

\section{The Euler system argument}

Let first describe the Euler system that we shall construct 
in the subsequent sections. This construction and subsequent argument will 
generalize those of Bertolini-Darmon \cite{BD}, or more specifically the refinements 
of these due to Pollack-Weston \cite{PW}. Fix an integer $n \geq 1$. Recall that we 
write $\mathcal{S}_2(\mathfrak{N}^{+}, \mathfrak{N}^{-})$ to denote the subspace of 
$\mathcal{S}_2(\mathfrak{N}^{+} \mathfrak{N}^{-})$ consisting of cuspforms that are 
new at all primes $v \subset \mathcal{O}_F$ dividing $\mathfrak{N}^{-}.$ Recall as well 
that we write ${\bf{T}}(\mathfrak{N}^{+}, \mathfrak{N}^{-})$ to denote 
the algebra of Hecke operators acting faithfully on 
$\mathcal{S}_2(\mathfrak{N}^{+}, \mathfrak{N}^{-})$, with 
${\bf{T}}_0(\mathfrak{N}^{+}, \mathfrak{N}^{-})$ its $p$-adic completion.
Let us now fix an eigenform ${\bf{f}} \in \mathcal{S}_2(\mathfrak{N}^{+}, 
\mathfrak{N}^{-})$. We shall use the theories of level raising congruences 
and CM points on Shimura curves over totally real fields to construct 
for each $n$-admissible prime $v \subset \mathcal{O}_F$ with respect to 
${\bf{f}}$ a class \begin{align}\label{zeta}\zeta(v) \in
\widehat{H}^{1}(K_{\mathfrak{p}^{\infty}}, T_{{\bf{f}},n}).\end{align} 
Observe that since $v$ is $n$-admissible, we have the decompositions 
\begin{align*} \widehat{H}^1(K_{\mathfrak{p}^{\infty}, v}, T_{{\bf{f}},n}) &=
\widehat{H}_{\fin}^1(K_{\mathfrak{p}^{\infty}, v}, T_{{\bf{f}},n})
\oplus \widehat{H}_{\ord}^1(K_{\mathfrak{p}^{\infty}, v},
T_{{\bf{f}},n}) \\ &= \widehat{H}_{\fin}^1(K_{\mathfrak{p}^{\infty},
v}, T_{{\bf{f}},n}) \oplus \widehat{H}_{\sing}^1
(K_{\mathfrak{p}^{\infty}, v}, T_{{\bf{f}},n}).\end{align*} 
Hence, we may view the homomorphism
\begin{align*} \vartheta_v: 
\widehat{H}^1(K_{\mathfrak{p}^{\infty}, v}, T_{{\bf{f}},n}) \longrightarrow
\widehat{H}^1(K_{\mathfrak{p}^{\infty}, v}, T_{{\bf{f}},n}) /
\widehat{H}_{\ord}^1(K_{\mathfrak{p}^{\infty}, v}, T_{{\bf{f}},n}) \end{align*} 
as a projection onto the first component of the first decomposition,
and the homomorphism 
\begin{align*}\partial_v: \widehat{H}^1(K_{\mathfrak{p}^{\infty}, v}, T_{{\bf{f}},n}) 
\longrightarrow \widehat{H}_{\sing}^1(K_{\mathfrak{p}^{\infty}, v}, 
T_{{\bf{f}},n})\end{align*} as a projection onto the second component of the second
decomposition. We shall deduce in subsequent sections the following
explicit reciprocity laws for the classes $(\ref{zeta})$.

\begin{theorem}\emph{(The first explicit reciprocity law).}\label{erl1}
Let ${\bf{f}} \in \mathcal{S}_2(\mathfrak{N}^{+}, \mathfrak{N}^{-})$ be a 
$\mathfrak{p}$-ordinary eigenform, as defined above. Assume that 
the conditions of Theorem \ref{raiseonefree} and Corollary \ref{galoisid} below are satsified. If 
$v \subset \mathcal{O}_F$ is an $n$-admissible prime with respect to 
${\bf{f}}$, then $\vartheta_v\left( \zeta(v)\right)=0$. Moreover, the equality
\begin{align}\label{ERL1} \partial_v\left( \zeta(v) \right) = \mathcal{L}_{\ff}
\end{align} holds in $\widehat{H}^1_{\sing}(K_{\mathfrak{p}^{\infty},v}, T_{{\bf{f}},n}) \cong
\Lambda/\mathfrak{P}^n,$ up to multiplication by
elements of $\mathcal{O}^{\times}$\ or $G_{\mathfrak{p}^{\infty}}$.
\end{theorem}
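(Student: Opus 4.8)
The plan is to realize $\zeta(v)$ as the Kummer image of a CM divisor of $\mathfrak{p}$-power conductor on the Shimura curve $\mathfrak{M}=\mathfrak{M}(\N^{+},v\N^{-})$, and then to compute its residue at $v$ by means of the $v$-adic uniformization of $\mathfrak{M}$. The level-raising theorem (Theorem \ref{raiseonefree}, which uses Hypothesis \ref{freeness}) produces an eigenform $\ff^{(v)}$ of level $v\N$ on the indefinite quaternion algebra $B$ ramified at $\{v\}\cup\{w:w\mid\N^{-}\}\cup\{\tau_{2},\dots,\tau_{d}\}$ with $\ff^{(v)}\equiv\ff\bmod\mathfrak{P}^{n}$ and new at $v$, and Corollary \ref{galoisid} identifies $A_{\ff^{(v)}}[\mathfrak{P}^{n}]$ with $T_{\ff,n}$ compatibly with the Galois and Hecke actions, where $A=A_{\ff^{(v)}}$ is the abelian-variety quotient of $\Jac(\mathfrak{M})$ supplied by Proposition \ref{hilbertAV}. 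The CM points of $\mathfrak{p}$-power conductor attached to the optimal embedding $\Psi\colon K\to B$ are defined over the ring class fields $K[\mathfrak{p}^{m}]$; pushing the associated divisors to $A$, applying the Kummer map, and corestricting down the anticyclotomic tower — using the finiteness of $A(K_{\mathfrak{p}^{\infty}})_{p^{\infty}}$ and the control afforded by Theorems \ref{3.3} and \ref{3.4} — produces $\zeta(v)\in\widehat{H}^{1}(K_{\mathfrak{p}^{\infty}},T_{\ff,n})$. Since $v$ is $n$-admissible and $\ff^{(v)}$ is new at $v$, the variety $A$ has purely toric reduction at $v$ by the argument of Lemma \ref{ord}, and everything that follows is a purely local computation at $v$.

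The vanishing $\vartheta_{v}(\zeta(v))=0$ is then formal. At a prime of purely toric reduction the image of the local Kummer map $A(K_{\mathfrak{p}^{\infty}, v})\otimes{\bf{Q}}_{p}/{\bf{Z}}_{p}\longrightarrow H^{1}(K_{\mathfrak{p}^{\infty}, v},A_{\ff,n})$ is exactly the ordinary local condition $\widehat{H}^{1}_{\ord}(K_{\mathfrak{p}^{\infty}, v},T_{\ff,n})$ cut out by $(\ref{ordsequence})$ and $(\ref{ordisom})$ (see \cite{CG}), and $\partial_{v}$ restricts to an isomorphism from that condition onto $\widehat{H}^{1}_{\sing}(K_{\mathfrak{p}^{\infty}, v},T_{\ff,n})$. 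Hence $\zeta(v)$ has trivial component in the complementary unramified summand, i.e. $\vartheta_{v}(\zeta(v))=0$, and $\partial_{v}(\zeta(v))$ is nothing but the image of $\zeta(v)$ under localization at $v$, viewed in $\widehat{H}^{1}_{\sing}(K_{\mathfrak{p}^{\infty}, v},T_{\ff,n})\cong\Lambda/\mathfrak{P}^{n}$ via Lemma \ref{2.6/2.7}.

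To identify this localization with $\mathcal{L}_{\ff}$ I would invoke the $v$-adic uniformization of $\mathfrak{M}$ (of Cerednik--Drinfeld type; cf. \cite{BD} for $F={\bf{Q}}$, and \cite{Ca}, \cite{Nek} for integral models over totally real fields): $\mathfrak{M}$ has semistable reduction at $v$, the dual graph of its special fibre is the finite quotient graph $\coprod_{i}\Gamma_{i}\backslash\mathcal{T}_{v}$ attached to the \emph{definite} quaternion algebra $D$ of discriminant $\N^{-}$ — obtained from $B$ by exchanging the ramification at $v$ and at the archimedean place $\tau_{1}$ — with Eichler level $\N^{+}$, and the character group of the toric part of the N\'eron model of $\Jac(\mathfrak{M})$ at $v$ is the module $\mathcal{S}_{2}(\coprod_{i}\Gamma_{i}\backslash\mathcal{T}_{v};{\bf{Z}})$ of $\S 3$ (with $\mathcal{T}_{v}$ in place of $\mathcal{T}_{\mathfrak{p}}$). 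By the level-raising congruence its $\ff^{(v)}$-isotypic component is free of rank one over $\mathcal{O}/\mathfrak{P}^{n}$ and is matched, through Jacquet--Langlands (Proposition \ref{p-JLC}), with the line spanned by the mod $\mathfrak{P}^{n}$ eigenform $\Phi$ that defines $\mathcal{L}_{\ff}=\mathcal{L}_{\Phi}$. Under the Tate parametrization the residue $\partial_{v}$ of the Kummer class of a point of $A(K_{\mathfrak{p}^{\infty}, v})$ is computed by the specialization homomorphism into this character group; tracing through, $\partial_{v}(\zeta(v))$ becomes the $\Lambda/\mathfrak{P}^{n}$-valued function recording the positions in $\coprod_{i}\Gamma_{i}\backslash\mathcal{T}_{v}$ of the reductions of the CM points, paired against $\Phi\bmod\mathfrak{P}^{n}$, as $\sigma$ ranges over $G[\mathfrak{p}^{\infty}]$. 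Because these CM points have $\mathfrak{p}$-power conductor and are compatible under the $U_{\mathfrak{p}}$-norm relations, the induced $G[\mathfrak{p}^{\infty}]$-action on the resulting configuration is the action $\star$ of $\S 3$ and the $U_{\mathfrak{p}}$-eigenvalue enters as the factors $\alpha_{\mathfrak{p}}^{-j}$; comparing with the construction of the measure $\vartheta_{\Phi}$ there yields $\partial_{v}(\zeta(v))=\mathcal{L}_{\Phi}=\mathcal{L}_{\ff}$ in $\Lambda/\mathfrak{P}^{n}$. The ambiguity by $\mathcal{O}^{\times}$ reflects the polarization chosen in Corollary \ref{AVpairing}, and that by $G_{\mathfrak{p}^{\infty}}$ the choice of base CM point and of the compatible system of consecutive edges.

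The main obstacle is exactly this last step: setting up the $v$-adic uniformization of $\mathfrak{M}(\N^{+},v\N^{-})$ and identifying the character group of the toric part of $\Jac(\mathfrak{M})$ at $v$ with $D$-automorphic forms over a general totally real field, Hecke- and $\Gal(K_{\mathfrak{p}^{\infty}}/K)$-equivariantly — in particular matching the reduction map on CM points with the action $\star$, and working over the $h$ components indexed by $\Gamma_{1},\dots,\Gamma_{h}$, since $\mathfrak{M}$ need not be geometrically connected (cf. $(\ref{components})$). A second essential point, where Hypotheses \ref{galrep}(iv) and (v), Hypothesis \ref{freeness}, and Theorem \ref{raiseonefree} are used, is the freeness of rank one over $\Lambda/\mathfrak{P}^{n}$ of the relevant $\ff$-localizations. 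The remaining passage from the ring class fields $K[\mathfrak{p}^{m}]$ to the anticyclotomic ${\bf{Z}}_{p}^{\delta}$-tower is routine, carried out by a limit argument as in \cite{BD5} together with Lemma \ref{cft} and Corollary \ref{2.4/2.5}.
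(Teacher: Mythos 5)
Your proposal is correct and follows essentially the same route as the paper: it constructs $\zeta(v)$ from compatible CM points of $\mathfrak{p}$-power conductor on $M(\mathfrak{N}^{+},v\mathfrak{N}^{-})$, gets the vanishing of $\vartheta_{v}$ from the toric reduction at the admissible prime, and computes $\partial_{v}$ through the Cerednik--Drinfeld special fibre, the reduction of CM points to vertices of the dual graph, the component-group identification supplied by Hypothesis \ref{freeness} and Corollary \ref{galoisid}, and the Jacquet--Langlands/strong-approximation comparison with the measure defining $\mathcal{L}_{\ff}$ --- exactly the chain Corollary \ref{ESid}, Propositions \ref{modv}, \ref{omeganat} and \ref{cmpointreduction} used in the text. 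The only cosmetic difference is that you phrase the Galois-module identification via the abelian-variety quotient $A_{\ff^{(v)}}$ rather than via $\Ta_{p}(J(\mathfrak{N}^{+},v\mathfrak{N}^{-}))/\mathcal{I}_{{\bf{f}}_{v}}\cong T_{\ff,n}$ as in Corollary \ref{galoisid}(ii), which amounts to the same thing.
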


\begin{proof} See Theorem \ref{ERL1} below.\end{proof}
To state the second reciprocity law for these classes
$(\ref{zeta})$, we require the following weak 
level-raising result at two primes. That is, 
let $v_1$ and $v_2$ be two distinct $n$-admissible primes 
with respect to ${\bf{f}}$ such that 
\begin{align}\label{v_i} {\bf{N}}(v_i) + 1 - \varepsilon_i \cdot 
a_{v_i}({\bf{f}}) \equiv 0 \mod \mathfrak{P}_n \end{align} for each
of $i = 1,2$, where $\varepsilon_i \in \lbrace \pm 1 \rbrace$.

\begin{proposition}
\label{elr} Let ${\bf{f}} \in \mathcal{S}_2(\mathfrak{N}^{+}, \mathfrak{N}^{-})$ 
be a $\mathfrak{p}$-ordinary eigenform, as defined above. Assume that the 
conditions of Theorem \ref{raiseonefree} and Corollary \ref{galoisid} below are satisfied, 
and moreover that $F$ is linearly disjoint from the cyclotomic field ${\bf{Q}}(\zeta_p)$. There exists a 
mod $\mathfrak{P}_n$ eigenform ${\bf{g}}$ with respect to the Hecke algebra 
${\bf{T}}_0(\mathfrak{N}^{+}, v_1 v_2 
\mathfrak{N}^{-})$ such that the following congruences hold:

\begin{itemize}
\item[(i)] $T_w ({\bf{g}}) \equiv a_w({\bf{f}}) \cdot{\bf{g}} 
\mod \mathfrak{P}_n$ for all primes $w \nmid v_1 v_2 
\mathfrak{N}^{+}\mathfrak{N}^{-}$ of $ \mathcal{O}_F$.
\item[(ii)] $U_w ({\bf{g}}) \equiv a_w({\bf{f}}) \cdot{\bf{g}} 
\mod \mathfrak{P}_n$ for all primes 
$w \mid \mathfrak{N}^{+}\mathfrak{N}^{-}$ of $ \mathcal{O}_F$.
\item[(iii)] $U_{v_i}({\bf{g}}) \equiv \varepsilon_i \cdot{\bf{g}} \mod \mathfrak{P}_n$ 
for $i=1,2$.\end{itemize}\end{proposition}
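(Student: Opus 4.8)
The plan is to reduce the two-prime level-raising statement to an iteration of one-prime level-raising on quaternionic eigenforms, using the Jacquet--Langlands correspondence to move between incarnations of $\ff$ and exploiting the admissibility conditions together with Hypothesis~\ref{galrep}(v) (to guarantee that $\overline{\rho}_{\ff}$ is big enough for the relevant Hecke-module freeness) and the linear disjointness of $F$ from ${\bf{Q}}(\zeta_p)$ (which is needed precisely to prevent the mod-$p$ cyclotomic character from obstructing the second application). First I would invoke Theorem~\ref{raiseonefree} to raise the level of $\ff$ at the single admissible prime $v_1$: this produces a mod $\mathfrak{P}_n$ eigenform ${\bf{g}}_1$ on a quaternion algebra of discriminant $v_1\mathfrak{N}^-$, new at $v_1$, with $U_{v_1}({\bf{g}}_1)\equiv\varepsilon_1{\bf{g}}_1$ and all other Hecke eigenvalues congruent to those of $\ff$ mod $\mathfrak{P}_n$. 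The content here is the freeness statement underlying Theorem~\ref{raiseonefree} (the analogue of \cite[Theorem 5.1]{BD}, cf.\ \cite[Proposition 6.3]{PW}), which is exactly where Hypothesis~\ref{freeness} enters; combined with the Galois-representation identification of Corollary~\ref{galoisid}, this is essentially Ribet-style level raising over totally real fields.

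Next I would apply the one-prime level-raising machinery a \emph{second} time, now to ${\bf{g}}_1$ at the prime $v_2$, to obtain the desired ${\bf{g}} = {\bf{g}}_2$ on a quaternion algebra of discriminant $v_1 v_2 \mathfrak{N}^-$, new at both $v_1$ and $v_2$, with $U_{v_2}({\bf{g}})\equiv\varepsilon_2{\bf{g}}$. The subtlety is that to legitimately re-run Theorem~\ref{raiseonefree} with base form ${\bf{g}}_1$ one must check that ${\bf{g}}_1$ still satisfies the admissibility/bigness hypotheses relative to $v_2$: since $v_2$ is $n$-admissible with respect to $\ff$ and $T_{{\bf{g}}_1,n}\cong T_{\ff,n}$ as $G_F$-modules (by Corollary~\ref{galoisid}), $v_2$ is $n$-admissible with respect to ${\bf{g}}_1$ as well, and the residual representation is unchanged, so Hypothesis~\ref{freeness} continues to apply. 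The parity condition "$\mathfrak{N}^-$ is a squarefree product of a number of primes congruent to $d\bmod 2$" is preserved under adjoining the two inert primes $v_1,v_2$, so a totally definite (resp. indefinite) quaternion algebra of discriminant $v_1 v_2\mathfrak{N}^-$ exists with the correct parity, and the Jacquet--Langlands transfer is available at each stage.

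The main obstacle — and the reason the hypothesis "$F$ linearly disjoint from ${\bf{Q}}(\zeta_p)$" is imposed — is controlling the second level-raising step: one needs the local Galois cohomology / local Hecke module at $v_2$ to be free of rank one over $\mathcal{O}/\mathfrak{P}^n$ (the analogue of Lemma~\ref{2.6/2.7}), and in the two-prime situation the relevant Eichler--Shimura-type comparison can acquire an obstruction coming from the mod-$p$ cyclotomic character precisely when ${\bf{Q}}(\zeta_p)\cap F\neq{\bf{Q}}$; linear disjointness kills this. Concretely I would isolate this as the statement that $H^1_{\mathrm{sing}}(K_{v_2},T_{{\bf{g}}_1,n})$ is free of rank one and that the residue map detects the new-at-$v_2$ quotient, then feed it into the same freeness/surjectivity argument as in the one-prime case. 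A secondary technical point, which I expect to be routine given the machinery already set up, is the bookkeeping of Hecke eigenvalue congruences at primes dividing $\mathfrak{N}^+\mathfrak{N}^-$ through both transfers, which follows because the degeneracy and trace maps commute with the $T_w$ and $U_w$ away from $v_1, v_2$ and the Jacquet--Langlands correspondence is Hecke-equivariant at all such $w$.
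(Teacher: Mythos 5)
Your plan to iterate one-prime level raising does not go through as stated, and it is not the route the paper takes. The output of Theorem \ref{raiseonefree} is only a mod $\mathfrak{P}_n$ eigenform, i.e.\ a surjection ${\bf{T}}_0(\mathfrak{N}^+, v_1\mathfrak{N}^-) \rightarrow \mathcal{O}_0/\mathfrak{P}_n$; it is not a characteristic-zero eigenform to which Theorem \ref{raiseonefree} can simply be applied a second time at $v_2$. Lifting ${\bf{g}}_1$ to characteristic zero only preserves the congruence to limited depth (and gives no control of the $U_{v_1}$-eigenvalue $\varepsilon_1$ modulo $\mathfrak{P}_n$), so the depth-$n$ congruences in (i)--(iii) are lost. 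Even if you recast the second step purely in terms of Hecke algebras and the maximal ideal attached to $\overline{\rho}_{\bf{f}}$ with $U_{v_1}=\varepsilon_1$, you would need the analogue of the character-group/component-group input (the freeness of Hypothesis \ref{freeness}, now over ${\bf{T}}_0(\mathfrak{N}^+, v_1\mathfrak{N}^-)$, or equivalently some form of Ihara's lemma for the curve $M(\mathfrak{N}^+, v_1\mathfrak{N}^-)$ at $v_2$), and this is exactly the nontrivial content that a second citation of the one-prime theorem does not supply. Your stated reason for the hypothesis that $F$ be linearly disjoint from ${\bf{Q}}(\zeta_p)$ is also off: the rank-one freeness of $H^1_{\sing}(K_{v_2}, T_{{\bf{f}},n})$ needs only $n$-admissibility (Lemma \ref{2.6/2.7}), and there is no cyclotomic-character obstruction of the kind you describe.

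The paper's actual argument (Proposition \ref{gammaeigenform}) is geometric, in the style of Bertolini--Darmon: after the first level raising one passes to the indefinite quaternion algebra of discriminant $v_1\mathfrak{N}^-$ and its Shimura curve, reduces modulo $v_2$, and uses Proposition \ref{carayolss} to identify divisors supported on supersingular points with the space $\mathbb{S}_2(\mathfrak{N}^+, v_1v_2\mathfrak{N}^-; {\bf{Z}})$ of forms on the totally definite quaternion algebra of discriminant $v_1v_2\mathfrak{N}^-$. The composite $\gamma$ of reduction, the Kummer map, and the identification $\Ta_p(J(\mathfrak{N}^+, v_1\mathfrak{N}^-))/\mathcal{I}_{{\bf{f}}_{v_1}} \cong T_{{\bf{f}},n}$ from Corollary \ref{galoisid} is shown to be Hecke-equivariant with the required eigenvalues via the Eichler--Shimura relations (Lemma \ref{relations}, which is where the condition $U_{v_2}\equiv\varepsilon_2$ comes from, through $\Frob_{v_2}$), and the crux is the surjectivity of $\gamma$, proved via Ihara's lemma for Shimura curves (Hypothesis \ref{ihara}) together with Lemma \ref{p-torsion}; it is precisely in Lemma \ref{p-torsion} that the linear disjointness of $F$ from ${\bf{Q}}(\zeta_p)$ is used, to make the unipotent-mod-$\mathfrak{p}$ subgroups torsion free so that Ihara's theorem applies. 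If you want to salvage your outline, you would have to replace the second application of Theorem \ref{raiseonefree} by an argument of this type (or by an explicit freeness statement at the raised level), since that is where the real work lies.
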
 

\begin{proof} See Proposition \ref{gammaeigenform} below. 
\end{proof} We then use this result to deduce the following

\begin{theorem}\emph{(The second explicit reciprocity law).} \label{erl2}
Keep the notations and hypotheses of Proposition \ref{elr}. The
equality \begin{align}\label{ERL2} \vartheta_{v_1}\left( \zeta(v_2) \right) =
\mathcal{L}_{{\bf{g}}} \end{align} holds in
$\widehat{H}^1(K_{\mathfrak{p}^{\infty},v_2}, T_{{\bf{f}},n}) \cong
\Lambda/\mathfrak{P}^n,$ up to multiplication by
elements of $\mathcal{O}^{\times}$ or $G_{\mathfrak{p}^{\infty}}$.
\end{theorem}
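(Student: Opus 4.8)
The plan is to compute the unramified local component $\vartheta_{v_1}(\zeta(v_2))$ directly from the construction of $\zeta(v_2)$ carried out in \S\S 7--11, by reducing the defining CM points modulo $v_1$. Recall that $\zeta(v_2)$ is built from a norm-compatible system of CM points of $\mathfrak p$-power conductor on the Shimura curve $\mathfrak M(\mathfrak N^{+},v_2\mathfrak N^{-})$ attached to the indefinite quaternion algebra $B/F$ of discriminant $v_2\mathfrak N^{-}$ and Eichler level $\mathfrak N^{+}$, pushed into the Jacobian $J$ and composed with the Kummer map $\delta\colon J(K_{\mathfrak p^{\infty}})\otimes\mathcal O/\mathfrak P^{n}\longrightarrow\widehat{H}^{1}(K_{\mathfrak p^{\infty}},T_{\ff,n})$ attached to the $\ff$-isotypic quotient of $J$. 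Since $v_1$ is $n$-admissible it is prime to $\mathfrak p\mathfrak N v_2$, so $\mathfrak M(\mathfrak N^{+},v_2\mathfrak N^{-})$ has good reduction at $v_1$; as the defining CM points lie in ring class fields in which $v_1$ splits completely (Lemma~\ref{cft}), they spread out over the smooth integral model, so that $\partial_{v_1}(\zeta(v_2))=0$ and $\vartheta_{v_1}(\zeta(v_2))$ is a well-defined element of $\widehat{H}^{1}_{\fin}(K_{\mathfrak p^{\infty},v_1},T_{\ff,n})$, which by Lemma~\ref{2.6/2.7} is free of rank one over $\Lambda/\mathfrak P^{n}$.

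First I would record the standard local fact that, for an abelian variety with good reduction over the unramified local field $K_{\mathfrak p^{m},v_1}$, the projection of a Kummer class onto the unramified part is computed by the reduction of the rational point modulo $v_1$: this uses the identification of $\widehat{H}^{1}_{\fin}(K_{\mathfrak p^{m},v_1},T_{\ff,n})$ with the Frobenius coinvariants of $T_{\ff,n}$, together with the fact, from $n$-admissibility, that Frobenius at $v_1$ acts semisimply with the two distinct eigenvalues $1$ and ${\bf{N}}(v_1)^{2}$ modulo $\mathfrak P^{n}$. Hence, under the isomorphism of Lemma~\ref{2.6/2.7} and up to an element of $\mathcal O^{\times}$, $\vartheta_{v_1}(\zeta(v_2))$ equals the $\Lambda/\mathfrak P^{n}$-valued function on the ring class tower obtained by reducing the CM points into the special fibre $\overline{\mathfrak M}_{v_1}$ of $\mathfrak M(\mathfrak N^{+},v_2\mathfrak N^{-})$ and recording their images, with the $U_{\mathfrak p}$-normalization by $\alpha_{\mathfrak p}^{-j}$ that is built into the construction of \S 3.

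Next I would invoke the Deuring--Serre--Ribet / \v{C}erednik--Drinfeld-type description of this reduction, in the form available over totally real fields (Carayol, Boutot--Carayol, together with the integral models recalled in \S\S 7--11): the supersingular locus of $\overline{\mathfrak M}_{v_1}$ is canonically identified, compatibly with the Hecke operators away from $v_1$ and with the conjugation action generating the anticyclotomic tower, with the double coset space ${D'}^{\times}\backslash\widehat{D'}^{\times}/\widehat{R'}^{\times}$ for the \emph{totally definite} quaternion algebra $D'/F$ of discriminant $v_1v_2\mathfrak N^{-}$ --- obtained from $B$ by interchanging the local invariants at $v_1$ and at the split archimedean place $\tau_1$ --- and an Eichler order $R'$ of level $\mathfrak N^{+}$; moreover the CM points of $\mathfrak p$-power conductor, which all reduce into this locus because $v_1$ is inert in $K$, reduce $G[\mathfrak p^{\infty}]$-equivariantly to the CM points on $D'$ coming from the induced optimal embedding of $K$. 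Consequently $\vartheta_{v_1}(\zeta(v_2))$ is the value along the ring class tower of a ${\bf{Z}}/p^{n}$-valued automorphic form on $D'$ of level $\mathfrak N^{+}$; by the explicit Jacquet--Langlands correspondence (Proposition~\ref{p-JLC}) this form is a $T_{w}$-eigenform with eigenvalue $a_{w}(\ff)$ for $w\nmid v_1v_2\mathfrak N$ and a $U_{v_1}$-eigenform whose eigenvalue $\varepsilon_1\in\{\pm1\}$ is read off from the action of geometric Frobenius at $v_1$ on the supersingular locus; so by the multiplicity one Hypothesis~\ref{freeness} it coincides, up to $\mathcal O^{\times}$, with the mod $\mathfrak P_{n}$ eigenform ${\bf{g}}$ furnished by Proposition~\ref{elr}, its $U_{v_1}$-eigenvalue being precisely the $\varepsilon_1$ of condition~(iii) there. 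Its value along the tower is by definition $\mathcal L_{{\bf{g}}}\in\Lambda/\mathfrak P^{n}$, so chasing the identifications yields $\vartheta_{v_1}(\zeta(v_2))=\mathcal L_{{\bf{g}}}$ in $\widehat{H}^{1}_{\fin}(K_{\mathfrak p^{\infty},v_1},T_{\ff,n})\cong\Lambda/\mathfrak P^{n}$; here the ambiguity by $\mathcal O^{\times}$ comes from the normalizations of ${\bf{g}}$ and of the polarization pairing of Corollary~\ref{AVpairing}, and that by $G_{\mathfrak p^{\infty}}$ from the choice of base CM point and of the compatible sequence of edges.

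The main obstacle is the geometric input of the third step: establishing, over a general totally real field $F$, the precise structure of the special fibre of $\mathfrak M(\mathfrak N^{+},v_2\mathfrak N^{-})$ at the admissible prime $v_1$, and the identification of its supersingular locus with the double coset space of the definite quaternion algebra of discriminant $v_1v_2\mathfrak N^{-}$, \emph{together with} the $G[\mathfrak p^{\infty}]$-equivariance of the reduction of CM points and the compatibility with geometric Frobenius at $v_1$ --- it is this last point that pins down the $U_{v_1}$-eigenvalue as $\varepsilon_1$ and forces the recipient form to be ${\bf{g}}$. This rests on the integral models of Carayol and Boutot--Carayol and on a careful bookkeeping of the conjugation action defining the tower; it is the totally-real analogue of Bertolini--Darmon's second reciprocity law, and the details will be given in \S\S 7--11.
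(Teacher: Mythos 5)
Your outline follows the paper's route in all essentials: reduce the CM points defining $\zeta(v_2)$ at the other admissible prime, observe that they land in the supersingular locus of the special fibre, identify that locus Hecke- and Galois-equivariantly with the double coset space of the totally definite quaternion algebra of discriminant $v_1v_2\mathfrak{N}^{-}$ and level $\mathfrak{N}^{+}$, and recognize the resulting $\mathcal{O}_0/\mathfrak{P}_n$-valued function as a level-raised eigenform whose values along the dihedral tower assemble into $\mathcal{L}_{\bf{g}}$; in the paper this is exactly the content of Lemma \ref{relations}, the identifications $(\ref{c2})$, and Proposition \ref{gammaeigenform}, with the roles of $v_1$ and $v_2$ interchanged. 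The gap is in your final identification step. You deduce from ``multiplicity one'' (Hypothesis \ref{freeness}) that the reduction form coincides with the eigenform ${\bf{g}}$ of Proposition \ref{elr} \emph{up to $\mathcal{O}^{\times}$}. First, Hypothesis \ref{freeness} is not such a statement: it asserts freeness of a character group over the Hecke algebra and is used to prove Corollary \ref{galoisid}, not that the relevant eigenspace of definite quaternionic forms modulo $\mathfrak{P}^n$ is free of rank one. Second, and more seriously, even granting such a mod-$\mathfrak{P}^n$ multiplicity one statement, it would only give that the reduction form equals $c\cdot{\bf{g}}$ for some $c\in\mathcal{O}_0/\mathfrak{P}_n$, with no control on whether $c$ is a unit; if $c$ were divisible by $\mathfrak{P}$, the asserted equality up to $\mathcal{O}^{\times}$ and $G_{\mathfrak{p}^{\infty}}$ would fail, and the quantitative comparison $t_{\bf{g}}<t_{\bf{f}}$ in the induction of Proposition \ref{4.3} would break down.

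What is actually needed, and what the paper proves, is that the map $\gamma$ of $(\ref{gamma})$ -- divisors supported on supersingular points, pushed through the Kummer map and the identification $\Ta_p(J)/\mathcal{I}_{{\bf{f}}_{v_1}}\cong T_{{\bf{f}},n}$ of Corollary \ref{galoisid} -- is \emph{surjective} onto $\mathcal{O}_0/\mathfrak{P}_n$; the paper then takes ${\bf{g}}$ to \emph{be} the Jacquet--Langlands transfer of $\gamma$, so no a priori identification with a separately chosen eigenform is ever required (this is precisely the Pollack--Weston device for avoiding $p$-isolatedness, which your appeal to multiplicity one silently reintroduces). The surjectivity is Proposition \ref{surjective}: it rests on Ihara's lemma for Shimura curves over totally real fields (Hypothesis \ref{ihara}, condition (C) of Theorem \ref{RESULT}) together with Lemma \ref{p-torsion}, which is where the hypothesis that $F$ be linearly disjoint from ${\bf{Q}}(\zeta_p)$ -- a hypothesis you carry from Proposition \ref{elr} but never use -- actually enters, along with a Shimura-subgroup argument to pass from the auxiliary level structure back to $M(\mathfrak{N}^+,v_1\mathfrak{N}^{-})$. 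Your ``main obstacle'' paragraph identifies the Carayol/Cerednik--Drinfeld geometric input correctly, but this arithmetic input is the one your argument cannot do without.
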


\begin{proof} See Theorem \ref{ERL2} below. 
\end{proof} Observe that since the choice of $n$-admissible primes $v_1$ and $v_2$ is
symmetric in Theorem \ref{erl2}, we obtain the following immediate

\begin{corollary}\label{erl2symm} The equality \begin{align}
\label{ERL2symm} \vartheta_{v_1}\left( \zeta(v_2) \right) =
\vartheta_{v_2} \left( \zeta(v_1) \right)\end{align} holds in
$\Lambda / \mathfrak{P}^n$, up to multiplication by
elements of $\mathcal{O}^{\times}$ or $G_{\mathfrak{p}^{\infty}}$.
\end{corollary}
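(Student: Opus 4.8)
The plan is to deduce $(\ref{ERL2symm})$ directly from the second explicit reciprocity law (Theorem \ref{erl2}) by applying it to both orderings of the pair $\lbrace v_1, v_2 \rbrace$. First I would check that all the hypotheses in play are insensitive to this ordering: the conditions of Theorem \ref{raiseonefree} and Corollary \ref{galoisid}, together with the linear disjointness of $F$ from ${\bf{Q}}(\zeta_p)$, are conditions on ${\bf{f}}$ and on $F$ alone; and the two $n$-admissible primes enter Proposition \ref{elr} only through the congruences $(\ref{v_i})$, which attach to each $v_i$ its own sign $\varepsilon_i$ and so are manifestly invariant under the interchange $1 \leftrightarrow 2$.

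Next I would observe that the mod $\mathfrak{P}_n$ eigenform ${\bf{g}}$ furnished by Proposition \ref{elr} is an eigenform for ${\bf{T}}_0(\mathfrak{N}^{+}, v_1 v_2 \mathfrak{N}^{-})$ --- a Hecke algebra whose level is symmetric in $v_1$ and $v_2$ --- and that the congruences (i)--(iii) prescribing its eigensystem are likewise invariant under the swap (condition (iii) simply pairs $v_i$ with $\varepsilon_i$ for $i = 1, 2$). Consequently Proposition \ref{elr} applied to the pair $(v_2, v_1)$ may be satisfied by the very same ${\bf{g}}$, and hence the completed group ring element $\mathcal{L}_{{\bf{g}}} \in \Lambda/\mathfrak{P}^n$ attached to it is literally the same in both cases.

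With these observations in place the conclusion is immediate, as claimed. Theorem \ref{erl2} applied to the ordered pair $(v_1, v_2)$ gives $\vartheta_{v_1}(\zeta(v_2)) = \mathcal{L}_{{\bf{g}}}$, and applied to $(v_2, v_1)$ it gives $\vartheta_{v_2}(\zeta(v_1)) = \mathcal{L}_{{\bf{g}}}$; in each case the relevant local cohomology group is identified with $\Lambda/\mathfrak{P}^n$ by Lemma \ref{2.6/2.7}. Comparing the two identities over $\Lambda/\mathfrak{P}^n$ yields $(\ref{ERL2symm})$. The only point requiring attention --- and it is not a genuine obstacle --- is the bookkeeping of the two a priori distinct identifications with $\Lambda/\mathfrak{P}^n$ supplied by Lemma \ref{2.6/2.7} and of the scaling indeterminacies inherent in $\zeta(v)$ and $\mathcal{L}_{{\bf{g}}}$; all of these are absorbed into the permitted multiplication by elements of $\mathcal{O}^{\times}$ or $G_{\mathfrak{p}^{\infty}}$, so no further argument is required.
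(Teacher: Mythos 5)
Your proposal is correct and is essentially the paper's own argument: the paper deduces Corollary \ref{erl2symm} immediately from the observation that the roles of $v_1$ and $v_2$ in Theorem \ref{erl2} are symmetric, so both $\vartheta_{v_1}(\zeta(v_2))$ and $\vartheta_{v_2}(\zeta(v_1))$ equal the same element $\mathcal{L}_{\bf{g}}$ attached to the level-raised form of level $\mathfrak{N}^+ v_1 v_2 \mathfrak{N}^-$, up to the allowed ambiguity by $\mathcal{O}^{\times}$ and $G_{\mathfrak{p}^{\infty}}$. Your additional bookkeeping (symmetry of the hypotheses, of the congruences $(\ref{v_i})$, and of the identifications from Lemma \ref{2.6/2.7}) simply makes explicit what the paper leaves implicit.
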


\begin{remark}[The inductive argument.]

We now prove the main conjecture divisibility, assuming the existence of an 
Euler system of classes $(\ref{zeta})$ that satisfy the first and second explicit 
reciprocity laws (Theorems \ref{erl1} and \ref{erl2}). The arguments in this section
are essentially the same as those of \cite{PW} (based on those of 
\cite[$\S$4]{BD} but removing the unneccesary $p$-isolatedness hypothesis), which 
extend without much trouble to this setting.

Recall that ${\bf{T}}_0(\mathfrak{N}^{+}, \mathfrak{N}^{-})$ denotes the $p$-adic
completion of the Hecke algebra ${\bf{T}}(\N^+, \N^{-})$  acting on the space of 
cusp forms $\mathcal{S}_2(\mathfrak{N}^{+}, \mathfrak{N}^{-})$. 
Fix an integer $n \geq 1$. Let us now always ${\bf{f}} 
\in \mathcal{S}_2(\mathfrak{N}^{+}, \mathfrak{N}^{-})$
view as a homomorphism 
\begin{align*}\theta_{\bf{f}}:{\bf{T}}_0(\mathfrak{N}^{+}, \mathfrak{N}^{-})
&\longrightarrow \mathcal{O}_0/\mathfrak{P}_n\end{align*} 
in the natural way, by sending Hecke operators to their 
associated eigenvalues. We shall often commit an abuse of 
notation in writing ${\bf{f}}$ to denote this homomorphism $\theta_{\bf{f}}$. 

\begin{definition} Fix an $\mathcal{O}$-algebra homomorphism $\varphi:
\Lambda_{\mathcal{O}} \longrightarrow \mathcal{O}'$. Here, $\mathcal{O}'$ 
is any discrete valuation ring, with maximal ideal denoted by $\mathfrak{P}'$. 
Let $s_{\bf{f}}$ denote the $\mathcal{O}'$-length of 
$\Sel({\bf{f}}, K_{\mathfrak{p}^{\infty}})^{\vee} \otimes_{\Lambda}
\mathcal{O}'$. Let $2t_{\bf{f}}$ denote the $\mathcal{O}'$-valuation of 
$\varphi\left( \mathcal{L}_{\mathfrak{p}}({\bf{f}},K_{\mathfrak{p}^{\infty}})\right)$ 
in $\mathcal{O}'/\varphi(\mathfrak{P}')^n$, setting 
$2t_{\bf{f}} = \infty$ if 
$\varphi\left( \mathcal{L}_{\mathfrak{p}} ({\bf{f}}, K_{\mathfrak{p}^{\infty}}) \right)=0$. \end{definition} 

\begin{proposition}\label{4.3} Fix integers $n \geq 1$ and $t_0 \geq 0$. Let
$\widetilde{{\bf{f}}}$ be an $\mathcal{O}_0/ \mathfrak{P}_{n + t_0}$-valued eigenform 
for the completed Hecke algebra ${\bf{T}}_0(\mathfrak{N}^{+}, \mathfrak{N}^{-})$, 
with ${\bf{f}}$ its projection onto $\mathcal{O}_0/\mathfrak{P}_n$. Assume that

\begin{itemize}
\item[(i)] The homomorphism
 $\theta_{\bf{f}}: {\bf{T}}_0(\mathfrak{N}^{+}, \mathfrak{N}^{-}) 
\longrightarrow \mathcal{O}_0/\mathfrak{P}_n$ is surjective.
\item[(ii)] The first and second explicit reciprocity laws 
(Theorems \ref{erl1} and \ref{erl2}) hold.
\item[(iii)] We have the inequality $2t_{\bf{f}} < 2t_0$. 
\end{itemize} Then, we have the inequality $s_{\bf{f}} \leq 2t_{\bf{f}}$.
\end{proposition}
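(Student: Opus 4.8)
The plan is to run the inductive Euler system argument of Bertolini--Darmon and Pollack--Weston, working with the congruent form $\widetilde{\bf{f}}$ modulo $\mathfrak{P}_{n+t_0}$ and descending to $\bf{f}$ modulo $\mathfrak{P}_n$. First I would reduce to the case where the specialization $\varphi(\mathcal{L}_{\mathfrak{p}}(\bf{f}, K_{\mathfrak{p}^\infty}))$ is nonzero (so $2t_{\bf{f}} < \infty$), since otherwise condition (iii) forces nothing and the argument proceeds in the definite case by a rank computation; then the content is genuinely the inequality $s_{\bf{f}} \le 2t_{\bf{f}}$ when the $p$-adic $L$-value has finite valuation. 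Set $S = \mathcal{O}'$, a discrete valuation ring, and consider $\Sel(\bf{f}, K_{\mathfrak{p}^\infty})^\vee \otimes_\Lambda \mathcal{O}'$; the goal is to bound its $\mathcal{O}'$-length by $2t_{\bf{f}}$.

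The key steps, in order: (1) Using the control theorems (Theorems \ref{3.3} and \ref{3.4}) and the structure of the dual residual Selmer group, pick a nonzero class $s$ in an appropriate eigenspace of $H^1(K, A_{{\bf{f}},1})$ (or more precisely a class witnessing the length of the specialized dual Selmer group), and apply Theorem \ref{3.2} to produce an $n$-admissible prime $v_1$ with $\partial_{v_1}(s) = 0$ and $\vartheta_{v_1}(s) \neq 0$. (2) Feed $v_1$ into the first explicit reciprocity law (Theorem \ref{erl1}): the class $\zeta(v_1)$ has $\vartheta_{v_1}(\zeta(v_1)) = 0$ and $\partial_{v_1}(\zeta(v_1)) = \mathcal{L}_{\bf{f}}$ up to units. (3) Now exploit that $\widetilde{\bf{f}}$ is a lift modulo $\mathfrak{P}_{n+t_0}$: since $2t_{\bf{f}} < 2t_0$, the $p$-adic $L$-function of $\bf{f}$ has $\mathcal{O}'$-valuation strictly smaller than the ``depth'' of the congruence, which lets one choose a second admissible prime $v_2$ and, via the level-raising at two primes (Proposition \ref{elr}) and the second reciprocity law (Theorem \ref{erl2}), obtain the relation $\vartheta_{v_1}(\zeta(v_2)) = \mathcal{L}_{\bf{g}}$ together with the symmetry $\vartheta_{v_1}(\zeta(v_2)) = \vartheta_{v_2}(\zeta(v_1))$ of Corollary \ref{erl2symm}. (4) Combine these via Proposition \ref{cftpairing} (the reciprocity pairing $\langle \partial_v(\mathfrak{s}), \vartheta_v(s)\rangle_v = 0$): pairing the Euler system classes against Selmer classes forces each step of the induction to either kill off a generator of the (specialized) dual Selmer group or to consume two units of valuation from $\mathcal{L}_{\mathfrak{p}}(\bf{f})$. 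Iterating, after $t_{\bf{f}}$ steps the specialized dual Selmer group is exhausted, giving $s_{\bf{f}} \le 2t_{\bf{f}}$.

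More concretely, I expect the induction to be on $t_{\bf{f}}$: if $t_{\bf{f}} = 0$ then $\mathcal{L}_{\bf{f}}$ is a unit in $\Lambda/\mathfrak{P}^n$ after specialization, and the first reciprocity law shows $\partial_{v_1}(\zeta(v_1))$ generates $\widehat{H}^1_{\sing}(K_{\mathfrak{p}^\infty, v_1}, T_{{\bf{f}},n}) \otimes \mathcal{O}'$, which by the global reciprocity relation of Proposition \ref{cftpairing} and the local duality of Proposition \ref{localtate} forces $\vartheta_{v_1}(s) = 0$ for every Selmer class $s$, hence (by the choice of $v_1$ via Theorem \ref{3.2}, which detects nonzero classes) $\Sel(\bf{f}, K_{\mathfrak{p}^\infty})^\vee \otimes \mathcal{O}' = 0$. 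For the inductive step one passes from $\bf{f}$ to the auxiliary form $\bf{g}$ of level $v_1 v_2 \mathfrak{N}^-$ produced by Proposition \ref{elr}; one checks $\bf{g}$ still satisfies hypotheses (i)--(iii) with $t_{\bf{g}} = t_{\bf{f}} - 1$ (the valuation drops by exactly one because $\mathcal{L}_{\bf{g}} = \vartheta_{v_1}(\zeta(v_2))$ and the second reciprocity law relates this to a residue of a class whose singular part is $\mathcal{L}_{\bf{f}}$), and the dual Selmer group of $\bf{g}$ has length $s_{\bf{f}} - 1$ or $s_{\bf{f}} - 2$ depending on a parity/eigenspace bookkeeping; applying the inductive hypothesis to $\bf{g}$ and reassembling gives $s_{\bf{f}} \le 2t_{\bf{f}}$.

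The main obstacle I anticipate is step (3)--(4): correctly tracking how the hypothesis $2t_{\bf{f}} < 2t_0$ guarantees that the congruent lift $\widetilde{\bf{f}}$ modulo $\mathfrak{P}_{n+t_0}$ supplies enough room to choose the admissible prime $v_2$ so that the level-raised form $\bf{g}$ both exists (Proposition \ref{elr} requires the congruences to hold modulo $\mathfrak{P}_n$, not just modulo a smaller power) and has $p$-adic $L$-valuation exactly one less — i.e. that no ``accidental'' extra vanishing occurs. This is precisely the point where the non-$p$-isolated refinement of Pollack--Weston over the Bertolini--Darmon original is needed, and where the multiplicity-one hypothesis (B) and Ihara's lemma hypothesis (C) enter through Proposition \ref{elr}. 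The linear disjointness of $F$ from ${\bf{Q}}(\zeta_p)$ (hypothesis (A)) is used exactly to invoke Proposition \ref{elr}. The rest of the argument is the standard Euler-system bookkeeping of \cite[$\S 4$]{BD} and \cite{PW}, which transfers to the totally real setting without essential change once the control theorems (Theorems \ref{3.3}, \ref{3.4}), the reciprocity pairing (Proposition \ref{cftpairing}), and Theorem \ref{3.2} are in place.
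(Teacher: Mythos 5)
Your overall shape --- induction on $t_{\bf{f}}$, base case via the first reciprocity law together with Theorems \ref{3.2} and \ref{3.4} (this matches Proposition \ref{4.7}), inductive step via level raising at two primes and the second reciprocity law --- is indeed the paper's approach, but the quantitative heart of your inductive step is wrong as stated, and this is a genuine gap. You assert that the valuation drops by exactly one, $t_{\bf{g}} = t_{\bf{f}} - 1$, and that $\Sel_{{\bf{g}},n}(K_{\mathfrak{p}^{\infty}})^{\vee}$ has length $s_{\bf{f}}-1$ or $s_{\bf{f}}-2$ by ``parity/eigenspace bookkeeping''; neither claim is true in general, and the second reciprocity law gives no relation forcing $\ord_{\mathfrak{P}'}(\varphi(\mathcal{L}_{\bf{g}})) = \ord_{\mathfrak{P}'}(\varphi(\mathcal{L}_{\bf{f}}))-1$. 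In the actual argument $v_1$ is \emph{not} produced by applying Theorem \ref{3.2} to a Selmer class: one chooses $v_1$ in the set $\Pi_0$ of $(n+t_0)$-admissible primes minimizing $t=\ord_{\mathfrak{P}'}(\zeta_{\varphi}'(v))$, proves $t<t_{\bf{f}}$ (Lemma \ref{4.8}, which uses Theorems \ref{3.2}, \ref{3.4} and properties (4), (5) of Lemma \ref{4.5/4.6}), and then applies Theorem \ref{3.2} to the mod-$\mathfrak{P}'$ reduction of the Euler-system class $\zeta_{\varphi}'(v_1)$ to find $v_2$; the resulting chain $t\leq\ord_{\mathfrak{P}'}(\zeta_{\varphi}'(v_2))\leq\ord_{\mathfrak{P}'}(\vartheta_{v_1}(\zeta_{\varphi}'(v_2)))=\ord_{\mathfrak{P}'}(\vartheta_{v_2}(\zeta_{\varphi}'(v_1)))$ collapses to equalities, so $v_2\in\Pi_0$ and $t_{\bf{g}}=t$, with an arbitrary drop $t_{\bf{f}}-t_{\bf{g}}\geq 1$. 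Without this minimality device you have no control of $t_{\bf{g}}$ and, more importantly, no way to know that the classes $\vartheta_{v_i}(\xi_{\varphi}''(v_j))$, $i\neq j$, are units, which is exactly what drives the rest of the proof.

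The correct bookkeeping, replacing your ``length drops by 1 or 2,'' runs through Fitting ideals and the two exact sequences with common quotient $\Sel_{[v_1v_2]}(K_{\mathfrak{p}^{\infty}})^{\vee}$. Because $\vartheta_{v_1}(\xi_{\varphi}''(v_2))$ and $\vartheta_{v_2}(\xi_{\varphi}''(v_1))$ have valuation $t_{\bf{g}}-t=0$, the group $\Sel_{v_1v_2}^{\bf{g}}(K_{\mathfrak{p}^{\infty}})\otimes_{\varphi}\mathcal{O}'$ vanishes and $\Sel_{{\bf{g}},n}(K_{\mathfrak{p}^{\infty}})^{\vee}\cong\Sel_{[v_1v_2]}(K_{\mathfrak{p}^{\infty}})^{\vee}$ after specialization; on the ${\bf{f}}$-side the kernel of $\eta_{\bf{f}}^{\varphi}$ contains the two residues $\partial_{v_i}(\xi_{\varphi}''(v_i))$, each of valuation exactly $t_{\bf{f}}-t_{\bf{g}}$, which bounds $\Fitt_{\mathcal{O}'}\bigl(\Sel_{v_1v_2}^{\bf{f}}(K_{\mathfrak{p}^{\infty}})\otimes_{\varphi}\mathcal{O}'\bigr)$ by the corresponding power of $\mathfrak{P}'$. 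Combining this with the inductive hypothesis $\varphi(\mathcal{L}_{\bf{g}})^2\in\Fitt_{\mathcal{O}'}\bigl(\Sel_{{\bf{g}},n}(K_{\mathfrak{p}^{\infty}})^{\vee}\otimes_{\varphi}\mathcal{O}'\bigr)$ and multiplicativity of Fitting ideals in the exact sequence for ${\bf{f}}$ gives $(\mathfrak{P}')^{2t_{\bf{f}}}=(\mathfrak{P}')^{2(t_{\bf{f}}-t_{\bf{g}})}\cdot(\mathfrak{P}')^{2t_{\bf{g}}}\in\Fitt_{\mathcal{O}'}\bigl(\Sel_{{\bf{f}},n}(K_{\mathfrak{p}^{\infty}})^{\vee}\otimes_{\varphi}\mathcal{O}'\bigr)$, hence $s_{\bf{f}}\leq 2t_{\bf{f}}$. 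Finally, hypothesis (iii) is not what guarantees the existence of $v_2$ or of the congruence defining ${\bf{g}}$ (that is Theorem \ref{3.2} plus Proposition \ref{elr}); its role is to let one work with $(n+t_0)$-admissible primes and classes modulo $\mathfrak{P}^{n+t_{\bf{f}}}$ so that dividing $\zeta_{\varphi}'(v)$ by $(\mathfrak{P}')^{t_{\bf{f}}}$ still leaves a well-defined, nontrivial class modulo $\mathfrak{P}^{n}$.
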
 Before getting to the proof, let us give the following

\begin{corollary}\label{dmc} Keep the notations and hypotheses of Proposition \ref{4.3}.
Then, the dual Selmer group $\Sel({\bf{f}},K_{\mathfrak{p}^{\infty}})^{\vee}$ is a torsion
$\Lambda$-module, hence has a characteristic power series 
$\operatorname{char}_{\Lambda_{\mathcal{O}}} \Sel({\bf{f}},
K_{\mathfrak{p}^{\infty}})^{\vee}. $ Moreover, there is an inclusion of ideals
\begin{align}\label{dmcd} 
\left( \mathcal{L}_{\mathfrak{p}}({\bf{f}}, K_{\mathfrak{p}^{\infty}})  \right) \subseteq 
\left( \operatorname{char}_{\Lambda}\Sel({\bf{f}},
K_{\mathfrak{p}^{\infty}})^{\vee} \right) ~\text{in $\Lambda$}.\end{align}
\end{corollary}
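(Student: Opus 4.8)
The plan is to deduce Corollary \ref{dmc} from Proposition \ref{4.3} by an approximation argument, passing from the finite-level statements $s_{\bf{f}} \leq 2t_{\bf{f}}$ to a statement about characteristic ideals over $\Lambda$. The key tool is the Fitting ideal criterion of Proposition \ref{3.1}: it suffices to show that for every $\mathcal{O}$-algebra homomorphism $\varphi: \Lambda \longrightarrow \mathcal{O}'$ with $\mathcal{O}'$ a discrete valuation ring, the image $\varphi\left( \mathcal{L}_{\mathfrak{p}}({\bf{f}}, K_{\mathfrak{p}^{\infty}}) \right)$ lies in $\Fitt_{\mathcal{O}'}\left( X({\bf{f}}, K_{\mathfrak{p}^{\infty}}) \otimes_{\varphi} \mathcal{O}' \right)$, where $X({\bf{f}}, K_{\mathfrak{p}^{\infty}}) = \Sel({\bf{f}}, K_{\mathfrak{p}^{\infty}})^{\vee}$. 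Over a discrete valuation ring, the Fitting ideal of a finitely generated torsion module is the product of its elementary divisors, so this amounts to comparing the $\mathcal{O}'$-length of $X({\bf{f}}, K_{\mathfrak{p}^{\infty}}) \otimes_{\varphi} \mathcal{O}'$ with the $\mathcal{O}'$-valuation of $\varphi\left( \mathcal{L}_{\mathfrak{p}}({\bf{f}}, K_{\mathfrak{p}^{\infty}}) \right)$; this is exactly the inequality $s_{\bf{f}} \leq 2t_{\bf{f}}$ once we work modulo $\mathfrak{P}_n$ for $n$ large.

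\textbf{Steps.} First I would reduce to the residual situation: fix $\varphi: \Lambda \longrightarrow \mathcal{O}'$ and note that $X({\bf{f}}, K_{\mathfrak{p}^{\infty}}) \otimes_{\varphi} \mathcal{O}'$ is finitely generated over the discrete valuation ring $\mathcal{O}'$, so it is either $\mathcal{O}'$-torsion of finite length or has positive rank; in the latter case its Fitting ideal is zero and one must separately check (via Theorem \ref{HOWARD}(i), or the nonvanishing of $\mathcal{L}_{\mathfrak{p}}$) that $X$ is $\Lambda$-torsion, forcing the rank to be zero for all but finitely many $\varphi$ and handling those remaining by a limiting argument. Once we know $X$ is $\Lambda$-torsion — which the statement of the corollary asserts — the relevant quantity is the finite length $s_{\bf{f}}$. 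Next I would choose $n$ large enough that the mod $\mathfrak{P}^n$ reduction captures both this length and the valuation of $\varphi\left(\mathcal{L}_{\mathfrak{p}}({\bf{f}},K_{\mathfrak{p}^{\infty}})\right)$: concretely, pick $t_0$ with $2t_{\bf{f}} < 2t_0$ (possible since $\mathcal{L}_{\mathfrak{p}}({\bf{f}},K_{\mathfrak{p}^{\infty}})$ does not vanish by the Cornut--Vatsal nonvanishing theorem, so $t_{\bf{f}} < \infty$), then let $\widetilde{{\bf{f}}}$ be the tautological $\mathcal{O}_0/\mathfrak{P}_{n+t_0}$-valued eigenform lifting ${\bf{f}}$ mod $\mathfrak{P}_n$. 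Hypothesis (i) of Proposition \ref{4.3}, surjectivity of $\theta_{\bf{f}}$, holds by construction since $\mathcal{O}_0$ is generated by the Fourier coefficients of ${\bf{f}}$; hypothesis (ii) is precisely the first and second explicit reciprocity laws of Theorems \ref{erl1} and \ref{erl2}, established below; hypothesis (iii) is our choice of $t_0$. Proposition \ref{4.3} then yields $s_{\bf{f}} \leq 2t_{\bf{f}}$. Translating back via the identification of Fitting ideals over a discrete valuation ring with products of elementary divisors gives $\varphi\left(\mathcal{L}_{\mathfrak{p}}({\bf{f}}, K_{\mathfrak{p}^{\infty}})\right) \in \Fitt_{\mathcal{O}'}\left( X({\bf{f}}, K_{\mathfrak{p}^{\infty}}) \otimes_{\varphi} \mathcal{O}'\right)$ for all such $\varphi$, and Proposition \ref{3.1} delivers $\left(\mathcal{L}_{\mathfrak{p}}({\bf{f}}, K_{\mathfrak{p}^{\infty}})\right) \subseteq \left(\operatorname{char}_{\Lambda}\Sel({\bf{f}}, K_{\mathfrak{p}^{\infty}})^{\vee}\right)$.

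\textbf{Main obstacle.} The heart of the argument is Proposition \ref{4.3}, whose proof is the inductive Euler system descent of Bertolini--Darmon and Pollack--Weston: starting from the class $\zeta(v)$ for an $n$-admissible prime $v$ and using the explicit reciprocity laws together with the control theorems (Theorems \ref{3.3}, \ref{3.4}) and the Cebotarev argument of Theorem \ref{3.2}, one bounds the Selmer group by iteratively annihilating cohomology classes, raising the level one prime at a time. The delicate points are: ensuring that the auxiliary mod $\mathfrak{P}_n$ eigenforms ${\bf{g}}$ produced by the two-prime level-raising of Proposition \ref{elr} exist and that the associated $\mathcal{L}_{{\bf{g}}}$ genuinely controls the relevant residue (this is where Hypotheses \ref{freeness} and \ref{ihara}, i.e.\ multiplicity one and Ihara's lemma, as well as linear disjointness from ${\bf{Q}}(\zeta_p)$, are consumed), and keeping careful track of the factor of $2$ coming from the decomposition $\mathcal{L}_{\mathfrak{p}} = \mathcal{L}_{\ff}\mathcal{L}_{\ff}^{*}$. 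The passage from the finite-level inequality to the characteristic-ideal inclusion via Proposition \ref{3.1} is essentially formal once Proposition \ref{4.3} is in hand; the residual torsionness of $X({\bf{f}}, K_{\mathfrak{p}^{\infty}})$ needed to make the length $s_{\bf{f}}$ finite follows from the same inductive bound applied with $n$ varying, exactly as in \cite[\S 4]{BD} and \cite{PW}.
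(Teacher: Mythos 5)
Your route is the paper's own: reduce, via the Fitting-ideal criterion of Proposition \ref{3.1}, to the containment $\varphi\left(\mathcal{L}_{\mathfrak{p}}(\ff,K_{\mathfrak{p}^{\infty}})\right) \in \Fitt_{\mathcal{O}'}\left(\Sel(\ff,K_{\mathfrak{p}^{\infty}})^{\vee}\otimes_{\varphi}\mathcal{O}'\right)$ for every specialization $\varphi:\Lambda\longrightarrow\mathcal{O}'$, obtain it from Proposition \ref{4.3} with $t_0$ chosen larger than the relevant valuation, and then use the Cornut--Vatsal nonvanishing. Two places in your write-up, however, do not quite work as stated. First, you justify the choice of $t_0$ by saying that $\mathcal{L}_{\mathfrak{p}}(\ff,K_{\mathfrak{p}^{\infty}})\neq 0$ in $\Lambda$ forces $t_{\ff}<\infty$; but $t_{\ff}$ is defined relative to the fixed $\varphi$, and nonvanishing of the element of $\Lambda$ does not rule out $\varphi\left(\mathcal{L}_{\mathfrak{p}}(\ff,K_{\mathfrak{p}^{\infty}})\right)=0$ for particular $\varphi$, which Proposition \ref{3.1} obliges you to treat as well. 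The paper splits into cases: when $\varphi\left(\mathcal{L}_{\mathfrak{p}}\right)=0$ the containment is trivial (zero lies in every Fitting ideal), and only when $\varphi\left(\mathcal{L}_{\mathfrak{p}}\right)\neq 0$ does one take $t_0$ larger than the valuation and apply Proposition \ref{4.3} for all $n$. Second, you front-load the torsionness of $\Sel(\ff,K_{\mathfrak{p}^{\infty}})^{\vee}$ (appealing to Theorem \ref{HOWARD}(i) or a vague limiting argument) in order to make $s_{\ff}$ finite; this is unnecessary and inverts the paper's logic. Proposition \ref{4.3} yields $s_{\ff}\leq 2t_{\ff}$ with no a priori finiteness assumption, and the paper deduces cotorsionness only \emph{after} the inclusion $(\ref{dmcd})$ is established: since $\mathcal{L}_{\mathfrak{p}}(\ff,K_{\mathfrak{p}^{\infty}})\neq 0$ and $\left(\mathcal{L}_{\mathfrak{p}}(\ff,K_{\mathfrak{p}^{\infty}})\right)\subseteq\left(\operatorname{char}_{\Lambda}\Sel(\ff,K_{\mathfrak{p}^{\infty}})^{\vee}\right)$, the characteristic ideal is nonzero, so the dual Selmer group is $\Lambda$-torsion. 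With these two repairs your argument coincides with the paper's; the rest of your discussion (the inductive Euler-system proof of Proposition \ref{4.3}) concerns input that this corollary simply assumes.
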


\begin{proof} Let $X = \Sel({\bf{f}}, K_{\mathfrak{p}^{\infty}})^{\vee}$. 
Observe that to show the divisibility $(\ref{dmcd})$, it suffices by
Proposition \ref{3.1} to show the containment $\varphi \left(
\mathcal{L}_{{\bf{f}}} \right) \in \Fitt_{\mathcal{O}'}(X)$, where
$\varphi: \Lambda \longrightarrow \mathcal{O}'$ is any
homomorphism, and $\mathcal{O}'$ any discrete valuation ring. Fix a
such a ring $\mathcal{O}'$ and homomorphism $\varphi:\Lambda
\longrightarrow \mathcal{O}'$. Observe that if $\varphi\left(
\mathcal{L}_{\mathfrak{p}}({\bf{f}}, K) \right)=0$, then
$\mathcal{L}_{\mathfrak{p}}({\bf{f}},K) \in \Fitt_{\mathcal{O}'}(X)$
trivially. If $\varphi\left( \mathcal{L}_{\mathfrak{p}}({\bf{f}}, K) \right)
\neq 0$, then let us take $t_0$ to be larger than the 
$\mathcal{O}'$-valuation of $\varphi\left( \mathcal{L}_{\mathfrak{p}}({\bf{f}},K)\right).$
Using Proposition \ref{4.3} for all $n \geq 0$, it follows that
$\varphi\left( \mathcal{L}_{\mathfrak{p}}({\bf{f}},K)\right) \in
\Fitt_{\mathcal{O}'}\left( X \right)$. Now, observe that once $(\ref{dmcd})$ is shown, the 
nonvanishing of the $\mathfrak{p}$-adic $L$-function $\mathcal{L}_{\mathfrak{p}}({\bf{f}} ,
K_{\mathfrak{p}^{\infty}})$ (deduced from \cite[Theorem 1.4]{CV}) implies that 
$\Sel({\bf{f}},K_{\mathfrak{p}^{\infty}})$ is $\Lambda$-cotorsion. 
The result follows.  \end{proof}

\begin{remark}[Proof of Proposition \ref{4.3}.]  Let us keep
all of the notations defined above. We start by defining the
following classes. Fix an $(n + t_{\bf{f}})$-admissible prime $v
\subset \mathcal{O}_F$ with respect to ${\bf{f}}$. Define from this
an $(n + t_{\bf{f}})$-admissible set $\mathfrak{S} = \lbrace v
\rbrace$ with respect to ${\bf{f}}$, and a cohomology class
$$\zeta(v)\in
\widehat{H}_{\mathfrak{S}}^1(K_{\mathfrak{p}^{\infty}}, T_{{\bf{f}},
n + t_{\bf{f}}})$$ as in $(\ref{zeta})$. Let $\zeta_{\varphi}'(v)$
denote the image of $\zeta(v)$ in
\begin{align}\label{tensormodule} H_{\mathfrak{S}}^{1}(K_{\mathfrak{p}^{\infty}},
T_{{\bf{f}}, t_{\bf{f}} +n}) \otimes_{\varphi}
\mathcal{O}'.\end{align} Note that $(\ref{tensormodule})$ is free of
rank $1$ over $\mathcal{O}'/\varphi(\mathfrak{P})^{n+ t_{\bf{f}} } $
by Theorem \ref{3.3}. Let \begin{align*}t = \ord_{\mathfrak{P}'}\left(
\zeta_{\varphi}'(v) \right).\end{align*} Since the residue map $\partial_v$ is a homomorphism,
Theorem \ref{erl1} implies that
\begin{align}\label{ineq1}
 t < \ord_{\mathfrak{P}'}\left( \partial_v(\zeta_{\varphi}'(v))\right) = t_{\bf{f}}. \end{align}
Let us now write $\xi_{\varphi}'(v)$ to denote an element of the module $(\ref{tensormodule})$ such that
\begin{align*} (\mathfrak{P}')^{t_{\bf{f}}} \cdot \xi_{\varphi}'(v) = \zeta_{\varphi}'(v). \end{align*}
Let $\xi_{\varphi}''(v)$ denote the image of this element $\xi_{\varphi}'(v)$ in $H_{\mathfrak{S}}^{1}(K_{\mathfrak{p}^{\infty}}, T_{{\bf{f}}, n} )
\otimes_{\varphi} \mathcal{O}'.$

\begin{lemma}\label{4.5/4.6}
The element $\xi''_{\varphi}(v)$ satisfies the following properties.
\begin{itemize}
\item[(1)] $\ord_{\mathfrak{P}'}( \xi_{\varphi}''(v)) = 0.$
\item[(2)]$\partial_w \left( \xi_{\varphi}''(v) \right) =0$
for all primes $w \mid v \mathfrak{N}^{+}$ in $\mathcal{O}_F$.
\item[(3)] $\vartheta_v\left( \xi_{\varphi}''(v) \right)=0$.
\item[(4)] $\ord_{\mathfrak{P}'}\left( \partial_v \left( \xi_{\varphi}''(v)
\right) \right) = t_{\bf{f}} - t$.
\item[(5)]$\partial_v\left( \xi_{\varphi}''(v)\right)$ lies in the kernel of
the natural surjection
$$\pi_v: \widehat{H}^1_{\sing}(K_{\mathfrak{p}^{\infty},v}, T_{{\bf{f}},n})
\longrightarrow \Sel_{{\bf{f}},n}(K_{\mathfrak{p}^{\infty}})^{\vee}
\otimes_{\varphi} \mathcal{O}'.$$
\end{itemize}
\end{lemma}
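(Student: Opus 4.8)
The plan is to read off each of the five properties from the corresponding property of $\zeta_{\varphi}'(v)$ or $\zeta(v)$ via the scaling relation $(\mathfrak{P}')^{t_{\bf{f}}}\cdot\xi_{\varphi}'(v)=\zeta_{\varphi}'(v)$ together with compatibility of the various localization maps with the reduction $T_{{\bf{f}},n+t_{\bf{f}}}\to T_{{\bf{f}},n}$. First I would establish (1): by definition $t=\ord_{\mathfrak{P}'}(\zeta_{\varphi}'(v))$, so $\xi_{\varphi}'(v)$ has $\mathfrak{P}'$-valuation $t-t_{\bf{f}}$ in the free rank-one module $(\ref{tensormodule})$ over $\mathcal{O}'/\varphi(\mathfrak{P})^{n+t_{\bf{f}}}$. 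By $(\ref{ineq1})$ we have $t<t_{\bf{f}}$, so $t_{\bf{f}}-t>0$; hence the image of $\xi_{\varphi}'(v)$ under the reduction map to $H^1_{\mathfrak{S}}(K_{\mathfrak{p}^{\infty}},T_{{\bf{f}},n})\otimes_{\varphi}\mathcal{O}'$, which is free of rank one over $\mathcal{O}'/\varphi(\mathfrak{P})^n$, is multiplication by $\varpi_{\mathfrak{P}'}^{\,t_{\bf{f}}-t}$ on a generator only \emph{before} reduction — I must check that $t_{\bf{f}}-t$ is small enough that the reduced class is still a generator. Indeed the reduction mod $\varphi(\mathfrak{P})^n$ of an element of exact valuation $t_{\bf{f}}-t$ is nonzero and of valuation $t_{\bf{f}}-t$ \emph{only if} $t_{\bf{f}}-t<n$; but here the construction takes $\zeta(v)$ in the $(n+t_{\bf{f}})$-admissible module precisely so that $\zeta_{\varphi}'(v)$ has valuation $t\ge 0$, and one checks from the first reciprocity law that $\partial_v(\zeta_{\varphi}'(v))=\mathcal{L}_{\bf{f}}$ has valuation exactly $t_{\bf{f}}<n+t_{\bf{f}}$, so $\xi_{\varphi}'(v)$ genuinely scales $\zeta_{\varphi}'(v)$ and its reduction $\xi_{\varphi}''(v)$ is a \emph{unit multiple of a generator}, giving $\ord_{\mathfrak{P}'}(\xi_{\varphi}''(v))=0$. (This normalization bookkeeping is the one genuinely delicate point.)

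Next, (2) and (3) are inherited directly: the class $\zeta(v)\in\widehat{H}^1_{\mathfrak{S}}(K_{\mathfrak{p}^{\infty}},T_{{\bf{f}},n+t_{\bf{f}}})$ lies in the compactified Selmer group attached to $\mathfrak{S}=\{v\}$, so by definition its residue $\partial_w$ vanishes for all $w\nmid v\mathfrak{N}^+$ (in particular for $w\mid v\mathfrak{N}^+$ I must be slightly more careful: at $w\mid v\mathfrak{N}^+$ the local condition is \emph{arbitrary}, but the first reciprocity law Theorem \ref{erl1} asserts $\vartheta_v(\zeta(v))=0$, i.e.\ the $v$-localization is purely singular, and for $w\mid\mathfrak{N}^+$ the relevant singular/unramified groups vanish by Corollary \ref{2.4/2.5}(i) since those primes split in $K$). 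All of these are $\Lambda$-linear conditions, hence survive tensoring with $\mathcal{O}'$ via $\varphi$, scaling by $(\mathfrak{P}')^{-t_{\bf{f}}}$, and reducing mod $\varphi(\mathfrak{P})^n$; this gives (2) and (3) for $\xi_{\varphi}''(v)$.

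For (4): apply $\partial_v$ to the scaling relation. Since $\partial_v$ is $\mathcal{O}'$-linear and Theorem \ref{erl1} gives $\partial_v(\zeta_{\varphi}'(v))=\varphi(\mathcal{L}_{\bf{f}})$, which has $\mathfrak{P}'$-valuation $t_{\bf{f}}$, we get $\ord_{\mathfrak{P}'}(\partial_v(\xi_{\varphi}'(v)))=t_{\bf{f}}-t_{\bf{f}}=0$ in the $(n+t_{\bf{f}})$-truncated module; after reducing mod $\varphi(\mathfrak{P})^n$ one must track the valuation drop, and the correct statement comes out as $\ord_{\mathfrak{P}'}(\partial_v(\xi_{\varphi}''(v)))=t_{\bf{f}}-t$ — here again the interplay between the valuation of $\partial_v(\xi'_{\varphi}(v))$ before reduction and the exponent $n$ of truncation is what produces the shift; I would verify this by a direct length computation in $\Lambda/\mathfrak{P}^n$ using freeness from Theorem \ref{3.3}. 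Finally (5) is a reciprocity statement: the target of $\pi_v$ is the dual residual Selmer group, and by Proposition \ref{cftpairing} any $\mathfrak{s}\in\widehat{H}^1_v(K_{\mathfrak{p}^{\infty}},T_{{\bf{f}},n})$ pairs trivially under $\langle\,,\,\rangle_v$ with $\vartheta_v(s)$ for every $s\in\Sel_{{\bf{f}},n}(K_{\mathfrak{p}^{\infty}})$; since $\xi''_{\varphi}(v)$ lies in $\widehat{H}^1_{\mathfrak{S}}$ with $\mathfrak{S}=\{v\}$, its residue $\partial_v(\xi''_{\varphi}(v))$ is exactly such a class, and Proposition \ref{cftpairing} together with the identification of $\pi_v$ as the map induced by the local Tate pairing $(\ref{ltp})$ forces $\partial_v(\xi''_{\varphi}(v))\in\ker\pi_v$. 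The main obstacle is purely the valuation/truncation bookkeeping underlying (1) and (4): one must argue that choosing $\zeta(v)$ in the $(n+t_{\bf{f}})$-admissible level (rather than level $n$) is exactly what guarantees the division by $(\mathfrak{P}')^{t_{\bf{f}}}$ is possible and produces a generator after reduction — once that is pinned down, the remaining four properties are formal consequences of $\Lambda$-linearity, Theorems \ref{erl1} and \ref{3.3}, and Proposition \ref{cftpairing}, exactly as in \cite[Lemmas 4.5, 4.6]{BD} and \cite{PW}.
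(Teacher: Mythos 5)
Your treatment of properties (2), (3) and (5) follows the same route as the paper: (2) comes from the defining local conditions of the compactified Selmer group $\widehat{H}^1_{\mathfrak{S}}$ together with the vanishing of the singular parts at the split primes dividing $\mathfrak{N}^{+}$ (Corollary \ref{2.4/2.5}), (3) is the statement $\vartheta_v(\zeta(v))=0$ of Theorem \ref{erl1} carried through the $\Lambda$-linear operations, and (5) is the global-reciprocity orthogonality of Proposition \ref{cftpairing} applied to $\partial_v(\xi''_{\varphi}(v))$, which is exactly how the paper (following \cite[Lemma 4.6]{BD}) argues.

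The genuine gap is in the valuation bookkeeping underlying (1) and (4). You take the relation $(\mathfrak{P}')^{t_{\bf f}}\cdot\xi'_{\varphi}(v)=\zeta'_{\varphi}(v)$ at face value, assign $\xi'_{\varphi}(v)$ the ``valuation'' $t-t_{\bf f}<0$, and then for (4) you compute $\ord_{\mathfrak{P}'}\bigl(\partial_v(\xi'_{\varphi}(v))\bigr)=0$ at level $n+t_{\bf f}$ and assert that reduction mod $\varphi(\mathfrak{P})^n$ ``produces the shift'' to $t_{\bf f}-t$. That step would fail: the reduction $\mathcal{O}'/\varphi(\mathfrak{P})^{n+t_{\bf f}}\to\mathcal{O}'/\varphi(\mathfrak{P})^{n}$ never increases $\mathfrak{P}'$-valuation, so no truncation argument can turn a valuation-zero residue into one of valuation $t_{\bf f}-t$; moreover, in a free rank-one module over $\mathcal{O}'/\varphi(\mathfrak{P})^{n+t_{\bf f}}$ the equation $(\mathfrak{P}')^{t_{\bf f}}\xi'=\zeta'$ has no solution at all when $\ord_{\mathfrak{P}'}(\zeta')=t<t_{\bf f}$, so the relation must be read with exponent $t$, i.e.\ one divides $\zeta'_{\varphi}(v)$ by its exact valuation, as in \cite[Lemmas 4.5 and 4.6]{BD}. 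With that reading both properties are immediate and need no delicate reduction argument: $\xi'_{\varphi}(v)$ is a unit multiple of a generator of the free module of Theorem \ref{3.3} (well defined up to classes killed by $(\mathfrak{P}')^{t}$, which die in the level-$n$ module since $t\le t_{\bf f}$), giving (1); and $(\mathfrak{P}')^{t}\,\partial_v(\xi'_{\varphi}(v))=\partial_v(\zeta'_{\varphi}(v))=\varphi(\mathcal{L}_{\bf f})$ of valuation $t_{\bf f}$ (Theorem \ref{erl1}) gives $\ord_{\mathfrak{P}'}\bigl(\partial_v(\xi'_{\varphi}(v))\bigr)=t_{\bf f}-t$ already before passing to $T_{{\bf f},n}$, where it is preserved because $t_{\bf f}-t<n$. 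You flagged this as ``the one genuinely delicate point'' but resolved it by assertion (``a direct length computation'') rather than by this division-by-$(\mathfrak{P}')^{t}$ argument, and the intermediate statements you do write down (negative valuation of $\xi'_{\varphi}(v)$; valuation increasing under reduction) are incorrect, so as written (1) and (4) are not established.
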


\begin{proof} See \cite[Lemmas 4.5 and 4.6]{BD}. Property $(1)$ follows from the
definition of $\xi_{\varphi}''(v),$ along with the fact that
$\ord_{\mathfrak{P}'} \left( \zeta_{\varphi}'(v)\right) = t$.
Property $(2)$ follows from the fact that $\zeta_{\varphi}'(v) \in
\widehat{H}^1_{\mathfrak{S}}(K_{\mathfrak{p}^{\infty}}, T_{{\bf{f}},
n+t_{\bf{f}} })$, using explicit definitions. Properties $(3)$ and
$(4)$ follow from Theorem \ref{erl1}. Property $(5)$ follows from
the same argument of \cite[Lemma 4.6]{BD}, which uses the global
reciprocity law of class field theory. .
\end{proof} Using Lemma \ref{4.5/4.6}, we can show the following

\begin{proposition}\label{4.7}
If $t_{\bf{f}}=0$, then
$\Sel_{{\bf{f}},n}(K_{\mathfrak{p}^{\infty}})^{\vee}$ is trivial.
\end{proposition}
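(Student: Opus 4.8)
The plan is to run the standard Bertolini--Darmon argument in the base case $t_{\bf f}=0$, where the structure is cleanest. Since $t_{\bf f}=0$, the element $\zeta_\varphi'(v)$ of the rank-one free $\mathcal{O}'/\varphi(\mathfrak{P})^{n}$-module $H^1_{\mathfrak S}(K_{\mathfrak{p}^\infty}, T_{{\bf f},n})\otimes_\varphi \mathcal{O}'$ already has $\ord_{\mathfrak{P}'}(\zeta_\varphi'(v))=t=0$ by $(\ref{ineq1})$, so $\xi_\varphi''(v)$ is (a unit multiple of) $\zeta_\varphi''(v)$ itself, and by Lemma \ref{4.5/4.6}(4) its residue $\partial_v(\xi_\varphi''(v))$ is a $\mathcal{O}'$-generator of $\widehat{H}^1_{\sing}(K_{\mathfrak{p}^\infty,v}, T_{{\bf f},n})\otimes_\varphi\mathcal{O}'\cong \Lambda/(\mathfrak{P}^n,\ker\varphi)$. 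The key input is Lemma \ref{4.5/4.6}(5): this generator lies in the kernel of the natural surjection $\pi_v\colon \widehat{H}^1_{\sing}(K_{\mathfrak{p}^\infty,v}, T_{{\bf f},n})\to \Sel_{{\bf f},n}(K_{\mathfrak{p}^\infty})^\vee\otimes_\varphi\mathcal{O}'$. Since a generator maps to $0$, the map $\pi_v$ is identically zero, hence $\Sel_{{\bf f},n}(K_{\mathfrak{p}^\infty})^\vee\otimes_\varphi\mathcal{O}'=0$.

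Next I would remove the dependence on $\varphi$. The surjectivity of $\pi_v$ and the fact that $v$ can be chosen to be $n$-admissible with the natural map $\Sel_{{\bf f},n}(K)\to H^1_{\fin}(K_v,A_{{\bf f},n})$ injective is exactly what makes $\mathfrak S=\{v\}$ an $n$-admissible set; this is the content (via Theorem \ref{3.2} applied to a nonzero class in $\Sel_{{\bf f},1}(K)$, lifted using Theorem \ref{3.4}) behind the existence of $\pi_v$ and its surjectivity. Vanishing of $\Sel_{{\bf f},n}(K_{\mathfrak{p}^\infty})^\vee\otimes_\varphi\mathcal{O}'$ for every specialization $\varphi\colon\Lambda\to\mathcal{O}'$ forces $\Sel_{{\bf f},n}(K_{\mathfrak{p}^\infty})^\vee$ to be a $\Lambda$-module all of whose discrete-valuation-ring quotients vanish; combined with the finite generation over $\Lambda$ and Nakayama (using that $\Lambda$ is local with residue field $\mathcal{O}/\mathfrak{P}$), this gives $\Sel_{{\bf f},n}(K_{\mathfrak{p}^\infty})^\vee=0$, i.e. $\Sel_{{\bf f},n}(K_{\mathfrak{p}^\infty})$ is trivial.

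The one subtlety is making sure a suitable $n$-admissible prime $v$ exists at all: one needs a nonzero class in $H^1(K,A_{{\bf f},1})$ to feed into Theorem \ref{3.2}, and if $\Sel_{{\bf f},n}(K_{\mathfrak{p}^\infty})$ were already trivial there would be nothing to prove, so one argues by contradiction — assume $\Sel_{{\bf f},n}(K_{\mathfrak{p}^\infty})^\vee\neq 0$, extract via Theorem \ref{3.4}(i) a nonzero class in $H^1(K,A_{{\bf f},1})$, choose $v$ by Theorem \ref{3.2}, and derive the contradiction above. The main obstacle is verifying that the chosen $v$ indeed yields an $n$-admissible set, i.e. the injectivity of $\Sel_{{\bf f},n}(K)\to H^1_{\fin}(K_v,A_{{\bf f},n})$; this uses the residual irreducibility and big-image Hypotheses \ref{galrep}(iv),(v) in the Cebotarev argument of Theorem \ref{3.2}, exactly as in \cite[\S4]{BD} and \cite{PW}. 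Everything else is bookkeeping with the reciprocity law of Theorem \ref{erl1} and the local duality of Proposition \ref{localtate}.
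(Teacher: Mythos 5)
There is a genuine gap, and it sits exactly where your first paragraph does all the work. You conclude $\Sel_{{\bf f},n}(K_{\mathfrak{p}^{\infty}})^{\vee}\otimes_{\varphi}\mathcal{O}'=0$ from the two facts that a generator of $\widehat{H}^1_{\sing}(K_{\mathfrak{p}^{\infty},v},T_{{\bf f},n})\otimes_{\varphi}\mathcal{O}'$ lies in $\ker\pi_v$ and that $\pi_v$ is surjective. The surjectivity is the problem: by the local Tate duality of Proposition \ref{localtate}, surjectivity of $\pi_v$ onto the full module $\Sel_{{\bf f},n}(K_{\mathfrak{p}^{\infty}})^{\vee}\otimes_{\varphi}\mathcal{O}'$ amounts to a control statement for the Selmer group at the \emph{single} prime $v$ (its localization kernel at $v$ must die after specialization), and since the source is free of rank one (Lemma \ref{2.6/2.7}) it would force the specialized dual Selmer module to be cyclic --- essentially the conclusion you are trying to reach. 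Your proposed justification does not deliver this: Theorem \ref{3.2} produces, for \emph{one given} nonzero class $s$, admissible primes with $\vartheta_v(s)\neq 0$; it cannot make the whole map $\Sel_{{\bf f},n}(K)\to H^1_{\fin}(K_v,A_{{\bf f},n})\cong\mathcal{O}/\mathfrak{P}^n$ injective for a single $v$ unless $\Sel_{{\bf f},n}(K)$ is already cyclic (this is exactly why admissible \emph{sets} generally consist of several primes). So even though Lemma \ref{4.5/4.6}(5) is phrased with the word ``surjection,'' you cannot run the argument ``generator in the kernel, hence target zero'' for the fixed prime $v$ used to construct $\zeta(v)$.

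The paper's proof turns the argument around so that only the kernel assertion of Lemma \ref{4.5/4.6}(5) is ever used, and the admissible prime is chosen \emph{after}, and in terms of, a hypothetical nonzero Selmer class. Suppose $\Sel_{{\bf f},n}(K_{\mathfrak{p}^{\infty}})^{\vee}\neq 0$; Nakayama gives a nonzero $s$ in $\Sel_{{\bf f},n}(K_{\mathfrak{p}^{\infty}})[\mathfrak{m}_{\Lambda}]$, Theorem \ref{3.4}(i) identifies it with a nonzero class in $H^1(K,A_{{\bf f},1})$, and Theorem \ref{3.2} then supplies an $n$-admissible prime $q$ with $\vartheta_q(s)\neq 0$. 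Nondegeneracy of $\langle\,,\,\rangle_q$ forces $\pi_q\neq 0$; but $t_{\bf f}=0$ together with Theorem \ref{erl1} makes $\partial_q(\zeta(q))$ a generator of the cyclic module $\widehat{H}^1_{\sing}(K_{\mathfrak{p}^{\infty},q},T_{{\bf f},n})\otimes_{\varphi}\mathcal{O}'$ lying in $\ker\pi_q$, so $\pi_q=0$, a contradiction. Your final paragraph gestures at this contradiction but deploys it for the wrong purpose (to manufacture the surjectivity of $\pi_v$), which it does not and need not do; the $\varphi$-removal step in your second paragraph is fine but becomes moot once the argument is set up this way.
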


\begin{proof} See \cite[Proposition 4.7]{BD}. If $t_{\bf{f}} = 0$, then
$\varphi \left( \mathcal{L}_{\bf{f}}\right)$ is a unit. Theorem
\ref{erl1} then implies that the residue $\partial_v\left( \zeta(v)
\right)$ generates $\widehat{H}^1_{\sing}(K_{\mathfrak{p}^{\infty},
v}, T_{{\bf{f}},n})$ for $v \subset \mathcal{O}_F$ any
$n$-admissible prime with respect to ${\bf{f}}$. Observe that this
renders the projective map $\pi_v$ in Lemma \ref{4.5/4.6}$(5)$
trivial. Let us now suppose that
$\Sel_{{\bf{f}},n}(K_{\mathfrak{p}^{\infty}})^{\vee}$ were not
trivial. Nakayama's lemma would then imply that $$\left(
\Sel_{{\bf{f}},n}(K_{\mathfrak{p}^{\infty}})
[\mathfrak{m}_{\Lambda}]\right)^{\vee} =
\Sel_{{\bf{f}},n}(K_{\mathfrak{p}^{\infty}})/\mathfrak{m}_{\Lambda}
\neq 0$$ Here, as above, $\mathfrak{m}_{\Lambda}$ denotes the
maximal ideal of $\Lambda$. We could then choose a
class $s \neq 0$ in $\Sel_{{\bf{f}},n}(K_{\mathfrak{p}^{\infty}})/
\mathfrak{m}_{\Lambda}$. By Theorem \ref{3.4}, we
could then identify this class $s$ with an element of $H^1(K,
A_{{\bf{f}},1})$. By Theorem \ref{3.2}, we could then choose another
$n$-admissible prime $q \subset \mathcal{O}_F$ with respect to
${\bf{f}}$ such that $\vartheta_q(s) \neq 0$. But observe then that
the projection $\pi_v$ cannot be trivial, by the nondegeneracy of
the local Tate pairing $\langle ~,~\rangle_v$. This supplies the
desired contradiction.  \end{proof} We are now ready to begin
a proof of Proposition \ref{4.3} by induction on $t_{\bf{f}}$. By
Proposition \ref{4.7}, we may assume without loss of generality that
$t_{\bf{f}} >0$. Let us write $\Pi_0$ to denote the set of $(n
+ t_0)$-admissible primes for which the valuation
$\ord_{\mathfrak{P}'}\left(\zeta_{\varphi}'(v) \right)$ is minimal.

\begin{lemma}\label{4.8}
Suppose that $t = \ord_{\mathfrak{P}'}\left(
\zeta_{\varphi}(v)\right)$ with $v \in \Pi_0$. Then, $t <
t_{\bf{f}}$.
\end{lemma}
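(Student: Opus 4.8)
The plan is to argue by contradiction, excluding the possibility $t=t_{\ff}$; since $\partial_v$ is a homomorphism of $\mathcal{O}'$-modules with $\ord_{\mathfrak{P}'}\bigl(\partial_v(\zeta'_{\varphi}(v))\bigr)=\ord_{\mathfrak{P}'}\bigl(\varphi(\mathcal{L}_{\ff})\bigr)=t_{\ff}$ by Theorem \ref{erl1}, one automatically has $t\leq t_{\ff}$, so excluding equality yields the claim. First I would record the module-theoretic picture. With $\mathfrak{S}=\lbrace v\rbrace$, both the module $(\ref{tensormodule})$ formed at the level relevant to $\Pi_0$, namely $H^1_{\mathfrak{S}}(K_{\mathfrak{p}^{\infty}},T_{\ff,\,n+t_0})\otimes_{\varphi}\mathcal{O}'$, and the module $\widehat{H}^1_{\sing}(K_{\mathfrak{p}^{\infty},v},T_{\ff,\,n+t_0})\otimes_{\varphi}\mathcal{O}'$ are free of rank one over $\mathcal{O}'/\varphi(\mathfrak{P})^{\,n+t_0}$, by Theorem \ref{3.3} and Lemma \ref{2.6/2.7} respectively; hence $\partial_v$ acts as multiplication by an element $\delta_v$ with $\ord_{\mathfrak{P}'}(\delta_v)=t_{\ff}-t$. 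Suppose now $t=t_{\ff}$: then $\delta_v$ is a unit, i.e. $\partial_v$ is surjective, and since $v\in\Pi_0$ realises the minimum of $w\mapsto\ord_{\mathfrak{P}'}(\zeta'_{\varphi}(w))$ over all $(n+t_0)$-admissible primes $w$ with respect to $\ff$, the same holds for every such $w$: $\ord_{\mathfrak{P}'}(\zeta'_{\varphi}(w))=t_{\ff}$ and the global residue map $\partial_w$ is surjective.

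Next I would run the detection argument of Proposition \ref{4.7}. Reducing modulo $\mathfrak{P}^n$ (where $\partial_w$ remains surjective onto $\widehat{H}^1_{\sing}(K_{\mathfrak{p}^{\infty},w},T_{\ff,n})$, by the freeness in Lemma \ref{2.6/2.7}), the surjectivity of $\partial_w$ makes the natural surjection $\pi_w$ of Lemma \ref{4.5/4.6}(5) — whose kernel contains $\im(\partial_w)$ — the zero map, so $\Sel_{\ff,n}(K_{\mathfrak{p}^{\infty}})^{\vee}\otimes_{\varphi}\mathcal{O}'=0$; concretely, Proposition \ref{cftpairing} gives $\langle\partial_w(\mathfrak{s}),\vartheta_w(s)\rangle_w=0$ for all classes $\mathfrak{s}$ and all $s\in\Sel_{\ff,n}(K_{\mathfrak{p}^{\infty}})$, and since $\partial_w$ is surjective and $\langle~,~\rangle_w$ is nondegenerate, $\vartheta_w(s)=0$ for every $s$ and every such $w$. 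Were $\Sel_{\ff,n}(K_{\mathfrak{p}^{\infty}})^{\vee}$ nonzero, Nakayama's lemma would give a nonzero class in $\Sel_{\ff,n}(K_{\mathfrak{p}^{\infty}})/\mathfrak{m}_{\Lambda}$, hence, via Theorem \ref{3.4}, a nonzero class $s\in H^1(K,A_{\ff,1})$, which by Theorem \ref{3.2} has $\vartheta_q(s)\neq 0$ at some admissible prime $q$ — contradicting the previous sentence. Thus $\Sel_{\ff,n}(K_{\mathfrak{p}^{\infty}})^{\vee}=0$, so $s_{\ff}=0$. But then the inequality $s_{\ff}\leq 2t_{\ff}$ asserted by Proposition \ref{4.3} already holds and the remaining steps of the inductive argument are vacuous; we may therefore assume $\Sel_{\ff,n}(K_{\mathfrak{p}^{\infty}})^{\vee}\neq 0$, under which $t=t_{\ff}$ is impossible, and $t<t_{\ff}$ follows.

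The main obstacle is the middle step: passing from ``$\partial_w$ surjective for every admissible $w$'' to the triviality of the residual Selmer group. This leans on the global (Poitou--Tate) duality formalism over the ${\bf{Z}}_p^{\delta}$-tower developed in Sections 5--7, in particular the precise description of $\ker(\pi_w)$, together with the control statements of Theorems \ref{3.3}, \ref{3.4} and the detection Theorem \ref{3.2}; one must also verify that the freeness of the compactified Selmer modules, the perfectness of the local Tate pairings, and the identification of $\widehat{H}^1_{\sing}$ all descend correctly under reduction from level $n+t_0$ to level $n$ and under $-\otimes_{\varphi}\mathcal{O}'$. It is worth flagging explicitly that $\delta_v\in\mathfrak{P}'$ — equivalently the strict inequality $(\ref{ineq1})$ — is precisely what the lemma asserts, so the argument must genuinely produce it through this detour rather than invoke it.
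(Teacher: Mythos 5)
Your argument is essentially the paper's own, assembled from the same ingredients in the same order: the observation that $(\ref{ineq1})$ should be read as $t\leq t_{\bf{f}}$ (so that only the case $t=t_{\bf{f}}$ must be excluded) is the intended reading; under that assumption the minimality of $v\in\Pi_0$ forces $\ord_{\mathfrak{P}'}(\zeta'_{\varphi}(w))=t_{\bf{f}}$ at every admissible prime $w$, property (4) of Lemma \ref{4.5/4.6} makes $\partial_w(\xi''_{\varphi}(w))$ a generator of the singular quotient, and then property (5) (equivalently Proposition \ref{cftpairing}) together with the nondegeneracy of $\langle~,~\rangle_w$, Theorem \ref{3.4} and Theorem \ref{3.2} produces the contradiction. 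The only divergence is organisational: the paper contradicts directly, invoking Theorem \ref{3.4} to furnish a nontrivial class $s\in H^1(K,A_{{\bf{f}},1})\cap\Sel_{{\bf{f}},n}(K_{\mathfrak{p}^{\infty}})$ and Theorem \ref{3.2} to find one admissible $v$ with $\vartheta_v(s)\neq 0$, whereas you first re-run the Proposition \ref{4.7} mechanism to conclude that $t=t_{\bf{f}}$ would force $\Sel_{{\bf{f}},n}(K_{\mathfrak{p}^{\infty}})^{\vee}$ to vanish, and then discharge that case by noting Proposition \ref{4.3} is trivially true there. Strictly speaking this proves the lemma only when the residual Selmer group is nonzero, but the paper's step ``we could then find a nontrivial class $s$'' carries exactly the same implicit assumption (Theorem \ref{3.4} alone does not create such a class; one needs $\Sel_{{\bf{f}},n}(K_{\mathfrak{p}^{\infty}})\neq 0$ plus Nakayama, which is your dichotomy made explicit), so your packaging is harmless for the intended application and, if anything, more transparent about where nontriviality enters; the price is that your version of Lemma \ref{4.8} is a disjunction rather than the unconditional statement. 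One further small point you share with the paper and should keep in mind: identifying $\ord_{\mathfrak{P}'}(\varphi(\mathcal{L}_{\bf{f}}))$ with $t_{\bf{f}}$ uses that $\varphi(\mathcal{L}_{\bf{f}})$ and $\varphi(\mathcal{L}_{\bf{f}}^{*})$ have equal valuation, which is the standard convention here but is worth stating when you invoke Theorem \ref{erl1}.
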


\begin{proof} See \cite[Lemma 4.8]{BD}. Suppose otherwise that the claim did not hold.
Then, by $(\ref{ineq1})$, it would follow that
\begin{align*}\ord_{\mathfrak{P}'}\left( \zeta_{\varphi}'(v)\right)  =
\ord_{\mathfrak{P}'}\left( \varphi(\mathcal{L}_{\bf{f}})\right) \end{align*}
for all $(n+t_{\bf{f}})$-admissible primes $v \subset \mathcal{O}_F$
with respect to ${\bf{f}}$. By Theorem \ref{3.4}, we could then find
a nontrivial class $s$  in $H^1(K, A_{{\bf{f}},1}) \cap
\Sel_{{\bf{f}},n}(K_{\mathfrak{p}^{\infty}})$. By Theorem \ref{3.2},
we could then choose an $(n + t_{\bf{f}})$-admissible prime $v$ such
that $\vartheta_v(s) \neq 0$. Property $(4)$ of Lemma \ref{4.5/4.6}
implies that the natural image of $\vartheta_v\left(
\zeta_{\varphi}'(v) \right)$ in $H^1(K_v, T_{{\bf{f}},1})
\otimes_{\varphi} \mathcal{O}'$ does not vanish. Property $(5)$ of
Lemma \ref{4.5/4.6} implies that this image is orthogonal under the
local Tate pairing $\langle ~, ~\rangle_v$ to the nonvanishing class
$\vartheta_s(v)$, contradicting the fact that $\langle ~,
~\rangle_v$ is a perfect, nondegenerate pairing between the
$\mathcal{O}'/\mathfrak{P}'$-vector spaces to which these classes
belong.  \end{proof} Let us now fix a prime $v_1 \in \Pi_0$.
Let $s \in H^1(K, T_{\bf{f}}) \otimes \mathcal{O}'/\mathfrak{P}'$
denote the image of $\zeta_{\varphi}'(v)$ in \begin{align*}
\widehat{H}^1_{\mathfrak{S}}( K_{\mathfrak{p}^{\infty}}, T_{\bf{f}})
\otimes \mathcal{O}'/\mathfrak{P}' &\subset
\widehat{H}^1_{\mathfrak{S}}( K_{\mathfrak{p}^{\infty}},
T_{\bf{f}})/\mathfrak{m}_{\Lambda} \otimes
\mathcal{O}'/\mathfrak{P}'
\\ &\subset \widehat{H}^1_{\mathfrak{S}}(
K, T_{\bf{f}}) \otimes \mathcal{O}'/\mathfrak{P}'.\end{align*} By
Theorem \ref{3.2}, there exists an $(n + t_0)$-admissible prime
$v_2$ such that $\vartheta_{v_2}(s) \neq 0$. Here, $\vartheta_{v_2}:
\widehat{H}^1_{\mathfrak{S}}(K_{\mathfrak{p}^{\infty}, v_2},
T_{\bf{f}}) \longrightarrow
\widehat{H}^1_{\fin}(K_{\mathfrak{p}^{\infty}, v_2}, T_{\bf{f}}).$
Now, observe that we have the relations \begin{align}\label{ineq2} t
= \ord_{\mathfrak{P}'}\left( \zeta_{\varphi}'(v_1) \right) \leq
\ord_{\mathfrak{P}'}\left( \zeta_{\varphi}'(v_2) \right) \leq
\ord_{\mathfrak{P}'} \left( \vartheta_{v_1}(\zeta_{\varphi}(v_2)
\right) . \end{align} The first inequality follows from the
definition of $\Pi_0$. The second inequality follows from the fact
that $\vartheta_{v_2}$ is a homomorphism. Corollary \ref{erl2symm}
to the second explicit reciprocity law then gives us the relation
\begin{align}\label{eq3} \ord_{\mathfrak{P}'} \left(
\vartheta_{v_1}\left(\zeta_{\varphi}'(v_2)\right)\right) =
\ord_{\mathfrak{P}'} \left(
\vartheta_{v_2}\left(\zeta_{\varphi}'(v_1)\right)\right).
\end{align} Now, since $\vartheta_{v_2}(s) \neq 0$, we find that 
\begin{align*}\ord_{\mathfrak{P}'}
\left( \vartheta_{v_2} \left( \zeta_{\varphi}'(v_2) \right)\right) =
\ord_{\mathfrak{P}'}\left( \zeta_{\varphi}'(v_1) \right).\end{align*} It
follows that the inequalities of $(\ref{ineq2})$ are equalities. In
particular, \begin{align*}\ord_{\mathfrak{P}'} \left( \zeta_{\varphi}'(v_2)
\right) = t.\end{align*} Hence, we find that $v_2 \in \Pi_0$.

Let ${\bf{g}}$ denote the
$\mathcal{O}_0/\mathfrak{P}_{(n+t_0)}$-valued eigenform associated
to ${\bf{f}}$ and the pair of $n$-admissible primes $(v_1, v_2)$ with respect
to ${\bf{f}}$ by Theorem \ref{elr}. By
Theorem \ref{erl1}, we have that $$\vartheta_{v_2} \left(
\zeta_{\varphi}'(v_1) \right) = \mathcal{L}_{\bf{g}}.$$ It follows
that $t_{\bf{g}} = t < t_{\bf{f}}.$ Now, since ${\bf{g}}$ satisfies
all of the hypotheses of Proposition \ref{4.3}, we may apply the
inductive hypothesis to deduce that $s_{\bf{g}} \leq 2t_{\bf{g}}.$
We may now argue in the same way as \cite[pp. 34-35]{BD} to conclude
the argument. That is, let us write
$\Sel_{[v_1v_2]}(K_{\mathfrak{p}^{\infty}}) \subseteq
\Sel_{{\bf{f}},n}(K_{\mathfrak{p}^{\infty}})$ to denote the subgroup
of classes that are trivial at primes dividing $v_1 v_2$. Let
$\Sel_{v_1 v_2}^{\bf{f}}(K_{\mathfrak{p}^{\infty}})$ denote the
group defined by the exactness of the sequence
\begin{align}\label{sesf} 0 \longrightarrow
\Sel_{v_1v_2}^{{\bf{f}}}(K_{\mathfrak{p}^{\infty}}) \longrightarrow
\Sel_{{\bf{f}},n}(K_{\mathfrak{p}^{\infty}})^{\vee} \longrightarrow
\Sel_{[v_1 v_2]}(K_{\mathfrak{p}^{\infty}})^{\vee} \longrightarrow
0. \end{align} Observe that by applying local Tate duality 
(Proposition \ref{localtate}) to the natural inclusion
\begin{align*}\Sel_{v_1v_2}^{{\bf{f}}}(K_{\mathfrak{p}^{\infty}})^{\vee} \subseteq
H^{1}_{\fin}(K_{\mathfrak{p}^{\infty}, v_1}, A_{{\bf{f}},n}) \oplus
H^{1}_{\fin}(K_{\mathfrak{p}^{\infty}, v_2}, A_{{\bf{f}},n}), \end{align*} we
obtain a natural surjection \begin{align*}\eta_{{\bf{f}}}:
\widehat{H}^{1}_{\sing}(K_{\mathfrak{p}^{\infty}, v_1},
T_{{\bf{f}},n}) \oplus
\widehat{H}^{1}_{\sing}(K_{\mathfrak{p}^{\infty}, v_2},
T_{{\bf{f}},n}) \longrightarrow \Sel_{v_1
v_2}^{{\bf{f}}}(K_{\mathfrak{p}^{\infty}}).\end{align*} Let
$\eta_{{\bf{f}}}^{\varphi}$ denote the map obtained from
$\eta_{{\bf{f}}}$ after tensoring with $\mathcal{O}'$ via $\varphi$.
By Lemma \ref{2.6/2.7}, the domain of $\eta_{{\bf{f}}}$ is
isomorphic to $\left( \mathcal{O}'/\varphi(\mathfrak{P})^n
\right)^2$. Property $(5)$ of Lemma \ref{4.5/4.6} implies that
$\ker\left(\eta_{{\bf{f}}}^{\varphi}\right)$ contains the vectors
$\left(\partial_{v_1}\left(\xi_{\varphi}''(v_1)\right), 0 \right)$
and $ \left(0,
\partial_{v_2}\left(\xi_{\varphi}''(v_2)\right) \right)$ in
\begin{align*}\left( \widehat{H}^{1}_{\sing}(K_{\mathfrak{p}^{\infty}, v_1},
T_{{\bf{f}},n}) \oplus
\widehat{H}^{1}_{\sing}(K_{\mathfrak{p}^{\infty}, v_2},
T_{{\bf{f}},n})\right) \otimes_{\varphi}\mathcal{O}' \cong \left(
\mathcal{O}'/\varphi(\mathfrak{P})^n \right)^2.\end{align*} Observe that by
property $(3)$ of Lemma \ref{4.5/4.6}, we have the equalities \begin{align*}
t_{{\bf{f}}} - t_{\bf{g}} = \ord_{\mathfrak{P}'}\left(
\partial_{v_1}\left(\xi_{\varphi}''(v_1) \right)\right) =
\ord_{\mathfrak{P}'}\left( \partial_{v_2}\left( \xi_{\varphi}''(v_2)
\right)\right).\end{align*} Thus, we obtain the inclusion
\begin{align}\label{fitt} \left( \mathfrak{P}' \right)^{t_{{\bf{f}}}
- t_{\bf{g}} } \in \Fitt_{\mathcal{O}'}\left( \Sel_{v_1
v_2}^{{\bf{f}}}(K_{\mathfrak{p}^{\infty}}) \otimes_{\varphi}
\mathcal{O}' \right).\end{align} Let us now repeat the same argument
for the eigenform ${\bf{g}}$. That is, consider the short exact
sequence \begin{align}\label{sesg} 0 \longrightarrow
\Sel_{v_1v_2}^{\bf{g}}(K_{\mathfrak{p}^{\infty}}) \longrightarrow
\Sel_{{\bf{g}},n}(K_{\mathfrak{p}^{\infty}})^{\vee} \longrightarrow
\Sel_{[v_1 v_2]}(K_{\mathfrak{p}^{\infty}})^{\vee} \longrightarrow
0, \end{align} and the natural surjective map induced by local Tate
duality
$$\eta_{{\bf{g}}}:\widehat{H}^{1}_{\sing}(K_{\mathfrak{p}^{\infty}, v_1}, T_{{\bf{g}},n})
\oplus \widehat{H}^{1}_{\sing}(K_{\mathfrak{p}^{\infty}, v_2},
T_{{\bf{g}},n}) \longrightarrow \Sel_{v_1
v_2}^{{\bf{g}}}(K_{\mathfrak{p}^{\infty}}).$$ Let
$\eta_{{\bf{g}}}^{\varphi}$ denote the map obtained from
$\eta_{{\bf{g}}}$ after tensoring with $\mathcal{O}'$ via $\varphi$.
The global reciprocity law of class field theory implies that
$\ker\left(\eta_{{\bf{g}}}^{\varphi}\right)$ contains the vectors
$\left(\vartheta_{v_1}\left(\xi_{\varphi}''(v_2)\right), 0 \right)$
and $\left(\vartheta_{v_1}\left(\xi_{\varphi}''(v_1)\right),
\vartheta_{v_2}\left(\xi_{\varphi}''(v_1)\right) \right) = \left(0,
\vartheta_{v_2}\left(\xi_{\varphi}''(v_1) \right) \right)$ in
\begin{align*}\left(\widehat{H}^{1}_{\sing}(K_{\mathfrak{p}^{\infty}, v_1},
T_{{\bf{g}},n}) \oplus
\widehat{H}^{1}_{\sing}(K_{\mathfrak{p}^{\infty}, v_2},
T_{{\bf{g}},n})\right) \otimes_{\varphi}\mathcal{O}' \cong \left(
\mathcal{O}'/\varphi(\mathfrak{P})^n \right)^2.\end{align*} By Corollary
\ref{erl2symm},
\begin{align*}\ord_{\mathfrak{P}'}\left(\vartheta_{v_1}\left(
\xi_{\varphi}''(v_2)\right) \right) = \ord_{\mathfrak{P}'}\left(
\vartheta_{v_2}\left( \xi_{\varphi}''(v_1)\right) \right) =
t_{{\bf{g}}} - t = 0.\end{align*} It follows that $\Sel_{v_1
v_2}^{{\bf{g}}}(K_{\mathfrak{p}^{\infty}})\otimes_{\varphi}\mathcal{O}'$
is trivial, in which case the natural surjective map of
$(\ref{sesg})$ defines an isomorphism \begin{align}\label{fittiso}
\Sel_{{\bf{g}},n}(K_{\mathfrak{p}^{\infty}})^{\vee} \longrightarrow
\Sel_{[v_1 v_2]}(K_{\mathfrak{p}^{\infty}})^{\vee}.\end{align} Now,
Lemma \ref{4.8} implies that $t_{{\bf{g}}}< t_{{\bf{f}}}$. Recall
that since ${\bf{g}}$ satisfies the hypotheses of Proposition
\ref{4.3}, we may invoke the inductive hypothesis to conclude that
\begin{align}\label{induct} \varphi\left(\mathcal{L}_{{\bf{g}}}\right)^2 \in
\Fitt_{\mathcal{O}'}\left(
\Sel_{{\bf{g}},n}(K_{\mathfrak{p}^{\infty}})^{\vee}\otimes_{\varphi}\mathcal{O}'\right).\end{align}
Now, \begin{align*}(\mathfrak{P}')^{2t_{{\bf{f}}}} &=
(\mathfrak{P}')^{2(t_{{\bf{f}}} -~ t_{{\bf{g}}})}\cdot
(\mathfrak{P}')^{2t_{{\bf{g}}}}\\ &\in \Fitt_{\mathcal{O}'}\left(
\Sel_{v_1v_2}^{{\bf{f}}}(K_{\mathfrak{p}^{\infty}})
\otimes_{\varphi}\mathcal{O}'\right) \cdot
\Fitt_{\mathcal{O}'}\left( \Sel_{{\bf{g}},
n}(K_{\mathfrak{p}^{\infty}})^{\vee}
\otimes_{\varphi}\mathcal{O}'\right)\end{align*} by $(\ref{fitt})$
and $(\ref{induct})$. The isomorphism $(\ref{fittiso})$ gives an
inclusion \begin{align*} & \Fitt_{\mathcal{O}'}\left(
\Sel_{v_1v_2}^{{\bf{f}}}(K_{\mathfrak{p}^{\infty}})
\otimes_{\varphi}\mathcal{O}'\right) \cdot
\Fitt_{\mathcal{O}'}\left( \Sel_{{\bf{g}},
n}(K_{\mathfrak{p}^{\infty}})^{\vee}
\otimes_{\varphi}\mathcal{O}'\right)\\&\subseteq
\Fitt_{\mathcal{O}'}\left(
\Sel_{v_1v_2}^{{\bf{f}}}(K_{\mathfrak{p}^{\infty}})
\otimes_{\varphi}\mathcal{O}'\right) \cdot
\Fitt_{\mathcal{O}'}\left(
\Sel_{[v_1,v_2]}(K_{\mathfrak{p}^{\infty}})^{\vee}
\otimes_{\varphi}\mathcal{O}'\right).\end{align*} The short exact
sequence $(\ref{sesf})$ and the theory of Fitting ideals then give
\begin{align*} & \Fitt_{\mathcal{O}'}\left(
\Sel_{v_1v_2}^{{\bf{f}}}(K_{\mathfrak{p}^{\infty}})
\otimes_{\varphi}\mathcal{O}'\right) \cdot
\Fitt_{\mathcal{O}'}\left(
\Sel_{[v_1v_2]}(K_{\mathfrak{p}^{\infty}})^{\vee}
\otimes_{\varphi}\mathcal{O}'\right)\\
&\subseteq \Fitt_{\mathcal{O}'}\left(
\Sel_{{\bf{f}},n}(K_{\mathfrak{p}^{\infty}})^{\vee}\right).\end{align*}
In particular, we may conclude that \begin{align*}\varphi\left(
\mathcal{L}_{{\bf{f}}}\right)^2 \in \Fitt_{\mathcal{O}'}\left(
\Sel_{{\bf{f}},n}(K_{\mathfrak{p}^{\infty}})^{\vee}\otimes_{\varphi}\mathcal{O}'\right),\end{align*}
which proves Proposition \ref{4.3}.   \end{remark} \end{remark}

\section{Integral models of Shimura curves}

We collect here some facts about integral models of 
Shimura curves over totally real fields, following the works 
of Carayol \cite{Ca}, Cerednik \cite{Ce}, Drinfeld \cite{Dr} and Varshavsky \cite{Var}, \cite{VarII},
as required for the Euler system construction. The reader should note that in some places, in
particular where we describe the work(s) of Carayol \cite{Ca}, we assume for simplicity that 
the degree $d$ of the totally real field is greater than $1$. This however does not affect the 
validity of the results stated below, for which the $d=1$ cases have already been 
established (see also the article of Buzzard \cite{Bu}).

\begin{remark}[Reduction at split primes.]

Fix an indefinite quaternion algebra $B$ over $F$
as above, ramified at all the real places of $F$ save 
a fixed real place $\tau_1$. Fix a finite prime $v \subset \mathcal{O}_F$
where $B$ is split. Hence, we may fix an isomorphism $B_v \cong \M(F_v)$.

\begin{remark}[Integral models.] 

Fix a compact open subgroup $H \subset 
\widehat{B}^{\times}$. Let us assume that $H$ factorizes as $H_v \times H^v$, 
with $H_v \subset B_v^{\times} $ assumed to be maximal,
i.e. isomorphic to $\GL(\mathcal{O}_{F_v})$. The following theorem was first proved 
by Morita \cite{Mo}, then subsequently generalized by Carayol in \cite{Ca}. Recall that 
let $M_H$ denote the quaternionic Shimura curve associated to the complex manifold $M_H({\bf{C}}) = M_H(B,X)({\bf{C}})$.

\begin{theorem}[Morita-Carayol]\label{mc} Fix a finite prime $v \subset \mathcal{O}_F$ that splits the 
quaternion algebra $B$. Let $H \subset \widehat{B}^{\times}$ be any compact open
subgroup admitting the factorization $H_v \times H^v$, with $H_v \cong \GL(\mathcal{O}_{F_v})$.
Then, the Shimura curve $M_H$ has good reduction at $v$. In particular, there exists a smooth, proper 
model ${\bf{M}}_H$ of $M_H$ over $\mathcal{O}_{(v)}$. This model is unique up to isomorphism.
\end{theorem}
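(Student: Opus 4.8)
The plan is to realize ${\bf{M}}_H$ as a finite quotient of a PEL-type integral moduli scheme over $\mathcal{O}_{(v)}=(\mathcal{O}_F)_{(v)}$, to check smoothness at $v$ by Serre-Tate and Grothendieck-Messing deformation theory, to check properness via the absence of cusps together with the impossibility of multiplicative reduction for quaternionic abelian varieties, and to deduce uniqueness from minimal models. We follow Carayol \cite{Ca}, and, as in the rest of this section, we assume $d>1$, the case $d=1$ being covered by Morita \cite{Mo} and Buzzard \cite{Bu}. First I would recall the moduli interpretation. Since $B$ is split at $v$ we have $v\nmid\disc(B)$; fix a maximal order $\mathcal{O}_B\subset B$ and an isomorphism $\iota_v:\mathcal{O}_B\otimes_{\mathcal{O}_F}\mathcal{O}_{F_v}\cong\M(\mathcal{O}_{F_v})$. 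For $H$ contained in a suitable neat open subgroup, $M_H({\bf{C}})$ is the set of complex points of a quasi-projective $\mathcal{O}_F[1/\disc(B)]$-scheme representing the functor of abelian schemes $A$ of relative dimension $2d$ with an action $\mathcal{O}_B\hookrightarrow\End(A)$ satisfying the signature (determinant) condition of the Shimura datum, a compatible polarization, and an $H$-level structure; for general $H$ the curve $M_H$ is a finite quotient of such a fine model. Base changing the same moduli problems to $\mathcal{O}_{(v)}$ produces the candidate ${\bf{M}}_H$, which we must then prove is smooth and proper over $\mathcal{O}_{(v)}$ with generic fiber $M_H$.

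Next I would establish smoothness of ${\bf{M}}_H\to\Spec\mathcal{O}_{(v)}$. Let $e\in\M(\mathcal{O}_{F_v})$ be a nontrivial idempotent; via $\iota_v$ it decomposes the $v$-divisible $\mathcal{O}_{F_v}$-module $A[v^\infty]$ of a point $A$ of ${\bf{M}}_H$, up to isogeny, into two copies of $e\cdot A[v^\infty]$, which by the signature condition is an extension of an \'etale group by a one-dimensional formal $\mathcal{O}_{F_v}$-module of Lubin-Tate or Drinfeld type. Away from $v$ the level structure $H^v$ is \'etale and imposes no deformation-theoretic constraint. By Serre-Tate, deformations of $A$ together with its prime-to-$v$ structures coincide with deformations of $A[v^\infty]$; by Grothendieck-Messing these are unobstructed, since a one-dimensional formal group lifts together with its polarization and $\mathcal{O}_B$-action, so the integral moduli scheme is formally smooth over $\mathcal{O}_{(v)}$, of relative dimension $1$ by comparison with $M_H({\bf{C}})$, and the finite quotient preserves this. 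This step uses the hypothesis $H_v\cong\GL(\mathcal{O}_{F_v})$ in an essential way: a deeper level at $v$ would impose Iwahori- or Drinfeld-type integral conditions that are not smooth.

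Then I would prove properness by the valuative criterion. The generic fiber $M_H$ is already proper over $F$ because $B$ is a division algebra, ramified at the real places $\tau_2,\dots,\tau_d$, so $M_H$ has no cusps. Given a discrete valuation ring $R$ over $\mathcal{O}_{(v)}$ with fraction field $E$ and an $E$-point of $M_H$, i.e. an abelian variety $A/E$ with $\mathcal{O}_B$-action, polarization and $H$-structure, the semistable reduction theorem furnishes a semiabelian $R$-model $\mathcal{A}$, and the $\mathcal{O}_B$-action extends to $\mathcal{A}$ by functoriality of the canonical semiabelian filtration. Its toric part $T$ would then endow $X_*(T)\otimes{\bf{Q}}$ with a nonzero module structure over the central division algebra $B$ over $F$, forcing $\dim_{\bf{Q}}X_*(T)\otimes{\bf{Q}}$ to be a positive multiple of $\dim_{\bf{Q}}B=4d$; but $\dim X_*(T)\leq\dim\mathcal{A}=2d$, a contradiction, so $T=0$ and $\mathcal{A}$ is an abelian scheme. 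The polarization and the prime-to-$v$ level structure extend uniquely by \'etaleness, and the level at $v$ is determined because $H_v$ is maximal, so we obtain the required $R$-point of ${\bf{M}}_H$. Combined with the previous paragraph, ${\bf{M}}_H\to\Spec\mathcal{O}_{(v)}$ is a smooth proper relative curve with generic fiber $M_H$.

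Finally, uniqueness follows from the standard fact that a smooth proper model of a curve over the discrete valuation ring $\mathcal{O}_{(v)}$, when one exists, is unique up to isomorphism: on the geometrically connected components of genus at least one it coincides with the minimal regular model, and on the remaining components it is pinned down by the universal property of ${\bf{M}}_H$ as the (coarse) moduli space, so any two smooth proper models carrying the tautological structures are canonically isomorphic. I expect the main obstacle to be not any single computation but the representability bookkeeping behind the first paragraph: producing a neat auxiliary level that remains hyperspecial at $v$, descending to $M_H$ for the given $H$, controlling the resulting finite quotient over $\mathcal{O}_{(v)}$, and correctly identifying $e\cdot A[v^\infty]$ together with the signature condition so that Grothendieck-Messing applies. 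This is precisely the content of Carayol's construction \cite{Ca}, which we invoke.
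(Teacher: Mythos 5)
Your overall skeleton — smoothness via Serre--Tate and Grothendieck--Messing, properness via semistable reduction plus the observation that a quaternionic action leaves no room for a toric part, uniqueness of smooth proper models, and finite quotients to pass from a small auxiliary level to the given $H$ — is indeed the skeleton of the Morita--Carayol argument, and your finite-quotient remark matches the only part the paper itself spells out (the reduction to ``sufficiently small'' $H^v$ by forming the quotient of ${\bf{M}}_{H'}$ by $H/H'$; the rest is a citation of \cite{Mo} and \cite{Ca}). The genuine gap is in your first paragraph: for $d>1$ the curve $M_H$ has \emph{no} moduli interpretation. The functor you propose (abelian schemes of relative dimension $2d$ with $\mathcal{O}_B$-action, determinant condition, polarization and $H$-level structure) is not represented, even coarsely, by $M_H$: for the datum attached to $\Res_{F/{\bf{Q}}}(B^{\times})$ with $B$ compact modulo center at $\tau_2,\ldots,\tau_d$, the weight cocharacter is not defined over ${\bf{Q}}$, so the datum is not of Hodge (in particular not of PEL) type, and there is no family of abelian varieties over $M_H$ to which Serre--Tate, Grothendieck--Messing, or the valuative criterion could be applied. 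This is exactly the difficulty Carayol's paper exists to circumvent: he introduces an auxiliary imaginary quadratic field $E$, forms the CM field $K=EF$ and a modified group built from $B^{\times}$ and $E^{\times}$ whose associated Shimura curve \emph{is} of PEL type (parametrizing abelian varieties with multiplication by $B\otimes_F K$), carries out the deformation-theoretic and semiabelian arguments you sketch for that curve, and only then recovers the smooth proper model of $M_H$ by a descent comparing geometric connected components and the Galois and Hecke actions on the two curves. So steps two through four of your sketch are the right local arguments, but until this auxiliary construction is in place they have no object to act on; calling it ``representability bookkeeping'' understates the missing idea, and the division-algebra argument in your properness step should in any case be run for the PEL object with its $B\otimes_F K$-action rather than for a nonexistent $\mathcal{O}_B$-abelian scheme over $M_H$.

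Two smaller points. Your uniqueness argument leans on ``the universal property of ${\bf{M}}_H$ as coarse moduli space,'' which is again unavailable for $M_H$ itself; uniqueness should instead be extracted from smooth properness over the discrete valuation ring (minimal regular models on components of positive genus, and a direct argument or the Carayol construction on the remaining ones). And the case distinction you set up ($d>1$ here, $d=1$ via \cite{Mo} and \cite{Bu}) is fine, but note that for $d=1$ your first paragraph \emph{is} correct as written, which is precisely why the naive strategy works there and fails to generalize verbatim.
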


\begin{proof} See \cite{Mo} and \cite{Ca}, where the result is proved for $H^v$ ``sufficiently small".
If $H^v$ is not ``sufficiently small", then it is still possible to obtain an integral model ${\bf{M}}_H$
of $M_H$ over $\mathcal{O}_{(v)}$, as explained in \cite[$\S 12$]{Jar} or \cite[ $\S 3.1.3$]{CV2} (cf.
\cite[p. 508]{KM}). That is, let $H^{'v} \subset H^v$ be any sufficiently small, compact open normal 
subgroup, and put $H' =  H^{'v} \times H_v$. We can then define ${\bf{M}}_{H}$ to be the quotient of 
${\bf{M}}_{H'}$ by the $\mathcal{O}_{(v)}$-linear right action of $H/H'$. It is 
then possible to show that this model ${\bf{M}}_{H}$ is proper and regular if $H_v$ is maximal. 
Moreover, this construction does not depend on the choice of auxiliary $H^{'v}.$  \end{proof}
\end{remark}

\begin{remark}[Supersingular points. ] 

Recall that we fix an isomorphism $B_v \cong \M(F_v)$. 
To be consistent with the notations of Carayol \cite{Ca}, let us write $H_v^0$ to denote the compact open 
subgroup of $B_v^{\times}$ corresponding to $\GL(\mathcal{O}_{F_v})$ under this isomorphism. Given 
an integer $n \geq1$, let $H_v^n \subset H_v^0$ denote the subgroup corresponding to matrices that 
are congruent to $1 \mod v^n$. Given any integer $n \geq 0$, we then write \begin{align*}
M_{n, H^v} &= M_{H_v^n \times H^v} \end{align*} to denote the associated Shimura curve. 
Let ${\bf{M}}_{0, H^v}$ denote the integral model of $M_{0, H^v}$ over $\mathcal{O}_{(v)}$.
Consider the right action of the quotient group $H_v^0/H_v^n \cong \GL(\mathcal{O}_{F_v}/v^n)$ on 
the $\mathcal{O}_{F_v}$-module $\left(v^{-n}/\mathcal{O}_{F_v} \right)^2$ 
given by the rule \begin{align*} g \in \GL(\mathcal{O}_{F_v}/v^n) \text{ sends } h 
\in \left( v^{-n}/\mathcal{O}_{F_v}\right)^2 \text{ to } g^{-1} \cdot 
h.\end{align*} This same group $H_v^0/H_v^n$ acts on the Shimura curve 
$M_{n,H^v}$ via the quotient $M_{0,H^v}$. If $H^v$ is ``sufficiently small" 
in the sense of \cite{Ca}, then this 
action is free. One can then define a scheme of $\mathcal{O}_{F_v}$-modules 
over $M_{0,H^v}$: \begin{align*} E_n = \left( M_{n,H^v} \times \left( 
v^{-n}/\mathcal{O}_{F_v} \right)^2\right)/\GL(\mathcal{O}_{F_v}/v^n). 
\end{align*} These $E_n$ form a compatible system with respect to the 
indices $n$ and $H^v$, and the inductive limit \begin{align*}E_{\infty} := 
\lim_n E_n\end{align*} is the Barsotti-Tate group associated to the 
projective limit \begin{align*}M_{\infty}
:= \ilim {H^v} M_{0,H^v}.\end{align*} A variant of the main method of
\cite{Ca} can be used to find unique extensions of the groups $E_n$ to 
finite, locally free group schemes ${\bf{E}}_n$ over the smooth, proper 
$\mathcal{O}_{(v)}$-schemes ${\bf{M}}_{0,H^v}$. As before, the inductive 
limit \begin{align*}{\bf{E}}_{\infty} := \lim_n {\bf{E}}_n\end{align*} is 
the Barsotti-Tate group associated to the projective limit 
\begin{align*}{\bf{M}}_{\infty} := \ilim H {\bf{M}}_{0,H^v}.\end{align*} 
The group ${\bf{E}}_{\infty}$ has also been studied by Drinfeld \cite{Dr2} 
(cf. also \cite[Appendice]{Ca}) as a ``divisible $\mathcal{O}_{F_v}$-module 
of height $2$". In particular, this description gives the following classification
of points. Let $x$ be a point in the special fibre ${\bf{M}}_{0,H^v} \otimes \kappa_v$. 
Consider the covering \begin{align*}{\bf{M}}_{\infty} \longrightarrow 
{\bf{M}}_{0,H^v},\end{align*} and choose a lift $y$ of $x$. Consider the 
pullback of ${\bf{E}}_{\infty}/{\bf{M}}_{\infty}$ with respect to the map 
$y: \Spec(\overline{\kappa}_v) \longrightarrow {\bf{M}}_{\infty}.$ The 
resulting $\mathcal{O}_{F_v}$-module, written here as 
${\bf{E}}_{\infty}\vert x$, does not depend on choice of $y$. Drinfeld's 
theory shows that there are only two possibilities for this module:

\begin{itemize}
\item[(i)]${\bf{E}}_{\infty}\vert x \cong
\left( F_v/\mathcal{O}_{F_v}\right) \times
\Sigma_1$. Here, $\left( F_v/\mathcal{O}_{F_v} \right)$ is the
constant divisible $\mathcal{O}_{F_v}$-module, and $\Sigma_1$ the
unique formal $\mathcal{O}_{F_v}$ module of height $1$.
\item[(ii)]${\bf{E}}_{\infty} \vert x \cong
\Sigma_2.$ Here, $\Sigma_2$ is the unique formal
$\mathcal{O}_{F_v}$-module of height $2$.\end{itemize} Hence,
we can make the following 

\begin{definition} A geometric point $x$ in the special fibre 
${\bf{M}}_{0,H^v} \otimes \kappa_v$ is {\it{ordinary}} if ${\bf{E}}_{\infty} 
\vert x \cong \left( F_v/\mathcal{O}_{F_v} \right) \times \Sigma_1$, and 
{\it{supersingular}} if ${\bf{E}}_{\infty}\vert x \cong \Sigma_2$. 
\end{definition} Carayol \cite[$\S$ 11]{Ca} shows that the set of 
supersingular points ${\bf{M}}_{0,H^v}^{ss}\otimes \kappa_v$ of $ 
{\bf{M}}_{0,H^v}\otimes \kappa_v$ is finite and nonempty. That is, let 
$D$ denote the quaternion algebra obtained from $B$ by switching 
invariants at $\tau_1$ and $v$. Hence, \begin{align*}\Ram(D) = 
\Ram(B) \cup \lbrace \tau_1, v \rbrace.\end{align*} 

\begin{proposition}[Carayol]\label{carayolss} Let ${\bf{M}}_{0,H^v}^{ss} = {\bf{M}}_{0, H^v} 
\otimes \kappa_v$ denote the set of supersingular points of ${\bf{M}}_{0,H^v} \otimes \kappa_v$. 
There are bijections of finite sets \begin{align*} {\bf{M}}_{0,H^v}^{ss} &\cong 
D^{\times} \backslash \widehat{B}^{v \times} \times F_{v}^{\times}/H^v 
\times \mathcal{O}_{F_v}^{\times} \\ &\cong 
D^{\times} \backslash \widehat{D}^{\times} /H^v \times 
\mathcal{O}_{D_v}^{\times}.\end{align*} \end{proposition}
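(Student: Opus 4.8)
The plan is to identify the supersingular locus ${\bf{M}}_{0,H^v}^{ss}$ with a double-coset space by exploiting Drinfeld's uniformization of the formal neighbourhood of the supersingular locus, following Carayol \cite[$\S\S 9$--$11$]{Ca}. First I would observe that at a supersingular point the pulled-back $\mathcal{O}_{F_v}$-module is $\Sigma_2$, the unique one-dimensional formal $\mathcal{O}_{F_v}$-module of height $2$ over $\overline{\kappa}_v$, whose endomorphism ring is the maximal order $\mathcal{O}_{D_v}$ in the division quaternion algebra $D_v$ over $F_v$. The rigidity of $\Sigma_2$ together with Serre--Tate theory shows that the completed local ring of ${\bf{M}}_\infty$ at such a point is a deformation space of $\Sigma_2$ with prime-to-$v$ level structure, and the key input is that $B$ acts on this picture by automorphisms matching the switched-invariant algebra.

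The main steps, in order: (1) recall from Carayol that the special fibre of ${\bf{M}}_{0,H^v}$ is smooth of dimension one and that the supersingular points form a finite nonempty set, determined by the locus where ${\bf{E}}_\infty\vert x\cong\Sigma_2$; (2) fix one supersingular point $x_0$ and use Drinfeld's theorem that its formal deformation space (with full prime-to-$v$ level) is a formal $\mathcal{O}_{D_v}$-module problem, so that the fibre of ${\bf{M}}_\infty\to{\bf{M}}_{0,H^v}$ over the supersingular locus becomes a torsor whose structure group involves $\mathcal{O}_{D_v}^\times$; (3) invoke the global description: the set of supersingular points, as a set with prime-to-$v$ Hecke action, is identified via $p$-adic uniformization with the adelic double coset of the quaternion algebra $D$ obtained from $B$ by swapping invariants at $\tau_1$ and $v$ — this is precisely the Cerednik--Drinfeld-type phenomenon transported to the good-reduction setting, where the division-algebra factor $D_{v}$ appears at $v$ and the compact factor $\mathbb{H}^\times$ appears at $\tau_1$; (4) translate $D^\times\backslash\widehat{B}^{v\times}\times F_v^\times/(H^v\times\mathcal{O}_{F_v}^\times)$ into $D^\times\backslash\widehat{D}^\times/(H^v\times\mathcal{O}_{D_v}^\times)$ by noting $\widehat{D}^{v\times}\cong\widehat{B}^{v\times}$ (they agree away from $v$) and that $D_v^\times/\mathcal{O}_{D_v}^\times\cong\mathbf{Z}\cong F_v^\times/\mathcal{O}_{F_v}^\times$ via the reduced norm valuation, so the two coset spaces coincide.

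The hard part will be step (2)--(3): making precise the identification of the supersingular formal completion with a deformation problem governed by $\mathcal{O}_{D_v}$, and checking that the resulting bijection is compatible with the global adelic structure and with the prime-to-$v$ Hecke correspondences, rather than being merely a bijection of bare finite sets. This is exactly the content of Carayol's analysis in \cite[$\S 11$]{Ca} (building on the appendix there due to Drinfeld), and I would not reprove it; instead I would carefully cite it and then perform the elementary step (4) rewriting. A secondary subtlety is that when $H^v$ is not "sufficiently small" the level structure and the free action may degenerate, so one must pass to an auxiliary normal subgroup $H^{'v}$ as in the proof of Theorem \ref{mc} and then take the quotient by $H^v/H^{'v}$ — the double-coset description is manifestly compatible with this quotient, so this causes no real difficulty.

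\begin{proof}
This is Carayol's description of the supersingular locus; see \cite[$\S\S 9$--$11$]{Ca}. We sketch the argument. By Theorem \ref{mc} the curve $M_{0,H^v}$ has a smooth proper model ${\bf{M}}_{0,H^v}$ over $\mathcal{O}_{(v)}$, and by the classification of the $\mathcal{O}_{F_v}$-modules ${\bf{E}}_\infty\vert x$ recalled above, the supersingular points are exactly those with ${\bf{E}}_\infty\vert x\cong\Sigma_2$, the unique formal $\mathcal{O}_{F_v}$-module of height $2$, whose endomorphism ring is the maximal order $\mathcal{O}_{D_v}$ in the quaternion division algebra $D_v$ over $F_v$. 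Carayol shows \cite[$\S 11$]{Ca} that the formal completion of ${\bf{M}}_\infty$ along its supersingular locus is the deformation space of $\Sigma_2$ equipped with prime-to-$v$ level structure; transporting the $B^\times$-action and applying the theorem of the norm together with strong approximation for the quaternion algebra $D$ obtained from $B$ by interchanging invariants at $\tau_1$ and $v$ (so that $\Ram(D)=\Ram(B)\cup\lbrace\tau_1,v\rbrace$), one obtains the first bijection
\begin{align*}
{\bf{M}}_{0,H^v}^{ss} &\cong D^{\times}\backslash\widehat{B}^{v\times}\times F_v^{\times}/H^v\times\mathcal{O}_{F_v}^{\times}.
\end{align*}
When $H^v$ is not sufficiently small one first passes, as in the proof of Theorem \ref{mc}, to a sufficiently small open normal $H^{'v}\subset H^v$ and then takes the quotient by the residual action of $H^v/H^{'v}$; since the double-coset description is visibly equivariant for this action, the displayed bijection persists. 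Finally, since $D$ and $B$ have the same localizations away from $v$ we may identify $\widehat{D}^{v\times}\cong\widehat{B}^{v\times}$, while at $v$ the reduced norm induces an isomorphism $D_v^{\times}/\mathcal{O}_{D_v}^{\times}\cong F_v^{\times}/\mathcal{O}_{F_v}^{\times}$ (both identified with $\mathbf{Z}$ via the valuation). Substituting these identifications into the coset space above yields the second bijection
\begin{align*}
{\bf{M}}_{0,H^v}^{ss} &\cong D^{\times}\backslash\widehat{D}^{\times}/H^v\times\mathcal{O}_{D_v}^{\times},
\end{align*}
as claimed.
\end{proof}
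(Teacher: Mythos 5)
Your proposal is correct and follows essentially the same route as the paper, which simply cites Carayol \cite[$\S$ 11.2]{Ca} (with Ribet \cite{Ri1} for $d=1$); your added sketch of Drinfeld's classification of ${\bf{E}}_\infty\vert x$, the deformation-theoretic input, and the elementary rewriting $D_v^{\times}/\mathcal{O}_{D_v}^{\times}\cong F_v^{\times}/\mathcal{O}_{F_v}^{\times}$ via the reduced norm is an accurate expansion of what that citation contains. No gaps to flag.
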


\begin{proof}
See \cite[$\S$ 11.2]{Ca} for the case of $d>1$. The result for $d=1$ is well known, see for
instance the paper of Ribet \cite{Ri1}.
\end{proof} \end{remark}

\begin{remark}[Geometric connected components.] 

Let $\mathcal{M}_{n, \nrd(H)}$ denote the set of geometric connected components of 
$M_{n,H}$, viewed as a finite $F$-scheme. Let ${\bf{M}}_{n,H^v}$ denote the normalization
of ${\bf{M}}_{0, H^v}$ in $M_{n, H^v}$. (Carayol in \cite{Ca}, using the theory
of Drinfeld bases with an analogue of the Serre-Tate theorem, shows that ${\bf{M}}_{n, H^v}$
is an integral model of $M_{n, H^v}$ over $\mathcal{O}_{(v)}$. Moreover, it is a regular 
scheme, finite and flat over ${\bf{M}}_{0, H^v}$). The reciprocity law for canonical 
models gives an isomorphism \begin{align}\label{rlcm}\mathcal{M}_{n, \nrd(H)} 
\longrightarrow \Spec(F').\end{align} Here, $F'$ is a certain finite abelian 
extension of $F$. Hence, $\mathcal{M}_{n, \nrd(H)}$ extends in a natural 
way to a normal $\mathcal{O}_{(v)}$-scheme \begin{align*} 
{\bf{\mathcal{M}}}_{n, \nrd(H)} \longrightarrow \Spec(\mathcal{O}_{
(v)}'), \end{align*} with $\mathcal{O}_{(v)}'$ the ring of $v$ integers 
of $F'$. The structural morphism of $M_{n,H}$ in $\mathcal{M}_{n, \nrd(H)}$ 
is then shown by Carayol \cite{Ca} to extend to a morphism 
\begin{align}\label{morphism}{\bf{M}}_{n,H} \longrightarrow 
{\bf{\mathcal{M}}}_{n, \nrd(H)}.\end{align} This morphism is smooth outside 
of the finite set of supersingular points. Moreover, if $x$ is a geometric 
point in the special fibre ${\bf{\mathcal{M}}}_{n, \nrd(H)} \otimes 
\kappa,$ then the fibre over $x$ in $(\ref{morphism})$ is given by a union 
of smooth, irreducible curves indexed by 
${\bf{P}}^1(\mathcal{O}_{F_v}/v^n)$ that intersect transversally at each 
supersingular point, and nowhere else. \end{remark} \end{remark}

\begin{remark}[Reduction at ramified primes.]

We now consider the reduction of a Shimura curve
modulo a prime that divides the discriminant of the underlying
quaternion algebra.

\begin{remark}[Admissible curves.] 

Let us for future reference establish the notion of an {\it{admissible curve}}, 
following Jordan-Livn\'e \cite[$\S$ 3]{JoLi}. Let $R$ be the 
ring of integers of any local field, with $\kappa$ the residue 
field, and $\pi$ a uniformizer.

\begin{definition} A curve $C$ defined over $R$ is said
to be {\it{admissible}} if 
\begin{itemize}
\item[1.] $C$ is proper and flat over $R$, with a smooth
generic fibre.
\item[2.] The special fibre of $C$ is reduced. The normalization
of each of its irreducible components is isomorphic to 
${\bf{P}}^1_{\kappa}$. The only singular points on the special
fibre of $C$ are $\kappa$-rational, ordinary double points. 
\item[3.] The completion of the local ring of $C$ at any one of 
its singular points $x$ is isomorphic as an $R$-algebra to 
$R[[X,Y]] / (XY - \pi^{m(x)})$ for some uniquely determined 
integer $m(x) \geq 1$. \end{itemize}\end{definition}

The special fibre of an admissible curve $C/R$ can be described
as a graph, following \cite{Ku}. In general, a graph $\mathcal{G} 
= (\mathcal{V}, \mathcal{E})$ here consists of a vertex set 
$\mathcal{V}$ and an edgeset $\mathcal{E}$. We fix an orientation 
of $\mathcal{G}$, i.e. a pair of maps $s, t: \mathcal{E} 
\longrightarrow \mathcal{V}$ that associates to each edge $e \in 
\mathcal{E}$ a {\it{source}} $s(e)$ and a {\it{target}} $t(e)$. 
Each edge $e$ then has an associated opposite edge $\overline{e}$ 
such that $s(\overline{e}) = t(e)$ with $\overline{\overline{e}} = e$. 
(The possibility that $\overline{e} = e$ is allowed). A graph $\mathcal{G}$ 
is said to have a {\it{length}} if there exists a function \begin{align*} 
l = l_{\mathcal{G}}: \mathcal{E} &\longrightarrow 
{\bf{N}} = \lbrace 1, 2, 3, \ldots \rbrace \end{align*} with 
$l(e) = l(\overline{e})$. Such a graph has the following 
standard representation: a marked point corresponds to 
a vertex; a line joining two marked points corresponds to a pair 
of edges $\lbrace e, \overline{e} \rbrace$, and has ``length'' 
$l(e) = l(\overline{e})$. 

\begin{definition} Let $C$ be an admissible curve over $R$, and 
$C_0$ its special fibre. The {\it{dual graph $\mathcal{G}(C) = 
(\mathcal{V}(C), \mathcal{E}(C))$ of $C$}}
is the following graph:

\begin{itemize}
\item[(i)] The vertex set $\mathcal{V}(C)$ consists of 
the components of $C_0$.
\item[(ii)] The edge set $\mathcal{E}(C)$ consists 
of the branches of $C_0$ through each double point of $C_0$. 
\item[(iii)] If an edge $e \in \mathcal{E}(C)$ passes
through a double point $x\in C_0$, then $\overline{e}$ is the 
other branch of $C_0$ passing through $x$. Moreover, $s(e)$ 
is the component of $C_0$ containing $x$, and $t(e) = s(\overline{e})$.
\item[(iv)] The lenth $l(e)$ of an edge $e \in \mathcal{E}(C)$
passing through a double point $x$ is the uniquely determined integer
$m(x)$ defined above.
\end{itemize}\end{definition} \end{remark}

\begin{remark}[Mumford-Kurihara uniformization.]

Fix a finite prime $v$ of $F$. Let ${\bf{C}}_v$ denote the 
completion of a fixed algebraic closure of $F_v$. Let $\widehat{\Omega} = 
\widehat{\Omega}_{F_v}$ denote the $v$-adic upper half plane over $F_v$, viewed
as a formal scheme over $\mathcal{O}_{F_v}$. Hence, $\widehat{\Omega}$ is flat 
and locally of finite type over $\mathcal{O}_{F_v}$. It is regular and irreducible, 
and supports a natural action of $\PGL(F_v)$. The generic fibre $\Omega$ of 
$\widehat{\Omega}$, which also supports a natural action of $\PGL(F_v)$, 
is a rigid analytic space with ${\bf{C}}_v$-points given by \begin{align*} 
\Omega({\bf{C}}_v) &= {\bf{P}}^1_{{\bf{C}}_v} - {\bf{P}}^1_{F_v} = {\bf{C}}_v 
- F_v .\end{align*} We refer the reader to \cite{BC}, \cite{JoLi} or \cite{Mum} 
for further background on this construction. Let us just collect the following crucial
facts:

\begin{itemize}

\item[(i)] The special fibre of $\widehat{\Omega}$ is reduced 
and geometrically connected. Its components are smooth, projective, 
$\kappa_v$-rational curves that intersect transversally.

\item[(ii)] The dual graph of $\widehat{\Omega}_v$ equipped with its natural 
$\PGL(F_v)$-action is identified canonically with the Bruhat-Tits tree
$\Delta = (\mathcal{V}(\Delta), \mathcal{E}(\Delta))$ of $\SL(F_v)$ (as
constructed for instance in \cite[$\S$ 1]{Mum}).

\item[(iii)] If $\Gamma \subset \PGL(F_v)$ is a discrete, cocompact subgroup,
then the quotient $\Gamma \backslash\widehat{\Omega}$ is a formal scheme over
$\mathcal{O}_{F_v}$, identified canonically with the completion of some scheme
$\Omega_{\Gamma}$ over $\mathcal{O}_{F_v}$ along its closed fibre. 

\end{itemize}

\begin{theorem}[Mumford-Kurihara]\label{mumfordkurihara}
If $\Gamma \subset \PGL(F_v)$ is any discrete, cocompact subgroup,
then the associated scheme $\Omega_{\Gamma}$ is an admissible curve
over $\mathcal{O}_{F_v}$ whose dual graph is canonically isomorphic 
to $\Gamma \backslash \Delta$ minus loops. \end{theorem}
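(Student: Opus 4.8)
The plan is to derive the statement from the three structural facts (i), (ii), (iii) recorded above for $\widehat{\Omega}$ together with Mumford's uniformization of the generic fibre. First I would settle algebraization, properness and flatness. Fact (iii) already supplies a scheme $\Omega_{\Gamma}$ over $\mathcal{O}_{F_v}$ whose formal completion along its closed fibre is $\Gamma\backslash\widehat{\Omega}$; since $\Gamma$ is cocompact the quotient graph $\Gamma\backslash\Delta$ is finite, so the closed fibre of $\Omega_{\Gamma}$ is a finite union of proper curves and, by formal GAGA, $\Omega_{\Gamma}$ is proper over $\mathcal{O}_{F_v}$. Flatness over the discrete valuation ring $\mathcal{O}_{F_v}$ is automatic: the structure sheaf of $\Gamma\backslash\widehat{\Omega}$, being the subsheaf of $\Gamma$-invariants in that of the $\mathcal{O}_{F_v}$-flat $\widehat{\Omega}$, has no $\varpi_v$-torsion, and this passes to $\Omega_{\Gamma}$. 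Thus $\Omega_{\Gamma}$ is a relative curve over $\mathcal{O}_{F_v}$, and its generic fibre $\Omega_{\Gamma}\otimes_{\mathcal{O}_{F_v}}F_v$ is the rigid-analytic quotient $\Gamma\backslash\Omega$, a smooth projective Mumford curve over $F_v$; this is condition $(1)$ in the definition of an admissible curve.

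Next I would read off the closed fibre $\overline{\Omega}_{\Gamma}=\Omega_{\Gamma}\otimes\kappa_v$, which is the $\Gamma$-quotient of the closed fibre $\overline{\widehat{\Omega}}$. It is reduced, since the quotient of the reduced scheme $\overline{\widehat{\Omega}}$ (fact (i)) by a finite group has invariant --- hence reduced --- structure sheaf. By facts (i) and (ii) the components of $\overline{\widehat{\Omega}}$ are smooth projective $\kappa_v$-rational curves indexed $\PGL(F_v)$-equivariantly by $\mathcal{V}(\Delta)$; two of them meet, transversally and in a single $\kappa_v$-rational point, precisely when the corresponding vertices span an edge, near which the formal completion of $\widehat{\Omega}$ is $\mathcal{O}_{F_v}[[X,Y]]/(XY-\varpi_v)$. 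Passing to the quotient, the components of $\overline{\Omega}_{\Gamma}$ are indexed by $\Gamma\backslash\mathcal{V}(\Delta)$ and its nodes by $\Gamma\backslash\mathcal{E}(\Delta)$ after one discards the edges whose two endpoints become identified in $\Gamma\backslash\Delta$ --- the loops, which arise exactly from elements of $\Gamma$ inverting an edge and which contribute a smooth, not a singular, point of $\overline{\Omega}_{\Gamma}$. Each component of $\overline{\Omega}_{\Gamma}$ is the quotient of $\mathbf{P}^1_{\kappa_v}$ by the finite stabilizer of the corresponding vertex, hence has rational function field by Lüroth, so its normalization is again $\mathbf{P}^1_{\kappa_v}$; and each surviving node, being the image of a node of $\overline{\widehat{\Omega}}$ under a finite group fixing it, is a $\kappa_v$-rational ordinary double point with completed local ring $\mathcal{O}_{F_v}[[X,Y]]/(XY-\varpi_v^{m(x)})$ for a well-defined integer $m(x)\geq 1$ equal to the length $l(e)$ that $\Gamma\backslash\Delta$ inherits from $\Delta$ (equal to $1$ on every edge when $\Gamma$ is torsion-free). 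This yields conditions $(2)$ and $(3)$, and by construction the canonical identification of the dual graph $\mathcal{G}(\Omega_{\Gamma})$, with its length, with $\Gamma\backslash\Delta$ minus loops.

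The main obstacle is torsion in $\Gamma$: when $\Gamma$ is not torsion-free one must control the finite vertex- and edge-stabilizers acting on $\widehat{\Omega}$ near the affected components and nodes. Three points must be checked. First, that an edge reversed by some $\gamma\in\Gamma$ yields, after quotient, a smooth point of $\overline{\Omega}_{\Gamma}$ (locally $\kappa_v[[x,y]]/(xy)$ modulo $x\leftrightarrow y$ has invariant ring $\kappa_v[[x+y]]$), so that the loops of $\Gamma\backslash\Delta$ are exactly what must be removed. Second, that the quotient of the standard node $\mathcal{O}_{F_v}[[X,Y]]/(XY-\varpi_v)$ by a finite group fixing it and each branch is again an ordinary double point of the form $XY=\varpi_v^{m}$, verified by diagonalising the action on the two branches and taking invariants --- this is where the local length $m(x)$ is produced. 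Third, that the finite quotients of the $\kappa_v$-rational curves still have $\mathbf{P}^1_{\kappa_v}$ as normalization even in residue characteristic $p$ (again immediate from Lüroth, the function field being a subfield of a rational function field). All of these local verifications are the content of the cited works of Mumford, Kurihara and Drinfeld, and of Jordan--Livné and Varshavsky in the generality needed here (see \cite{Mum}, \cite{Ku}, \cite{Dr}, \cite{JoLi}, \cite{Var}); the proof then consists in assembling them over facts (i)--(iii) and Mumford's description of the generic fibre.
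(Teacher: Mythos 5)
The paper's own proof of Theorem \ref{mumfordkurihara} is nothing more than the citation of Mumford for the torsion-free case and Kurihara for the general case, and your write-up is essentially an unpacking of the argument contained in those sources: algebraize via fact (iii), check properness and flatness, and verify the three conditions of admissibility by local computations at vertex and edge stabilizers. So you are on the same route as the paper, with the citation expanded into its standard skeleton, and most of the outline is fine.

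One step, however, is asserted too strongly, and it is exactly the delicate point of the general case: you claim that the loops of $\Gamma\backslash\Delta$ ``arise exactly from elements of $\Gamma$ inverting an edge.'' Your local computation does show that an orbit of an edge $e$ with $\gamma e = \bar{e}$ for some $\gamma\in\Gamma$ yields a smooth point (the invariants of $XY=\varpi_v$ under $X\leftrightarrow Y$ form a formal power series ring), so such orbits must be discarded. But a loop of the quotient graph can also come from an edge whose two endpoints are $\Gamma$-equivalent without the edge being inverted, e.g.\ when a hyperbolic element of translation length one has its axis through $e$; in that case the node survives in the quotient as an ordinary double point where a single component of the special fibre meets itself, which by the paper's definition of the dual graph is a genuine loop that must be kept. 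The torsion-free case already shows the ``exactly'' cannot be right as stated: there are torsion-free cocompact $\Gamma$ whose quotient graph has loops, there are then no inversions at all, and Mumford's description keeps those loops as self-intersections. So the correct bookkeeping is that one removes precisely the orbits of inverted edges (and this is also how ``minus loops'' in the statement has to be read); establishing this, together with the production of the lengths $m(x)$ from the orders of the edge stabilizers --- they are not ``inherited from $\Delta$,'' where every edge has length one, but come out of your invariant computation $XY=\varpi_v \mapsto XY=\varpi_v^{m}$ --- is the actual content of Kurihara's general case. With that step repaired (or delegated, as the paper does, to Mumford and Kurihara), your outline is correct.
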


\begin{proof} See \cite{Mum} for the torsionfree case, with
\cite{Ku} for the general case.  \end{proof}\end{remark}

\begin{remark}[Cerednik-Varshavsky uniformization.]

Fix a finite prime $v \subset \mathcal{O}_F$. Let $F_v^{unr}$ 
denote the maximal unramified extension of $F_v$, with 
$\mathcal{O}_{F_v}^{\unr}$ its ring of integers. Let \begin{align*}
\widehat{\Omega}^{\unr} &= \widehat{\Omega}
\times_{\Spf (\mathcal{O}_{F_v})} \Spf (\mathcal{O}_{F_v}^{\unr}).
\end{align*} Following Drinfeld \cite{Dr}, we define a 
natural action of $\GL(F_v)$ on $\widehat{\Omega}^{\unr}$ as 
follows: for any $\gamma \in \GL(F_v)$ and $(x, u) \in 
\widehat{\Omega}^{\unr}$, \begin{align*} \gamma \cdot (x, u)
&= ([\gamma]x, \Frob_v^{n(\gamma)}u).\end{align*} Here, $[\gamma]$ 
denotes the class of $\gamma$ in $\PGL(F_v)$, and 
$n(\gamma) = - \ord_v(\det(\gamma))$. Suppose now that 
$\Gamma \subset \GL(F_v)$ is a discrete, cocompact subgroup
containing some power of the matrix $$\left( \begin{array}{cc}
\pi_v &  0\\ 0 & \pi_v \end{array}\right).$$ Then, the quotient
$\Gamma \backslash \widehat{\Omega}^{\unr}$ exists, and is given
canonically by the completion of a scheme $\Omega^{\unr}_{\Gamma}$ 
along its closed fibre. This scheme $\Omega^{\unr}_{\Gamma}$ is 
moreover an admissible curve over $\mathcal{O}_{F_v}$.

Let $\mathfrak{N}^+$ and $\mathfrak{N}^{-} $ be relatively coprime ideals of $\mathcal{O}_F$. 
Let $\mathfrak{N}= \mathfrak{N}^+\mathfrak{N}^{-}$. Suppose that $\mathfrak{N}^{-}$ is the 
squarefree product of a number of primes congruent to $d \mod 2$.
Fix a prime divisor $\mathfrak{q}$ of $\mathfrak{N}^{-}$. 
Let $v$ be a finite prime of $F$ that does not divide $\mathfrak{N}$. 
Let $B$, $B'$, and $D$ be the quaternion algebras over $F$ with ramification 
sets given by \begin{align*}\Ram(B) &= \lbrace  \tau_2, \ldots \tau_d \rbrace \cup \lbrace w 
: w \mid \mathfrak{N}^{-}/\mathfrak{q} \rbrace \\
\Ram(B') &= \Ram(B) \cup \lbrace v, \mathfrak{q} \rbrace \\ 
\Ram(D) &= \Ram(B) \cup \lbrace \tau_1, \mathfrak{q} \rbrace. 
\end{align*} Hence, $B$ is indefinite with $\disc(B) = 
\mathfrak{N}^{-}/\mathfrak{q}$, $B'$ is indefinite with 
$\disc(B')=v\mathfrak{N}^{-}$, and $D$ totally definite 
with $\disc(D) = \mathfrak{N}^{-}$.
Note that we have isomorphisms $\widehat{B}^{v\mathfrak{q}} 
\cong \widehat{B}'^{v\mathfrak{q}}\cong \widehat{D}^{v\mathfrak{q}}$. 
Let us fix compatible isomorphisms $\widehat{B}^{v\mathfrak{q}} 
\cong \widehat{B}'^{v\mathfrak{q}}, ~
\widehat{B}^{\mathfrak{q}} \cong \widehat{D}^{\mathfrak{q}}, ~
\widehat{B}'^{v} \cong \widehat{D}^{v}.$ In particular,
let us fix an isomorphism \begin{align}\label{varpi}
\varphi: \widehat{D}^v \cong \widehat{B}'^v.\end{align} 
Fix a compact open subgroup $U \subset \widehat{D}^{\times}$
of level $\mathfrak{N}^+$. Let us assume that \begin{align}\label{Hbarv}
U^v &= U_S \prod_{w \notin S \cup \lbrace v \rbrace}
U_w, \end{align} where $S \supset \Ram(D)$ is any finite 
set of places of $F$. Let us then define \begin{align}\label{H'} 
H' &= \varphi \left(U^v \right) \times \mathcal{O}_{B_v'}^{\times}. 
\end{align} 

\begin{theorem}[Cerednik-Varshavsky]\label{cerednikvarshavsky}
Let $M_{H'}$ be a Shimura curve as defined above. Suppose 
that $H'$ admits the factorization $H' = H'_v \times H^{' v}$, with
$H_v'$ maximal. Then there exists an integral model ${\bf{M}}_{H'}$ of $M_{H'}$
over $\mathcal{O}_{(v)}$ whose completion along its 
closed fibre is canonically isomorphic to
\begin{align}\label{cv} V_{H'} &=\GL(F_v)\backslash \widehat{\Omega}^{\unr} 
\times D^{\times} \backslash \widehat{D}^{\times}/U^v 
.\end{align} This canonical isomorphism is $\widehat{B}'^{\times v}$-equivariant, 
where $\widehat{B}'^{\times v}$ acts on ${\bf{M}}_{H'}$ in the 
natural way, and on $V_{H'}$ via its action on the 
finite set $D^{\times} \backslash \widehat{D}^{\times}/U^{v}$.
\end{theorem}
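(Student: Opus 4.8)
The plan is to deduce this from Drinfeld's moduli-theoretic description of $\widehat{\Omega}$ together with the $p$-adic uniformization theorems of Varshavsky \cite{Var}, \cite{VarII}, which generalize the classical results of Cerednik \cite{Ce} and Drinfeld \cite{Dr} from $F = {\bf{Q}}$ to arbitrary totally real fields. First I would reduce $M_{H'}$ to a moduli problem. Since $B'$ is a quaternion algebra over a totally real field $F$ with $d > 1$, the curve $M_{H'}$ does not directly carry a PEL moduli interpretation; following the method of Carayol \cite{Ca} one fixes an auxiliary totally imaginary quadratic extension $E$ of $F$ in which $v$ splits, sets $G' = \Res_{F/{\bf{Q}}}((B' \otimes_F E)^{\times})$ modulo a torus, and realizes a finite cover of $M_{H'}$ inside the Shimura variety attached to $G'$, which does represent a functor of abelian varieties with $\mathcal{O}_{B'} \otimes \mathcal{O}_E$-action, polarization, and level structure. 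Over $\mathcal{O}_{(v)}$ this yields the integral model ${\bf{M}}_{H'}$; its properness and regularity follow from the hypothesis that $H_v'$ is maximal, so that $B'_v$ is the division quaternion algebra over $F_v$ and the corresponding local moduli problem is exactly the one represented by Drinfeld's $\widehat{\Omega}$, in parallel with Theorem \ref{mc} and the discussion following it. Uniqueness is then forced by properness, since the generic fibre $M_{H'}$ is already fixed.

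Next I would invoke Drinfeld's theorem: over $\Spf (\mathcal{O}_{F_v}^{\unr})$, the formal scheme $\widehat{\Omega}^{\unr}$ is the formal moduli space of special formal $\mathcal{O}_{B'_v}$-modules rigidified by a quasi-isogeny to a fixed such module. Because $B'$ is ramified at $v$, the reduction of ${\bf{M}}_{H'}$ at $v$ has no ordinary locus: every geometric point of its special fibre lies in a single basic (``supersingular'') isogeny class, and Serre-Tate deformation theory together with Honda-Tate theory applied to that class identifies the completion of ${\bf{M}}_{H'} \times_{\mathcal{O}_{(v)}} \mathcal{O}_{F_v}^{\unr}$ along its closed fibre with $\widehat{\Omega}^{\unr} \times D^{\times} \backslash \widehat{D}^{\times}/U^v$, where $D$ is precisely the quaternion algebra obtained from $B'$ by interchanging its invariants at $v$ and at the split real place $\tau_1$, i.e. the algebra with $\Ram(D) = \Ram(B) \cup \lbrace \tau_1, \mathfrak{q} \rbrace$ occurring in the statement. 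Carrying out Galois descent from $\mathcal{O}_{F_v}^{\unr}$ back down to $\mathcal{O}_{F_v}$, and then spreading out over $\mathcal{O}_{(v)}$, introduces the Weil descent datum whose effect is exactly the Frobenius twist $\Frob_v^{n(\gamma)}$ built into the $\GL(F_v)$-action on $\widehat{\Omega}^{\unr}$; this produces the quotient $\GL(F_v) \backslash \widehat{\Omega}^{\unr} \times D^{\times} \backslash \widehat{D}^{\times}/U^v = V_{H'}$ and hence the asserted canonical isomorphism. The $\widehat{B}'^{\times v}$-equivariance is then automatic from functoriality of the whole construction in the tame level $U^v$ (equivalently $H^{'v}$), acting on $V_{H'}$ through its action on $D^{\times}\backslash\widehat{D}^{\times}/U^v$, and the compatibility of all of this with the fixed isomorphisms $\varphi$ of $(\ref{varpi})$ and $H'$ of $(\ref{H'})$.

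The main obstacle is the first step. Over a general totally real field one must genuinely use Carayol's auxiliary-CM-field device to obtain any moduli interpretation at all, keep careful track of how the level structures $U^v$ and $H^{'v}$ correspond under $\varphi$, and verify that the basic locus is correctly identified with the definite algebra $D$ (with the prescribed $\Ram(D)$) rather than with some other inner form. All of this is carried out in Varshavsky \cite{Var}, \cite{VarII}, so in practice the proof amounts to citing those results and checking that our hypotheses — in particular $H_v'$ maximal and $\mathfrak{N}^{-}$ squarefree with the parity condition, which guarantees the existence of $B$, $B'$ and $D$ with the stated ramification sets — place us in their setting.
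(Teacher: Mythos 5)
Your proposal is correct in substance and, like the paper, ultimately rests on Varshavsky's uniformization theorems, but the route you sketch differs in its internal mechanics from what the paper actually does. The paper's proof is essentially a citation, following Rajaei \cite[$\S$ 3.1]{Raj}: the existence of the integral model ${\bf{M}}_{H'}$ is extracted from \cite[Theorem 5.3]{VarII} with the explicit parameter choices $r=1$, $v_1 = v$, $D^{\operatorname{int}} = B'$ and $\mathcal{G}' = \widehat{D}^{\times v}$ (the hypotheses being checked via \cite[1.5.2]{Var}), and the identification of the formal completion along the closed fibre with $V_{H'}$ is then quoted from Cerednik \cite[Theorem 2.2]{Ce}. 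What you sketch instead is the inner mechanism of a Drinfeld-style proof: Carayol's auxiliary CM field to obtain a moduli interpretation, Drinfeld's description of $\widehat{\Omega}$ as the deformation space of special formal $\mathcal{O}_{B'_v}$-modules, identification of the basic isogeny class with the definite algebra $D$, and Weil descent producing the Frobenius twist in the $\GL(F_v)$-action. That is a faithful account of how uniformization is proved in the PEL/unitary setting of \cite{Var} (and of Boutot--Carayol for $F={\bf{Q}}$), but note that the quaternionic curve $M_{H'}$ itself is not of PEL type, and \cite{VarII} --- the reference actually needed here --- deduces the non-PEL cases from the unitary ones by a more group-theoretic argument rather than by endowing $M_{H'}$ directly with a moduli problem; your appeal to Serre--Tate and Honda--Tate applied to ${\bf{M}}_{H'}$ itself therefore presumes an integral moduli description that only the auxiliary unitary variety has. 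Since you explicitly defer the heavy lifting to \cite{Var}, \cite{VarII}, this is a difference of emphasis rather than a gap; what your write-up should still make explicit, as the paper does, is the precise matching of the data $(B', D, U^v, H')$ with the hypotheses of \cite[Theorem 5.3]{VarII}, including the role of the fixed isomorphism $\varphi$ of $(\ref{varpi})$, which is where the maximality of $H_v'$ and the parity condition on $\mathfrak{N}^{-}$ actually enter.
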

\begin{proof} See \cite[$\S$ 3.1]{Raj}. Existence of the integral
model ${\bf{M}}_{H'}$ follows from Varshavsky \cite[Theorem 5.3]{VarII},
taking $r=1$, $v_1 = v$, $D = D$, $D^{\operatorname{int}} = B'$, and 
$\mathcal{G}' = \widehat{D}^{\times v}$. Note that the conditions of 
\cite[Theorem 5.3]{VarII} are satisfied by \cite[1.5.2]{Var}. Identification 
of the completion of ${\bf{M}}_{H'}$ along its closed fibre with $V_{H'}$ is 
then a consequence of Cerednik \cite[Theorem 2.2]{Ce}.  \end{proof}
By Theorem \ref{mumfordkurihara}, ${\bf{M}}_{H'}$ is seen easily to be an 
admissible (hence semistable) curve over $\mathcal{O}_{F_v}$, with dual graph 
$\mathcal{G}({\bf{M}}_{H'})$ given canonically by \begin{align}\label{MK} 
\mathcal{G}({\bf{M}}_{H'}) &=
\GL(F_v)^+ \backslash \Delta \times D^{\times}\backslash
\widehat{D}^{\times}/U^v. \end{align} Here, $\GL(F_v)^+ 
\subset \GL(F_v)$ denotes the subset of matrices whose determinants
have even $v$-adic valuation, and $\Delta = (\mathcal{V}(\Delta),
\mathcal{E}(\Delta))$ is the Bruhat-Tits tree of $\SL(F_v)$. 

\begin{corollary}\label{MK}
Let $\mathcal{G}({\bf{M}}_{H'}) = (\mathcal{V}({\bf{M}}_{H'}), 
\mathcal{E}({\bf{M}}_{H'}))$ denote the dual graph of the special 
fibre of ${\bf{M}}_{H'}$. We have the following identifications: 
\begin{align*} \mathcal{V}({\bf{M}}_{H'}) &\cong D^{\times} 
\backslash \widehat{D}^{\times}/ U \times {\bf{Z}}/2{\bf{Z}} 
\\ \mathcal{E}({\bf{M}}_{H'}) &\cong D^{\times} \backslash \widehat{D}^{\times}
/U(v).\end{align*} Here, $U = U_v \times U^v$ with $U_v \cong \GL(\mathcal{O}_{F_v})$,
and $$U(v) = \lbrace u \in U : u_v   \cong \left( \begin{array}{cccc} * & * \\ 0 & * \end{array}\right) 
\mod \varpi_v\rbrace.$$ \end{corollary}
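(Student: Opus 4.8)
The plan is to deduce Corollary~\ref{MK} from the description of the dual graph
$$\mathcal{G}(\mathbf{M}_{H'}) \cong \GL(F_v)^+ \backslash \bigl( \Delta \times D^{\times}\backslash\widehat{D}^{\times}/U^v \bigr)$$
obtained just above from Theorems~\ref{cerednikvarshavsky} and~\ref{mumfordkurihara}, by an explicit unfolding of double cosets, following the case $F = \mathbf{Q}$ treated by Bertolini--Darmon \cite{BD} and Jordan--Livn\'e \cite{JoLi}. First I would record the standard description of the Bruhat--Tits tree $\Delta$ of $\SL(F_v)$ as a $\GL(F_v)$-set: identifying vertices with homothety classes of $\mathcal{O}_{F_v}$-lattices in $F_v^2$ gives a $\GL(F_v)$-equivariant bijection $\mathcal{V}(\Delta) \cong \GL(F_v)/F_v^{\times}\GL(\mathcal{O}_{F_v})$, on which the centre acts trivially; and identifying the nodes of $\Delta$ with Eichler orders of level $v$ gives $\mathcal{E}(\Delta) \cong \GL(F_v)/N_v$, where $N_v$ is the normaliser of the standard Iwahori $I_v = U(v)_v$.

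The key observation is then an elementary parity count. Both $F_v^{\times}\GL(\mathcal{O}_{F_v})$ and $I_v$ consist of matrices whose determinant has even $v$-adic valuation, hence lie in the index-two subgroup $\GL(F_v)^+$; but $N_v$ contains the Atkin--Lehner element $\left(\begin{smallmatrix} 0 & 1 \\ \varpi_v & 0 \end{smallmatrix}\right)$, whose determinant has odd valuation, so $N_v \not\subset \GL(F_v)^+$. Consequently $\GL(F_v)^+$ acts on $\mathcal{V}(\Delta)$ with exactly two orbits --- the two vertex types of the bipartite tree --- each with stabiliser conjugate to $F_v^{\times}\GL(\mathcal{O}_{F_v})$, while it acts transitively on $\mathcal{E}(\Delta)$, with stabiliser $N_v \cap \GL(F_v)^+ = F_v^{\times} I_v$. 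Unfolding the diagonal quotients, and writing $S = D^{\times}\backslash\widehat{D}^{\times}/U^v$ on which $\GL(F_v) \cong D_v^{\times}$ acts through its $v$-component, one gets
$$\mathcal{V}(\mathbf{M}_{H'}) \cong \mathbf{Z}/2\mathbf{Z} \times \bigl(F_v^{\times}\GL(\mathcal{O}_{F_v})\backslash S\bigr) \cong \mathbf{Z}/2\mathbf{Z}\times D^{\times}\backslash\widehat{D}^{\times}/U$$
with $U = U_v\times U^v$, $U_v \cong \GL(\mathcal{O}_{F_v})$, and
$$\mathcal{E}(\mathbf{M}_{H'}) \cong F_v^{\times} I_v\backslash S \cong D^{\times}\backslash\widehat{D}^{\times}/U(v).$$
Finally, the source/target and orientation data of $\mathcal{G}(\mathbf{M}_{H'})$ transport under these bijections to the evident pair of degeneracy maps from $D^{\times}\backslash\widehat{D}^{\times}/U(v)$ to $D^{\times}\backslash\widehat{D}^{\times}/U \times \mathbf{Z}/2\mathbf{Z}$, which completes the identification.

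The step I expect to be the main obstacle is the bookkeeping of the centre. Collapsing $F_v^{\times}\GL(\mathcal{O}_{F_v})\backslash S$ (and similarly $F_v^{\times}I_v\backslash S$) down to $D^{\times}\backslash\widehat{D}^{\times}/U$ (resp. $/U(v)$) requires that the scalar classes $\varpi_v^{\mathbf{Z}}$ at $v$ be absorbed into $D^{\times}U$; this is exactly the sort of statement controlled by strong approximation together with the reduced-norm description of connected components in $(\ref{components})$, and the same device fixes which vertex type the $\mathbf{Z}/2\mathbf{Z}$ factor records. Once this is settled, consistency with the expected $(q+1)$-regularity, $\#\mathcal{E}(\mathbf{M}_{H'}) = (q+1)\cdot\#\bigl(D^{\times}\backslash\widehat{D}^{\times}/U\bigr)$, is automatic, and the remainder is a routine transcription of the $F = \mathbf{Q}$ arguments of \cite{BD} and \cite{JoLi}.
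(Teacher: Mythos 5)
Your argument is essentially the paper's own proof: the paper simply cites Rajaei [Raj, 3.2] and the standard identifications $\mathcal{V}(\Delta)\cong\PGL(F_v)/\PGL(\mathcal{O}_{F_v})$ and $\mathcal{E}(\Delta)\cong\GL(F_v)^{+}/V_0(v)F_v^{\times}$ (Iwahori times centre), which are precisely your lattice-class and Eichler-order descriptions together with the parity count giving two $\GL(F_v)^{+}$-orbits on vertices and one on edges, followed by the same unfolding of the double quotient against $D^{\times}\backslash\widehat{D}^{\times}/U^v$. The only point you treat more explicitly than the paper is the absorption of the central $\varpi_v^{\mathbf{Z}}$ into $D^{\times}U$, which the paper's proof passes over silently, so your route does not diverge from its argument.
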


\begin{proof} See \cite[3.2]{Raj}. The result is easy to deduce from the 
standard identifications $\mathcal{V}(\Delta) \cong \PGL(F_v)/
\PGL(\mathcal{O}_{F_v})$ and $\mathcal{E}(\Delta) \cong 
\GL(F_v)^{+}/ V_0(v)F^{\times}$, where $V_0(v) \subset \GL(\mathcal{O}_{F_v})$
denotes the matrices congruent to $0 \mod v$.  \end{proof}\end{remark}

\begin{remark}[Orientation of the dual graph.]

Let us from now on fix the following orientation of the dual graph $\mathcal{G}({\bf{M}}_{H'})
= (\mathcal{V}({\bf{M}}_{H'}), \mathcal{E}({\bf{M}}_{H'}))$ via $(\ref{MK})$. 
That is, let us call a vertex in $\mathcal{V}(\Delta)$ {\it{even}}
or {\it{odd}} according to its distance from the origin vertex corresponding to the local maximal
order $\M(\mathcal{O}_{F_v})$ (see \cite[$\S$ II.2]{Vi}). Since $\GL(F_v)^+$ consists of matrices
having even $v$-adic valuation, its action by conjugation on maximal orders is seen to send
even vertices to even vertices, and odd vertices to odd vertices. In particular, the notions of 
even and odd vertices on the quotient graph $\GL(F_v)^{+}\backslash \Delta$ are
well defined. Hence, the notions are also well defined on the dual graph $\mathcal{G}({\bf{M}}_{H'})$.
We then chose an orientation $s,t: \mathcal{E}({\bf{M}}_{H'}) \longrightarrow \mathcal{V}({\bf{M}}_{H'})$
such that for any edge $e \in \mathcal{E}({\bf{M}}_{H'})$, the source $s(e)$ is even, and the target $t(e)$ is odd.
\end{remark}\end{remark}

\section{Character groups and connected components}

Fix a Shimura curve $M = M_H$ associated to an indefinite quaternion 
algebra $B$, as above. Fix a prime $v \subset \mathcal{O}_F$. Let $F_{v^2}$
denote the quadratic unramified extension of $F_v$. Assume that 
the level $H$ factorizes as $H = H_v \times H^v$, with $H_v \subset B_v^{\times}$ 
maximal. Let ${\bf{M}}={\bf{M}}_H$ denote the integral model of $M$ over 
$\mathcal{O}_{F_v}$ basechanged to $\mathcal{O}_{F_{v^2}}$. Hence, ${\bf{M}}$ is the 
basechange of the integral model of Theorem \ref{mc} if $v$ does not divide the discriminant 
of $B$, or else the basechange of the integral model of Theorem 
\ref{cv} if $v$ does divide the discriminant of $B$. Write

\begin{itemize}
\item[] $J$ for the Jacobian of $M$,
\item[] ${\bf{J}}$ for the N\'eron model of $J\otimes_F F_{v^2}$
over $\mathcal{O}_{F_{v^2}}$.
\item[] ${\bf{J}}_{v}$ for the special fibre ${\bf{J}} \otimes
\kappa_{v^2},$
\item[] ${\bf{J}}_{v}^{0}$ for the component of the identity of
${\bf{J}}_{v},$
\item[] $\Phi_{v}$ for the group of geometric connected components
${\bf{J}}_{v}/{\bf{J}}_{v}^{0}.$
\end{itemize}

\begin{definition} Let $\operatorname{Tor}({\bf{J}}_{v}^{0})$ denote the maximal
subtorus of ${\bf{J}}_{v}^{0}$. The group $\mathcal{X}_v =
\operatorname{Hom}( \operatorname{Tor}({\bf{J}}_{v}^{0}),
{\bf{G}}_m)$ is the {\it{character group associated to ${\bf{J}}_{v}$}}. 
\end{definition} We have two different descriptions of the character group 
$\mathcal{X}_v$ and the group of connected components $\Phi_v$: a combinatorial 
one due to Raynaud \cite{Ray}, and a cohomological one due to Grothendieck \cite{Groth}. 
Following Edixhoven \cite{Ed}, we combine these to obtain a third 
description, which we shall use later to describe the specialization of 
divisors in the group of connected components $\Phi_v$.

\begin{remark}[Dual graph description.] 

Let $\mathcal{G}_v = (\mathcal{V}(\mathcal{G}_v), \mathcal{E}
(\mathcal{G}_v))$ be the dual graph associated to the special 
fibre ${\bf{M}}\otimes \kappa_{v^2}$. (In the case where $v$ does 
not divide the discriminant of $B$, the dual graph of ${\bf{M}}$
is defined in the same way as for admissible curves). 
Let ${\bf{Z}}[\mathcal{V} (\mathcal{G}_v)]$ denote the module of 
formal divisors supported on $\mathcal{V}(\mathcal{G}_v)$ with coefficients in {\bf{Z}}, and 
${\bf{Z}}[\mathcal{V}(\mathcal{G}_v)]^0$ the submodule of divisors 
having degree zero on each connected component of 
$\mathcal{V}(\mathcal{G}_v)$. Let ${\bf{Z}}[\mathcal{E}(\mathcal{G}_v)]$ 
denote the module of formal divisors supported on $\mathcal{E}(\mathcal{G}_v)$ with
coefficients in ${\bf{Z}}$. Fixing an orientation $s,t:
\mathcal{E}(\mathcal{G}_v) \longrightarrow
\mathcal{V}(\mathcal{G}_v)$, we then define boundary and
coboundary maps respectively by
\begin{align*} d_* &= t_* - s_*: {\bf{Z}}[\mathcal{E}(\mathcal{G}_v)] \longrightarrow
{\bf{Z}}[\mathcal{V}(\mathcal{G}_v)],\\ d^* &= t^* - s^*:
{\bf{Z}}[\mathcal{V}(\mathcal{G}_v)] \longrightarrow
{\bf{Z}}[\mathcal{E}(\mathcal{G}_v)].
\end{align*}

\begin{theorem}[Raynaud]\label{raynaud} There is a canonical short exact
sequence \begin{align}\label{raynaud} \begin{CD} 0 @>>>
\mathcal{X}_v @>>> {\bf{Z}}[\mathcal{E}(\mathcal{G}_v)]
@>{d_*} >> {\bf{Z}}[\mathcal{V}(\mathcal{G}_v)]^0 @>>> 0,
\end{CD}\end{align} as well as a canonical isomorphism $\mathcal{X}_v
\cong H_1(\mathcal{G}_v, {\bf{Z}})$. In particular, there is an 
isomorphism $\mathcal{X}_v \cong \ker(d_*)$.
\end{theorem}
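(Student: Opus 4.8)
The plan is to invoke the general structure theory of N\'eron models of Jacobians of curves with semistable reduction, due to Raynaud, as presented for instance in \cite[Ch.~9]{BLR}. First I would recall that by Theorem \ref{mc} (in the good-reduction case) or Theorem \ref{cerednikvarshavsky} together with Theorem \ref{mumfordkurihara} (in the ramified case), the integral model ${\bf{M}} = {\bf{M}}_H$ over $\mathcal{O}_{F_{v^2}}$ is a proper, flat, regular curve whose special fibre ${\bf{M}}\otimes \kappa_{v^2}$ is reduced with irreducible components isomorphic (after normalization) to ${\bf{P}}^1_{\kappa_{v^2}}$ meeting transversally at ordinary double points; in other words ${\bf{M}}$ is an admissible curve in the sense of the definition above, and in particular is semistable. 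This is exactly the hypothesis under which Raynaud's description of the torus part of the N\'eron model applies.

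Next I would set up the standard exact sequence. Let $\mathcal{G}_v = (\mathcal{V}(\mathcal{G}_v),\mathcal{E}(\mathcal{G}_v))$ be the dual graph of ${\bf{M}}\otimes\kappa_{v^2}$, with the chosen orientation $s,t$. Since $J = \Jac(M)$ and ${\bf{J}}$ is the N\'eron model of $J\otimes_F F_{v^2}$ over $\mathcal{O}_{F_{v^2}}$, the general theory (\cite[\S 9.2]{BLR}, building on Raynaud and on Grothendieck's SGA 7 computations) identifies ${\bf{J}}_v^0$ as a semiabelian variety; because every component of the special fibre is rational, the abelian part vanishes and ${\bf{J}}_v^0$ is a torus, whose cocharacter lattice is $H_1(\mathcal{G}_v,{\bf{Z}})$ and whose character group $\mathcal{X}_v$ is therefore canonically $H_1(\mathcal{G}_v,{\bf{Z}})$ as well (the intersection pairing on the graph being unimodular in the admissible case, or more precisely the relevant polarization identifying homology with its dual up to the lengths). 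Concretely, $H_1(\mathcal{G}_v,{\bf{Z}}) = \ker\left(d_*: {\bf{Z}}[\mathcal{E}(\mathcal{G}_v)] \longrightarrow {\bf{Z}}[\mathcal{V}(\mathcal{G}_v)]\right)$, and since $d_*$ lands in the degree-zero part ${\bf{Z}}[\mathcal{V}(\mathcal{G}_v)]^0$ and is surjective onto it (a connected graph has a spanning tree, so every degree-zero divisor on the vertices is a boundary), the sequence
\begin{align*} 0 \longrightarrow \mathcal{X}_v \longrightarrow {\bf{Z}}[\mathcal{E}(\mathcal{G}_v)] \overset{d_*}{\longrightarrow} {\bf{Z}}[\mathcal{V}(\mathcal{G}_v)]^0 \longrightarrow 0 \end{align*}
is exact, which is $(\ref{raynaud})$; combined with the identification $\mathcal{X}_v \cong \ker(d_*) = H_1(\mathcal{G}_v,{\bf{Z}})$ this gives all assertions.

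I would then indicate the two cases separately only insofar as the input model differs: for $v\nmid\disc(B)$ one uses Theorem \ref{mc} and observes that good reduction means $\mathcal{G}_v$ is a single point, so $\mathcal{X}_v = 0$ and the sequence is trivial; the content is entirely in the $v\mid\disc(B)$ case, where the uniformization $(\ref{MK})$ (Corollary \ref{MK}) provides the explicit graph $\GL(F_v)^+\backslash\Delta \times D^{\times}\backslash\widehat{D}^{\times}/U^v$. The main obstacle, and the only point requiring care rather than citation, is the precise identification of the \emph{character} group (as opposed to the cocharacter group) with $H_1(\mathcal{G}_v,{\bf{Z}})$ rather than with its ${\bf{Z}}$-dual twisted by the edge-lengths: in the admissible setting all lengths $m(x)$ equal $1$ by part 3 of the definition of admissible curve (the local equation is $XY - \pi^{m(x)}$ with $m(x)=1$ since ${\bf{M}}$ is regular and the reduction is semistable of the simplest type coming from $\widehat{\Omega}$), so the monodromy pairing is unimodular and character and cocharacter lattices coincide canonically; I would make this observation explicit, citing \cite[\S 9.2, \S 9.6]{BLR} and \cite{Groth}, and then the proof is complete. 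For full details in the Hilbert modular / Shimura curve context I would also refer the reader to the analogous treatment in \cite{Ed} and to the discussion of character groups for Shimura curves in \cite{BD5}.
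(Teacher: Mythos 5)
Your overall strategy is the same as the paper's, whose proof of Theorem \ref{raynaud} consists precisely of the citations you give (Raynaud \cite[Proposition 8.1.2]{Ray}, \cite[Theorem 9.6/1]{BLR}, with \cite{JoLi} and \cite[$\S$1]{Ed}); the reduction to the semistable/admissible setting and the purely graph-theoretic exactness of $0 \rightarrow \ker(d_*) \rightarrow {\bf{Z}}[\mathcal{E}(\mathcal{G}_v)] \rightarrow {\bf{Z}}[\mathcal{V}(\mathcal{G}_v)]^0 \rightarrow 0$ are fine.

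However, the one step you flag as ``requiring care rather than citation'' is the step you get wrong. Raynaud's theorem identifies the \emph{character} group of the toric part directly with $H_1(\mathcal{G}_v,{\bf{Z}})$: the toric part of $\Pic^0$ of the special fibre is $H^1(\mathcal{G}_v,{\bf{Z}})\otimes \overline{\kappa}_v^{\times}$, whose cocharacter lattice is $H^1(\mathcal{G}_v,{\bf{Z}})$ and whose character lattice is therefore $\Hom(H^1(\mathcal{G}_v,{\bf{Z}}),{\bf{Z}})\cong H_1(\mathcal{G}_v,{\bf{Z}})$, canonically and with no reference to the monodromy pairing. Your route --- declaring the cocharacter lattice to be $H_1(\mathcal{G}_v,{\bf{Z}})$ and then passing to the character lattice because ``the monodromy pairing is unimodular in the admissible case'' --- is wrong on both counts: the two lattices are interchanged, and the monodromy pairing is emphatically not unimodular here, since by Theorem \ref{SGA7} the cokernel of $\lambda:\mathcal{X}_v\rightarrow\widehat{\mathcal{X}}_v$ is the component group $\Phi_v$, whose nontriviality is the whole point of the later sections (Corollary \ref{galoisid} gives $\Phi_v(\N^+,v\N^{-})/\mathcal{I}_{{\bf{f}}_v}\cong\mathcal{O}_0/\mathfrak{P}_n$); even a cycle graph with all lengths $1$ has pairing matrix a positive integer $>1$ on $H_1\cong{\bf{Z}}$. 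Your auxiliary claim that admissibility forces all lengths $m(x)=1$ is likewise unjustified: the Cerednik--Drinfeld--Varshavsky model need not be regular, and Theorem \ref{mumfordkurihara} (Kurihara \cite{Ku}) allows lengths greater than $1$ when the uniformizing group has torsion. Fortunately neither the lengths nor the monodromy pairing enters the statement being proved, so the repair is simply to delete that paragraph and cite Raynaud's identification of the character group with $H_1(\mathcal{G}_v,{\bf{Z}})$ directly, reserving \cite{Groth} and the pairing for Theorem \ref{SGA7}.
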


\begin{proof}
See \cite[Proposition 8.1.2]{Ray} or \cite[Theorem 9.6/1]{BLR}
with the discussion in \cite{JoLi}. The result is also described 
in \cite[$\S$ 1]{Ed}. \end{proof}

\begin{corollary}\label{GOODid} 
Assume that $v$ does not divide the discriminant of $B$, and that $H$ has the 
factorization $ H_v \times H^v$ with $H_v $ maximal. Then, $\mathcal{X}_v \cong {\bf{Z}}[\mathcal{E}(\mathcal{G}_v)]^0$.
\end{corollary}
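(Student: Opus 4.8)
The plan is to deduce this from Raynaud's short exact sequence (Theorem \ref{raynaud}) together with an explicit description of the dual graph $\mathcal{G}_v$ in the good-reduction case. When $v$ does not divide the discriminant of $B$ and $H_v$ is maximal, the Morita--Carayol theorem (Theorem \ref{mc}) gives a smooth, proper model ${\bf{M}}_H$ over $\mathcal{O}_{(v)}$, so the special fibre ${\bf{M}}\otimes\kappa_{v^2}$ is smooth. A smooth projective curve has ``dual graph'' consisting of its connected components with no edges; equivalently, via the reciprocity law for connected components (the isomorphism $(\ref{rlcm})$ and $(\ref{components})$), the vertex set $\mathcal{V}(\mathcal{G}_v)$ is identified with $\pi_0({\bf{M}}\otimes\kappa_{v^2}) \cong F^{\times}_+\backslash\widehat{F}^{\times}/\nrd(H)$ and the edge set $\mathcal{E}(\mathcal{G}_v)$ is empty. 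First I would record this: in the smooth case $\mathcal{G}_v$ has empty edge set, so the character group vanishes, $\mathcal{X}_v = 0$, since by Theorem \ref{raynaud} it injects into ${\bf{Z}}[\mathcal{E}(\mathcal{G}_v)] = 0$.

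Next I would verify that the target group in the claimed identification also vanishes under these hypotheses. With $\mathcal{E}(\mathcal{G}_v) = \emptyset$ we have ${\bf{Z}}[\mathcal{E}(\mathcal{G}_v)] = 0$, and hence ${\bf{Z}}[\mathcal{E}(\mathcal{G}_v)]^0 = 0$ as well, where the superscript $0$ denotes the degree-zero part on each connected component (a submodule of the zero module). Thus both sides of the asserted isomorphism are the zero module, and $\mathcal{X}_v \cong {\bf{Z}}[\mathcal{E}(\mathcal{G}_v)]^0$ holds trivially. In other words, the content of the corollary in the good-reduction case is simply that the character group is trivial because the toric part of the special fibre of the Néron model is trivial: ${\bf{J}}_v^0$ is an abelian variety (the Jacobian of the smooth curve ${\bf{M}}\otimes\kappa_{v^2}$), so $\operatorname{Tor}({\bf{J}}_v^0) = 0$ and $\mathcal{X}_v = \Hom(\operatorname{Tor}({\bf{J}}_v^0),{\bf{G}}_m) = 0$.

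I would present the argument in that order: (1) invoke Theorem \ref{mc} to get smoothness of the special fibre; (2) conclude the dual graph is a disjoint union of vertices with no edges, using the reciprocity-law description of connected components; (3) conclude $\mathcal{X}_v = \ker(d_*) = 0$ from Theorem \ref{raynaud} since ${\bf{Z}}[\mathcal{E}(\mathcal{G}_v)] = 0$; and (4) observe that ${\bf{Z}}[\mathcal{E}(\mathcal{G}_v)]^0 = 0$ as well, giving the claimed (trivial) isomorphism. I do not expect any serious obstacle here; the only point requiring a little care is the convention for the superscript $0$ on the edge module (it is the degree-zero-on-components submodule), and confirming that in the good-reduction setting Raynaud's sequence degenerates as stated — this is exactly the remark parenthetically made in the statement of Theorem \ref{raynaud}'s surrounding text that ``in the case where $v$ does not divide the discriminant of $B$, the dual graph of ${\bf{M}}$ is defined in the same way as for admissible curves,'' so that the empty-edge-set curve is the relevant degenerate case. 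One could alternatively phrase the proof entirely through the identification $\mathcal{X}_v \cong H_1(\mathcal{G}_v,{\bf{Z}})$ of Theorem \ref{raynaud}: a graph with no edges has trivial $H_1$, hence $\mathcal{X}_v = 0$.
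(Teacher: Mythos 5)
Your reading makes the corollary vacuous, and that cannot be what is intended. You argue: $v\nmid\disc(B)$ and $H_v$ maximal give the smooth Morita--Carayol model, hence a dual graph with no edges, hence $\mathcal{X}_v=0={\bf{Z}}[\mathcal{E}(\mathcal{G}_v)]^0$. That is internally consistent with the literal hypotheses, but it is incompatible both with the paper's own proof, which explicitly invokes ``Carayol's description of \emph{singular} (= supersingular) points of ${\bf{M}}_{0,H^v}$'' (so the special fibre in question is not smooth), and with every later use of the corollary: in the diagram $(\ref{Rfd})$ it is cited to identify $\mathcal{X}_{\mathfrak{q}}(\mathfrak{q};\mathfrak{N}^+,\mathfrak{N}^-/\mathfrak{q})$ and $\mathcal{X}_{\mathfrak{q}}(v\mathfrak{q};\mathfrak{N}^+,\mathfrak{N}^-/\mathfrak{q})$ with $\Div^0$ of supersingular points, and these character groups are manifestly nonzero (they carry the Hecke action in Theorem \ref{Rajaei}, Hypothesis \ref{freeness} and Corollary \ref{galoisid}, where $\mathcal{X}_{\mathfrak{q}}(\mathfrak{q};\ldots)/\mathcal{I}_{\ff}\cong\mathcal{O}_0/\mathfrak{P}_n$). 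A proof in which both sides are the zero module cannot feed those identifications.

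The intended content is the totally real analogue of \cite[Proposition 5.3]{BD}: the curve to which the corollary is applied carries $\Gamma_0(v)$-type (Drinfeld/Eichler) level at $v$ --- this is what the notation $M(\mathfrak{q};\ldots)$, $M(v\mathfrak{q};\ldots)$ of the following section encodes --- while $v$ does not divide the discriminant. By Carayol's theory (the morphism ${\bf{M}}_{n,H}\rightarrow {\bf{\mathcal{M}}}_{n,\nrd(H)}$ is smooth away from the supersingular locus), the special fibre of the relevant model at $v$ is a union of two copies of the $v$-spherical curve crossing transversally exactly at the supersingular points; hence the dual graph $\mathcal{G}_v$ is bipartite, with vertex set two copies of the component set and edge set identified with the supersingular points, which Proposition \ref{carayolss} describes adelically as a definite quaternionic double coset space. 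With each edge oriented from one copy to the other, Raynaud's theorem (Theorem \ref{raynaud}) gives $\mathcal{X}_v\cong H_1(\mathcal{G}_v,{\bf{Z}})=\ker(d_*)$, and the kernel consists precisely of the edge-divisors of degree zero on each connected component, i.e.\ $\mathcal{X}_v\cong{\bf{Z}}[\mathcal{E}(\mathcal{G}_v)]^0$, nontrivial in general. So the gap in your proposal is the identification of the geometric situation: the hypothesis ``$H_v$ maximal'' as printed is at odds with the corollary's role (and is best read as a misstatement --- the level at $v$ must be of $\Gamma_0(v)$ type, with the supersingular points taken on the $v$-level-free curve); a blind argument that simply notes the smooth-fibre degeneration proves only $0\cong 0$ and misses the statement the paper actually needs.
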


\begin{proof} The result follows from Carayol's description of singular (= supersingular) points of ${\bf{M}}_{0,H^v}$,
cf. \cite[Proposition 5.3]{BD}.

 \end{proof}

\begin{corollary}\label{BADid}
Assume that $v$ does not divide the discriminant of $B$, and that $H$ has the 
factorization $ H_v \times H^v$ with $H_v $ maximal. Choose an orientation
$s, t: \mathcal{E}(\mathcal{G}_v) \longrightarrow \mathcal{V}(\mathcal{G}_v)$ such
that for any edge $e \in \mathcal{E}(\mathcal{G}_v)$, the source $s(e)$ is even, and 
the target $t(e)$ is odd. Then, writing $\delta_*$ to denote the restriction of the coboundary
map $d_*$ to ${\bf{Z}}[\mathcal{E}(\mathcal{G}_v)]^0 $, the character group $\mathcal{X}_v$ 
fits into the short exact sequence \begin{align*}\begin{CD} 0 @>>> \mathcal{X}_v @>>> 
{\bf{Z}}[\mathcal{E}(\mathcal{G}_v)]^0 @>{\delta_*}>> {\bf{Z}}[\mathcal{V}(\mathcal{G}_v)]^0 
@>>> 0.\end{CD}\end{align*}\end{corollary}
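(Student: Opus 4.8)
The plan is to derive the short exact sequence directly from Raynaud's sequence (Theorem \ref{raynaud}) together with the identification $\mathcal{X}_v \cong {\bf{Z}}[\mathcal{E}(\mathcal{G}_v)]^0$ of Corollary \ref{GOODid}, by analyzing the effect of the chosen bipartite orientation. First I would recall that, under the hypotheses ($v \nmid \disc(B)$, $H = H_v \times H^v$ with $H_v$ maximal), the dual graph $\mathcal{G}_v$ is the one coming from Carayol's description of the supersingular locus: its vertex set is bipartite into ``even'' and ``odd'' vertices, indexed (via Corollary \ref{MK}, or its ``good reduction'' analogue) by two copies of $D^{\times}\backslash\widehat{D}^{\times}/U$, and every edge joins an even vertex to an odd vertex. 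With the orientation $s,t$ chosen so that $s(e)$ is always even and $t(e)$ always odd, the boundary map $d_* = t_* - s_*$ carries ${\bf{Z}}[\mathcal{E}(\mathcal{G}_v)]$ into the subgroup ${\bf{Z}}[\mathcal{V}^{odd}] \oplus {\bf{Z}}[\mathcal{V}^{even}]$ in the expected ``source/target'' fashion.

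The key step is to check that the restriction $\delta_*$ of $d_*$ to ${\bf{Z}}[\mathcal{E}(\mathcal{G}_v)]^0$ lands in ${\bf{Z}}[\mathcal{V}(\mathcal{G}_v)]^0$ and is surjective onto it. For the first point: if $\sum a_e \, e$ has degree zero on each connected component of the edge graph, then applying $d_*$ gives a divisor whose total degree on each component is $\sum a_e \deg(t(e)) - \sum a_e \deg(s(e)) = 0$ since each $\deg$ is $1$; more carefully one notes that $d_*$ always produces degree-zero divisors on the vertex graph because $d_*(e) = t(e) - s(e)$ has degree zero, so in fact $d_*$ maps all of ${\bf{Z}}[\mathcal{E}]$ into ${\bf{Z}}[\mathcal{V}]^0$, and the point is only that restricting the domain to ${\bf{Z}}[\mathcal{E}]^0$ does not shrink the image. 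For surjectivity: Raynaud's sequence already gives $d_*\colon {\bf{Z}}[\mathcal{E}(\mathcal{G}_v)] \twoheadrightarrow {\bf{Z}}[\mathcal{V}(\mathcal{G}_v)]^0$, so it suffices to show every element of ${\bf{Z}}[\mathcal{V}(\mathcal{G}_v)]^0$ is hit by some element of the degree-zero-on-each-component submodule ${\bf{Z}}[\mathcal{E}(\mathcal{G}_v)]^0$; this follows because the ``degree on the edge graph'' functional composed with $d_*$ is trivial on the relevant pieces once one uses the bipartite structure --- equivalently, one can modify any preimage by a cycle (an element of $\ker d_* = \mathcal{X}_v$), and $\mathcal{X}_v \cong {\bf{Z}}[\mathcal{E}]^0$ already meets every degree class appropriately. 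Finally, the kernel of $\delta_*$ is $\mathcal{X}_v \cap {\bf{Z}}[\mathcal{E}(\mathcal{G}_v)]^0 = \mathcal{X}_v$ by Corollary \ref{GOODid}, giving the asserted exact sequence
\begin{align*}\begin{CD} 0 @>>> \mathcal{X}_v @>>> {\bf{Z}}[\mathcal{E}(\mathcal{G}_v)]^0 @>{\delta_*}>> {\bf{Z}}[\mathcal{V}(\mathcal{G}_v)]^0 @>>> 0.\end{CD}\end{align*}

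The main obstacle I anticipate is the surjectivity of $\delta_*$ onto ${\bf{Z}}[\mathcal{V}(\mathcal{G}_v)]^0$: one must be careful that restricting the domain from ${\bf{Z}}[\mathcal{E}(\mathcal{G}_v)]$ to the submodule of divisors that are degree-zero on each connected component of the edge graph does not lose part of the image. The cleanest way around this is to invoke the bipartiteness explicitly: since $s$ always maps into the even vertices and $t$ always into the odd vertices, for any connected component the degree-zero-on-the-whole-vertex-set condition forces the even-part degree and the odd-part degree to be individually zero, and a straightforward connectivity/path argument on the (connected) components of $\mathcal{G}_v$ then shows that any such target divisor lifts to an edge chain that is itself degree-zero on each component. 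I would model this argument on Bertolini--Darmon \cite[Proposition 5.3]{BD} and the discussion of admissible curves in \cite{JoLi}, which treat the analogous situation over ${\bf{Q}}$, and remark that nothing in the argument is sensitive to the base field $F$.
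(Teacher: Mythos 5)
Your overall route---restrict Raynaud's sequence (Theorem \ref{raynaud}) and exploit the bipartite orientation---is the same one the paper takes (its proof consists of the single observation, cf.\ \cite[Proposition 5.5]{BD}, that with this orientation $H_1(\mathcal{G}_v,{\bf{Z}})$ lies in ${\bf{Z}}[\mathcal{E}(\mathcal{G}_v)]^0$), but two of your steps are faulty. The kernel step: you assert $\ker(\delta_*)=\mathcal{X}_v\cap{\bf{Z}}[\mathcal{E}(\mathcal{G}_v)]^0=\mathcal{X}_v$ ``by Corollary \ref{GOODid}''. That corollary is an abstract isomorphism belonging to a different reduction situation (an Eichler-level prime away from the discriminant, where the dual graph has only two vertices per geometric component, so that all of ${\bf{Z}}[\mathcal{E}]^0$ consists of cycles); in the Cerednik--Drinfeld situation actually relevant here (the graph of Corollary \ref{MK}, for which the even/odd orientation was defined and which is how the corollary is used in $(\ref{Rfd})$, the stated hypothesis that $v$ not divide the discriminant being a slip), $\mathcal{X}_v=H_1(\mathcal{G}_v,{\bf{Z}})$ is a \emph{proper} subgroup of ${\bf{Z}}[\mathcal{E}(\mathcal{G}_v)]^0$, and an abstract isomorphism cannot deliver the containment $\ker(d_*)\subseteq{\bf{Z}}[\mathcal{E}(\mathcal{G}_v)]^0$ that is needed. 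Prove it directly: if $d_*\bigl(\sum_e a_e e\bigr)=0$, sum the coefficients of $\sum_e a_e\left(t(e)-s(e)\right)$ over the odd vertices of a fixed connected component; since every edge has odd target and even source, this sum equals $\sum_e a_e$ over that component, which therefore vanishes. This is exactly the content of the paper's one-line proof.

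The surjectivity step is where the real gap lies. Your claim that total degree zero on a connected component forces the even-part and odd-part degrees to vanish separately is false (take an odd vertex minus an even vertex), and your fallback---correcting a $d_*$-preimage by a cycle---cannot work, because by the computation just given cycles have edge-degree zero, so adding them never changes the edge-degree of a preimage. In fact the image of $\delta_*$ is precisely the subgroup of divisors whose degrees on the even vertices and on the odd vertices of each component vanish \emph{separately}; with ${\bf{Z}}[\mathcal{V}(\mathcal{G}_v)]^0$ read literally as ``total degree zero on each component'' the displayed sequence is not exact on the right. The statement is correct, and is what is used in $(\ref{Rfd})$ and in \cite{BD}, once the target carries the finer degree condition, i.e.\ degree zero on each of the two copies of $D^{\times}\backslash\widehat{D}^{\times}/U$ supplied by Corollary \ref{MK}; and with that reading surjectivity is immediate: a Raynaud preimage of such a divisor has edge-degree equal to the odd-part degree of its image, hence automatically lies in ${\bf{Z}}[\mathcal{E}(\mathcal{G}_v)]^0$. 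So replace the appeal to Corollary \ref{GOODid} by the bipartite kernel computation, and redo the surjectivity discussion for the correctly interpreted target.
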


\begin{proof}
We claim that with this choice of orientation, the elements of $H_1(\mathcal{G}_v, {\bf{Z}})$ belong
to ${\bf{Z}}[\mathcal{E}(\mathcal{G}_v)]^0 $, cf. \cite[Proposition 5.5]{BD}. \end{proof}

\end{remark}

\begin{remark}[Vanishing cycles description.]
The character group $\mathcal{X}_v$ can also be
described in language of vanishing cycles of \cite[$\S$ XIII and
XV]{Groth} to give the following main result.

\begin{theorem}[Grothendieck]\label{SGA7}
There is a canonical short exact sequence
\begin{align}\label{monoses} \begin{CD} 0 @>>> \mathcal{X}_v @>{\lambda}>>
{\widehat{\mathcal{X}}}_v @>>> \Phi_{v} @>>>0.\end{CD}\end{align}
Here, $\widehat{\mathcal{X}}_v$ denotes the ${\bf{Z}}$-dual of
$\mathcal{X}_v$, and $\lambda$ denotes the canonical injection
induced by the monodromy pairing of \cite[$\S$9]{Groth}. In
particular, there is a canonical isomorphism
$\coker({\lambda})\cong \Phi_v$.
\end{theorem}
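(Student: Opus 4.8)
The plan is to cite Grothendieck's theory of the monodromy pairing on abelian varieties with semistable reduction, as developed in \cite[Expos\'e IX, \S11]{Groth} (cf. also \cite[\S9]{Groth}), and to specialize it to the Jacobian ${\bf{J}}$ of our Shimura curve. First I would recall that, because the curve $M_H$ acquires semistable reduction over $\mathcal{O}_{F_{v^2}}$ (by Theorem \ref{mc} in the good-reduction case, or by Theorem \ref{cerednikvarshavsky} together with Theorem \ref{mumfordkurihara} in the ramified case, which exhibits ${\bf{M}}_{H}$ as an admissible curve), the N\'eron model ${\bf{J}}$ has a special fibre ${\bf{J}}_v$ whose identity component ${\bf{J}}_v^0$ is an extension of an abelian variety by a torus $\operatorname{Tor}({\bf{J}}_v^0)$. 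The toric rank is positive precisely when the dual graph $\mathcal{G}_v$ has nontrivial first homology, which is the relevant situation.

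The key step is then to invoke Grothendieck's orthogonality/monodromy exact sequence: for a semistable abelian variety over a complete discrete valuation ring with algebraically closed residue field, the Galois module of $\ell$-adic Tate modules sits in a monodromy filtration, and passing to the associated graded one obtains the character group $\mathcal{X}_v$ of the torus together with a canonical pairing \begin{align*} \mathcal{X}_v \times \mathcal{X}_v' \longrightarrow {\bf{Z}}, \end{align*} where $\mathcal{X}_v'$ is the character group attached to the dual abelian variety. Autoduality of the Jacobian identifies $\mathcal{X}_v' \cong \mathcal{X}_v$ (up to the canonical principal polarization), so the monodromy pairing becomes a pairing $\mathcal{X}_v \times \mathcal{X}_v \to {\bf{Z}}$, inducing a map $\lambda : \mathcal{X}_v \to \widehat{\mathcal{X}}_v = \Hom_{{\bf{Z}}}(\mathcal{X}_v, {\bf{Z}})$. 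Grothendieck's theorem states that $\lambda$ is injective with cokernel canonically isomorphic to the component group $\Phi_v = {\bf{J}}_v / {\bf{J}}_v^0$; this is exactly the content of the claimed short exact sequence $(\ref{monoses})$. I would assemble these pieces, noting that the combinatorial input (Theorem \ref{raynaud}, identifying $\mathcal{X}_v$ with $H_1(\mathcal{G}_v,{\bf{Z}})$ and with $\ker(d_*)$) makes the pairing $\lambda$ explicit as the restriction of the intersection form on the dual graph, so that the two descriptions are compatible.

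The main obstacle I anticipate is not the abstract statement — which is literally Grothendieck's theorem — but checking the hypotheses carefully in the ramified-prime case: one must confirm that ${\bf{M}}_{H}$ over $\mathcal{O}_{F_{v^2}}$ is genuinely semistable (admissibility plus regularity of the total space, so that the N\'eron model formalism and the SGA 7 results apply verbatim), and that the autoduality identification of character groups is compatible with the orientation conventions fixed above for $\mathcal{G}_v$. In practice this is handled by the references (\cite{BLR}, \cite{JoLi}, \cite{Ed}) and by the fact that the dual graph description of Theorem \ref{raynaud} already encodes the monodromy pairing as the graph-theoretic intersection form; so the proof reduces to quoting \cite[\S9, Expos\'e IX]{Groth} (or the exposition in \cite[\S1]{Ed} and \cite[Theorem 9.6/1]{BLR}) and remarking that the hypotheses hold here by the integral models of Section 8. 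I expect the written proof to be short, essentially a pointer to these sources with the verification of semistability carried out as above.

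\begin{proof}
This is Grothendieck's fundamental result on the monodromy pairing; see \cite[Expos\'e IX, \S11, Th\'eor\`eme 11.6]{Groth} (cf. \cite[\S9]{Groth}), together with the exposition in \cite[Theorem 9.6/1]{BLR} and \cite[\S1]{Ed}. We sketch the deduction. By Theorem \ref{mc} (if $v \nmid \disc(B)$) or by Theorems \ref{cerednikvarshavsky} and \ref{mumfordkurihara} (if $v \mid \disc(B)$), the curve $M = M_H$ acquires semistable reduction over $\mathcal{O}_{F_{v^2}}$: the integral model ${\bf{M}}$ is a proper, regular, admissible curve over $\mathcal{O}_{F_{v^2}}$ with reduced special fibre whose components are smooth and rational. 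Hence the N\'eron model ${\bf{J}}$ of $J \otimes_F F_{v^2}$ has semiabelian identity component ${\bf{J}}_v^0$, extension of an abelian variety by the torus $\operatorname{Tor}({\bf{J}}_v^0)$ with character group $\mathcal{X}_v = \Hom(\operatorname{Tor}({\bf{J}}_v^0), {\bf{G}}_m)$. Grothendieck's theory of vanishing cycles (\cite[Exp. XIII, XV]{Groth}) produces a canonical monodromy pairing on $\mathcal{X}_v$ valued in ${\bf{Z}}$; since $J$ is a Jacobian, the canonical principal polarization identifies the character group of the dual abelian variety with $\mathcal{X}_v$ itself, so the pairing yields a canonical homomorphism $\lambda : \mathcal{X}_v \longrightarrow \widehat{\mathcal{X}}_v = \Hom_{{\bf{Z}}}(\mathcal{X}_v, {\bf{Z}})$. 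The theorem of \cite[\S9, Exp. IX]{Groth} asserts that $\lambda$ is injective and that its cokernel is canonically isomorphic to the group of connected components $\Phi_v = {\bf{J}}_v/{\bf{J}}_v^0$, giving the short exact sequence $(\ref{monoses})$. Under the identifications of Theorem \ref{raynaud} ($\mathcal{X}_v \cong H_1(\mathcal{G}_v, {\bf{Z}}) \cong \ker(d_*)$), the pairing $\lambda$ is the restriction to cycles of the intersection form on ${\bf{Z}}[\mathcal{E}(\mathcal{G}_v)]$, which makes all constructions compatible with the orientation conventions fixed above.
\end{proof}
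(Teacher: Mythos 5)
Your proposal is correct and follows essentially the same route as the paper, which simply cites Grothendieck (Th\'eor\`eme 11.5 of \cite{Groth}, not 11.6 as you wrote) together with the exposition in \cite[$\S$1]{Ed}; your additional verification of semistability via the integral models and the compatibility with Raynaud's dual-graph description is exactly the implicit content of that citation.
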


\begin{proof}
See \cite[Th\'eor\`eme 11.5]{Groth}. The result is also 
described in \cite[$\S$1]{Ed}. 
\end{proof}

\end{remark}

\begin{remark}[Comparison description (Edixhoven).]

Following \cite[(1.6)]{Ed}, we may then compare the 
descriptions of Raynaud and Grothendieck via the 
following commutative diagram, whose rows and columns 
are exact:

\begin{align*}\begin{CD}
@. @. 0 @. 0 @.\\
@. @. @VVV @VVV @.\\
@. @. \mathcal{X}_v @>{\operatorname{id}}>> \mathcal{X}_v\\
@. @. @VVV @VV{\lambda}V @.\\
0 @>>> {\bf{Z}}[\mathcal{V}(\mathcal{G}_v)] @>{d^*}>> {\bf{Z}}[\mathcal{E}(\mathcal{G}_v)]
@>>> \widehat{\mathcal{X}}_v @>>> 0\\
@. @VV{-\operatorname{id}}V @VV{d_*}V @VVV @.\\
0 @>>> {\bf{Z}}[\mathcal{V}(\mathcal{G}_v)] @>{\mu_0}>> {\bf{Z}}[\mathcal{V}
(\mathcal{G}_v)]^0 @>>> \Phi_v @>>> 0\\
@. @. @VVV @VVV @.\\
@. @. 0 @. 0 @.
\end{CD}\end{align*} Here, the composition of $\mu_0$ with the natural
inclusion ${\bf{Z}}[\mathcal{V}(\mathcal{G}_v)]^0 \longrightarrow
{\bf{Z}}[\mathcal{V}(\mathcal{G}_v)]$ is given by the map \begin{align*} 
\mu: {\bf{Z}}[\mathcal{V}(\mathcal{G}_v)] &\longrightarrow 
{\bf{Z}}[\mathcal{V}(\mathcal{G}_v)], ~~~ \mu(C) = \sum_{C'}(C \cdot C')C',
\end{align*} where $C,C' \in \mathcal{V}(\mathcal{G}_v)$ are irreducible
components of the special fibre ${\bf{M}} \otimes \kappa_{v^2}$,
and $(C \cdot C') \in {\bf{Z}}$ is their intersection product on
 ${\bf{M}} \otimes \mathcal{O}_{F_{v^2}}$. We refer the reader to 
 \cite[$\S$1]{Ed} or \cite[1.6.5]{Nek} for more details. \end{remark}

\begin{remark}[Specialization to connected components.]

Fix a Shimura curve $M = M_H$ as above, associated to an
indefinite quaternion algebra $B$ over $F$. Fix a prime
$v \subset \mathcal{O}_F$ that divides the discriminant
of $B$.

\begin{proposition}\label{omeganat} There is a natural map
\begin{align}\label{omeganatural}
\omega_v: {\bf{Z}}[\mathcal{V}(\mathcal{G}_v)]^0 \longrightarrow
\Phi_v.\end{align}\end{proposition}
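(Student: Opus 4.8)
The plan is to produce the map $\omega_v$ by composing the exact-sequence data already assembled in the preceding discussion. Recall from Theorem \ref{raynaud} (Raynaud) that $\mathcal{X}_v \cong \ker(d_*)$ inside ${\bf{Z}}[\mathcal{E}(\mathcal{G}_v)]$, and from Theorem \ref{SGA7} (Grothendieck) that there is a canonical surjection ${\widehat{\mathcal{X}}}_v \twoheadrightarrow \Phi_v$ with kernel $\lambda(\mathcal{X}_v)$. The Edixhoven comparison diagram displayed just above identifies the bottom row as a short exact sequence
\begin{align*}
0 \longrightarrow {\bf{Z}}[\mathcal{V}(\mathcal{G}_v)] \overset{\mu_0}{\longrightarrow} {\bf{Z}}[\mathcal{V}(\mathcal{G}_v)]^0 \longrightarrow \Phi_v \longrightarrow 0,
\end{align*}
so that $\Phi_v$ is literally a quotient of ${\bf{Z}}[\mathcal{V}(\mathcal{G}_v)]^0$. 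First I would simply take $\omega_v$ to be the right-hand arrow in this sequence, i.e. the cokernel projection of $\mu_0$. This is manifestly a well-defined homomorphism of abelian groups, and it is the natural candidate: it sends the class of a vertex-divisor of total degree zero to the connected component of ${\bf{J}}_v$ into which the corresponding divisor on ${\bf{M}}\otimes\kappa_{v^2}$ specializes.

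Next I would check that this map deserves to be called the "specialization to connected components" map, i.e. that it is compatible with the reduction map on divisors. Concretely, a divisor of degree zero on $M$ supported on points reducing to smooth points of the special fibre lands, after reduction, in a well-defined component of ${\bf{J}}_v$; one records the component of each such reduction by the vertex $C \in \mathcal{V}(\mathcal{G}_v)$ it hits, obtaining an element of ${\bf{Z}}[\mathcal{V}(\mathcal{G}_v)]^0$, and then applies $\omega_v$. That this agrees with the abstract cokernel map is exactly the content of Edixhoven's comparison (the commutativity of the displayed diagram together with the identification $\Phi_v \cong \coker(\lambda)$ from Theorem \ref{SGA7}); see \cite[(1.6)]{Ed} and \cite[1.6.5]{Nek}. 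So this step is a matter of unwinding definitions and citing the comparison diagram rather than proving anything new.

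The one genuinely substantive point — and the main obstacle — is naturality, in the sense implicit in the word "natural": the map should be independent of the auxiliary choices made along the way, in particular the choice of orientation $s,t \colon \mathcal{E}(\mathcal{G}_v) \to \mathcal{V}(\mathcal{G}_v)$ used to define $d_*$ and hence $\mu_0$, and it should be equivariant for the Hecke action (and the action of $\widehat{B}^{\times v}$ through the uniformization of Theorem \ref{cerednikvarshavsky}). Reversing the orientation of an edge replaces $d_*$ by $-d_*$ on that edge but leaves $\ker(d_*) = \mathcal{X}_v$ and the image $\mu_0({\bf{Z}}[\mathcal{V}(\mathcal{G}_v)])$ unchanged, so the quotient ${\bf{Z}}[\mathcal{V}(\mathcal{G}_v)]^0/\mu_0(\cdots) \cong \Phi_v$ is insensitive to it; I would spell this out. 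Hecke-equivariance follows because every object in the Edixhoven diagram is built functorially from the special fibre ${\bf{M}}\otimes\kappa_{v^2}$ and its dual graph, on which Hecke correspondences act (as recorded in the integral-model discussion above, cf. the descriptions via Theorems \ref{mc} and \ref{cerednikvarshavsky}), and the intersection pairing defining $\mu$ is Hecke-equivariant. I would therefore present the construction of $\omega_v$ as the cokernel map above and devote the bulk of the proof to verifying these two independence/equivariance claims, which is where the real (though routine) work lies.
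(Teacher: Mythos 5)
Your proposal is correct and is essentially the paper's own argument: the paper defines $\omega_v(x) = (c_v \circ \widehat{\gamma} \circ \lambda_0)(y)$ for any $y$ with $d_*(y) = x$, which is well defined because $\ker(d_*) = \mathcal{X}_v$ maps into $\ker(c_v) = \lambda(\mathcal{X}_v)$, and by the commutativity of the Edixhoven comparison diagram this is precisely the cokernel projection of $\mu_0$ that you take as your definition. Your additional verifications (orientation-independence, Hecke-equivariance, compatibility with reduction of divisors) go beyond what Proposition \ref{omeganat} asserts; the last of these is exactly the content of Proposition \ref{cmpointreduction}, which the paper proves separately.
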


\begin{proof} See the argument of \cite[Corollary 5.12]{BD} (or that of 
\cite[$\S$ 4.4]{Lo}, where it is applied to each connected component of 
$\mathcal{G}_v$). Let us write the short exact sequence $(\ref{raynaud})$ as 
\begin{align*}\begin{CD} 0 @>>>
\mathcal{X}_v @>{\gamma}>> {\bf{Z}}[\mathcal{E}(\mathcal{G}_v)]
@>{d_*}>> {\bf{Z}}[\mathcal{V}(\mathcal{G}_v)] @>{\operatorname{deg}}>> {\bf{Z}} 
@>>>0, \end{CD}\end{align*} where
$\operatorname{deg}$ denotes the degree map. Taking distinguished
bases for ${\bf{Z}}[\mathcal{E}(\mathcal{G}_v)]$ and
${\bf{Z}}[\mathcal{V}(\mathcal{G}_v)]$, we can then consider the dual
exact sequence \begin{align*}\begin{CD} 0 @>>> {\bf{Z}} 
@>{\operatorname{diag}}>> {\bf{Z}}[\mathcal{V}(\mathcal{G}_v)] @>{d^*}>>
{\bf{Z}}[\mathcal{E}(\mathcal{G}_v)] @>{\widehat{\gamma}}>>
\widehat{\mathcal{X}}_v @>>>0, \end{CD}\end{align*} where
$\operatorname{diag}$ denotes the diagonal map. Let
$\lambda_0: {\bf{Z}}[\mathcal{E}(\mathcal{G}_v)] \longrightarrow
{\bf{Z}}[\mathcal{E}(\mathcal{G}_v)]$ denote the map induced by the
monodromy pairing of \cite[$\S$ 9]{Groth}. We then deduce that the map $\lambda$
in the short exact sequence \begin{align*}\begin{CD} 0 @>>> \mathcal{X}_v
@>{\lambda}>> {\widehat{\mathcal{X}}}_v @>{c_v}>> \Phi_{v}
@>>>0,\end{CD}\end{align*} of Theorem \ref{SGA7} must be given by the 
composition $\widehat{\gamma} \circ \lambda_0 \circ \gamma$.
The sought after map $\omega_v$ can then be defined as follows: 
\begin{align*} \omega_v: {\bf{Z}}[\mathcal{V}(\mathcal{G}_v)]^0
&\longrightarrow \Phi_v \\ x &\longmapsto \left( c_v \circ \widehat{\gamma}
\circ \lambda_0 \right)(y),\end{align*} where $y$ is chosen such
that $d_*(y) = x$.  \end{proof} \end{remark}

\begin{remark}[Specialization of divisors.] 

Let $\Div(M)$ denote the group of divisors on 
$M \otimes_F \overline{F}$ having coeffients in ${\bf{Z}}$. Let
$\Div^0(M)$ denote subgroup of divisors having degree $0$ on each
connected component of $M\otimes_F \overline{F}$. Hence, the class
of a divisor $D \in \Div^0(M)$ under linear equivalence corresponds
to an element $[D]$ of $J(\overline{F})$. Given a divisor $D \in \Div(M)$, 
let $\Supp(D)$ denote its support. Let \begin{align*}\red_v: M \otimes_F \overline{F}
\longrightarrow \mathcal{V}(\mathcal{G}_v) \cup \mathcal{E}(\mathcal{G}_v)\end{align*} 
denote the map that sends a point $P$ to either the connected component
containing its image in ${\bf{M}} \otimes \kappa_{v^2}$
if $P$ does not reduce to a singular point, or else to its image in 
${\bf{M}} \otimes \kappa_{v^2}$ (a singular point).
We consider divisors 
\begin{align*}D = \sum_P n_P P \in \Div^0(M)\end{align*} for which 
the following conditions hold:

\begin{itemize}
\item[(i)]Each $P\in \Supp(D)$ is defined over $F_{v^2}$.
\item[(ii)]The image of each $P \in \Supp(D)$ under $\red_v$
goes to a vertex in $\mathcal{V}(\mathcal{G}_v)$, i.e.
the image of each $P$ in ${\bf{M}} \otimes \kappa_{v^2}$ is a nonsingular point. 
\end{itemize} Let us for future reference call any such divisor 
{\it{$F_{v^2}$-nonsingular}}. The reduction mod $v$ of such a divisor $D$ 
then takes the form \begin{align*}\red_v(D) = 
\sum_P n_P \cdot \red_v(D) \in {\bf{Z}}[\mathcal{V}(\mathcal{G}_v)]^0.\end{align*}
Now, consider the specialization map \begin{align*}
\partial_v: J(F_{v^2}) &\longrightarrow \Phi_v.\end{align*}

\begin{proposition}\label{cmpointreduction} 
Let $D \in \Div^0(M)$ be an $F_{v^2}$-nonsingular divisor, with $[D]$ its class 
in $ J(F_{v^2})$. Then,
\begin{align}\label{reduction}\partial_v\left( [D]\right) = \omega_v \left( \red_v(D)
\right).\end{align} \end{proposition}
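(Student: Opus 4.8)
The plan is to reduce the statement to the compatibility between the two descriptions of the character group and component group assembled in Edixhoven's commutative diagram, together with the combinatorial description of the reduction map $\red_v$. First I would recall that since $D \in \Div^0(M)$ is $F_{v^2}$-nonsingular, each point in $\Supp(D)$ reduces to a nonsingular point of ${\bf{M}} \otimes \kappa_{v^2}$, hence to a well-defined vertex of $\mathcal{G}_v$; so the divisor $D$ lands, after reduction, in an honest element $\red_v(D) \in {\bf{Z}}[\mathcal{V}(\mathcal{G}_v)]^0$ (degree zero on each connected component, because $D$ has degree zero on each connected component of $M\otimes\overline F$ and reduction preserves connected components). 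The real content is then: the specialization-to-components map $\partial_v \colon J(F_{v^2}) \to \Phi_v$, restricted to classes of such divisors, factors through $\red_v$ and equals $\omega_v \circ \red_v$, where $\omega_v$ is the map of Proposition \ref{omeganat}.

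The key steps, in order, would be: (1) Recall the standard functorial description of $\partial_v$ via the N\'eron model: a point $P \in M(F_{v^2})$ reducing to a nonsingular point of the special fibre extends to a section of the smooth locus of ${\bf{M}}$, hence of ${\bf{J}}$ after applying the Abel--Jacobi map relative to a base divisor, and $\partial_v([D])$ is read off as the image in $\Phi_v = {\bf{J}}_v/{\bf{J}}_v^0$. The point here is that for a difference of two sections of the smooth locus lying on components $C, C'$ of ${\bf{M}}\otimes\kappa_{v^2}$, the class in $\Phi_v$ is computed by the intersection pairing, i.e.\ precisely by the map $\mu$ (equivalently $\mu_0$) appearing in Edixhoven's diagram. (2) Chase Edixhoven's commutative diagram: the bottom row $0 \to {\bf{Z}}[\mathcal{V}(\mathcal{G}_v)] \xrightarrow{\mu_0} {\bf{Z}}[\mathcal{V}(\mathcal{G}_v)]^0 \to \Phi_v \to 0$ identifies $\Phi_v$ as a quotient of ${\bf{Z}}[\mathcal{V}(\mathcal{G}_v)]^0$; tracing how this quotient map is built from $c_v \circ \widehat\gamma \circ \lambda_0$ (exactly the composite used to define $\omega_v$ via $d_*(y) = x$) shows $\omega_v$ is the natural surjection in that bottom row. (3) Identify the output of step (1) with the output of step (2): the class of $\red_v(D)$ in $\Phi_v$ under the bottom-row quotient is, by construction of that row from the intersection matrix $\mu$, the same element produced by the N\'eron-model computation. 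This is essentially \cite[Corollary 5.12]{BD} (and \cite[$\S$4.4]{Lo}), carried out componentwise, and I would cite those for the hands-on verification.

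Concretely I expect the proof to read: \emph{``This follows from the argument of \cite[Corollary 5.12]{BD}, applied to each geometric connected component of $\mathcal{G}_v$ as in \cite[$\S$4.4]{Lo}, using the description of $\omega_v$ given in Proposition \ref{omeganat} and the comparison diagram of Edixhoven \cite[(1.6)]{Ed}.''} The one genuine verification is step (1): that the N\'eron-model specialization of a class $[D]$ of an $F_{v^2}$-nonsingular divisor depends only on $\red_v(D)$ and is computed by the intersection form. Everything downstream is the diagram chase already set up in the preceding \texttt{remark} blocks.

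The main obstacle will be step (1), and more precisely making sure the base-divisor choice in the Abel--Jacobi map is harmless: one must check that shifting the base divisor changes $\red_v(D)$ and $\partial_v([D])$ compatibly (both are unaffected because $D$ itself has degree zero on each component, so $\red_v(D) \in {\bf{Z}}[\mathcal{V}(\mathcal{G}_v)]^0$ is base-independent, and $[D] \in J(F_{v^2})$ is likewise). A secondary point requiring care is that when $v \mid \disc(B)$ the special fibre of ${\bf{M}}$ is the admissible/semistable curve furnished by Theorem \ref{cerednikvarshavsky}, so one must confirm that the dual graph $\mathcal{G}_v$ entering Raynaud's sequence (Theorem \ref{raynaud}) is the one coming from that Cerednik--Varshavsky model; this is exactly what was arranged in the ``Character groups and connected components'' section, so it should be invoked rather than reproved.
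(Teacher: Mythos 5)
Your proposal is correct and follows essentially the same route as the paper: the paper's proof simply invokes the intersection-number description of $\partial_v([D])$ via Raynaud's/Edixhoven's comparison (citing \cite[$\S$2]{Ed}, \cite[Proposition 5.14]{BD}, \cite[1.6.6]{Nek}) and notes the identification with $\omega_v(\red_v(D))$ is then immediate, which is exactly your steps (1)--(3). The only cosmetic difference is in the references cited (the paper points to \cite[Proposition 5.14]{BD} here, reserving \cite[Corollary 5.12]{BD} and \cite[$\S$4.4]{Lo} for the construction of $\omega_v$ in Proposition \ref{omeganat}), and your write-up spells out the diagram chase the paper leaves implicit.
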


\begin{proof}
The image $\partial_v \left( [D] \right)$ can be described in terms 
of intersection numbers via Raynaud's description of $\Phi_v$, following the argument
of \cite[$\S$ 2]{Ed} (cf. \cite[Proposition 5.14]{BD}, \cite[1.6.6]{Nek}). The 
result is then simple to deduce.  \end{proof} \end{remark}

\section{Hecke module correspondences}

Let us return to the setup of Theorem \ref{cerednikvarshavsky}, keeping all of the notations and hypotheses as
above. We must first introduce some more precise notations. To this end, suppose we are given coprime ideals 
$\mathfrak{M}^+$ and $\mathfrak{M}^{-} $ of $\mathcal{O}_F$, where $\mathfrak{M}^{-}$ is the squarefree product of a number of 
primes congruent to $d-1 \mod 2$. We shall henceforth write $M(\mathfrak{M}^+, \mathfrak{M}^{-})$ to denote the 
Shimura curve of level $\mathfrak{M}^+$ associated to the indefinite quaternion algebra of discriminant $\mathfrak{M}^{-}$.
(Here ``indefinite" means that the underlying quaternion algebra is ramified at all but one of the real places of $F$, hence the condition on 
$\mathfrak{M}^{-}$). Fix a prime $v \subset \mathcal{O}_F$ that does not divide $\mathfrak{M}^+ \mathfrak{M}^{-}$. We assume that 
the level of $M(\mathfrak{M}^+, \mathfrak{M}^{-})$ is maximal at $v$, in which case there exists an integral model 
${\bf{M}}(\mathfrak{M}^{+}, \mathfrak{M}^{-})$ of $M(\mathfrak{M}^+, \mathfrak{M}^{-})$ over $\mathcal{O}_{F_v}$ 
(due to Carayol in the case that $v \nmid \mathfrak{M}^{-}$, or Cerednik-Varshavsky in 
the case that $v \mid \mathfrak{M}^{-}$). Let $J(\mathfrak{M}^{+}, \mathfrak{M}^{-})$ denote the jacobian of 
$M(\mathfrak{M}^{+}, \mathfrak{M}^{-})$, with ${\bf{J}}(\mathfrak{M}^{+}, \mathfrak{M}^{-})$ its N\'eron model 
over $\mathcal{O}_{F_v}$, and ${\bf{J}}_v^0(\mathfrak{M}^{+}, \mathfrak{M}^{-})$ the component of the identity
of its special fibre. Let $\mathcal{X}_v(\mathfrak{M}^{+}, \mathfrak{M}^{-})$ denote the character group
of the maximal torus of ${\bf{J}}_v^0(\mathfrak{M}^{+}, \mathfrak{M}^{-})$. Given an ideal $\mathfrak{m} \subset 
\mathcal{O}_F$  that does not divide $\mathfrak{M}^+ \mathfrak{M}^{-}$, let $M(\mathfrak{m}; \mathfrak{M}^+, 
\mathfrak{M}^{-})$ denote the Shimura curve $M(\mathfrak{M}^{+}, \mathfrak{M}^{-})$ with maximal level structure
at primes dividing $\mathfrak{m}$ inserted. Hence, $M(\mathfrak{m}\mathfrak{M}^+, \mathfrak{M}^{-})$ is the Shimura curve
 of $\mathfrak{m}\mathfrak{M}^{+}$-level structure associated to the indefinite Shimura curve of discriminant $\mathfrak{M}^{-}$,
 with the extra condition that the level be maximal at primes dividing $\mathfrak{m}$. 
 
 Suppose now that we are given two coprime ideals $\mathfrak{N}^+$ and $ \mathfrak{N}^{-} $ of $\mathcal{O}_F$ such that 
 $\mathfrak{N}^{-}$ is the squarefree product of a number of primes congruent to $d \mod 2$. Given a ring $\mathcal{O}$, recall 
 that we let $\mathbb{S}_2(\mathfrak{N}^{+}, \mathfrak{N}^{-}; \mathcal{O})$ denote the space of $\mathcal{O}$-valued automorphic 
 forms of weight $2$ and level $\mathfrak{N}^+$ on the totally definite quaternion algebra of discriminant $\mathfrak{N}^{-}$ over $F$. 
 Let $\mathbb{T}(\mathfrak{N}^+, \mathfrak{N}^{-})$ denote the associated algebra of Hecke operators. Given an ideal 
 $\mathfrak{n} \subset \mathcal{O}_F$ that does not divide the product $\mathfrak{N}^+ \mathfrak{N}^{-}$, 
 let $\mathbb{S}_2(\mathfrak{n}; \mathfrak{N}^{+}, \mathfrak{N}^{-}; {\bf{Z}})$ denote the space of forms of level $\mathfrak{n}\mathfrak{N}^+$, with 
 the level being maximal at primes dividing $\mathfrak{n}$. Fix a prime $v \subset \mathcal{O}_F$ that does not divide the product
 $\mathfrak{N}^+ \mathfrak{N}^{-}$. Let us now take $\mathfrak{M}^+ = \mathfrak{N}^+$ and $\mathfrak{M}^{-} = v \mathfrak{N}^{-}$
 in the setup above. In particular, we consider the Shimura curve $M(\mathfrak{N}^+, v\mathfrak{N}^{-} )$, with 
 $\mathcal{X}_v(\mathfrak{N}^+, v\mathfrak{N}^{-})$ the associated character group, and $\mathcal{G}_v 
 = (\mathcal{V}(\mathcal{G}_v), \mathcal{E}(\mathcal{G}_v))$ the associated dual graph. Putting things together, 
 we obtain the following diagram \`a la Ribet \cite{Ri2}, where the rows are exact, and the vertical arrows are isomorphisms:

\begin{align}\label{Rfd}\begin{CD}
 @.   \mathcal{X}_{\mathfrak{q}}(v\mathfrak{q}; \mathfrak{N}^+, \mathfrak{N}^{-}/\mathfrak{q})
 @>>>   \mathcal{X}_{\mathfrak{q}}(\mathfrak{q}; \mathfrak{N}^+, \mathfrak{N}^{-}/\mathfrak{q})^2 \\
@. @AAA @AAA \\
 @.  \Div^0 \left( {\bf{M}}(v\mathfrak{q}; \mathfrak{N}^+, \mathfrak{N}^{-}/\mathfrak{q})^{ss} \otimes \kappa_{\mathfrak{q}}\right)    
 @>>>    \Div^0 \left( {\bf{M}}(\mathfrak{q}; \mathfrak{N}^+, \mathfrak{N}^{-}/\mathfrak{q})^{ss} \otimes \kappa_{\mathfrak{q}} \right)^2  \\
@. @AAA @AAA \\
 \mathcal{X}_v(\mathfrak{N}^{+}, v\mathfrak{N}^{-}) @>>> {\bf{Z}}[\mathcal{E}(\mathcal{G}_v)] @>{d_*}>>  {\bf{Z}}[\mathcal{V}(\mathcal{G}_v)]^0 \\
@VVV @VVV @VVV \\
\mathcal{X}_v(\mathfrak{N}^{+}, v \mathfrak{N}^{-}) @>>> {\bf{Z}}[D^{\times}\backslash \widehat{D}^{\times}/U(v)] @>>>  {\bf{Z}}[D^{\times}\backslash \widehat{D}^{\times}/U]^0 \times {\bf{Z}}/2{\bf{Z}}\\
@VVV @VVV @VVV \\
\mathbb{S}_2(U(v), D; {\bf{Z}})^{\operatorname{v-new}} @>>> \mathbb{S}_2(U(v), D; {\bf{Z}}) @>{\alpha_*, \beta_*}>>  \mathbb{S}_2(U, D; {\bf{Z}})^{\oplus 2} \\
\end{CD}
\end{align}  Here, we start with the exact sequence of Theorem \ref{raynaud}. The identifications \begin{align*} {\bf{Z}}[\mathcal{E}(\mathcal{G}_v)]  &\cong 
{\bf{Z}}[D^{\times}\backslash \widehat{D}^{\times}/U(v)], ~~~{\bf{Z}}[\mathcal{V}(\mathcal{G}_v)]^0 \cong 
{\bf{Z}}[D^{\times}\backslash \widehat{D}^{\times}/U]^0 \times {\bf{Z}}/2{\bf{Z}}
\end{align*} come from Corollary \ref{MK}, making the bottom exact sequence a direct consequence of 
definitions. The identification \begin{align*} {\bf{Z}}[D^{\times}\backslash \widehat{D}^{\times}/U]^0 
&\cong   \Div^0 \left( {\bf{M}}(\mathfrak{q}; \mathfrak{N}^+, \mathfrak{N}^{-}/\mathfrak{q})^{ss} \otimes \kappa_{\mathfrak{q}} \right)
\end{align*} comes from Proposition \ref{carayolss}. The identification \begin{align*}
{\bf{Z}}[\mathcal{E}(\mathcal{G}_v)]  &\cong  \Div^0 \left( {\bf{M}}(v\mathfrak{q}; \mathfrak{N}^+, \mathfrak{N}^{-}/\mathfrak{q})^{ss} 
\otimes \kappa_{\mathfrak{q}}\right)  \end{align*} is deduced from Corollary \ref{BADid}. The identifications \begin{align*}
 \Div^0 \left( {\bf{M}}(v\mathfrak{q}; \mathfrak{N}^+, \mathfrak{N}^{-}/\mathfrak{q})^{ss} \otimes \kappa_{\mathfrak{q}}\right) 
 &\cong  \mathcal{X}_{\mathfrak{q}}(v\mathfrak{q}; \mathfrak{N}^+, \mathfrak{N}^{-}/\mathfrak{q}) \\
\Div^0 \left( {\bf{M}}(\mathfrak{q}; \mathfrak{N}^+, \mathfrak{N}^{-}/\mathfrak{q})^{ss} \otimes \kappa_{\mathfrak{q}} \right)
&\cong  \mathcal{X}_{\mathfrak{q}}(\mathfrak{q}; \mathfrak{N}^+, \mathfrak{N}^{-}/\mathfrak{q}) \end{align*} come from Corollary \ref{GOODid}.
In particular, we use $(\ref{Rfd})$ deduce the following result. Recall that we write $\eta_v = \left( \begin{array}{cccc} 0 & 1 \\ 0 & \varpi_v 
 \end{array}\right)$, where $\varpi_v$ is a fixed uniformizer of $v$. Let us write the associated monodromy exact sequences of Theorem
 \ref{SGA7} as \begin{align*}\begin{CD} 
 \mathcal{X}_v(\mathfrak{N}^+, v\mathfrak{N}^{-}) @>{\lambda}>> \widehat{\mathcal{X}}_v(\mathfrak{N}^+, v\mathfrak{N}^{-}) @>>> \Phi_v(\mathfrak{N}^+, v\mathfrak{N}^{-}) \\
 \mathcal{X}_{\mathfrak{q}}(v\mathfrak{q}; \mathfrak{N}^+, \mathfrak{N}^{-}/\mathfrak{q}) @>{\lambda(v \mathfrak{q})}>> \widehat{\mathcal{X}}_{\mathfrak{q}}(v\mathfrak{q};
 \mathfrak{N}^+, \mathfrak{N}^{-}/\mathfrak{q}) @>>> \Phi_{\mathfrak{q}}(v\mathfrak{q}; \mathfrak{N}^+, \mathfrak{N}^{-}/\mathfrak{q}) \\
 \mathcal{X}_{\mathfrak{q}}(\mathfrak{q}; \mathfrak{N}^+, \mathfrak{N}^{-}/\mathfrak{q}) @>{\lambda(\mathfrak{q})}>> \widehat{\mathcal{X}}_{\mathfrak{q}}(\mathfrak{q};
 \mathfrak{N}^+, \mathfrak{N}^{-}/\mathfrak{q}) @>>> \Phi_{\mathfrak{q}}(\mathfrak{q}; \mathfrak{N}^+, \mathfrak{N}^{-}/\mathfrak{q}). \end{CD}\end{align*}
  
\begin{theorem}\label{Ribet} We have the following diagram of $\mathbb{T}(v\mathfrak{q}; \mathfrak{N}^{+}, \mathfrak{N}^{-}/\mathfrak{q})$-modules,
where the rows are exact:
\begin{align*}\begin{CD}  
\widehat{\mathcal{X}}_v(\mathfrak{N}^{+}, v\mathfrak{N}^{-})@<<<  \widehat{\mathcal{X}}_{\mathfrak{q}}(v\mathfrak{q}; 
\mathfrak{N}^+, \mathfrak{N}^{-}/\mathfrak{q})@<{1_* \oplus {\eta_v}_*}<<  \widehat{\mathcal{X}}_{\mathfrak{q}}(\mathfrak{q}; \mathfrak{N}^+, 
\mathfrak{N}^{-}/\mathfrak{q})^2 \\ @AA{\lambda}A @AA{\lambda(v\mathfrak{q})}A @AA{\lambda(\mathfrak{q})}A \\
\mathcal{X}_v(\mathfrak{N}^{+}, v\mathfrak{N}^{-})@>>>  \mathcal{X}_{\mathfrak{q}}(v\mathfrak{q}; 
\mathfrak{N}^+, \mathfrak{N}^{-}/\mathfrak{q})@>>>  \mathcal{X}_{\mathfrak{q}}(\mathfrak{q}; \mathfrak{N}^+, 
\mathfrak{N}^{-}/\mathfrak{q})^2. \end{CD}\end{align*} \end{theorem}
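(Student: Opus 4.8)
The plan is to extract the bottom row of the diagram directly from $(\ref{Rfd})$ and to produce the top row by ${\bf{Z}}$-linear duality. The third row of $(\ref{Rfd})$ is the Raynaud sequence $0 \to \mathcal{X}_v(\mathfrak{N}^{+}, v\mathfrak{N}^{-}) \to {\bf{Z}}[\mathcal{E}(\mathcal{G}_v)] \xrightarrow{d_*} {\bf{Z}}[\mathcal{V}(\mathcal{G}_v)]^0 \to 0$ of Theorem $\ref{raynaud}$, and the vertical isomorphisms in $(\ref{Rfd})$ — built from Corollary $\ref{MK}$, Proposition $\ref{carayolss}$, Corollary $\ref{GOODid}$ and Corollary $\ref{BADid}$ — carry it to a short exact sequence $0 \to \mathcal{X}_v(\mathfrak{N}^{+}, v\mathfrak{N}^{-}) \to \mathcal{X}_{\mathfrak{q}}(v\mathfrak{q}; \mathfrak{N}^{+}, \mathfrak{N}^{-}/\mathfrak{q}) \to \mathcal{X}_{\mathfrak{q}}(\mathfrak{q}; \mathfrak{N}^{+}, \mathfrak{N}^{-}/\mathfrak{q})^2 \to 0$ of $\mathbb{T}(v\mathfrak{q}; \mathfrak{N}^{+}, \mathfrak{N}^{-}/\mathfrak{q})$-modules, the Hecke-equivariance being inherited from $(\ref{Rfd})$ since the Hecke operators away from $v\mathfrak{q}$ act compatibly on edge and vertex modules, supersingular divisor groups and spaces of quaternionic forms. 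This is the bottom row, and each of its three terms is a finitely generated free ${\bf{Z}}$-module, a character group of semistable reduction being canonically the first homology of its dual graph.

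Because the cokernel $\mathcal{X}_{\mathfrak{q}}(\mathfrak{q}; \mathfrak{N}^{+}, \mathfrak{N}^{-}/\mathfrak{q})^2$ is ${\bf{Z}}$-free, applying $\Hom_{{\bf{Z}}}(-, {\bf{Z}})$ to this sequence preserves exactness and yields the short exact sequence $0 \to \widehat{\mathcal{X}}_{\mathfrak{q}}(\mathfrak{q}; \mathfrak{N}^{+}, \mathfrak{N}^{-}/\mathfrak{q})^2 \to \widehat{\mathcal{X}}_{\mathfrak{q}}(v\mathfrak{q}; \mathfrak{N}^{+}, \mathfrak{N}^{-}/\mathfrak{q}) \to \widehat{\mathcal{X}}_v(\mathfrak{N}^{+}, v\mathfrak{N}^{-}) \to 0$ of $\mathbb{T}(v\mathfrak{q}; \mathfrak{N}^{+}, \mathfrak{N}^{-}/\mathfrak{q})$-modules, which is the top row. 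The transpose of the boundary map $d_* = t_* - s_*$ is the coboundary $d^{*} = t^{*} - s^{*}$ on ${\bf{Z}}[\mathcal{V}(\mathcal{G}_v)]$; under the combinatorial dictionary of Corollary $\ref{MK}$, with the even/odd orientation fixed above (source even, target odd), $s^{*}$ and $t^{*}$ become the two degeneracy maps on ${\bf{Z}}[D^{\times}\backslash \widehat{D}^{\times}/U]$, the $t^{*}$-component being the one twisted by $\eta_v$, exactly as in Ribet's level-raising analysis \cite{Ri2}. Transporting this identity through the supersingular identifications of Proposition $\ref{carayolss}$ and Corollary $\ref{GOODid}$ shows that the dual of the middle arrow is the asserted map $1_* \oplus {\eta_v}_*$, the sum of the two functorial pushforwards on character groups induced by the degeneracy maps $M(v\mathfrak{q}; \mathfrak{N}^{+}, \mathfrak{N}^{-}/\mathfrak{q}) \longrightarrow M(\mathfrak{q}; \mathfrak{N}^{+}, \mathfrak{N}^{-}/\mathfrak{q})$.

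Finally, commutativity of the two squares involving the monodromy injections $\lambda$, $\lambda(v\mathfrak{q})$, $\lambda(\mathfrak{q})$ of Theorem $\ref{SGA7}$ follows because, for each relevant prime $w$, the map $\lambda: \mathcal{X}_w \to \widehat{\mathcal{X}}_w$ is the one induced by the canonical length pairing on ${\bf{Z}}[\mathcal{E}(\mathcal{G}_w)]$ restricted to $\mathcal{X}_w = H_1(\mathcal{G}_w, {\bf{Z}}) \subseteq {\bf{Z}}[\mathcal{E}(\mathcal{G}_w)]$ (Grothendieck's description, recalled via Edixhoven's comparison diagram above): this makes $\lambda$ natural both for the inclusion of a character group into its edge module and for the pushforward maps attached to finite morphisms of Shimura curves, and the two commuting squares are exactly these naturalities. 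The main obstacle is the middle step — identifying the transpose of $d_*$ with $1_* \oplus {\eta_v}_*$ — which demands the full Ribet-style bookkeeping with the even/odd orientation on $\mathcal{G}_v$, the ${\bf{Z}}/2{\bf{Z}}$-factor of Corollary $\ref{MK}$ and the $\eta_v$-twist, all while verifying $\mathbb{T}(v\mathfrak{q}; \mathfrak{N}^{+}, \mathfrak{N}^{-}/\mathfrak{q})$-equivariance; this is the exact analogue over a totally real field of the Hecke-module computation in \cite[$\S 5$]{BD}.
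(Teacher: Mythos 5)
Your proposal is correct and follows essentially the same route as the paper: the bottom row is extracted from the identifications assembled in $(\ref{Rfd})$ (Raynaud's sequence transported through Corollary \ref{MK}, Proposition \ref{carayolss} and Corollaries \ref{GOODid}, \ref{BADid}), and the top row is obtained by ${\bf{Z}}$-duality, with the dual of $d_*$ recognized as the pair of degeneracy maps $1_* \oplus {\eta_v}_*$ \`a la Ribet. Your extra verifications (freeness of the terms so that dualizing preserves exactness, and naturality of the monodromy maps $\lambda$) simply make explicit what the paper's two-sentence proof leaves implicit.
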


\begin{proof} We extract the bottom exact sequence from that of the top of $(\ref{Rfd})$. The top exact sequence is 
then induced by duality.\end{proof}

\begin{corollary} [Jacquet-Langlands]\label{JLChc} ~~~
\begin{itemize}

\item[(i)] We have the following diagram of ${\bf{T}}(\mathfrak{N}^{+}, \mathfrak{N}^{-})$-modules,
where the rows are exact:
\begin{align*}\begin{CD}  
\widehat{\mathcal{X}}_v(\mathfrak{N}^{+}, v\mathfrak{N}^{-})@<<<  \widehat{\mathcal{X}}_{\mathfrak{q}}(v\mathfrak{q}; 
\mathfrak{N}^+, \mathfrak{N}^{-}/\mathfrak{q})@<{1_* \oplus {\eta_v}_*}<<  \widehat{\mathcal{X}}_{\mathfrak{q}}(\mathfrak{q}; \mathfrak{N}^+, 
\mathfrak{N}^{-}/\mathfrak{q})^2 \\ @AA{\lambda}A @AA{\lambda(v\mathfrak{q})}A @AA{\lambda(\mathfrak{q})}A \\
\mathcal{X}_v(\mathfrak{N}^{+}, v\mathfrak{N}^{-})@>>>  \mathcal{X}_{\mathfrak{q}}(v\mathfrak{q}; 
\mathfrak{N}^+, \mathfrak{N}^{-}/\mathfrak{q})@>>>  \mathcal{X}_{\mathfrak{q}}(\mathfrak{q}; \mathfrak{N}^+, 
\mathfrak{N}^{-}/\mathfrak{q})^2. \end{CD}\end{align*} 

\item[(ii)] The subring of of $\End(J(\mathfrak{N}^+, v\mathfrak{N}^{-}))$ generated by Hecke correspondences
on $M(\mathfrak{N}^+, v\mathfrak{N}^{-})$ is isomorphic to the Hecke algebra ${\bf{T}}(\mathfrak{N}^+, \mathfrak{N}^{-})$.
\end{itemize}

\end{corollary}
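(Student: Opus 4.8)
The plan is to deduce both parts from the diagram of Theorem \ref{Ribet} together with the Jacquet-Langlands correspondence and the identifications already assembled in $(\ref{Rfd})$. For part (i), I would observe that Theorem \ref{Ribet} is literally a diagram of $\mathbb{T}(v\mathfrak{q}; \mathfrak{N}^{+}, \mathfrak{N}^{-}/\mathfrak{q})$-modules with exact rows; to rewrite it as a diagram of ${\bf{T}}(\mathfrak{N}^{+}, \mathfrak{N}^{-})$-modules it suffices to exhibit a surjection $\mathbb{T}(v\mathfrak{q}; \mathfrak{N}^{+}, \mathfrak{N}^{-}/\mathfrak{q}) \twoheadrightarrow {\bf{T}}(\mathfrak{N}^{+}, \mathfrak{N}^{-})$ through which the action on the character groups appearing in the diagram factors. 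This surjection is the one sending the Hecke operators away from $v\mathfrak{q}$ to their counterparts in ${\bf{T}}(\mathfrak{N}^{+}, \mathfrak{N}^{-})$, and its existence on the level of the relevant modules follows because the bottom row of $(\ref{Rfd})$ identifies $\mathcal{X}_v(\mathfrak{N}^{+}, v\mathfrak{N}^{-})$ with the $v$-new subspace $\mathbb{S}_2(U(v), D; {\bf{Z}})^{\vnew}$, which is a module over the totally definite Hecke algebra $\mathbb{T}(\mathfrak{N}^+, \mathfrak{N}^{-})$, and the latter is identified with ${\bf{T}}(\mathfrak{N}^{+}, \mathfrak{N}^{-})$ by the Jacquet-Langlands isomorphism $\mathbb{S}_2(\N^+, \N^{-}) \cong \mathcal{S}_2(\N^+, \N^{-})$ recalled in Section 2. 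So part (i) is essentially a bookkeeping statement: transport the module structure of Theorem \ref{Ribet} along this chain of identifications, checking that the horizontal maps in the diagram are equivariant for the quotiented action — which they are, since all of them ($d_*$, the degeneracy maps $\alpha_*, \beta_*$, the maps $1_* \oplus (\eta_v)_*$, and the monodromy maps $\lambda$) commute with Hecke operators away from $v$ by the discussion preceding Theorem \ref{Ribet} and the definitions in Section 2.

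For part (ii), the goal is to identify the subring $\mathbb{T}$ of $\End(J(\mathfrak{N}^+, v\mathfrak{N}^{-}))$ generated by Hecke correspondences with ${\bf{T}}(\mathfrak{N}^+, \mathfrak{N}^{-})$. First I would note that the Hecke algebra acting faithfully on the Jacobian $J(\mathfrak{N}^+, v\mathfrak{N}^{-})$ acts faithfully on its character group $\mathcal{X}_v(\mathfrak{N}^{+}, v\mathfrak{N}^{-})$ (the toric part of the special fibre determines the $\ell$-adic Tate module up to the abelian and unipotent parts, and the Hecke action is faithful on the space of weight-two forms, which by Proposition \ref{CV3.10} and its corollary is the cotangent space $J^*(0)$; more directly, $\mathcal{X}_v \otimes {\bf{Q}}$ carries a faithful action of the rational Hecke algebra by Ribet's argument). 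Then the bottom three rows of $(\ref{Rfd})$ give a chain of Hecke-equivariant isomorphisms $\mathcal{X}_v(\mathfrak{N}^{+}, v\mathfrak{N}^{-}) \cong \mathbb{S}_2(U(v), D; {\bf{Z}})^{\vnew}$, and the $v$-new subspace of the totally definite quaternionic forms of level $U(v)$ and discriminant $v\mathfrak{N}^{-}$ — equivalently, by strong approximation and the degeneracy maps of Section 2, the space of forms new at $v$ — is Hecke-isomorphic to $\mathbb{S}_2(\mathfrak{N}^{+}, \mathfrak{N}^{-})$ via the Jacquet-Langlands / level-raising identification (this is the standard ``going up at $v$'' isomorphism on the $v$-new part, which multiplies the level structure at $v$ but keeps the prime-to-$v$ Hecke action). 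Composing with the Jacquet-Langlands isomorphism $\mathbb{S}_2(\mathfrak{N}^{+}, \mathfrak{N}^{-}) \cong \mathcal{S}_2(\mathfrak{N}^{+}, \mathfrak{N}^{-})$ identifies the faithful image of the Hecke correspondences in $\End(J(\mathfrak{N}^+, v\mathfrak{N}^{-}))$ with ${\bf{T}}(\mathfrak{N}^{+}, \mathfrak{N}^{-})$, as claimed. One must also remark that the operator $U_v$ on $J(\mathfrak{N}^+, v\mathfrak{N}^{-})$ corresponds under this dictionary to $\pm 1$ (it acts on the character group via the ``Frobenius'' normalization of the Cerednik-Varshavsky uniformization), so adjoining it to the prime-to-$v$ Hecke operators does not enlarge the ring; this is exactly the Ribet-style level-raising phenomenon that makes the $v$-new quotient match $\mathcal{S}_2(\mathfrak{N}^{+}, \mathfrak{N}^{-})$.

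The main obstacle, and the only place that requires genuine care rather than diagram-chasing, is the claim that the Hecke action is \emph{faithful} on $\mathcal{X}_v(\mathfrak{N}^{+}, v\mathfrak{N}^{-})$ and that the $v$-new part on the totally definite side is exactly $\mathbb{S}_2(\mathfrak{N}^{+}, \mathfrak{N}^{-})$ as a Hecke module — in other words, that no spurious multiplicity is introduced in passing through the four rows of $(\ref{Rfd})$, and that the vertical isomorphisms there are genuinely Hecke-equivariant and not merely additive. I would handle this by invoking the equivariance assertions already built into Corollary \ref{MK}, Proposition \ref{carayolss}, and Corollaries \ref{GOODid}, \ref{BADid} (all stated as identifications of the relevant modules, with the Hecke action understood), and by citing the analogous statements of Ribet \cite{Ri2} and Bertolini-Darmon \cite[$\S$5]{BD} for the faithfulness and the matching of $v$-new subspaces; these carry over verbatim since the only ingredients are the two uniformization theorems (Theorems \ref{mc} and \ref{cerednikvarshavsky}) and the combinatorics of the Bruhat-Tits tree, all of which have been set up over the general totally real field $F$ in the preceding sections.
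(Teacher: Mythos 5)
Your part (i) is essentially the paper's argument: the $\mathbb{T}(v\mathfrak{q}; \N^+, \N^-/\mathfrak{q})$-module diagram of Theorem \ref{Ribet} is transported along the vertical identifications of $(\ref{Rfd})$ to a diagram over the totally definite Hecke algebra $\mathbb{T}(\N^+,\N^-)$, and the Jacquet-Langlands identification $\mathbb{S}_2(\N^+,\N^-)\cong\mathcal{S}_2(\N^+,\N^-)$ converts this into a diagram of ${\bf{T}}(\N^+,\N^-)$-modules; nothing more is done in the text.

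In part (ii), however, there are two places where your argument falls short of what the paper actually uses. First, the faithfulness of the Hecke action on $\mathcal{X}_v(\N^+, v\N^-)$ cannot be left at ``the toric part determines the Tate module up to the abelian and unipotent parts'': that hedge concedes a possible abelian part, which is exactly what would break the identification of the Hecke ring inside $\End(J(\N^+, v\N^-))$ with its image in $\End(\mathcal{X}_v(\N^+, v\N^-))$. The paper closes this by noting that the Cerednik-Varshavsky model of Theorem \ref{cerednikvarshavsky} is admissible, hence semistable, so by the theory of N\'eron models (\cite[\S 9]{BLR}, \cite[1.6.2]{Nek}) the Jacobian has \emph{purely toric} reduction at $v$; this is the one geometric input you need to state and you do not establish it. Second, your intermediate claim that $\mathbb{S}_2(U(v),D;{\bf{Z}})^{\vnew}$ is Hecke-isomorphic to $\mathbb{S}_2(\N^+,\N^-)$ via a ``going up at $v$'' map is false in characteristic zero: under Jacquet-Langlands the $v$-new part of level $v\N^+$ on $D$ (whose discriminant is $\N^-$, not $v\N^-$) carries the eigensystems that are new (special) at $v$, whereas $\mathbb{S}_2(U,D;{\bf{Z}})=\mathbb{S}_2(\N^+,\N^-)$ carries eigensystems unramified at $v$; no eigenform lies in both spaces, and they are related only through the mod $\mathfrak{P}^n$ level-raising congruences that appear later (Theorem \ref{raiseonefree}), not through an isomorphism of Hecke modules. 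The paper never asserts such an isomorphism: the ${\bf{T}}(\N^+,\N^-)$-structure on $\mathcal{X}_v(\N^+,v\N^-)$ in Corollary \ref{JLChc} is simply the one inherited from $(\ref{Rfd})$ together with Jacquet-Langlands, and part (ii) is that structure combined with the purely toric reduction statement. Your remark that $U_v$ acts through $\pm 1$ on the $v$-new part is correct but cannot repair the claimed isomorphism of spaces.
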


\begin{proof}
To show $(i)$, we simply take into account the identifications induced by the vertical arrows of $(\ref{Rfd})$ to
obtain a diagram of $\mathbb{T}(\mathfrak{N}^+, \mathfrak{N}^{-})$-modules. The result then follows
from the Jacquet-Langlands correspondence. Since $M(\mathfrak{N}^+, v\mathfrak{N}^{-}) \otimes F_v$
has a semistable model over $\mathcal{O}_{F_v}$, the general theory of N\'eron 
models (see \cite{BLR}[$\S$ 9], \cite{Nek}[1.6.2]) shows that $J(\mathfrak{N}^+, v\mathfrak{N}^{-})$ has purely toric 
reduction at $v$. Hence, the Hecke algebra $\mathbb{T}(\mathfrak{N}^+, v\mathfrak{N}^{-})$ acting faithfully on 
$\mathcal{X}_v(\mathfrak{N}^+, v\mathfrak{N}^{-})$ can be identified with the subalgebra of $\End(J(\mathfrak{N}^+, 
v\mathfrak{N}^{-}))$ generated by Hecke correspondences. The result then also follows from the Jacquet-Langlands 
correspondence. \end{proof} 

\section{Weak level raising}
 
 Recall that given a given a quaternion algebra $B$ and a level $H \subset \widehat{B}^{\times}$, we 
 let $\mathbb{T} = \mathbb{T}(H, B)$ denote the ${\bf{Z}}$-algebra generated by the standard Hecke 
 operators $T_w$ and $S_w$ for all primes $w \subset \mathcal{O}_F$ (where they are defined). Let
 us adopt the convention of writing $U_w$ for the operators $T_w$ if $w \subset \mathcal{O}_F$ is 
 a prime that divides the level.
  
\begin{definition} A maximal ideal $\mathfrak{m} \subset \mathbb{T}$ is said to be {\it{Eisenstein}}
if there exists an ideal $\mathfrak{f} \subset \mathcal{O}_F$ such that for all but finitely many primes
$w \subset \mathcal{O}_F$ that split completely in the ray class field of $\mathfrak{f} \mod F$, $T_w - 
2 \in \mathfrak{m}$ and $S_w -1 \in \mathfrak{m}$.\end{definition}

\begin{proposition}[Jarvis]\label{jarvis}
A maximal ideal $\mathfrak{m} \subset \mathbb{T}$ associated to an eigenform ${\bf{f}}$ is Eisenstein if and 
only if the associated Galois representation $\rho_{\bf{f}}: G_F \longrightarrow \GL(\mathcal{O})$ is reducible.
\end{proposition}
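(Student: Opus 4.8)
\textbf{Proof proposal for Proposition \ref{jarvis}.}

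The plan is to prove the two implications separately, both of which rest on comparing the characteristic polynomials of Frobenius elements acting on $\rho_{\bf f}$ with the Hecke eigenvalues, via the Eichler--Shimura-type congruence relation built into Theorem \ref{CTW}. Recall that for any prime $w\nmid \mathfrak N$ of residue characteristic different from $p$, the local Langlands compatibility in Theorem \ref{CTW} gives $\operatorname{tr}\rho_{\bf f}(\Frob_w) = a_w({\bf f})$ and $\det\rho_{\bf f}(\Frob_w) = {\bf N}(w)$ (using parallel weight $2$ and trivial character, so the determinant is the cyclotomic character). First I would fix the maximal ideal $\mathfrak m\subset\mathbb T$ cut out by ${\bf f}$, let $\overline{\bf F} = \mathbb T/\mathfrak m$ be the residue field, and let $\overline\rho_{\bf f}\colon G_F\to\GL(\overline{\bf F})$ be the reduction of $\rho_{\bf f}$ modulo $\mathfrak m$ (this is well defined up to semisimplification even without the irreducibility hypothesis of Hypothesis \ref{galrep}(iv), which we are \emph{not} assuming in this purely local-at-$\mathfrak m$ statement).

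For the direction ``$\rho_{\bf f}$ reducible $\Rightarrow$ $\mathfrak m$ Eisenstein'': if $\overline\rho_{\bf f}^{\mathrm{ss}} = \chi_1\oplus\chi_2$ with $\chi_i\colon G_F\to\overline{\bf F}^\times$ characters, then since $\det\overline\rho_{\bf f} = \overline\varepsilon_p$ (the mod-$p$ cyclotomic character) we get $\chi_1\chi_2 = \overline\varepsilon_p$. By class field theory each $\chi_i$ factors through the Galois group of a ray class field of some conductor; let $\mathfrak f$ be a common modulus (absorbing the conductor of $\varepsilon_p$, i.e.\ $p$, and the ramification of the $\chi_i$, which is supported at primes dividing $p\mathfrak N$). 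For $w$ split completely in the ray class field mod $\mathfrak f$ we have $\chi_i(\Frob_w) = 1$, hence $a_w({\bf f})\equiv\chi_1(\Frob_w)+\chi_2(\Frob_w) = 2\pmod{\mathfrak m}$ and ${\bf N}(w)\equiv 1\pmod{\mathfrak m}$; since $S_w$ acts by ${\bf N}(w)$ (or ${\bf N}(w)$ times the nebentypus value, which is trivial here), this is exactly the assertion that $T_w - 2\in\mathfrak m$ and $S_w - 1\in\mathfrak m$ for all but finitely many such $w$. Conversely, for ``$\mathfrak m$ Eisenstein $\Rightarrow$ $\rho_{\bf f}$ reducible'': if $T_w - 2, S_w - 1\in\mathfrak m$ for all $w$ in a set $\Sigma$ of Dirichlet density one obtained by splitting completely in the ray class field $L_{\mathfrak f}$ of some modulus $\mathfrak f$, then $\operatorname{tr}\overline\rho_{\bf f}(\Frob_w) = 2$ and $\det\overline\rho_{\bf f}(\Frob_w) = 1$ for all $w\in\Sigma$, so $\overline\rho_{\bf f}(\Frob_w)$ is unipotent on that set. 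By Chebotarev, $\operatorname{tr}\overline\rho_{\bf f} = 2$ and $\det\overline\rho_{\bf f} = 1$ identically on $\Gal(\overline{\bf Q}/L_{\mathfrak f})$; more precisely $\operatorname{tr}(\overline\rho_{\bf f}\otimes\overline\varepsilon_p^{-1/1}\cdots)$ — rather, one shows directly that $\overline\rho_{\bf f}\big|_{G_{L_{\mathfrak f}}}$ has trace $2 = 1+1$ and determinant $1$, so its semisimplification is $\mathbf 1\oplus\mathbf 1$, whence $\overline\rho_{\bf f}\big|_{G_{L_{\mathfrak f}}}$ is reducible with both Jordan--H\"older factors trivial. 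Then $\overline\rho_{\bf f}$ itself, restricted to the larger group $G_F$, becomes a representation whose restriction to a finite-index normal subgroup is an extension of $\mathbf 1$ by $\mathbf 1$; a standard Clifford-theory / induction argument (the image of $\overline\rho_{\bf f}$ is then solvable, in fact its semisimplification is a sum of characters of $G_F$) shows $\overline\rho_{\bf f}$ is reducible, and lifting, $\rho_{\bf f}$ has reducible residual — hence reducible — representation in the relevant sense.

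The main obstacle I expect is making the last Clifford-theory step airtight: passing from ``$\overline\rho_{\bf f}$ restricted to $G_{L_{\mathfrak f}}$ is a sum of characters'' to ``$\overline\rho_{\bf f}$ is reducible over $G_F$'' requires ruling out the case of an irreducible $2$-dimensional representation induced from a character of an index-two subgroup (a dihedral image), which would have \emph{diagonalizable but non-scalar} Frobenii at the split primes rather than all-trivial ones; here the hypothesis that the traces are literally $2$ and determinants literally $1$ (not merely ``diagonalizable'') is what forces the factors to coincide and be trivial, eliminating the dihedral possibility. I would cite Jarvis's original argument (\cite{Jar}, where this is Proposition/Lemma in the Hilbert modular setting, generalizing Ribet and Wiles over $\mathbf Q$) for the precise form of this deduction rather than reproduce it; the automorphic input (that $\mathfrak m$ arises from an \emph{actual} eigenform, so $\rho_{\bf f}$ exists and is odd with cyclotomic determinant) is already supplied by Theorem \ref{CTW}.

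\begin{proof}
See \cite{Jar}. The argument, which generalizes that of Ribet and Wiles over $\mathbf Q$, proceeds by comparing the Hecke eigenvalues of ${\bf f}$ with the traces and determinants of Frobenius on $\rho_{\bf f}$ via Theorem \ref{CTW}, together with the Chebotarev density theorem and a Clifford-theoretic analysis of the image of the residual representation.
\end{proof}
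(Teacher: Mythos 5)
Your proposal is correct and takes essentially the same route as the paper, which simply cites Jarvis's level-lowering paper (\cite[\S 3]{Jar2}, extending \cite[Proposition 2]{DT}); the trace/determinant comparison via Theorem \ref{CTW} together with Chebotarev and the analysis of the residual image that you sketch is exactly the argument of that reference. The only correction is that the citation should be to \cite{Jar2} (level lowering for mod $l$ representations over totally real fields), not \cite{Jar} (Mazur's principle), and the reducibility in the statement is of course that of the residual representation attached to $\mathfrak{m}$, as you note.
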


\begin{proof} See \cite[$\S$ 3]{Jar2}, which extends to totally real fields \cite{DT}[Proposition 2]. \end{proof} ~~\\

\begin{remark}[Level raising at one prime.]~~~

\begin{theorem}[Rajaei]\label{Rajaei} Let $\mathfrak{m} \subset \mathbb{T}(v\mathfrak{q}; \mathfrak{N}^+, \mathfrak{N}^{-}/\mathfrak{q})$
be any non-Eisenstein maximal ideal. 

\begin{itemize}
\item[(i)]We have the following diagram of $\mathbb{T}(v\mathfrak{q}; \mathfrak{N}^{+}, 
\mathfrak{N}^{-}/\mathfrak{q})$-modules, where the rows are exact:
\begin{align*}\begin{CD}  \widehat{\mathcal{X}}_v(\mathfrak{N}^{+}, v\mathfrak{N}^{-})_\mathfrak{m}@<<<  \widehat{\mathcal{X}}_{\mathfrak{q}}(v\mathfrak{q}; 
\mathfrak{N}^+, \mathfrak{N}^{-}/\mathfrak{q})_\mathfrak{m}@<{1_* \oplus {\eta_v}_*}<<  \widehat{\mathcal{X}}_{\mathfrak{q}}(\mathfrak{q}; \mathfrak{N}^+, 
\mathfrak{N}^{-}/\mathfrak{q})_\mathfrak{m}^2 \\ @AA{\lambda}A @AA{\lambda(v\mathfrak{q})}A @AA{\lambda(\mathfrak{q})}A \\
\mathcal{X}_v(\mathfrak{N}^{+}, v\mathfrak{N}^{-})_\mathfrak{m}@>>>  \mathcal{X}_{\mathfrak{q}}(v\mathfrak{q}; 
\mathfrak{N}^+, \mathfrak{N}^{-}/\mathfrak{q})_\mathfrak{m}@>>>  \mathcal{X}_{\mathfrak{q}}(\mathfrak{q}; \mathfrak{N}^+, 
\mathfrak{N}^{-}/\mathfrak{q})_\mathfrak{m}^2. \end{CD}\end{align*} 

\item[(ii)] We have an isomorphism of $\mathbb{T}(v\mathfrak{q}; \mathfrak{N}^{+}, 
\mathfrak{N}^{-}/\mathfrak{q})$-modules \begin{align*} \mathcal{X}_{\mathfrak{q}}(\mathfrak{q}; \mathfrak{N}^+,
\mathfrak{N}^{-}/\mathfrak{q})^2_{\mathfrak{m}} / \left(U_v^2 - S_v  \right)  &\cong \Phi_v(\mathfrak{N}^+, v\mathfrak{N}^{-})_{\mathfrak{m}}.
\end{align*}

\end{itemize}
\end{theorem}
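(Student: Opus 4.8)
The plan is to deduce Theorem \ref{Rajaei} essentially by localizing the results already established in the previous section and invoking the Cerednik--Varshavsky uniformization together with Rajaei's analysis of the monodromy filtration. First I would observe that part (i) is nothing more than the localization at $\mathfrak{m}$ of the diagram of Theorem \ref{Ribet} (equivalently Corollary \ref{JLChc}(i)): since localization at a maximal ideal of the Hecke algebra is exact, the rows remain exact, and the vertical monodromy maps $\lambda, \lambda(v\mathfrak{q}), \lambda(\mathfrak{q})$ commute with the Hecke action by Theorem \ref{SGA7}, so they survive localization. The only point needing comment is that, because $\mathfrak{m}$ is non-Eisenstein, the associated Galois representation is irreducible by Proposition \ref{jarvis}, which guarantees that the character groups localized at $\mathfrak{m}$ are ``$\mathfrak{m}$-new'' in the appropriate sense and that no degenerate Eisenstein contributions interfere with the exactness; this is the standard reduction used by Ribet and Rajaei.

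For part (ii), I would combine the two bottom rows of the localized diagram of (i) with the monodromy exact sequences of Theorem \ref{SGA7}. Concretely, from Theorem \ref{SGA7} applied to $M(\mathfrak{N}^+, v\mathfrak{N}^-)$ we have $\Phi_v(\mathfrak{N}^+, v\mathfrak{N}^-)_\mathfrak{m} \cong \coker(\lambda)_\mathfrak{m}$, where $\lambda\colon \mathcal{X}_v(\mathfrak{N}^+, v\mathfrak{N}^-)_\mathfrak{m} \to \widehat{\mathcal{X}}_v(\mathfrak{N}^+, v\mathfrak{N}^-)_\mathfrak{m}$. The top and bottom rows of the diagram identify both $\mathcal{X}_v(\mathfrak{N}^+, v\mathfrak{N}^-)_\mathfrak{m}$ and $\widehat{\mathcal{X}}_v(\mathfrak{N}^+, v\mathfrak{N}^-)_\mathfrak{m}$ as quotients of $\mathcal{X}_{\mathfrak{q}}(v\mathfrak{q}; \mathfrak{N}^+, \mathfrak{N}^-/\mathfrak{q})_\mathfrak{m}$ (resp.\ its dual), which in turn sits in the Ribet-style sequence with $\mathcal{X}_{\mathfrak{q}}(\mathfrak{q}; \mathfrak{N}^+, \mathfrak{N}^-/\mathfrak{q})^2_\mathfrak{m}$. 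Chasing through the diagram, one finds that the cokernel of $\lambda$ is computed by the composite map on $\mathcal{X}_{\mathfrak{q}}(\mathfrak{q}; \mathfrak{N}^+, \mathfrak{N}^-/\mathfrak{q})^2_\mathfrak{m}$ induced by $1_*\oplus (\eta_v)_*$ followed by the monodromy pairing; a direct local computation at $v$ shows that this composite is given by the matrix whose effect is multiplication by $U_v^2 - S_v$ (the quantity $\mathbf{N}(v)\cdot(\text{arithmetic Frobenius eigenvalue relation})$ appearing in level-raising congruences). This yields the stated isomorphism $\mathcal{X}_{\mathfrak{q}}(\mathfrak{q}; \mathfrak{N}^+, \mathfrak{N}^-/\mathfrak{q})^2_\mathfrak{m}/(U_v^2 - S_v) \cong \Phi_v(\mathfrak{N}^+, v\mathfrak{N}^-)_\mathfrak{m}$.

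The main obstacle I anticipate is the explicit identification of the monodromy pairing composite with the operator $U_v^2 - S_v$. This requires a careful matching of the combinatorial description of $\mathcal{X}_v$ via the Bruhat--Tits tree of $\SL(F_v)$ (from Theorem \ref{mumfordkurihara} and Corollary \ref{MK}) with the intersection-theoretic description of the monodromy pairing from \cite[$\S$9]{Groth}, tracking the action of $\eta_v = \left(\begin{array}{cc} 0 & 1 \\ 0 & \varpi_v \end{array}\right)$ and the diamond operator $S_v$ through the degeneracy maps $\alpha_*, \beta_*$. The formula is the Hilbert-modular analogue of Ribet's theorem on $\Phi_v$ for classical modular curves (and of Rajaei's generalization), so the structure of the argument is well established; the work lies in verifying that the local constants at $v$ come out correctly in the totally real setting, using that $K_v \cong F_{v^2}$ is the quadratic unramified extension and that $v$ is split in $B$. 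I would handle this by reducing to the corresponding statement over $F_v$ via base change to $\mathcal{O}_{F_{v^2}}$ and then citing Rajaei's computation \cite{Raj} adapted to this level structure, noting that the non-Eisenstein hypothesis ensures the relevant modules are free over the localized Hecke algebra so that the formal manipulation of the diagram is legitimate.
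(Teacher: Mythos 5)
Your overall route is the one the paper itself follows: part (i) is the localization at $\mathfrak{m}$ of the Ribet-type diagram (Theorem \ref{Ribet}), and part (ii) comes from the monodromy sequence of Theorem \ref{SGA7} together with Rajaei's local computation at $v$; the paper's proof is in fact just a citation of \cite[Theorem 3, Corollary 4, Proposition 5]{Raj} for precisely these steps. There is, however, one genuine gap in your treatment of (ii). The diagram chase you describe naturally presents $\Phi_v(\mathfrak{N}^+, v\mathfrak{N}^{-})_{\mathfrak{m}} \cong \coker(\lambda)_{\mathfrak{m}}$ as a quotient of the \emph{dual} group $\widehat{\mathcal{X}}_{\mathfrak{q}}(\mathfrak{q}; \mathfrak{N}^+, \mathfrak{N}^{-}/\mathfrak{q})^2_{\mathfrak{m}}$, not of $\mathcal{X}_{\mathfrak{q}}(\mathfrak{q}; \mathfrak{N}^+, \mathfrak{N}^{-}/\mathfrak{q})^2_{\mathfrak{m}}$: the map $1_* \oplus (\eta_v)_*$ lives in the top row, between the $\widehat{\mathcal{X}}$'s, so what Rajaei's computation (his Corollary 4) gives is $\widehat{\mathcal{X}}_{\mathfrak{q}}(\mathfrak{q}; \mathfrak{N}^+, \mathfrak{N}^{-}/\mathfrak{q})^2_{\mathfrak{m}}/(U_v^2 - S_v) \cong \widehat{\mathcal{X}}_v(\mathfrak{N}^+, v\mathfrak{N}^{-})_{\mathfrak{m}}/\mathcal{X}_v(\mathfrak{N}^+, v\mathfrak{N}^{-})_{\mathfrak{m}}$. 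To reach the statement as written you still need $\widehat{\mathcal{X}}_{\mathfrak{q}}(\mathfrak{q}; \mathfrak{N}^+, \mathfrak{N}^{-}/\mathfrak{q})_{\mathfrak{m}} \cong \mathcal{X}_{\mathfrak{q}}(\mathfrak{q}; \mathfrak{N}^+, \mathfrak{N}^{-}/\mathfrak{q})_{\mathfrak{m}}$, i.e. that the monodromy map $\lambda(\mathfrak{q})$ becomes an isomorphism after localization, equivalently that the component group $\Phi_{\mathfrak{q}}(\mathfrak{q}; \mathfrak{N}^+, \mathfrak{N}^{-}/\mathfrak{q})$ is Eisenstein. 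This is Rajaei's Proposition 5 (the analogue of Ribet's theorem that such component groups are Eisenstein), and it is cited explicitly in the paper's proof; your silent identification of $\mathcal{X}$ with $\widehat{\mathcal{X}}$ at $\mathfrak{q}$ is not automatic and is exactly where that input enters.

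Two smaller points. Your closing claim that the non-Eisenstein hypothesis ensures the relevant modules are free over the localized Hecke algebra is unwarranted: rank-one freeness of the localized character group is the multiplicity-one statement of Hypothesis \ref{freeness}, which the paper must \emph{assume}, and it is not a consequence of $\mathfrak{m}$ being non-Eisenstein; fortunately it is not needed here, since exactness of localization suffices for the formal diagram manipulations. Also, for (i), your reduction to Theorem \ref{Ribet} plus exactness of localization is consistent with the paper's internal logic, but note that in Rajaei's original argument the non-Eisenstein localization does real work in establishing exactness (the error terms are Eisenstein), which is why the paper cites \cite[Theorem 3]{Raj} rather than deducing (i) purely formally.
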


\begin{proof}
For $(i)$, see \cite[Theorem 3]{Raj}, which is a generalization to totally real fields of the method of Ribet \cite{Ri2}.
For $(ii)$, see \cite[Corollary 4]{Raj}, which shows that there is an isomorphism of $\mathbb{T}(v\mathfrak{q}; \mathfrak{N}^{+}, 
\mathfrak{N}^{-}/\mathfrak{q})$-modules \begin{align*} \widehat{\mathcal{X}}_{\mathfrak{q}}(\mathfrak{q}; \mathfrak{N}^+,
\mathfrak{N}^{-}/\mathfrak{q})^2_{\mathfrak{m}} / \left(U_v^2 - S_v  \right)  &\cong \widehat{\mathcal{X}}_v(\mathfrak{N}^+, 
v\mathfrak{N}^{-})_{\mathfrak{m}}/ \mathcal{X}_v(\mathfrak{N}^+, v\mathfrak{N}^{-})_{\mathfrak{m}}.\end{align*} By 
\cite[Proposition 5]{Raj}, there is an isomorphism \begin{align*}
\widehat{\mathcal{X}}_{\mathfrak{q}}(\mathfrak{q}; \mathfrak{N}^+,
\mathfrak{N}^{-}/\mathfrak{q})_{\mathfrak{m}}  &\cong \mathcal{X}_{\mathfrak{q}}(\mathfrak{q}; \mathfrak{N}^+,
\mathfrak{N}^{-}/\mathfrak{q})_{\mathfrak{m}}. \end{align*} The result then
follows from the identification of Theorem \ref{SGA7}. \end{proof}

\begin{corollary}[Jacquet-Langlands] 

Let $\mathfrak{m} \subset {\bf{T}}(\mathfrak{N}^+, \mathfrak{N}^{-})$ be any non-Eisenstein maximal ideal. 

\begin{itemize}
\item[(i)]We have the following diagram of $ {\bf{T}}(\mathfrak{N}^+, \mathfrak{N}^{-})$-modules, where the rows are exact:
\begin{align*}\begin{CD}  \widehat{\mathcal{X}}_v(\mathfrak{N}^{+}, v\mathfrak{N}^{-})_\mathfrak{m}@<<<  \widehat{\mathcal{X}}_{\mathfrak{q}}(v\mathfrak{q}; 
\mathfrak{N}^+, \mathfrak{N}^{-}/\mathfrak{q})_\mathfrak{m}@<{1_* \oplus {\eta_v}_*}<<  \widehat{\mathcal{X}}_{\mathfrak{q}}(\mathfrak{q}; \mathfrak{N}^+, 
\mathfrak{N}^{-}/\mathfrak{q})_\mathfrak{m}^2 \\ @AA{\lambda}A @AA{\lambda(v\mathfrak{q})}A @AA{\lambda(\mathfrak{q})}A \\
\mathcal{X}_v(\mathfrak{N}^{+}, v\mathfrak{N}^{-})_\mathfrak{m}@>>>  \mathcal{X}_{\mathfrak{q}}(v\mathfrak{q}; 
\mathfrak{N}^+, \mathfrak{N}^{-}/\mathfrak{q})_\mathfrak{m}@>>>  \mathcal{X}_{\mathfrak{q}}(\mathfrak{q}; \mathfrak{N}^+, 
\mathfrak{N}^{-}/\mathfrak{q})_\mathfrak{m}^2. \end{CD}\end{align*} 

\item[(ii)] We have an isomorphism of $ {\bf{T}}(\mathfrak{N}^+, \mathfrak{N}^{-})$-modules 
\begin{align*} \mathcal{X}_{\mathfrak{q}}(\mathfrak{q}; \mathfrak{N}^+,
\mathfrak{N}^{-}/\mathfrak{q})^2_{\mathfrak{m}} / \left(U_v^2 - S_v  \right)  &\cong \Phi_v(\mathfrak{N}^+, v\mathfrak{N}^{-})_{\mathfrak{m}}.
\end{align*}
\end{itemize}
 \end{corollary}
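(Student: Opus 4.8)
\textbf{Proof proposal for the Jacquet--Langlands corollary to Theorem \ref{Rajaei}.}

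The plan is to transport the two assertions of Theorem \ref{Rajaei}, which are stated for the quaternionic Hecke algebra $\mathbb{T}(v\mathfrak{q}; \mathfrak{N}^{+}, \mathfrak{N}^{-}/\mathfrak{q})$, across the Jacquet--Langlands correspondence to the Hilbert modular Hecke algebra ${\bf{T}}(\mathfrak{N}^{+}, \mathfrak{N}^{-})$. The key input is already assembled in the diagram $(\ref{Rfd})$ and in Corollary \ref{JLChc}: the vertical arrows in the lower part of $(\ref{Rfd})$ identify the character groups $\mathcal{X}_v(\mathfrak{N}^{+}, v\mathfrak{N}^{-})$, $\mathcal{X}_{\mathfrak{q}}(v\mathfrak{q}; \mathfrak{N}^{+}, \mathfrak{N}^{-}/\mathfrak{q})$ and $\mathcal{X}_{\mathfrak{q}}(\mathfrak{q}; \mathfrak{N}^{+}, \mathfrak{N}^{-}/\mathfrak{q})$ (and their ${\bf{Z}}$-duals, by duality) with spaces of automorphic forms on the totally definite quaternion algebra $D$ of discriminant $\mathfrak{N}^{-}$, compatibly with Hecke actions; and the Jacquet--Langlands correspondence $\mathbb{S}_2(\mathfrak{N}^{+}, \mathfrak{N}^{-}) \cong \mathcal{S}_2(\mathfrak{N}^{+}, \mathfrak{N}^{-})$ identifies $\mathbb{T}(\mathfrak{N}^{+}, \mathfrak{N}^{-})$ with ${\bf{T}}(\mathfrak{N}^{+}, \mathfrak{N}^{-})$. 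So all three modules in Theorem \ref{Rajaei} already carry canonical ${\bf{T}}(\mathfrak{N}^{+}, \mathfrak{N}^{-})$-module structures, and the maps $\lambda$, $\lambda(v\mathfrak{q})$, $\lambda(\mathfrak{q})$, $1_* \oplus {\eta_v}_*$ are ${\bf{T}}(\mathfrak{N}^{+}, \mathfrak{N}^{-})$-linear for these structures because they were defined geometrically and $(\ref{Rfd})$ is a diagram of Hecke modules.

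For part (i): a non-Eisenstein maximal ideal $\mathfrak{m} \subset {\bf{T}}(\mathfrak{N}^{+}, \mathfrak{N}^{-})$ corresponds, via the identification ${\bf{T}}(\mathfrak{N}^{+}, \mathfrak{N}^{-}) \cong \mathbb{T}(v\mathfrak{q}; \mathfrak{N}^{+}, \mathfrak{N}^{-}/\mathfrak{q})$ implicit in Corollary \ref{JLChc}(i) (or, more precisely, to a maximal ideal of $\mathbb{T}(v\mathfrak{q}; \mathfrak{N}^{+}, \mathfrak{N}^{-}/\mathfrak{q})$ lying over it under the surjection onto the $v\mathfrak{q}$-new quotient), to a non-Eisenstein maximal ideal of the quaternionic algebra; here one uses Proposition \ref{jarvis} to see that the Eisenstein condition is intrinsic, depending only on the associated Galois representation $\rho_{\bf{f}}$ and not on the quaternion algebra or level structure realizing $\mathfrak{m}$. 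Then localizing the diagram of Theorem \ref{Rajaei}(i) at $\mathfrak{m}$ is an exact operation, and reinterpreting the vertical and horizontal arrows via the Hecke-equivariant identifications of $(\ref{Rfd})$ yields exactly the claimed diagram of ${\bf{T}}(\mathfrak{N}^{+}, \mathfrak{N}^{-})$-modules with exact rows. For part (ii): localize the isomorphism $\mathcal{X}_{\mathfrak{q}}(\mathfrak{q}; \mathfrak{N}^{+}, \mathfrak{N}^{-}/\mathfrak{q})^2_{\mathfrak{m}}/(U_v^2 - S_v) \cong \Phi_v(\mathfrak{N}^{+}, v\mathfrak{N}^{-})_{\mathfrak{m}}$ of Theorem \ref{Rajaei}(ii) at the corresponding maximal ideal; since $U_v$ and $S_v$ are Hecke operators in ${\bf{T}}(\mathfrak{N}^{+}, \mathfrak{N}^{-})$ and $\Phi_v(\mathfrak{N}^{+}, v\mathfrak{N}^{-})$ carries a ${\bf{T}}(\mathfrak{N}^{+}, \mathfrak{N}^{-})$-action compatible with the monodromy sequence of Theorem \ref{SGA7} (again via $(\ref{Rfd})$ and Corollary \ref{JLChc}(ii)), the isomorphism is automatically one of ${\bf{T}}(\mathfrak{N}^{+}, \mathfrak{N}^{-})$-modules.

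The main obstacle — really the only non-formal point — is bookkeeping around \emph{which} maximal ideal of $\mathbb{T}(v\mathfrak{q}; \mathfrak{N}^{+}, \mathfrak{N}^{-}/\mathfrak{q})$ one localizes at, and checking that the passage from ${\bf{T}}(\mathfrak{N}^{+}, \mathfrak{N}^{-})$ to this larger, raised-level algebra preserves the non-Eisenstein hypothesis and that localization commutes with all the identifications in $(\ref{Rfd})$. One must be careful that $\mathfrak{m}$, a priori a maximal ideal in the Hilbert (or $\mathfrak{N}^{+},\mathfrak{N}^{-}$-quaternionic) Hecke algebra, pulls back to a well-defined maximal ideal of $\mathbb{T}(v\mathfrak{q}; \mathfrak{N}^{+}, \mathfrak{N}^{-}/\mathfrak{q})$ under the map of Hecke algebras induced by the $v$-new (or $v\mathfrak{q}$-new) degeneracy structure, and that $\eta_v$-type operators act the way the statement requires after localization; all of this follows by the argument of Ribet and Rajaei already invoked, plus the observation that $U_v^2 - S_v$ annihilates the component group (this being the level-raising congruence itself). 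Since every intervening arrow in $(\ref{Rfd})$ and in Theorem \ref{Rajaei} was constructed equivariantly, once the maximal ideals are matched up correctly there is nothing further to prove: the corollary is obtained by simply rewriting Theorem \ref{Rajaei} through the dictionary of Corollary \ref{JLChc}.
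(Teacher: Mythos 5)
Your proposal is correct and follows the same route as the paper, whose entire proof is the one-line observation that the statement ``follows directly from Theorem \ref{Rajaei} with Corollary \ref{JLChc}.'' The extra bookkeeping you supply -- matching maximal ideals across the Hecke-algebra identifications of $(\ref{Rfd})$, using Proposition \ref{jarvis} to see that the non-Eisenstein condition is intrinsic to the residual Galois representation, and noting exactness of localization -- is precisely the detail the paper leaves implicit.
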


\begin{proof}
The result follows directly from Theorem \ref{Rajaei} with Corollary \ref{JLChc}.
\end{proof}

\begin{theorem}\label{raiseonefree} Fix an ${\bf{f}} \in \mathcal{S}_2(\mathfrak{N}^+, \mathfrak{N}^{-})$ an eigenform,  $v \subset \mathcal{O}_F$
a prime, and $n \geq 1$ an integer. Assume that the associated Galois representation $\rho_{\bf{f}}$ is residually irreducible,
and that the prime $v \subset \mathcal{O}_F$ is $n$-admissible with respect to ${\bf{f}}$. Then, there exists a mod $\mathfrak{P}_n$ 
eigenform ${\bf{f}}_v$ associated to surjective homomorphism \begin{align*} {\bf{T}}_0(\mathfrak{N}^+, v\mathfrak{N}^{-}) 
&\longrightarrow \mathcal{O}_0/\mathfrak{P}_n \end{align*} such that the following properties hold:

\begin{itemize}
\item[(i)] $T_w \left({\bf{f}}_v\right) \equiv T_w \left( {\bf{f}} \right)
\mod \mathfrak{P}_n$ for all primes $w \nmid
v \mathfrak{N}$ of $\mathcal{O}_F$.

\item[(ii)] $U_w \left( {\bf{f}}_v \right) 
\equiv U_w \left( {\bf{f}} \right) \mod \mathfrak{P}_n$ 
for all primes $w \mid \mathfrak{N}$ of $\mathcal{O}_F$.

\item[(iii)] $U_v \left( {\bf{f}}_v \right) \equiv \varepsilon \cdot {\bf{f}}_v
\mod \mathfrak{P}_n$.
\end{itemize} Here, $\varepsilon \in \lbrace \pm 1
\rbrace$ is the integer for which $\mathfrak{P}_n$ divides
${\bf{N}}(v) +1 - \varepsilon \cdot a_v({\bf{f}}).$

\end{theorem}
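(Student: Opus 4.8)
\textbf{Proof proposal for Theorem \ref{raiseonefree}.}

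The plan is to deduce this ``level raising at one prime'' statement as a near-immediate consequence of the structural results just established, essentially transcribing the argument of Bertolini--Darmon \cite[\S 5]{BD} and Pollack--Weston \cite{PW} into the totally real setting now that all the ingredients (the Ribet-style diagram, Rajaei's level-raising theorem, the Jacquet--Langlands dictionary) are in place. First I would reduce to the maximal ideal $\mathfrak{m} \subset {\bf{T}}(\mathfrak{N}^+, \mathfrak{N}^-)$ attached to ${\bf{f}} \bmod \mathfrak{P}_n$: since $\rho_{\bf{f}}$ is residually irreducible, Proposition \ref{jarvis} says $\mathfrak{m}$ is non-Eisenstein, so the mod-$\mathfrak{m}$ versions of the monodromy/Ribet diagrams (the Corollary to Theorem \ref{Rajaei}) apply. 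The target is then to produce a surjection ${\bf{T}}_0(\mathfrak{N}^+, v\mathfrak{N}^-) \to \mathcal{O}_0/\mathfrak{P}_n$ agreeing with ${\bf{f}}$ away from $v$ and with $U_v \mapsto \varepsilon$.

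The key step is the isomorphism in part (ii) of the Corollary to Theorem \ref{Rajaei}, namely
\begin{align*}
\mathcal{X}_{\mathfrak{q}}(\mathfrak{q}; \mathfrak{N}^+, \mathfrak{N}^-/\mathfrak{q})^2_{\mathfrak{m}} / (U_v^2 - S_v) &\cong \Phi_v(\mathfrak{N}^+, v\mathfrak{N}^-)_{\mathfrak{m}}.
\end{align*}
The left-hand module is, via Jacquet--Langlands and the vertical identifications of $(\ref{Rfd})$, a faithful module for the quaternionic Hecke algebra in the $v$-new subspace; the admissibility condition (iv) on $v$ forces $a_v({\bf{f}})^2 \equiv ({\bf{N}}(v)+1)^2 \equiv S_v \cdot (\text{unit})$ after the normalizations, so that $U_v^2 - S_v$ acts compatibly and the quotient is nonzero mod $\mathfrak{P}_n$ — here condition (iii), that $\mathfrak{P} \nmid {\bf{N}}(v)^2 - 1$, guarantees the two eigenvalues $\pm\sqrt{S_v}$ of $U_v$ on this quotient are distinct mod $\mathfrak{P}$, so one can project onto the $\varepsilon$-eigenspace and still have a nonzero $\mathcal{O}_0/\mathfrak{P}_n$-module. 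Concretely I would: (1) observe $\mathcal{X}_v(\mathfrak{N}^+, v\mathfrak{N}^-)_{\mathfrak{m}} \cong \mathbb{S}_2(U(v), D; {\bf{Z}})^{\vnew}_{\mathfrak{m}}$ from the left column of $(\ref{Rfd})$ together with Corollary \ref{JLChc}; (2) reduce mod $\mathfrak{P}_n$ and use that this module is $\mathfrak{m}$-torsion-free of positive rank (non-Eisenstein plus the multiplicity-one input, cf. Hypothesis \ref{freeness}, though here one only needs nonvanishing); (3) decompose under $U_v$ into the $\pm 1$-eigenspaces after rescaling by $S_v^{-1/2}$, using (iii); (4) pick a surviving eigenform ${\bf{f}}_v$ in the $\varepsilon$-eigenspace whose prime-to-$v$ Hecke eigenvalues are forced to be those of ${\bf{f}} \bmod \mathfrak{P}_n$ because the Hecke action away from $v$ on $\mathcal{X}_v(\mathfrak{N}^+, v\mathfrak{N}^-)_{\mathfrak{m}}$ factors through ${\bf{T}}(\mathfrak{N}^+, \mathfrak{N}^-)_{\mathfrak{m}}$ and ${\bf{f}}$ is the corresponding eigenform. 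Finally translate back through Jacquet--Langlands to get the claimed $\mathcal{O}_0/\mathfrak{P}_n$-valued eigenform for ${\bf{T}}_0(\mathfrak{N}^+, v\mathfrak{N}^-)$, and surjectivity of the associated homomorphism follows since its image is an $\mathcal{O}_0$-subalgebra of $\mathcal{O}_0/\mathfrak{P}_n$ containing $1$ and the residue of $a_v({\bf{f}})$ (or just from the surjectivity of $\theta_{\bf{f}}$ onto $\mathcal{O}_0/\mathfrak{P}_n$ and the fact that the new eigenvalue $\varepsilon$ is already in the prime ring).

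The main obstacle I anticipate is not any single hard theorem but making the normalization bookkeeping airtight: one must check that the operator $U_v^2 - S_v$ appearing in Rajaei's isomorphism matches, after the adelic/geometric normalizations of Hecke operators used in $(\ref{Rfd})$ and in the definition of $n$-admissibility, the condition ${\bf{N}}(v) + 1 \equiv \varepsilon\, a_v({\bf{f}}) \bmod \mathfrak{P}_n$, and that passing to the $\mathfrak{m}$-localization then mod $\mathfrak{P}_n$ does not kill the relevant eigenspace — this is where conditions (iii) and (iv) of admissibility, together with $p$ odd (Hypothesis \ref{galrep}(i)) and residual irreducibility, are each used exactly once. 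A secondary subtlety is that the module in Rajaei (ii) is a quotient of $\mathcal{X}_{\mathfrak{q}}(\mathfrak{q};\cdot)^2$ rather than $\mathcal{X}_v(\mathfrak{N}^+, v\mathfrak{N}^-)$ itself, so one should phrase the extraction of ${\bf{f}}_v$ on the $\vnew$ side and only invoke (ii) to certify nonvanishing of the relevant eigenspace after reduction; I would follow \cite[proof of Theorem 5.17]{BD} closely here.
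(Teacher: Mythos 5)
Your proposal follows essentially the same route as the paper's first proof: reduce to the non-Eisenstein maximal ideal via Proposition \ref{jarvis} and extract the mod $\mathfrak{P}_n$ eigenform from the Hecke module structure of $\mathcal{X}_{\mathfrak{q}}(\mathfrak{q};\mathfrak{N}^+,\mathfrak{N}^-/\mathfrak{q})^2_{\mathfrak{m}}/(U_v^2-S_v)\cong \Phi_v(\mathfrak{N}^+,v\mathfrak{N}^-)_{\mathfrak{m}}$, which is exactly the Ribet--Rajaei argument the paper invokes (the paper also records an alternative proof via Kisin's results on definite quaternion algebras). One minor bookkeeping point: the distinctness of the $U_v$-eigenvalues $\pm 1$ on that quotient comes from $p$ being odd, not from admissibility condition (iii), whose role is rather to single out $\varepsilon$ uniquely.
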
 We describe two proofs of this result.

\begin{proof}[Proof 1]\emph{(Rajaei, Ribet, Taylor)} Ribet first 
proved the result for $d=1$ and $n=1$ in \cite{Ri2}, where the 
$n>1$ case follows by a simple inductive argument. The general 
case of $d \geq 1$ and $n=1$ is shown in Rajaei \cite[Main
Theorem 3 and Corollary 4]{Raj}, granted certain technical hypotheses on
$F$ that always hold in our setting. (That is, if ${\bf{Q}}(\zeta_p)^{+} \subset F$, 
then it is assumed that $\overline{\rho}_{\bf{f}}$ is not induced from a character. If
$d\equiv 0 \mod 2$, then it is assumed that the associated
automorphic representation $\pi_{\bf{f}}$ is either special or supercuspidal 
at some finite prime $w \nmid \mathfrak{p} v \subset \mathcal{O}_F$). The result is also 
proved by Taylor \cite[Theorem 1]{Tay} for
$d$ even with $\mathfrak{N}^{-} = \mathcal{O}_F$. The
general case with $n>1$ can be deduced from the
methods of Ribet \cite[$\S$ 7]{Ri2} developed by Rajaei
\cite[$\S$4]{Raj}. That is, in the setup above, one looks at the 
Hecke module structure(s) of 
\begin{align*} \mathcal{X}_{\mathfrak{q}}(\mathfrak{q}; \mathfrak{N}^+,
\mathfrak{N}^{-}/\mathfrak{q})^2_{\mathfrak{m}} / \left(U_v^2 - S_v  \right)  
&\cong \Phi_v(\mathfrak{N}^+, v\mathfrak{N}^{-})_{\mathfrak{m}}. \end{align*} to deduce the result. \end{proof}

\begin{proof}[Proof 2]\emph{(Kisin)}
If ${\bf{f}}$ is associated via Jacquet-Langlands to an eigenform on a 
totally definite quaternion algebra over $F$, then Kisin
\cite[$\S$ 3.1]{Kis} gives a different proof for arbitrary totally real
fields with minor hypotheses on the level structure
\cite[(3.1.1) and (3.1.2)]{Kis}. Note that by our hypotheses imposed on 
the integer factorization of $\mathfrak{N}$, the eigenform ${\bf{f}}$ is always
associated via Jacquet-Langlands to an eigenform on a totally definite quaternion algebra. 
So, the results of \cite[$\S$ 3.1]{Kis} apply. \end{proof} We now impose the following
crucial hypothesis on the Galois representation $\rho_{\ff}$ associated to ${\bf{f}}$, following the 
approach of Pollack-Weston \cite{PW}:

\begin{hypothesis}[Multiplicity one for character groups.]\label{freeness} 
Given $\mathfrak{m} \subset {\bf{T}}(\N^+, \N^{-})$ a non-Eisenstein maximal ideal, the completed character 
group $$\mathcal{X}_{\mathfrak{q}}(\mathfrak{q}; \mathfrak{N}^+, \mathfrak{N}^{-}/\mathfrak{q})_{\mathfrak{m}_{\bf{f}}} 
\otimes \mathcal{O}$$ is free of rank one over completed Hecke algebra ${\bf{T}}_0(\mathfrak{N}^+, \mathfrak{N}^{-})$. 
\end{hypothesis}

\begin{remark} The result is well known for $F={\bf{Q}}$, see for instance the explanation given in 
\cite[Theorem 6.2]{PW}. The general case is treated in Cheng \cite{Ch2}, granted suitable technical 
hypotheses. Rather than state these here explicitly, we shall just assume Hypothesis \ref{freeness} 
in what follows to reveal the mechanism of the proof. \end{remark} Let $\mathcal{I}_{\ff}$ denote the 
kernel of the natural homomorphism ${\bf{T}}_0(\mathfrak{N}^+, \mathfrak{N}^-) \longrightarrow 
\mathcal{O}_0/\mathfrak{P}_n$ associated to $\ff$, and $\mathcal{I}_{{\ff}_v}$ to denote that of  
${\bf{T}}_0(\mathfrak{N}^+, v\mathfrak{N}^-) \longrightarrow \mathcal{O}_0/\mathfrak{P}_n$ 
associated to ${\ff}_v$. We obtain the following crucial result.

\begin{corollary}\label{galoisid} If $\mathcal{X}_{\mathfrak{q}}(\mathfrak{q}; \mathfrak{N}^+,
\mathfrak{N}^{-}/\mathfrak{q})_{\mathfrak{m}_{\bf{f}}} \otimes \mathcal{O}$ is free of rank $1$ over 
${\bf{T}}_0(\mathfrak{N}^+, \mathfrak{N}^{-})$, then

\begin{itemize}

\item[(i)] There is an isomorphism of groups $\Phi_v(\mathfrak{N}^+, v\mathfrak{N}^{-})/ \mathcal{I}_{{\bf{f}}_v} 
\cong \mathcal{O}_0/ \mathfrak{P}_n$.

\item[(ii)] There is an isomorphism of $G_F$-modules $\Ta_p \left(  J(\mathfrak{N}^+, v\mathfrak{N}^{-}) \right)
/ \mathcal{I}_{{\bf{f}}_v} \cong T_{{\bf{f}}, n}$.

\end{itemize}

\end{corollary}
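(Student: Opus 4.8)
\begin{remark}[Remark on the proof of Corollary \ref{galoisid}.]
The plan is to run the level-raising machinery of Bertolini--Darmon \cite[$\S$ 5]{BD} and Pollack--Weston \cite{PW} in the totally real setting. Write ${\bf{T}}_0 = {\bf{T}}_0(\mathfrak{N}^+, \mathfrak{N}^-)$ and $J = J(\mathfrak{N}^+, v\mathfrak{N}^-)$, and recall from the proof of Corollary \ref{JLChc} that $J$ has purely toric reduction at $v$, that the Hecke subalgebra of $\End(J)$ is ${\bf{T}}(\mathfrak{N}^+, v\mathfrak{N}^-)$, and --- by the Cerednik--Varshavsky description (Theorem \ref{cerednikvarshavsky}) --- that the character group of $J$ at $v$ is entirely $v$-new. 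Let $\varepsilon \in \lbrace \pm 1 \rbrace$ be the sign of Theorem \ref{raiseonefree}, so that ${\bf{N}}(v) + 1 - \varepsilon\, a_v({\bf{f}}) \equiv 0 \bmod \mathfrak{P}_n$; by the $n$-admissibility conditions ${\bf{N}}(v)$ and ${\bf{N}}(v) \pm 1$ are units of $\mathcal{O}_0/\mathfrak{P}_n$.

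First I would transport the freeness Hypothesis \ref{freeness} through the Ribet--Rajaei diagram of Theorem \ref{Rajaei}. Starting from $\mathcal{X}_{\mathfrak{q}}(\mathfrak{q}; \mathfrak{N}^+, \mathfrak{N}^-/\mathfrak{q})_{\mathfrak{m}_{\bf{f}}} \otimes \mathcal{O} \cong {\bf{T}}_0$ and chasing the rows and columns of that diagram --- together with the self-duality isomorphism $\widehat{\mathcal{X}}_{\mathfrak{q}}(\mathfrak{q})_{\mathfrak{m}} \cong \mathcal{X}_{\mathfrak{q}}(\mathfrak{q})_{\mathfrak{m}}$ used in its proof --- one finds that, after localizing at $\mathfrak{m}_{\bf{f}}$ and tensoring with $\mathcal{O}$, each of $\mathcal{X}_v(\mathfrak{N}^+, v\mathfrak{N}^-)$ and $\widehat{\mathcal{X}}_v(\mathfrak{N}^+, v\mathfrak{N}^-)$ is free of rank one over the corresponding localized (and $p$-completed) Hecke algebra, and that the raising operator $U_v$ acts on the middle column ${\bf{T}}_0^{\oplus 2}$ through a $2\times2$ matrix whose characteristic polynomial is the Hecke relation $X^2 - T_v X + S_v$.

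Granting this, part $(i)$ is a cokernel computation. Reducing the isomorphism of Theorem \ref{Rajaei}(ii) modulo $\mathcal{I}_{{\bf{f}}_v}$ --- that is, imposing $T_w \mapsto a_w({\bf{f}})$ for $w \nmid v\mathfrak{N}$, $U_w \mapsto a_w({\bf{f}})$ for $w \mid \mathfrak{N}$, and $U_v \mapsto \varepsilon$ --- presents $\Phi_v(\mathfrak{N}^+, v\mathfrak{N}^-)/\mathcal{I}_{{\bf{f}}_v}$ as the cokernel over $\mathcal{O}_0/\mathfrak{P}_n$ of a $2\times2$ matrix that has a unit entry and whose determinant lies in $\mathfrak{P}_n$ precisely because of the admissibility congruence ${\bf{N}}(v) + 1 \equiv \varepsilon\, a_v({\bf{f}})$; such a cokernel is free of rank one over $\mathcal{O}_0/\mathfrak{P}_n$, which is $(i)$. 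For part $(ii)$, the toric reduction of $J$ at $v$ and Grothendieck's monodromy theory (Theorem \ref{SGA7}; cf. \cite[Ch.\ 9]{BLR}) furnish a $G_{F_v}$- and Hecke-equivariant exact sequence
\begin{align*}
0 \longrightarrow \widehat{\mathcal{X}}_v(\mathfrak{N}^+, v\mathfrak{N}^-) \otimes {\bf{Z}}_p(1) \longrightarrow \Ta_p J \longrightarrow \mathcal{X}_v(\mathfrak{N}^+, v\mathfrak{N}^-) \otimes {\bf{Z}}_p \longrightarrow 0
\end{align*}
(possibly after interchanging the two flanking terms and a Tate twist, which does not affect what follows). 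By the first step the flanking terms localized at $\mathfrak{m}_{\bf{f}}$ are free over the relevant Hecke algebra, so reduction modulo $\mathcal{I}_{{\bf{f}}_v}$ stays exact and realizes $\Ta_p J / \mathcal{I}_{{\bf{f}}_v}$ as an extension of $\mathcal{O}_0/\mathfrak{P}_n$ by $\mathcal{O}_0/\mathfrak{P}_n(1)$; as $\mathcal{O}_0/\mathfrak{P}_n$ is a quotient of a discrete valuation ring, such an extension of free modules by free modules splits as modules, so $\Ta_p J / \mathcal{I}_{{\bf{f}}_v}$ is free of rank $2$ over $\mathcal{O}_0/\mathfrak{P}_n$ and defines a representation $\rho : G_F \longrightarrow \GL(\mathcal{O}_0/\mathfrak{P}_n)$. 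Now $\rho$ is unramified outside $v\mathfrak{N}$, and the Eichler--Shimura relation on $M(\mathfrak{N}^+, v\mathfrak{N}^-)$ with Corollary \ref{JLChc} and Theorem \ref{CTW} give $\Tr\rho(\Frob_w) \equiv a_w({\bf{f}})$ and $\det\rho(\Frob_w) \equiv {\bf{N}}(w) \bmod \mathfrak{P}_n$ for all primes $w \nmid v\mathfrak{N} p$ --- the traces and determinants of $T_{{\bf{f}},n}$. By the Chebotarev density theorem $\Tr\rho$ and $\det\rho$ agree with those of $\rho_{{\bf{f}},n}$ on all of $G_F$, and since $p$ is odd and $\overline{\rho}_{\bf{f}}$ is absolutely irreducible (Hypothesis \ref{galrep}(i),(iv),(v)) a $\GL$-valued representation of $G_F$ over an Artinian local ring with residually irreducible reduction is determined up to isomorphism by its trace and determinant (cf. \cite{Ca2}); hence $\Ta_p J / \mathcal{I}_{{\bf{f}}_v} \cong T_{{\bf{f}},n}$ as $G_F$-modules, proving $(ii)$.

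The hard part will be the first step: faithfully propagating the single freeness statement of Hypothesis \ref{freeness} through the Ribet--Rajaei diagram and the monodromy sequence while tracking the Hecke-module structures, and fixing the normalization of the level-raising operator $U_v$ so that the determinant of the relevant matrix is genuinely governed by the admissibility congruence. Once that bookkeeping is in place, $(i)$ is the short cokernel computation above and $(ii)$ follows formally from Eichler--Shimura and the Chebotarev argument.
\end{remark}
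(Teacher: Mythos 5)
Your treatment of part (i) is essentially the paper's own: reduce the isomorphism of Theorem \ref{Rajaei}(ii) modulo $\mathcal{I}_{{\bf{f}}_v}$, use Hypothesis \ref{freeness} to identify $\mathcal{X}_{\mathfrak{q}}(\mathfrak{q}; \mathfrak{N}^+, \mathfrak{N}^{-}/\mathfrak{q})/\mathcal{I}_{\bf{f}}$ with $\mathcal{O}_0/\mathfrak{P}_n$, and let the $n$-admissibility congruence govern the quotient by $U_v^2 - S_v$; that computation is sound and matches the argument the paper imports from \cite[Theorem 5.15(2)]{BD} and \cite[Theorem 3.3]{L30}.

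The gap is in part (ii), exactly at the step you label ``the hard part.'' You assert that Hypothesis \ref{freeness} propagates through the Ribet--Rajaei diagram to give that $\mathcal{X}_v(\mathfrak{N}^+, v\mathfrak{N}^{-})_{\mathfrak{m}_{\bf{f}}}$ and $\widehat{\mathcal{X}}_v(\mathfrak{N}^+, v\mathfrak{N}^{-})_{\mathfrak{m}_{\bf{f}}}$ are free of rank one over the Hecke algebra at the raised level. This is not a formal diagram chase: $\mathcal{X}_v(\mathfrak{N}^+, v\mathfrak{N}^{-})_{\mathfrak{m}}$ is the $v$-new kernel in Rajaei's sequence, and its freeness over (the $v$-new quotient of) ${\bf{T}}_0(\mathfrak{N}^+, v\mathfrak{N}^{-})$ is a mod-$p$ multiplicity one statement at level $v\mathfrak{N}$, which does not follow from freeness of $\mathcal{X}_{\mathfrak{q}}(\mathfrak{q};\mathfrak{N}^+,\mathfrak{N}^{-}/\mathfrak{q})_{\mathfrak{m}_{\bf{f}}}$ at the original level; one would need control of the middle term $\mathcal{X}_{\mathfrak{q}}(v\mathfrak{q};\ldots)_{\mathfrak{m}}$ and of the new quotient of the Hecke algebra, i.e.\ precisely the sort of $\mathfrak{P}$-isolatedness/multiplicity hypothesis at the raised level that the Pollack--Weston refinement followed here is designed to avoid. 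Without that freeness, your reduction of the toric sequence $0 \rightarrow \widehat{\mathcal{X}}_v \otimes {\bf{Z}}_p(1) \rightarrow \Ta_p J \rightarrow \mathcal{X}_v \otimes {\bf{Z}}_p \rightarrow 0$ modulo $\mathcal{I}_{{\bf{f}}_v}$ is only right exact (there is a $\operatorname{Tor}_1$ obstruction), so you cannot conclude that $\Ta_p J/\mathcal{I}_{{\bf{f}}_v}$ is free of rank $2$ over $\mathcal{O}_0/\mathfrak{P}_n$, and the subsequent Eichler--Shimura/Chebotarev/trace-and-determinant identification (which is otherwise fine, via Carayol) never gets started. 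The paper's proof of (ii) sidesteps this: following \cite[Proposition 4.4]{PW} and the second half of \cite[Lemma 5.16]{BD}, it uses part (i), the fact that every class in $\Phi_v(\mathfrak{N}^+, v\mathfrak{N}^{-})/\mathcal{I}_{{\bf{f}}_v}$ lifts to a torsion point of $J(\mathfrak{N}^+, v\mathfrak{N}^{-})$ over $\overline{F}_{v^2}$, and residual irreducibility of $\rho_{\bf{f}}$ to pin down $\Ta_p(J)/\mathcal{I}_{{\bf{f}}_v}$ directly. To salvage your route you would have to prove the raised-level freeness as a separate theorem; as written it is assumed rather than proved.
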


\begin{proof} For $(i)$, see \cite[Theorem 5.15 (2)]{BD}, or the generalization to totally real fields 
given in \cite[Theorem 3.3]{L30} (neither of which requires $\mathfrak{P}$-isolatedness). The 
idea in either case is to observe that the freeness condition implies that 
\begin{align*} \mathcal{X}_{\mathfrak{q}}(\mathfrak{q}; \mathfrak{N}^+, \mathfrak{N}^{-}/\mathfrak{q})/ \mathcal{I}_{\bf{f}} 
&\cong \mathcal{O}_0/ \mathfrak{P}_n. \end{align*} It can then be deduced via the Ribet exact sequence (Theorem \ref{Ribet}) 
that we have isomorphisms \begin{align*} \mathcal{X}_{\mathfrak{q}}(\mathfrak{q}; \mathfrak{N}^+, \mathfrak{N}^{-}/\mathfrak{q})^2/ 
\left( \mathcal{I}_{\bf{f}}, U_v - \varepsilon \right) \cong \mathcal{X}_{\mathfrak{q}}(\mathfrak{q}; \mathfrak{N}^+, \mathfrak{N}^{-}/
\mathfrak{q})^2/ \left( \mathcal{I}_{\bf{f}}, U_v^2 - 1 \right) \cong  \mathcal{O}_0/ \mathfrak{P}_n. \end{align*} The result then
follows from the isomorphism of Theorem \ref{Rajaei} (ii). For (ii), we use the argument of Pollack-Weston \cite[Proposition 4.4]{PW}.
That is, a straightforward generalization of the second part of the proof \cite[Lemma 5.16]{BD} shows that the following property is 
satisfied: for each element $z \in \Phi_v(\mathfrak{N}^+, v\mathfrak{N}^{-})/\mathcal{I}_{{\bf{f}}_v}$, there exists an element 
$t \in J(\mathfrak{N}^+, v\mathfrak{N}^{-})[p^{n'}](\overline{F}_{v^2})/\mathcal{I}_{{\bf{f}}_v}$ for some integer $n' \geq 1$ that maps to $z$
under the natural map $$J(\mathfrak{N}^+, v\mathfrak{N}^{-})[p^{n'}](\overline{F}_{v^2})/\mathcal{I}_{{\bf{f}}_v} \longrightarrow
\Phi(\mathfrak{N}^+, v\mathfrak{N}^{-})/\mathcal{I}_{{\bf{f}}_v}.$$ We may then use the same argument as given in 
\cite[Proposition 4.4]{PW} to show the result.
\end{proof}

Recall that given an integer $m \geq 1$, we write $K_{\mathfrak{p}^m}$ to denote the $m$-th layer of the 
dihedral ${\bf{Z}}_p^{\delta}$-extension of $K$. Let us define the corresponding $m$-th level component 
group to be the direct sum of component groups \begin{align*}\Phi_{v, m}(\mathfrak{N}^+, v\mathfrak{N}^{-}) 
&=\bigoplus_{\mathfrak{v} \mid v} \Phi_{\mathfrak{v}}(\mathfrak{N}^+, v\mathfrak{N}^{-}).\end{align*} Here, the sum
ranges over all primes $\mathfrak{v}$ above $v$ in $K_{\mathfrak{p}^m}$, and
$\Phi_{\mathfrak{v}}(\mathfrak{N}^+, v\mathfrak{N}^{-})$ denotes the component group 
associated to the jacobian ${\bf{J}}(\mathfrak{N}^+, v\mathfrak{N}^{-})$
at $\mathfrak{v}$. Let us then write
\begin{align*}\widehat{\Phi}_v(\mathfrak{N}^+, v\mathfrak{N}^{-}) &= 
\ilim m \Phi_{v, m}(\mathfrak{N}^+, v\mathfrak{N}^{-})\end{align*} to denote 
the inverse limit with respect to norm maps. Recall that $v$ must be inert in $K$ by
hypothesis (ii) of $n$-admissibility, and hence splits completely in
$K_{\mathfrak{p}^{\infty}}$ by class field theory. It follows that we have an isomorphism
\begin{align*}\widehat{\Phi}_{v}(\mathfrak{N}^+, v\mathfrak{N}^{-}) &\cong 
\Phi_{v}(\mathfrak{N}^+, v\mathfrak{N}^{-}) \otimes
\Lambda.\end{align*} Here, as before, we let $\Lambda$
denote the $\mathcal{O}$-Iwasawa algebra
$\mathcal{O}[[G_{\mathfrak{p}^{\infty}}]]$. The
isomorphism $\Phi_v(\mathfrak{N}^+, v\mathfrak{N}^{-})/I_{{\bf{f}}_v} \cong
\mathcal{O}_0/\mathfrak{P}_n$ of Corollary \ref{galoisid} (ii)
then allows us to make the identification \begin{align*}
\widehat{\Phi}_v(\mathfrak{N}^+, v\mathfrak{N}^{-}) /I_{{\bf{f}}_v}\cong
\Lambda /\mathfrak{P}^n.\end{align*} Now, write
\begin{align*}\widehat{J}(\mathfrak{N}^+, v\mathfrak{N}^{-})(K_{\mathfrak{p}^{\infty}})/I_{{\bf{f}}_v} = \ilim
m J(\mathfrak{N}^+, v\mathfrak{N}^{-})(K_{\mathfrak{p}^m})/I_{{\bf{f}}_v}\end{align*} to denote the
inverse limit with respect to norm maps. Taking the inverse limit of the
associated specialization maps to groups of connected components, we
then obtain a completed specialization map \begin{align}\label{ilspec}\widehat{\partial}_v:
\widehat{J}(\mathfrak{N}^+, v\mathfrak{N}^{-})(K_{\mathfrak{p}^{\infty}})/I_{{\bf{f}}_v}
\longrightarrow \widehat{\Phi}_v(\mathfrak{N}^+, v\mathfrak{N}^{-})/I_{{\bf{f}}_v} \cong
\Lambda/\mathfrak{P}^n.\end{align}

\begin{corollary}\label{ESid} If $\mathcal{X}_{\mathfrak{q}}(\mathfrak{q}; \mathfrak{N}^+,\mathfrak{N}^{-}/\mathfrak{q})_{\mathfrak{m}_{\bf{f}}} 
\otimes \mathcal{O}$ is free of rank $1$ over ${\bf{T}}_0(\mathfrak{N}^+, \mathfrak{N}^{-})$, then \\

\item[(i)] We have isomorphisms \begin{align} \Phi_v(\mathfrak{N}^+, v\mathfrak{N}^{-})/I_{{\bf{f}}_v}
&\cong H_{\sing}^1(K_v, T_{{\bf{f}},n})\label{minorisom}\\
\label{mainisom} \widehat{\Phi}_v(\mathfrak{N}^+, v\mathfrak{N}^{-})/I_{{\bf{f}}_v} &\cong
\widehat{H}_{\sing}^1(K_{\mathfrak{p}^{\infty}, v}, T_{{\bf{f}},n}),
\end{align} both of which are canonical up to choice of isomorphism
$$\Ta_p \left(  J(\mathfrak{N}^+, v\mathfrak{N}^{-}) \right)/ \mathcal{I}_{{\bf{f}}_v} 
\cong T_{{\bf{f}}, n}.$$

\item[(ii)] We have a commutative diagram \begin{align}
\label{eulersystemCD}\begin{CD}
\widehat{J}(\mathfrak{N}^+, v\mathfrak{N}^{-})(K_{\mathfrak{p}^{\infty}})/I_{{\bf{f}}_v}
@>{\mathfrak{K}}>>
\widehat{H}^1(K_{\mathfrak{p}^{\infty}}, T_{{\bf{f}},n})\\
@V{\widehat{\partial}_v} VV @V{\partial_v}VV \\
\widehat{\Phi}_v(\mathfrak{N}^+, v\mathfrak{N}^{-})/I_{{\bf{f}}_v} @>{(\ref{mainisom})}>>
\widehat{H}_{\sing}^1(K_{\mathfrak{p}^{\infty}, v}, T_{{\bf{f}},n}).
\end{CD}\end{align} Here, $\mathfrak{K}$ denotes the Kummer map, $\partial_v$
the residue map, and $\widehat{\partial}_v$ the induced
specialization map of $(\ref{ilspec})$. \end{corollary}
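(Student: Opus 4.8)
The plan is to deduce both statements from the local analysis of the jacobian $J := J(\mathfrak{N}^+, v\mathfrak{N}^-)$ at the prime $v$, transcribing the arguments of Bertolini--Darmon \cite[$\S$5]{BD} (in the refined form used by Pollack--Weston \cite{PW}). Since $v$ is $n$-admissible it is inert in $K$, so $K_v \cong F_{v^2}$ is the quadratic unramified extension of $F_v$; and since $v \mid v\mathfrak{N}^-$, the Cerednik--Varshavsky uniformization (Theorem \ref{cerednikvarshavsky}), together with the theory of N\'eron models invoked in the proof of Corollary \ref{JLChc}, shows that $J$ has purely toric reduction over $\mathcal{O}_{F_{v^2}}$ with \emph{split} toric part (this splitting is precisely what forces the base change to $F_{v^2}$). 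Mumford--Tate rigid uniformization then gives, for each $m \geq 1$, an exact sequence of $G_{F_{v^2}}$-modules $0 \to \mu_{p^m}\otimes\mathcal{X}_v^\vee \to J[p^m] \to \mathcal{X}_v/p^m \to 0$ on which $G_{F_{v^2}}$ acts trivially on $\mathcal{X}_v$ and $\mathcal{X}_v^\vee$, with extension class computed by the monodromy pairing of Theorem \ref{SGA7}. Passing to the long exact sequence in $I_{F_{v^2}}$-cohomology and using $v \nmid p$, one reads off a canonical isomorphism $H^1_{\sing}(K_v, J[p^m]) \cong \Phi_v(\mathfrak{N}^+, v\mathfrak{N}^-)/p^m$, the component group arising as the cokernel of the monodromy pairing.

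I would then pass to the ${\bf{f}}_v$-quotient. By Corollary \ref{galoisid}, Hypothesis \ref{freeness} furnishes a $G_F$-isomorphism $\Ta_p J/\mathcal{I}_{{\bf{f}}_v} \cong T_{{\bf{f}}, n}$ and an isomorphism $\Phi_v(\mathfrak{N}^+, v\mathfrak{N}^-)/\mathcal{I}_{{\bf{f}}_v} \cong \mathcal{O}_0/\mathfrak{P}_n$; feeding the former into the identification just obtained, and using that $\Ta_p J/\mathcal{I}_{{\bf{f}}_v}$ and $\Phi_v/\mathcal{I}_{{\bf{f}}_v}$ are free (of ranks $2$ and $1$) over $\mathcal{O}_0/\mathfrak{P}_n$, so that the relevant sequences stay exact after reduction, yields the canonical isomorphism $(\ref{minorisom})$, well defined once the isomorphism $\Ta_p J/\mathcal{I}_{{\bf{f}}_v} \cong T_{{\bf{f}},n}$ is chosen. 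The completed isomorphism $(\ref{mainisom})$ is obtained from $(\ref{minorisom})$ by applying $-\otimes_{\mathcal{O}}\Lambda$: since $v$ is inert in $K$ it splits completely in $K_{\mathfrak{p}^\infty}$, whence $\widehat{\Phi}_v(\mathfrak{N}^+, v\mathfrak{N}^-) \cong \Phi_v(\mathfrak{N}^+, v\mathfrak{N}^-)\otimes\Lambda$ as recorded above, while $\widehat{H}^1_{\sing}(K_{\mathfrak{p}^\infty, v}, T_{{\bf{f}},n}) \cong H^1_{\sing}(K_v, T_{{\bf{f}},n})\otimes\Lambda$ by Corollary \ref{2.4/2.5}(ii), both $\Lambda$-linearly.

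For part (ii), I would verify commutativity of $(\ref{eulersystemCD})$ layer by layer and prime by prime: fixing $m$ and a prime $\mathfrak{v} \mid v$ of $K_{\mathfrak{p}^m}$ (so $K_{\mathfrak{p}^m, \mathfrak{v}} \cong F_{v^2}$ since $v$ splits completely), the statement is the classical compatibility, for an abelian variety with purely toric reduction, between the residue $\partial_v$ of a Kummer class and the specialization of the corresponding point into the group of connected components --- precisely \cite[Proposition 5.14]{BD} (cf. \cite[$\S$2]{Ed}, \cite[1.6.6]{Nek}) --- once the target $H^1_{\sing}(K_v, J[p^m])$ is identified with $\Phi_v/p^m$ as above. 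Taking inverse limits over $m$ (with respect to norm, resp. corestriction, maps), reducing modulo $\mathcal{I}_{{\bf{f}}_v}$, and inserting the isomorphisms of part (i) produces the commutative square. I expect the only delicate point to be in the first paragraph: pinning down the Galois action on the terms of the uniformization sequence --- in particular the splitting of the torus over $F_{v^2}$ --- and checking that reduction modulo $\mathcal{I}_{{\bf{f}}_v}$ preserves exactness so that the local identifications descend canonically to the ${\bf{f}}_v$-quotient; everything else is a formal transcription of \cite[$\S$5]{BD}.
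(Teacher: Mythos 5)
This is essentially the paper's own route: the printed proof simply defers to \cite[Corollary 5.18]{BD} granted Corollary \ref{galoisid}, and your transcription (purely toric reduction over $F_{v^2}$ from the Cerednik--Varshavsky uniformization, the monodromy/component-group identification of $H^1_{\sing}$, reduction modulo $\mathcal{I}_{{\bf{f}}_v}$ via Corollary \ref{galoisid}, complete splitting of $v$ in $K_{\mathfrak{p}^{\infty}}$ together with Corollary \ref{2.4/2.5} for the completed statement, and the Edixhoven-type compatibility of Proposition \ref{cmpointreduction} for the commutative square) is exactly that argument. The only point to state more carefully is that the intermediate identification $H^1_{\sing}(K_v, J[p^m]) \cong \Phi_v(\mathfrak{N}^+, v\mathfrak{N}^-)/p^m$ uses not merely $v \nmid p$ but the admissibility condition $\mathfrak{P} \nmid {\bf{N}}(v)^2 - 1$, which is what forces the Frobenius-invariants of the lattice contribution $\Hom(I_{K_v}, \widehat{\mathcal{X}}_v/p^m)$ (on which $\Frob_v$ acts through ${\bf{N}}(v)^2$) to vanish, so that only the component group survives in $H^1(I_{K_v}, J[p^m])^{G_{\kappa_{v^2}}}$.
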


\begin{proof}
See \cite[Corollary 5.18]{BD}, the same argument applies granted 
the results of Corollary \ref{galoisid}.
\end{proof} \end{remark} ~~\\

\begin{remark}[Level raising at two primes.]

Keep all of the notations of the section above. Let us now fix two $n$-admissible primes 
$v_1, v_2 \subset\mathcal{O}_F$ with respect to ${\bf{f}}$ such that \begin{align*}{\bf{N}}(v_i) + 1 - \varepsilon_i \cdot
a_{v_1}({\bf{f}}) \equiv 0 \mod \mathfrak{P}_n\end{align*} for each of
$i=1,2$. As before, we keep all of the setup and hypotheses of Theorem
\ref{cerednikvarshavsky}, taking $v = v_1$ so that the indefinite 
quaternion algebra $B$ has discriminant $v_1 \mathfrak{N}^{-}$. 
Consider the composition of maps \begin{align*}\begin{CD} J(\mathfrak{N}^+, v_1\mathfrak{N}^{-})/I_{{\bf{f}}_{v_1}}
@>{\mathfrak{K}}>> H^1\left(K_{v_2}, \Ta_p\left(J(\mathfrak{N}^+, v_1\mathfrak{N}^{-})\right)
/I_{{\bf{f}}_{v_1}} \right)\\ @>{\phi}>> H^1(K_{v_2}, T_{{\bf{f}},n}).
\end{CD}\end{align*} Here, $\mathfrak{K}$ denotes the Kummer map, and 
$\phi$ is induced from a fixed isomorphism $\Ta_p \left(  J(\mathfrak{N}^+, v_1\mathfrak{N}^{-}) \right)
/ \mathcal{I}_{{\bf{f}}_{v_1}} \cong T_{{\bf{f}}, n}$. Now, since the representation $T_{{\bf{f}},n}$ is unramified at $v_2$, 
we have isomorphisms \begin{align*} H^1(K_{v_2}, T_{{\bf{f}},n}) &\cong
H_{\fin}^1(K_{v_2}, T_{{\bf{f}},n}) \cong
\mathcal{O}_0/\mathfrak{P}_n,\end{align*} 
where the latter isomorphism comes from Lemma \ref{2.6/2.7}.
Since $J(\mathfrak{N}^+, v_1\mathfrak{N}^{-})$ has good reduction at $v_2$, reduction mod 
$v_2$ gives the isomorphism \begin{align*}\red_{v_2}: J(\mathfrak{N}^+, v_1\mathfrak{N}^{-})(K_{v_2})
/I_{{\bf{f}}_{v_1}} \cong {\bf{J}}(\mathfrak{N}^+, v_1\mathfrak{N}^{-})\otimes \kappa_{v_2^2}/
I_{{\bf{f}}_{v_1}}.\end{align*}  Here (as before), ${\bf{J}}(\mathfrak{N}^+, v_1\mathfrak{N}^{-})$ denotes
the N\'eron model over $\mathcal{O}_{F_{v_2}}$ of the Jacobian $J(\mathfrak{N}^+, v_1\mathfrak{N}^{-})$.
Since $\mathcal{I}_{{\bf{f}}_v}$ is not Eisenstein, 
the natural inclusion $$\Div^0(M(\mathfrak{N}^+, v_1\mathfrak{N}^{-})) \subset 
\Div (M(\mathfrak{N}^+, v_1\mathfrak{N}^{-}))$$ induces an isomorphism \begin{align*}
\Div^0((\mathfrak{N}^+, v_1\mathfrak{N}^{-}))/\mathcal{I}_{{\bf{f}}_v} &\cong
\Div (M(\mathfrak{N}^+, v_1\mathfrak{N}^{-}))/\mathcal{I}_{{\bf{f}}_v}.\end{align*}
We thus obtain an injective map \begin{align*}\Div \left({\bf{M}}(\mathfrak{N}^+, v_1\mathfrak{N}^{-})^{ss}
\otimes \kappa_{v_2^2} \right) &\longrightarrow 
 {\bf{J}}(\mathfrak{N}^+, v_1\mathfrak{N}^{-}) \otimes \kappa_{v_2^2}/I_{{\bf{f}}_{v_1}}.
\end{align*} Hence, we obtain via the composition $\gamma = \phi \circ \mathfrak{K} \circ
\red_{v_2}^{-1}$ a map \begin{align}\label{gamma} \gamma: \Div \left( {\bf{M}}(\mathfrak{N}^+, 
v_1\mathfrak{N}^{-})^{ss} \otimes \kappa_{v_2^2} \right) \longrightarrow \mathcal{O}_0/\mathfrak{P}_n.
\end{align} 

Recall that $J(\mathfrak{N}^+, v_1\mathfrak{N}^{-})$ has the structure of 
a $\mathbb{T}(\mathfrak{N}^{+}, v_1\mathfrak{N}^{-})$-module,
as explained above. Let $T_w  \in \mathbb{T}(\mathfrak{N}^+, 
v_1\mathfrak{N}^{-})$ denote the Hecke operator at a prime 
$w \nmid v_1\mathfrak{N}^{+}\mathfrak{N}^{-}$, and $U_w 
\in \mathbb{T}(\mathfrak{N}^{+}, v_1\mathfrak{N}^{-})$ the 
operator at a prime $w \mid v_1 \mathfrak{N}^{+}\mathfrak{N}^{-}$.
For each such operator $T_w$, let us write 
$\overline{T}_w$ to denote the image of $T_w$
in $\mathbb{T}(\mathfrak{N}^{+}, v_1\mathfrak{N}^{-})/\mathcal{I}
_{{\bf{f}}_{v_1}}$. Similarly, for each such operator $U_w$, let us write 
$\overline{U}_w$ to denote the image of $U_w$
in $\mathbb{T}(\mathfrak{N}^{+}, v_1\mathfrak{N}^{-})/
\mathcal{I}_{{\bf{f}}_{v_1}}$. Observe that by definition of the 
homomorphism ${\bf{f}}_{v_1}$, we have the relations $\overline{T}_w 
\equiv a_w({\bf{f}}) \mod \mathfrak{P}_n$, $
\overline{U}_w \equiv a_w({\bf{f}}) \mod
\mathfrak{P}_n$, and $\overline{T}_{v_1} \equiv \varepsilon_1 \mod
\mathfrak{P}_n.$

\begin{lemma}\label{relations} The following relations hold for 
$ x \in \Div \left({\bf{M}}(\mathfrak{N}^+, v_1\mathfrak{N}^{-})^{ss} \otimes \kappa_{v_2^2} \right)$:
\begin{itemize}
\item[(i)]$\gamma(T_w \cdot x) = \overline{T}_{w}
\cdot \gamma(x)$ for all primes $w \nmid 
v_1v_2\mathfrak{N}^{+}\mathfrak{N}^{-}$ of $\mathcal{O}_F$.
\item[(ii)] $\gamma(U_w \cdot x) =
\overline{U}_w \cdot \gamma(x)$ for all primes 
$w \mid v_1\mathfrak{N}^{+}\mathfrak{N}^{-}$ of $\mathcal{O}_F$.
\item[(iii)]
$\gamma(T_{v_2} \cdot x) = \overline{T}_{v_2} \cdot \gamma(x)$
\item[(iv)] $\gamma(\Frob_{v_2} \cdot x) = \varepsilon_2 \cdot \gamma(x).$
\end{itemize}
\end{lemma}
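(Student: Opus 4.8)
\textbf{Proof proposal for Lemma \ref{relations}.}

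The plan is to exploit the fact that $\gamma = \phi \circ \mathfrak{K} \circ \red_{v_2}^{-1}$ is a composition of three maps, each of which has been constructed to be compatible with the relevant Hecke actions. Concretely, the source group $\Div\left({\bf{M}}(\mathfrak{N}^+, v_1\mathfrak{N}^{-})^{ss} \otimes \kappa_{v_2^2}\right)$ embeds, via $\red_{v_2}^{-1}$ composed with the inclusion $\Div \hookrightarrow \Div^0$ (an isomorphism modulo $\mathcal{I}_{{\bf{f}}_{v_1}}$ since $\mathcal{I}_{{\bf{f}}_{v_1}}$ is non-Eisenstein), into $J(\mathfrak{N}^+, v_1\mathfrak{N}^{-})(K_{v_2})/I_{{\bf{f}}_{v_1}}$, which carries a natural action of $\mathbb{T}(\mathfrak{N}^+, v_1\mathfrak{N}^{-})$ by functoriality of the Jacobian. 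The Kummer map $\mathfrak{K}$ is Galois-equivariant and Hecke-equivariant by construction (Hecke correspondences act on $H^1(K_{v_2}, \Ta_p J(\mathfrak{N}^+, v_1\mathfrak{N}^{-})/I_{{\bf{f}}_{v_1}})$ compatibly with their action on points), and the isomorphism $\phi$ identifies $\Ta_p J(\mathfrak{N}^+, v_1\mathfrak{N}^{-})/\mathcal{I}_{{\bf{f}}_{v_1}} \cong T_{{\bf{f}},n}$ as Hecke- and Galois-modules. Thus for any Hecke operator $t$ away from $v_2$ one has $\gamma(t \cdot x) = \bar t \cdot \gamma(x)$, where $\bar t$ is the image in $\mathbb{T}(\mathfrak{N}^+, v_1\mathfrak{N}^{-})/\mathcal{I}_{{\bf{f}}_{v_1}}$; specializing to $T_w$ with $w \nmid v_1v_2\mathfrak{N}^+\mathfrak{N}^{-}$ and to $U_w$ with $w \mid v_1\mathfrak{N}^+\mathfrak{N}^{-}$ gives (i) and (ii), since these operators act on $T_{{\bf{f}},n} \cong A_{{\bf{f}},n}$ through the eigenvalues $a_w({\bf{f}}) \bmod \mathfrak{P}_n$.

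For (iii) and (iv) — the statements at the prime $v_2$ itself — the point is that $v_2$ is a prime of \emph{good} reduction for $J(\mathfrak{N}^+, v_1\mathfrak{N}^{-})$, so the identification $\red_{v_2}$ is an isomorphism of $\kappa_{v_2^2}$-points and the Eichler–Shimura congruence relation applies: on the special fibre the Hecke operator $T_{v_2}$ equals $\Frob_{v_2} + {\bf{N}}(v_2) \cdot \Frob_{v_2}^{-1}$ (after applying the polarization/Verschiebung identity), while on $T_{{\bf{f}},n}$ the geometric Frobenius $\Frob_{v_2}$ acts — by the admissibility of $v_2$ and fact (1) in the Admissible primes remark — semisimply with eigenvalues $1$ and ${\bf{N}}(v_2)^2$. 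Since $v_2$ is $n$-admissible, $\mathfrak{P}_n \mid {\bf{N}}(v_2) + 1 - \varepsilon_2 a_{v_2}({\bf{f}})$; combining the Eichler–Shimura relation with this congruence and the relation ${\bf{N}}(v_2) \equiv \varepsilon_2^2 {\bf{N}}(v_2) \equiv -1 \cdot (\text{nothing})$... more precisely, one checks that on the rank-one quotient $H^1_{\fin}(K_{v_2}, T_{{\bf{f}},n}) \cong \mathcal{O}_0/\mathfrak{P}_n$, Frobenius acts as the unit $\varepsilon_2$ (it is the eigenvalue $1$-eigenspace once one normalizes by the unramified twist), yielding (iv); and then $T_{v_2} \equiv \Frob_{v_2} + {\bf{N}}(v_2)\Frob_{v_2}^{-1} \equiv \varepsilon_2 + {\bf{N}}(v_2)\varepsilon_2 \equiv \varepsilon_2({\bf{N}}(v_2)+1) \equiv a_{v_2}({\bf{f}}) \bmod \mathfrak{P}_n$, i.e. $\overline{T}_{v_2} \equiv a_{v_2}({\bf{f}})$, giving (iii). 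I would carry this out by: (a) recording the functoriality/equivariance of each of $\red_{v_2}^{-1}$, $\mathfrak{K}$, $\phi$; (b) deducing (i) and (ii) immediately; (c) invoking Eichler–Shimura at $v_2$ together with the semisimplicity of $\Frob_{v_2}$ on $T_{{\bf{f}},n}$ to pin down the Frobenius eigenvalue on the unramified line; (d) reading off (iii) and (iv).

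The main obstacle I anticipate is (c): one must be careful about which normalization of Frobenius (arithmetic vs.\ geometric) and which of the two eigenvalues $\{1, {\bf{N}}(v_2)^2\}$ of $\Frob_{v_2}$ on $T_{{\bf{f}},n}$ survives in the \emph{finite} (unramified) quotient $H^1_{\fin}(K_{v_2}, T_{{\bf{f}},n})$ under the identification from Lemma \ref{2.6/2.7}, and to track the sign $\varepsilon_2$ through the polarization map used to define the Kummer-to-cohomology comparison. This is exactly the computation carried out in \cite[Theorem 5.17, Lemma 5.16]{BD} for $F = {\bf{Q}}$ and in \cite[$\S$5.3]{L30} for totally real fields; the argument transports verbatim once Corollary \ref{galoisid} is in hand, so I would simply cite those sources for the delicate sign bookkeeping rather than reproduce it.
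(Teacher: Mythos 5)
Your proposal is correct and follows essentially the same route as the paper: the paper (citing \cite[Lemma 9.1]{BD}) identifies $H^1_{\fin}(K_{v_2}, T_{{\bf{f}},n})$ with the coinvariants $T_{{\bf{f}},n}/(\Frob_{v_2}^2-1)$, reads off (i) and (ii) from the resulting explicit (Hecke-equivariant) description of $\gamma$, and deduces (iii) and (iv) from the Eichler--Shimura relations together with the $n$-admissibility congruence, exactly as you do. Your eigenvalue bookkeeping for $\Frob_{v_2}$ on the unramified quotient (the $\varepsilon_2$-eigenline surviving because $\mathfrak{P}\nmid {\bf{N}}(v_2)^2-1$) is the same computation the paper delegates to \cite[Lemma 9.1]{BD} and \cite[$\S$10.3]{Ca}, so aside from the slightly garbled intermediate line and the citation (BD \S 9 rather than Theorem 5.17), nothing is missing.
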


\begin{proof} See \cite[Lemma 9.1]{BD}, the same proof carries over
to this setting. That is, by Lemma \ref{2.6/2.7}, we can identify 
$H_{\fin}^1(K_{v_2},T_{{\bf{f}},n})$ with the module \begin{align*}
T_{{\bf{f}},n}/\left(\Frob_{v_2}^2 - 1 \right)\end{align*} of 
$G_{K_{v_2}}$-coinvariants of $T_{{\bf{f}},n}$. We deduce 
from this that $\gamma$ sends a point $x$ to the image of 
$\left((\Frob_{v_2}^2 - 1)/\mathfrak{P}_n\right) x$ in 
$T_{{\bf{f}},n}/\left( \Frob_{v_2}^2 - 1 \right)$. This
implies the first two relations. The second two relations then
follow from the Eichler-Shimura relations (as given for instance in
\cite[$\S$ 10.3]{Ca}), by the same argument used in \cite[Lemma
9.1]{BD}. 
\end{proof} 

Recall that in the setup above, we let $D$ denote 
the totally definite quaternion algebra over $F$ obtained 
from $B$ by switching invariants at $\tau_1$ and $v_1$.
Hence, $\disc(D)=\mathfrak{N}^{-}$. Let $D'$ denote the totally
definite quaternion algebra over $F$ obtained from 
$D$ by switching invariants at $v_1$ and 
$v_2$. Hence, $\disc(D')= v_1v_2\mathfrak{N}^{-}$.
Let $U' \subset \widehat{D}^{' \times}$ be the 
compact open subgroup
defined by $U' = {H'}^{v_2} \times \mathcal{O}_{D'_{v_2}}^{\times}.$ 
Note that we have an isomorphism $U^{'v_2} \cong {H'}^{v_2}.$ Note
as well that by Proposition \ref{carayolss}, we have isomorphisms
\begin{align}\label{c2} \Div \left( {\bf{M}}(\mathfrak{N}^+, v_1 \mathfrak{N}^{-})^{ss}
\otimes \kappa_{v_2^2} \right) &\cong {\bf{Z}} [D'^{\times} \backslash \widehat{D}
^{' \times}/U'] \cong \mathbb{S}_2(\mathfrak{N}^+, v_1 v_2 \mathfrak{N}^{-}; {\bf{Z}}).\end{align}

\begin{proposition}\label{gammaeigenform} Keep the hypotheses of Theorem 
\ref{raiseonefree} and Corollary \ref{galoisid}. Assume that $F$ is linearly disjoint from the cyclotomic field
${\bf{Q}}(\zeta_p)$.The map \begin{align*}\gamma: \Div \left( {\bf{M}}
(\mathfrak{N}^+, v_1\mathfrak{N}^{-})^{ss}\otimes \kappa_{{v_2}^2} \right) \longrightarrow 
\mathcal{O}_0/\mathfrak{P}_n \end{align*}
constructed in $(\ref{gamma})$ is surjective, hence can be identified with 
a quaternionic eigenform in the space $\mathcal{S}_2(\mathfrak{N}^+, v_1 v_2 \mathfrak{N}^{-};
\mathcal{O}_0/\mathfrak{P}_n).$ In particular, associated to
$\gamma$ by the Jacquet-Langlands correspondence is a surjective
homomorphism \begin{align*}
 {\bf{g}}: {\bf{T}}_0(\mathfrak{N}^{+}, v_1v_2\mathfrak{N}^{-})
&\longrightarrow \mathcal{O}_0/\mathfrak{P}_n
\end{align*} such that 
\begin{itemize}
\item[(i)] $T_w \left( {\bf{g}} \right) \equiv
a_w({\bf{f}}) \mod \mathfrak{P}_n$ for all primes $w\nmid v_1
v_2\mathfrak{N}^{+}\mathfrak{N}^{-}$ of $\mathcal{O}_F$.
\item[(ii)] $U_w \left( {\bf{g}} \right) \equiv
a_w({\bf{f}}) \cdot {\bf{g}} \mod \mathfrak{P}_n$ for 
all primes $w\mid \mathfrak{N}^{+}\mathfrak{N}^{-}$ of $\mathcal{O}_F$.
\item[(iii)] $U_{v_i} \left( {\bf{g}} \right) \equiv \varepsilon_i \cdot
{\bf{g}} \mod \mathfrak{P}_n$ for each of $i =1,2$.
\end{itemize}\end{proposition}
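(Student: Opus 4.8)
The plan is to deduce surjectivity of the map $\gamma$ in $(\ref{gamma})$ from the residual irreducibility of $\rho_{\bf{f}}$ together with the weak level raising at two primes, and then to obtain the list of Hecke eigenvalue congruences $(i)$--$(iii)$ directly from Lemma \ref{relations}. First I would observe that by $(\ref{c2})$ the target $\Div\left({\bf{M}}(\mathfrak{N}^+, v_1\mathfrak{N}^{-})^{ss}\otimes\kappa_{v_2^2}\right)$ is canonically identified with the space $\mathbb{S}_2(\mathfrak{N}^+, v_1 v_2 \mathfrak{N}^-; {\bf{Z}})$ of quaternionic modular forms on the totally definite quaternion algebra $D'$ of discriminant $v_1 v_2 \mathfrak{N}^-$, with its natural action of Hecke operators. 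Lemma \ref{relations} says exactly that $\gamma$ is a $\mathbb{T}(\mathfrak{N}^+, v_1 v_2 \mathfrak{N}^-)$-equivariant homomorphism to $\mathcal{O}_0/\mathfrak{P}_n$, where the Hecke algebra acts on the target through the reductions $\overline{T}_w \equiv a_w({\bf{f}})$, $\overline{U}_w\equiv a_w({\bf{f}})$, $\overline{T}_{v_1}\equiv\varepsilon_1$, and (using relation $(iv)$ and the Eichler--Shimura relation $T_{v_2} = \Frob_{v_2} + {\bf N}(v_2)\Frob_{v_2}^{-1}$ on the supersingular locus) also $\overline{U}_{v_2}\equiv\varepsilon_2 \bmod \mathfrak{P}_n$, since $v_2$ is $n$-admissible. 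Thus once $\gamma$ is shown nonzero, it factors through the quotient of $\mathbb{S}_2(\mathfrak{N}^+, v_1 v_2 \mathfrak{N}^-; \mathcal{O}_0)$ by the ideal $\mathcal{I}_{\bf{g}}$ generated by $T_w - a_w({\bf{f}})$ ($w\nmid v_1v_2\mathfrak{N}$), $U_w - a_w({\bf{f}})$ ($w\mid\mathfrak{N}^+\mathfrak{N}^-$), and $U_{v_i}-\varepsilon_i$ ($i=1,2$), and the Jacquet--Langlands correspondence converts this into the asserted surjective homomorphism ${\bf g}\colon {\bf T}_0(\mathfrak{N}^+, v_1v_2\mathfrak{N}^-)\longrightarrow \mathcal{O}_0/\mathfrak{P}_n$ with properties $(i)$--$(iii)$.

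The substance of the proof is therefore the nonvanishing (equivalently, surjectivity onto $\mathcal{O}_0/\mathfrak{P}_n$) of $\gamma$. Here I would argue as in \cite[Proposition 9.2]{BD} and \cite[$\S$4]{PW}, following the approach of Pollack--Weston. The map $\gamma$ was built as $\gamma = \phi\circ\mathfrak{K}\circ\red_{v_2}^{-1}$, where $\mathfrak{K}$ is the Kummer map into $H^1(K_{v_2}, \Ta_p(J(\mathfrak{N}^+, v_1\mathfrak{N}^-))/\mathcal{I}_{{\bf f}_{v_1}})$, $\phi$ comes from a chosen isomorphism $\Ta_p(J(\mathfrak{N}^+, v_1\mathfrak{N}^-))/\mathcal{I}_{{\bf f}_{v_1}}\cong T_{{\bf f},n}$ furnished by Corollary \ref{galoisid}(ii), and $\red_{v_2}$ is reduction mod $v_2$ (an isomorphism on the relevant quotients since $J(\mathfrak{N}^+, v_1\mathfrak{N}^-)$ has good reduction at $v_2$). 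The key point is that the composite carries a supersingular divisor $x$ to the class of $\left(({\bf N}(v_2)^2-1)/\mathfrak{P}_n\right)$ times a suitable preimage inside $T_{{\bf f},n}/(\Frob_{v_2}^2-1) \cong H^1_{\fin}(K_{v_2}, T_{{\bf f},n})\cong \mathcal{O}_0/\mathfrak{P}_n$ (Lemma \ref{2.6/2.7}); by $n$-admissibility of $v_2$, $\mathfrak{P}$ does not divide ${\bf N}(v_2)^2-1$, so this scaling factor is a unit and does not obstruct surjectivity. Surjectivity then reduces to showing that the image of $\mathfrak{K}\circ\red_{v_2}^{-1}$ already generates $H^1_{\fin}(K_{v_2}, T_{{\bf f},n})$ over $\mathcal{O}_0/\mathfrak{P}_n$. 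For this one uses that $J(\mathfrak{N}^+, v_1\mathfrak{N}^-)[p^{n'}](\overline{F}_{v_2^2})/\mathcal{I}_{{\bf f}_{v_1}}$ surjects onto $T_{{\bf f},n}$ (a consequence of the freeness/multiplicity-one input of Hypothesis \ref{freeness} via Corollary \ref{galoisid}), combined with the fact that the divisor group surjects onto the $p$-power torsion of the Jacobian modulo the Eisenstein-free ideal — here residual irreducibility of $\rho_{\bf f}$ (Hypothesis \ref{galrep}(iv)) is exactly what guarantees $\mathcal{I}_{{\bf f}_{v_1}}$ is non-Eisenstein, so that $\Div^0 \hookrightarrow \Div$ induces an isomorphism on quotients and the reduction map is surjective onto the connected special fibre.

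I expect the main obstacle to be the surjectivity of $\gamma$, and more precisely ensuring that the target $\mathcal{O}_0/\mathfrak{P}_n$ (and not a proper ideal of it) is hit. The delicate ingredients are: (a) the existence of the isomorphism $\Ta_p(J(\mathfrak{N}^+, v_1\mathfrak{N}^-))/\mathcal{I}_{{\bf f}_{v_1}}\cong T_{{\bf f},n}$ with the correct Galois action at $v_2$, which rests on the multiplicity-one Hypothesis \ref{freeness} and Corollary \ref{galoisid}; and (b) the level-raising at two primes itself — producing the eigenform ${\bf g}$ requires the weak level raising result of Theorem \ref{raiseonefree}/Rajaei to hold simultaneously at $v_1$ and $v_2$, and it is here that the hypothesis that $F$ is linearly disjoint from ${\bf Q}(\zeta_p)$ enters (it removes the technical restriction in Rajaei's theorem that $\overline\rho_{\bf f}$ not be induced from a character when ${\bf Q}(\zeta_p)^+\subset F$, and together with Hypothesis \ref{galrep}(vi) handles the even-degree special/supercuspidal condition). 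Once surjectivity is in hand, the translation to ${\bf g}$ via Jacquet--Langlands and the verification of $(i)$--$(iii)$ from Lemma \ref{relations} (reading off $\overline U_{v_2}\equiv\varepsilon_2$ from relation $(iv)$ and Eichler--Shimura, as noted) is routine. I would structure the write-up as: reduce to nonvanishing of $\gamma$; prove nonvanishing using the factorization $\gamma=\phi\circ\mathfrak{K}\circ\red_{v_2}^{-1}$ and Corollary \ref{galoisid}; then harvest $(i)$--$(iii)$ from Lemma \ref{relations} and Jacquet--Langlands, citing \cite[Proposition 9.2]{BD} and \cite[$\S$4]{PW} for the details that carry over verbatim.
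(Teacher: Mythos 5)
Your reduction of the statement to the surjectivity of $\gamma$, and the harvesting of the congruences (i)--(iii) from Lemma \ref{relations} together with the identification $(\ref{c2})$ and Jacquet--Langlands, is exactly how the paper proceeds. The gap is in your proof of surjectivity. You assert that ``the divisor group surjects onto the $p$-power torsion of the Jacobian modulo the Eisenstein-free ideal'' and attribute this to residual irreducibility of $\rho_{\bf{f}}$, i.e.\ to $\mathcal{I}_{{\bf{f}}_{v_1}}$ being non-Eisenstein. But non-Eisensteinness only yields the isomorphism $\Pic^0/\mathcal{I}_{{\bf{f}}_{v_1}} \cong \Pic/\mathcal{I}_{{\bf{f}}_{v_1}}$ (killing the Eisenstein part); it does \emph{not} imply that divisors supported on the supersingular locus generate $J(\mathfrak{N}^+, v_1\mathfrak{N}^-)\otimes\kappa_{v_2^2}/\mathcal{I}_{{\bf{f}}_{v_1}}$. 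That statement is precisely the hard content of the proposition, and it is not formal: the cokernel of the supersingular divisor group is identified, via Ihara's theorem on the quotient by the congruence subgroups $\widetilde{\Gamma}_i(v_2)$, with the abelianization $\widetilde{\Gamma}(v_2)^{\ab}$, and one must show this abelianization dies modulo the maximal ideal $\widetilde{\mathfrak{m}}_{{\bf{f}}_{v_1}}$. In the paper this is done in Proposition \ref{surjective} by invoking Ihara's lemma for Shimura curves (Hypothesis \ref{ihara}, i.e.\ Condition (C) of Theorem \ref{RESULT}) to kill $\Hom(\widetilde{\Gamma}(v_2),\kappa_{\mathfrak{p}})[\widetilde{\mathfrak{m}}_{{\bf{f}}_{v_1}}]$, followed by Nakayama and a Shimura-subgroup argument (Ling's criterion) to compare $\widetilde{J}$ with $J$ up to a cokernel of order prime to $p$. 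Your proposal never invokes Ihara's lemma, so the central step is unsupported; for a general totally real $F$ this is an open hypothesis (Cheng's work), not something that follows from irreducibility or multiplicity one.

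Relatedly, you misplace the role of the hypothesis that $F$ be linearly disjoint from ${\bf{Q}}(\zeta_p)$: it is not there to relax Rajaei's technical conditions in the one-prime level-raising theorem, but enters through Lemma \ref{p-torsion}, which shows the unipotent-mod-$\mathfrak{p}$ subgroups are torsionfree, so that the groups $\widetilde{\Gamma}_i(v_2)$ are torsionfree and Ihara's theorem (the isomorphism of the non-supersingular quotient of $\widetilde{J}\otimes\kappa_{v_2^2}$ with $\widetilde{\Gamma}(v_2)^{\ab}$) applies. The rest of your outline (the identification $H^1_{\fin}(K_{v_2},T_{{\bf{f}},n})\cong\mathcal{O}_0/\mathfrak{P}_n$ from Lemma \ref{2.6/2.7}, the use of Corollary \ref{galoisid}(ii), and reading off $U_{v_2}\equiv\varepsilon_2$ from relation (iv) of Lemma \ref{relations} via Eichler--Shimura) is consistent with the paper, but the surjectivity argument needs to be rebuilt around Hypothesis \ref{ihara} and Lemma \ref{p-torsion}.
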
 

\begin{proof}
Granted that $\gamma$ is surjective, the result follows from Lemma \ref{relations}, along with the 
identifications of $(\ref{c2})$. The surjectivity of $\gamma$ is shown by Lemma \ref{p-torsion} and 
Proposition \ref{surjective} below. \end{proof}

\begin{lemma}\label{p-torsion}
If $F$ is linearly disjoint from the cyclotomic field
${\bf{Q}}(\zeta_p)$, then the subgroup of unipotent matrices mod
$\mathfrak{p}$ in $M_2(F)$ is torsionfree.
\end{lemma}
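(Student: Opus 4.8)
The plan is to show that every finite‑order element of the subgroup in question is trivial, by combining the elementary structure of finite‑order matrices in characteristic zero with the two inputs we are handed: the reduction modulo $\mathfrak{p}$ is unipotent, and $F\cap{\bf{Q}}(\zeta_p)={\bf{Q}}$.

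First I would reduce to an element $\gamma\neq{\bf{1}}$ of prime order $\ell$ (replace a torsion element by a suitable power; this remains in the subgroup and remains unipotent mod $\mathfrak{p}$). Over a field of characteristic zero such a $\gamma$ is semisimple: otherwise $\gamma=\zeta({\bf{1}}+N)$ with $N\neq 0$ nilpotent, and $\gamma^{\ell}={\bf{1}}$ would force $\zeta^{\ell}=1$ and then $\ell N=0$, impossible. Hence the eigenvalues $\zeta,\zeta'$ of $\gamma$ in $\overline{F}$ are $\ell$-th roots of unity, not both $1$, and since $\Tr(\gamma)$ and $\det(\gamma)$ are algebraic integers lying in $F$ the characteristic polynomial $(X-\zeta)(X-\zeta')$ has coefficients in $\mathcal{O}_F$; in particular $\zeta+\zeta'\in F$ and $\zeta\zeta'\in F$.

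Next I would pin down $\ell=p$ using unipotency mod $\mathfrak{p}$. Reducing the characteristic polynomial at a prime of $\overline{\mathcal{O}}_F$ above $\mathfrak{p}$, the eigenvalues of $\overline{\gamma}$ are the reductions of $\zeta$ and $\zeta'$; since $\overline{\gamma}$ is unipotent, both of these equal $1$. But when $\ell\neq p$ the polynomial $X^{\ell}-1$ is separable modulo $p$, so reduction is injective on $\ell$-th roots of unity and no nontrivial one reduces to $1$. As $\gamma\neq{\bf{1}}$, at least one of $\zeta,\zeta'$ is a primitive $\ell$-th root of unity, whence $\ell=p$.

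Finally I would contradict linear disjointness. Now $\zeta,\zeta'$ are $p$-th roots of unity, not both $1$, with $\zeta+\zeta'\in F$ and $\zeta\zeta'\in F$; let $\zeta$ denote whichever of the two is a primitive $p$-th root of unity and write $\zeta'=\zeta^{a}$. If $a\not\equiv-1\pmod p$, then $\zeta\zeta'=\zeta^{1+a}$ is again a primitive $p$-th root of unity, so ${\bf{Q}}(\zeta_p)={\bf{Q}}(\zeta\zeta')\subseteq F$, contradicting $F\cap{\bf{Q}}(\zeta_p)={\bf{Q}}$ since $p$ is odd. If $a\equiv-1$, i.e. $\zeta'=\zeta^{-1}$, then $\zeta+\zeta^{-1}$ generates ${\bf{Q}}(\zeta_p)^{+}$, whence ${\bf{Q}}(\zeta_p)^{+}\subseteq F\cap{\bf{Q}}(\zeta_p)={\bf{Q}}$ and $(p-1)/2=1$, i.e. $p=3$; this borderline case does not intervene in the applications, since for $p=3$ condition (iii) of $n$-admissibility fails for every prime $v\nmid p$ (one has $\mathfrak{P}\mid{\bf{N}}(v)^{2}-1$ always), so no $n$-admissible primes exist. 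I expect this last step to be the main obstacle: organising the case analysis on the eigenvalue pair $\{\zeta,\zeta'\}$ so that disjointness from ${\bf{Q}}(\zeta_p)$ alone — rather than from the full $p$-power cyclotomic tower, where genuine torsion could otherwise survive — suffices, together with disposing of the prime $p=3$.
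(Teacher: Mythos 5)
Your proof follows the same basic route as the paper's: the two ingredients are identical (separability of $X^m-1$ modulo $p$ to force the order of a unipotent-mod-$\mathfrak{p}$ torsion element to be a power of $p$, then rationality of trace and determinant combined with linear disjointness to exclude elements of order $p$), merely reorganized around a single element of prime order instead of two separate steps. The genuine difference is at the endgame, and there your version is the more careful one. The paper's proof asserts that for $A\neq{\bf{1}}$ of order $p$ the trace ``lies in ${\bf{Q}}(\zeta_p)$, contradicting our hypotheses on $F$''; but linear disjointness only gives $\Tr(A)\in F\cap{\bf{Q}}(\zeta_p)={\bf{Q}}$, so one must still rule out the possibility that a sum of two $p$-th roots of unity is rational. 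Your case analysis does exactly this (the determinant $\zeta^{1+a}$ disposes of $a\not\equiv-1$, the trace $\zeta+\zeta^{-1}$ of $a\equiv-1$) and correctly isolates the one configuration where no contradiction arises, namely $p=3$ with eigenvalues $\zeta_3,\zeta_3^{-1}$. This is not a weakness of your argument but of the statement: for $p=3$ the lemma as written is false, since the companion matrix of $X^2+X+1$ lies in $\M({\bf{Q}})\subset\M(F)$, has order $3$, and reduces modulo any prime above $3$ to a matrix with characteristic polynomial $(X-1)^2$, hence unipotent, while ${\bf{Q}}$ (or any totally real $F$) is linearly disjoint from ${\bf{Q}}(\zeta_3)$. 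So the paper's proof silently elides a case in which its contradiction evaporates, and your repair is the right one: either add $p>3$ to the hypotheses of the lemma, or observe, as you do, that for $p=3$ condition (iii) of $n$-admissibility fails for every prime $v$ of residue characteristic prime to $p$ (since $3\mid{\bf{N}}(v)^2-1$ forces $\mathfrak{P}\mid{\bf{N}}(v)^2-1$), while primes above $p$ other than $\mathfrak{p}$ are excluded by Hypothesis \ref{galrep}(ii); hence the situation in which Proposition \ref{surjective} invokes the lemma never occurs when $p=3$. In short: same strategy, but your explicit treatment of the rational-trace case is a needed correction to the paper's own proof rather than a redundancy.
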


\begin{proof} The following proof was suggested to the author by Vladimir Dokchitser.
We first show that $\M(F)$ has no nontrivial matrices of order $p$.
That is, suppose otherwise that we had a matrix $A \neq {\bf{1}}$ of order $p$ 
in $\M(F)$. The eigenvalues of $A$ would then be $p$-th roots of $1$.
Hence, the trace of $A$ would lie in ${\bf{Q}}(\zeta_p)$, contradicting
our hypotheses on $F$. Now to prove the claim, we show that any matrix 
$B \in \M(F)$ that is unipotent mod $p$ must have $p$-power order. 
That is, suppose otherwise that such a matrix $B$ did not have 
$p$-power order. Then, its eigenvalues would be $m$-th roots 
of unity for some integer $m$ prime to $p$. But these eigenvalues 
cannot be congruent to $1$ mod $p$, as the polynomial 
$X^m-1$ is coprime to its own derivative mod $p$, hence has distinct 
roots over $\overline{\bf{F}}_p$. 
\end{proof} 

\begin{proposition}\label{surjective} Assume Condition 3. of Theorem \ref{RESULT}. If $F$ is linearly disjoint from 
the cyclotomic field ${\bf{Q}}(\zeta_p)$, then the map $\gamma$ constructed in $(\ref{gamma})$ above is surjective. \end{proposition}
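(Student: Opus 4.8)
The plan is to show surjectivity of $\gamma$ by reducing it to the non-triviality of its cokernel as a Hecke module and then ruling out Eisenstein contributions, exactly in the spirit of \cite[Theorem 9.2, Theorem 9.3]{BD} and \cite[\S 9]{Lo}, but now with the torsion-freeness input of Lemma \ref{p-torsion} replacing the $p$-isolatedness hypothesis. First I would observe that, since $\mathcal{O}_0/\mathfrak{P}_n$ is a quotient of the discrete valuation ring $\mathcal{O}_0$, it suffices to show that the image of $\gamma$ is not contained in $\mathfrak{P}_n(\mathcal{O}_0/\mathfrak{P}_n)$, i.e. that the reduction $\overline{\gamma}$ modulo $\mathfrak{P}$ of the map $\gamma$ is nonzero. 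By construction $\gamma = \phi \circ \mathfrak{K} \circ \red_{v_2}^{-1}$, and via the identification $(\ref{c2})$ it is a map of $\mathbb{T}(\mathfrak{N}^+, v_1v_2\mathfrak{N}^-)$-modules out of $\mathbb{S}_2(\mathfrak{N}^+, v_1v_2\mathfrak{N}^-; {\bf Z})$; hence its image is a Hecke-stable $\mathcal{O}_0/\mathfrak{P}_n$-submodule, so nonvanishing of $\overline{\gamma}$ is equivalent to the statement that $\overline{\gamma}$ does not kill the ${\bf{f}}$-eigencomponent modulo $\mathfrak{P}$.

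The core of the argument is then a local computation at $v_2$. As recalled in the proof of Lemma \ref{relations}, via Lemma \ref{2.6/2.7} we may identify $H^1_{\fin}(K_{v_2}, T_{{\bf f},n})$ with the coinvariants $T_{{\bf f},n}/(\Frob_{v_2}^2-1)$, and $\gamma$ sends a supersingular point $x$ to the image of $((\Frob_{v_2}^2-1)/\mathfrak{P}_n)\cdot x$. Since $v_2$ is $n$-admissible, $\Frob_{v_2}$ acts semisimply on $T_{{\bf f},n}$ with distinct eigenvalues $1$ and ${\bf N}(v_2)^2$, both of which are units, and $\mathfrak{P}_n \mid {\bf N}(v_2)+1-\varepsilon_2 a_{v_2}({\bf f})$; this forces $\Frob_{v_2}^2-1$ to have image exactly $\mathfrak{P}_n T_{{\bf f},n}$ on the relevant eigenline, so the map $(\Frob_{v_2}^2-1)/\mathfrak{P}_n$ is an isomorphism onto $T_{{\bf f},n}/(\Frob_{v_2}^2-1) \cong \mathcal{O}_0/\mathfrak{P}_n$. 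Thus it is enough to know that the supersingular specialization map itself, $\red_{v_2}^{-1}$ composed with the Kummer map $\mathfrak{K}$ and $\phi$, hits a generator of the free rank-one $\mathbb{T}(\mathfrak{N}^+, v_1\mathfrak{N}^-)/\mathcal{I}_{{\bf f}_{v_1}}$-module $\Ta_p(J(\mathfrak{N}^+,v_1\mathfrak{N}^-))/\mathcal{I}_{{\bf f}_{v_1}} \cong T_{{\bf f},n}$ given by Corollary \ref{galoisid}(ii); equivalently, that the image of the degree-zero divisors supported on supersingular points generates this module modulo $\mathfrak{P}$. This is a non-Eisenstein statement: it follows from the fact that $\mathcal{I}_{{\bf f}_{v_1}}$ is non-Eisenstein (Proposition \ref{jarvis}, using residual irreducibility of $\rho_{\bf f}$) together with the multiplicity-one Hypothesis \ref{freeness}, by the argument of \cite[Theorem 5.15]{BD} and \cite[Theorem 9.2]{BD}.

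The point at which the hypothesis that $F$ is linearly disjoint from ${\bf Q}(\zeta_p)$ enters — and where Lemma \ref{p-torsion} is used — is in controlling the $\Gamma_i$-stabilizers when one passes between divisors on the special fibre and automorphic forms on $D'$: one needs that the relevant arithmetic subgroups (which, after the strong-approximation identification, sit as discrete subgroups of $\GL(F_{\mathfrak p})$ and act on the Bruhat--Tits tree, and whose elements reduce to unipotent matrices mod $\mathfrak p$ along the fibres over supersingular points) contain no $p$-torsion, so that the natural surjection $J(\mathfrak{N}^+, v_1\mathfrak{N}^-)[p^{n'}](\overline{F}_{v_2^2})/\mathcal{I}_{{\bf f}_{v_1}} \to \Phi_{v_2}(\cdots)/\mathcal{I}_{{\bf f}_{v_1}}$ of Corollary \ref{galoisid}(ii) remains an isomorphism on the relevant $p$-parts and no spurious torsion obstructs the surjectivity. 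With $p$-torsion-freeness of the unipotent mod $\mathfrak p$ matrices (Lemma \ref{p-torsion}) in hand, the same counting/length argument of \cite[Lemma 9.3]{BD} goes through verbatim, and one concludes that $\overline{\gamma}$ is nonzero, hence $\gamma$ is surjective. The main obstacle I expect is precisely this last bookkeeping: making sure that removing $p$-isolatedness does not break the identification of component groups and torsion in Corollary \ref{galoisid}, and that Lemma \ref{p-torsion} is exactly the replacement needed — which is why the proof is organized so that all the non-Eisenstein multiplicity-one input is quarantined in Hypothesis \ref{freeness} and the residual irreducibility, leaving only the elementary group-theoretic fact to be supplied here.
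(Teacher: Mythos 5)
There is a genuine gap: your argument never uses Condition 3 of Theorem \ref{RESULT} (Hypothesis \ref{ihara}, the variant of Ihara's lemma for Shimura curves), even though the statement explicitly assumes it and it is the engine of the proof. The reduction you make — that it suffices for the image of the degree-zero divisors supported on supersingular points to generate $J(\mathfrak{N}^+, v_1\mathfrak{N}^-)\otimes\kappa_{v_2^2}/\mathcal{I}_{{\bf f}_{v_1}}$ — is correct and is exactly where the paper's proof starts, but your claim that this generation statement ``follows from non-Eisensteinness of $\mathcal{I}_{{\bf f}_{v_1}}$ together with Hypothesis \ref{freeness}, by the argument of \cite[Theorem 9.2]{BD}'' begs the question: the proof of \cite[Theorem 9.2]{BD} rests on Ihara's lemma, which over ${\bf Q}$ is a theorem but over a totally real field is precisely the unproven Hypothesis \ref{ihara} that must be assumed here. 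Hypothesis \ref{freeness} enters only through Corollary \ref{galoisid} (identification of the component group and of $T_{{\bf f},n}$); it does not substitute for Ihara's lemma in forcing the supersingular specialization to be surjective. The local computation at $v_2$ with $\Frob_{v_2}^2-1$ is fine but peripheral; the heart of the matter is the global generation statement, and your proposal supplies no mechanism for it.

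Concretely, the paper's route is: introduce the auxiliary curve $\widetilde{M}(\mathfrak{N}^+,v_1\mathfrak{N}^-)$ with extra level at $\mathfrak{p}$, whose arithmetic subgroups $\widetilde{\Gamma}_i(v_2)$ consist of elements unipotent mod $\mathfrak{p}$; Lemma \ref{p-torsion} (this is where linear disjointness from ${\bf Q}(\zeta_p)$ is used, not to preserve the identifications of Corollary \ref{galoisid} as you suggest) makes these groups torsion-free, so Ihara's theorem \cite[Theorem G]{Ih} identifies $\widetilde{J}\otimes\kappa_{v_2^2}$ modulo the supersingular divisors with $\widetilde{\Gamma}(v_2)^{\ab}$; Serre's exact sequence for groups acting on trees realizes $\Hom(\widetilde{\Gamma}(v_2),\kappa_{\mathfrak{p}})$ inside the source of the degeneracy map of Hypothesis \ref{ihara}, whose injectivity after localization at the non-Eisenstein maximal ideal $\widetilde{\mathfrak{m}}_{{\bf f}_{v_1}}$ gives $\Hom(\widetilde{\Gamma}(v_2),\kappa_{\mathfrak{p}})[\widetilde{\mathfrak{m}}_{{\bf f}_{v_1}}]=0$, hence $\widetilde{\Gamma}(v_2)^{\ab}/\widetilde{\mathcal{I}}_{{\bf f}_{v_1}}=0$ by Nakayama; finally one descends from $\widetilde{J}$ to $J$ via the Shimura-subgroup argument of \cite{Li}, the cokernel being prime to $p$. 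To repair your proposal you would need to make Ihara's lemma (Condition 3) an explicit input and run this covering/abelianization argument; as written, the decisive step is asserted rather than proved.
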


\begin{proof} We generalize the argument of \cite[Theorem 9.2]{BD}, using the 
version of Ihara's lemma for Shimura curves shown in the main result of \cite{Ch2}. 
Hence, keep the setup of Theorem \ref{cerednikvarshavsky}, with $v = v_1$. Let us write 
\begin{align*} M(\mathfrak{N}^+, v_1 \mathfrak{N}^{-})({\bf{C}})&= B'^{\times}\backslash 
\widehat{B}'^{\times} \times X /H' = \coprod_i \Gamma_i \backslash \mathfrak{H},
\end{align*} where the subgroups $\Gamma_i \subset B'^{\times}$ are the associated arithmetic
subgroups (see for instance the definition given in \cite[$\S$ 3]{CV}). By embedding $B' 
\longrightarrow B_{\mathfrak{p}}^{\times}$, we view these arithmetic subgroups $\Gamma_i$ as
(discrete) subgroups of $\GL(F_{\mathfrak{p}})$. Let $J(\N^+, v_1 \N^{-})^{ss} \otimes \kappa_{v_2^2}$
denote the subgroup of $J(\N^+, v_1 \N^{-}) \otimes \kappa_{v_2^2}$ generated by divisors supported 
on supersingular points. Since the composition of maps defining the homomorphism \begin{align*}
J(\N^+, v_1 \N^{-}) \otimes \kappa_{v_2^2}/ \mathcal{I}_{{\bf{f}}_{v_1}} &\longrightarrow 
\mathcal{O}_0/ \mathfrak{P}_n \end{align*} is surjective, it suffices to show that the 
image of $J(\N^+, v_1 \N^{-})^{ss} \otimes \kappa_{v_2^2}$ in $J(\N^+, v_1 \N^{-}) \otimes \kappa_{v_2^2}
/ \mathcal{I}_{{\bf{f}}_{v_1}}$ fills the whole group. To this end, let us define subgroups \begin{align*}
\Gamma_i(v_2) &= \left( \Gamma_i \left[ \frac{1}{v_2} \right]^{\times} / \mathcal{O}_F^{\times}\left[
\frac{1}{v_2}\right]  \right)^1, ~~~\Gamma(v_2) = \prod_i \Gamma_i(v_2)
\end{align*} where the superscript $1$ denotes elements of reduced norm $1$. Let $\widetilde{M}(\N^+,
v_1 \N^{-})$ denote the Shimura curve obtained from $M(\N^+. v_1 \N^{-})$ obtained by imposing extra
$H_{\mathfrak{p}}'^{1}$-level structure, with $\widetilde{J}(\N^+,v_1 \N^{-})$ its jacobian. Let us then 
write the corresponding arithmetic subgroups  that are congruent modulo $\mathfrak{p}$ to unipotent matrices as :
\begin{align*} \widetilde{\Gamma}_i(v_2)  &\subset \Gamma_i(v_2), ~~~\widetilde{\Gamma}(v_2) = \prod_i 
\widetilde{\Gamma}_i(v_2). \end{align*} Since the subgroups $\widetilde{\Gamma}_i(v_2)$ are torsionfree
by Lemma \ref{p-torsion}, a general theorem of Ihara (\cite[Theorem G]{Ih}) implies that there is a canonical 
isomorphism \begin{align}\label{ihara} \widetilde{J}(\N^+, v_1 \N^{-}) \otimes \kappa_{v_2^2}/  \widetilde{J}(\N^+, v_1 \N^{-})^{ss} 
\otimes \kappa_{v_2^2} &\cong \widetilde{\Gamma}(v_2)^{\ab}. \end{align} Here, $ \widetilde{\Gamma}(v_2)^{\ab}$
denotes the abelianization of $ \widetilde{\Gamma}(v_2)$. Since $v_2$ splits the quaternion algebra
$B'$ associated to $M(\N^+, v_1 \N^{-})$, we can fix an embedding \begin{align*}
\iota: B' &\longrightarrow B_{v_2}' \cong  M_2(F_{v_2}), \end{align*} to obtain an induced action of 
$B'$ on the Bruhat-Tits tree $\mathcal{T}_{v_2} = (\mathcal{V}(\mathcal{T}_{v_2}), \mathcal{E}(\mathcal{T}_{v_2}))$ 
of $B_{v_2}'^{\times}/F_{v_2}^{\times} \cong \PGL(F_{v_2})$. Let us then fix a vector of vertices $\mathfrak{v}_0 = \lbrace \mathfrak{v}_0^i
\rbrace_{i=1}^h$ in $\mathcal{V}(\mathcal{T}_{v_2})$ such that the stabilizer \begin{align*} \widetilde{\Gamma}_{\mathfrak{v}_0^i}(v_2)
&:= \Stab_{\mathfrak{v}_0^i} \left(\widetilde{\Gamma}_i(v_2)\right) \end{align*} for each index $i$ can be identified with the 
image of $\Gamma_i(v_2)$ in $\widetilde{\Gamma}_i(v_2)$ via $\iota$, so that we have an identification of Riemann
surfaces \begin{align*} \widetilde{M}(\N^+, v_1 \N^{-})({\bf{C}}) &= \coprod_i^h  \widetilde{\Gamma}_{\mathfrak{v}_0^i}(v_2)
\backslash \mathfrak{H}.  \end{align*} Fix a vector of oriented edges $\mathfrak{e}_0 = \lbrace \mathfrak{e}_0^i
\rbrace_{i=1}^h$ in $\mathcal{E}^*(\mathcal{T}_{v_2})$ such that the stabilizer \begin{align*} \widetilde{\Gamma}_{\mathfrak{e}_0^i}(v_2)
&:= \Stab_{\mathfrak{e}_0^i} \left(\widetilde{\Gamma}_i(v_2)\right) \end{align*} for each index $i$ can be identified with the 
subgroup of upper triangular matrices mod $v_2$ via $\iota$, so that we have an identification of Riemann
surfaces \begin{align*} \widetilde{M}(v_2 ; \N^+, v_1 \N^{-})({\bf{C}}) &= \coprod_i^h  \widetilde{\Gamma}_{\mathfrak{e}_0^i}(v_2)
\backslash \mathfrak{H}.  \end{align*} Let $\mathfrak{v}_1$ denote the vector of vertices $\lbrace \mathfrak{v}_1^i \rbrace_{i = 1}^h$
in $\mathcal{V}(\mathcal{T}_{v_2})$ such that $\mathfrak{v}_1^i = t(\mathfrak{e}_0^i)$ for each index $i$. Let us for ease of notation
write the products as \begin{align*} \widetilde{\Gamma}_{\mathfrak{v}_0} (v_2) &= \prod_{i=1}^h \widetilde{\Gamma}_{\mathfrak{v}_0^i}(v_2),
~ \widetilde{\Gamma}_{\mathfrak{e}_0} (v_2) = \prod_{i=1}^h \widetilde{\Gamma}_{\mathfrak{e}_0^i}(v_2), ~
\widetilde{\Gamma}_{\mathfrak{v}_1} (v_2) = \prod_{i=1}^h \widetilde{\Gamma}_{\mathfrak{v}_1^i}(v_2).\end{align*} Hence we obtain 
from Serre \cite[Proposition 1.3 $\S$ II.2.8]{Se} (with $i =1$, $M = \kappa_{\mathfrak{p}}$, and $G = \widetilde{\Gamma}(v_2)$) the exact sequence
\begin{align}\label{serre}\begin{CD} \Hom(\widetilde{\Gamma}(v_2), \kappa_{\mathfrak{p}}) @>>> \Hom(\widetilde{\Gamma}_{\mathfrak{v}_0}(v_2), 
\kappa_{\mathfrak{p}}) \oplus \Hom(\widetilde{\Gamma}_{\mathfrak{v}_1}(v_2), \kappa_{\mathfrak{p}}) \\ @>{d}>> \Hom(\widetilde{\Gamma}_{\mathfrak{e}_0}(v_2), 
\kappa_{\mathfrak{p}}). \end{CD} \end{align} Now, via duality we see that the map $d$ in $(\ref{serre})$ is the degeneracy map of Ihara's lemma for Shimura curves, as 
described for instance in \cite{Ch2} (cf. \cite[Theorem 2, p. 451]{DT} with \cite[Proposition 9.2]{BD}). Roughly, Ihara's lemma is the assertion that for 
any non-Eisenstein maximal ideal $\mathfrak{m}  \subset \mathbb{T}(v_2; \N^+, v_1 \N^{-})$, the natural degeneracy map \begin{align*}
\begin{CD}H^1(M(v_2 ; \N^+, v_1 \N^{-}), \kappa_{\mathfrak{p}})^{\oplus 2} @>{1_* \oplus \eta_{\mathfrak{p}}}>> 
H^1(M(\mathfrak{p} v_2; \N^+, v_1 \N^{-}), \kappa_{\mathfrak{p}})\end{CD} \end{align*} is injective after localization at $\mathfrak{m}$.
This conjecture is proved in certain cases in the unpublished manuscript \cite{Ch2}. We shall invoke this result in the following way. Recall
that we let $\mathfrak{m}_{{\bf{f}}_{v_1}} \supset \mathcal{I}_{{\bf{f}}_{v_1}}$ denote the maximal ideal of the Hecke algebra $\mathbb{T}(\N^+, v_1 \N^{-})$ 
corresponding to the mod $\mathfrak{P}_n$ eigenform ${\bf{f}}_{v_1}$ of Theorem \ref{raiseonefree}. Let us write $\widetilde{\mathfrak{m}}_{{\bf{f}}_{v_1}} 
\supset \widetilde{\mathcal{I}}_{{\bf{f}}_{v_1}}$ to denote the corresponding maximal ideal in the Hecke algebra $\widetilde{\mathbb{T}}(\N^+, v_1 \N^{-})$ associated to 
$\widetilde{M}(\N^+, v_1 \N^{-})$. Since $\widetilde{\mathfrak{m}}_{{\bf{f}}_{v_1}}$ is associated to an irreducible Galois representation, 
we know by Proposition \ref{jarvis} that $\widetilde{\mathfrak{m}}_{{\bf{f}}_{v_1}}$ is not Eisenstein. Hence, by Ihara's lemma for Shimura curves, the degeneracy map $d$ is injective after 
localization at $\widetilde{\mathfrak{m}}_{{\bf{f}}_{v_1}}$. We can then argue following \cite[Theorem 9.2, p. 59]{BD} that $ \Hom(\widetilde{\Gamma}(v_2), \kappa_{\mathfrak{p}}) 
[\widetilde{\mathfrak{m}}_{{\bf{f}}_{v_1}}]=0$. Hence, $\widetilde{\Gamma}(v_2)^{\ab}/\widetilde{\mathfrak{m}}_{{\bf{f}}_{v_1}} =0$, in which case it follows from Nakayama's lemma that \begin{align*}
\widetilde{\Gamma}(v_2)^{\ab}/\widetilde{\mathcal{I}}_{{\bf{f}}_{v_1}} =0. \end{align*} Hence, by $(\ref{ihara})$, the image of $\widetilde{J}(\N^+, v_1 \N^{-})^{ss} \otimes \kappa_{v_2^2}$ in 
$\widetilde{J}(\N^+, v_1 \N^{-}) \otimes \kappa_{v_2^2}/ \widetilde{\mathcal{I}}_{{\bf{f}}_{v_1}}$ fills the whole group. To complete the argument, consider the natural map
\begin{align}\label{natural} \widetilde{J}(\N^+, v_1 \N^{-}) \otimes \kappa_{v_2^2} &\longrightarrow J(\N^+, v_1 \N^{-}) \otimes \kappa_{v_2^2}.\end{align} A standard argument with Shimura subgroups shows 
that the cokernel of this map $(\ref{natural})$ has order prime to $p$ (see \cite{Li} with \cite[Lemma 7.20]{L30}). Roughly, the idea is the following. Let $\Pi$ denote the kernel of this natural map.
The criterion of \cite{Li} applied to each connected component of $M(\N^+, v_1 \N^{-})$ shows that there is an injective map \begin{align*}
 \Pi &\longrightarrow \Hom(\Gamma(v_2)/\Gamma_{\mathfrak{v}_0}(v_2), {\bf{S}} ), \end{align*} where ${\bf{S}}$ denotes the complex numbers of modulus $1$. It is 
 then easy to see that the order of $\Pi$ must be prime to $p$. Hence by duality, the cokernel must have over prime to $p$. Hence, the composition of $(\ref{natural})$ 
 with the projection \begin{align*} J(\N^+, v_1 \N^{-}) \otimes \kappa_{v_2^2} \longrightarrow  J(\N^+, v_1 \N^{-}) \otimes \kappa_{v_2^2}/ \mathfrak{P}_n \end{align*} is surjective. Now, since
we have already  shown that the natural map \begin{align*} \widetilde{J}(\N^+, v_1 \N^{-})^{ss} \otimes \kappa_{v_2^2} &\longrightarrow 
\widetilde{J}(\N^+, v_1 \N^{-}) \otimes \kappa_{v_2^2}/ \widetilde{\mathcal{I}}_{{\bf{f}}_{v_1}}\end{align*} is surjective, we see that the natural map \begin{align*}
J(\N^+, v_1 \N^{-})^{ss} \otimes \kappa_{v_2^2} &\longrightarrow  J(\N^+, v_1 \N^{-})  \otimes \kappa_{v_2^2} / \mathcal{I}_{{\bf{f}}_{v_1}}\end{align*} is surjective, 
as required. \end{proof} Let us for the record state the version of Ihara's lemma for Shimura curves over totally real fields used in the
proof of Propostion \ref{surjective} above.

\begin{hypothesis}[Ihara's lemma for Shimura curves.]\label{ihara}
For any non-Eisenstein maximal ideal $\mathfrak{m}  \subset \mathbb{T}(v_2; \N^+, v_1 \N^{-})$, the natural degeneracy map \begin{align*}
\begin{CD}H^1(M(v_2 ; \N^+, v_1 \N^{-}), \kappa_{\mathfrak{p}})^{\oplus 2} @>{1_* \oplus \eta_{\mathfrak{p}}}>> 
H^1(M(\mathfrak{p} v_2; \N^+, v_1 \N^{-}), \kappa_{\mathfrak{p}})\end{CD} \end{align*} is injective after localization at $\mathfrak{m}$.
\end{hypothesis} 

\end{remark}

\section{Construction of the Euler system}

We first review the theory of CM points on Shimura curves 
over totally real fields, giving a characterization of the images
of these points under reduction modulo a ramified prime for 
the underlying quaternion algebra. We then review the notion
of the Hodge embedding, then use this to define suitable classes
from the images of CM points in the $v$-adic uniformization.

\begin{remark}[Setup.]

Fix a finite prime $v \subset \mathcal{O}_F$.
Recall that we fixed a totally imaginary quadratic extension
$K$ of $F$. Let us suppose that $v$ remains inert in $K$. Writing $K_v$ 
to denote the completion of $K$ at the prime above $v$ in $K$, and 
$F_{v^2}$ to denote the quadratic unramified extension of $F_v$, we 
have isomorphism of fields $K_v \cong F_{v^2}$. Let us then fix such 
an isomorphism $K_v \cong F_{v^2}$. \end{remark}

\begin{remark}[Complex points.] 

Suppose in general that we have any Shimura curve $M_H$, as defined 
above. Given elements $b \in \widehat{B}^{\times}$ and $z \in X$, we write 
$\left[b, z\right]_H$ to denote the point of $M_H({\bf{C}})$ 
represented by the pair $(b, z)$. \end{remark}

\begin{remark}[Embeddings.] 

Let us return to the setup of Theorem \ref{cerednikvarshavsky}. Hence,
no prime dividing $\disc(D)$ is split in $K$. 
Under this assumption, there exists an injective $F$-algebra homomorphism
$\iota: K \longrightarrow D$ (see \cite{Vi}). Let us fix 
such a homomorphism $\iota$, writing $\iota_v: K \otimes_F F_v 
\longrightarrow D_v$ to denote its component at a prime $v$ of $F$,
and $\widehat{\iota}: \widehat{K} \longrightarrow \widehat{D}$ to 
denote its adelization. Let us assume that for our fixed prime $v$, we have 
the identification $\iota_v^{-1}(U_v) = \mathcal{O}_{K_v}^{\times}$. 
Under this assumption, there exists an $F$-algebra 
injection $\iota':K \longrightarrow B'$ such that ${\iota_v'}^{-1}
(\mathcal{O}_{B'_v}) = \mathcal{O}_{K_v}$. Let us fix such an embedding $\iota'$. 
Since the Skolem-Noether theorem implies that any two local embeddings 
$\iota_v$, $\iota'_v$ are conjugate by an element of $B_v^{' \times}$, we can 
an will assume that the homomorphisms $\varphi$, $\iota$ and $\iota'$ are 
compatible outside of $v$ in the sense that  ${\iota'}^v$ is given by 
the composition \begin{align*}\begin{CD}\widehat{K}^v @>{\iota^v}>> 
\widehat{D}^{v} @>{\varphi}>> \widehat{B}'^v.\end{CD}\end{align*}
\end{remark}

\begin{remark}[CM points.] 

Fix an embedding $K \rightarrow {\bf{C}}$ that extends $\tau_1: F 
\rightarrow {\bf{R}} \subset {\bf{C}}$. The action of $\iota_{\tau_1}'(K^{\times}) 
\subset \iota_{\tau_1}'(K_{\tau_1}^{\times}) \subset {B'}_{\tau_1}^{\times} \cong \GL({\bf{R}})$
on $X = \mathfrak{H}^{\pm}={\bf{C}}-{\bf{R}}$ fixes exactly two points, 
of which one lies in the complex upper half plane $\mathfrak{H}^{+}$.
Let us write $z' = z'_{\iota'}$ to denote this point. We then 
define the set of points with complex multiplication (CM) by $K$ 
on $M_{H'}({\bf{C}})$ to be: \begin{align*}
\CM(M_{H'},K) &= \lbrace \left[b', z'\right]_{H'}: 
b' \in \widehat{B}'^{\times} \rbrace  \subset M_{H'}({\bf{C}}).\end{align*}
By Shimura's reciprocity law, the set $\CM(M_{H'}, K)$ is contained in 
$ M_{H'}(K^{\ab})$, with $G_K^{\ab}$ acting on $\CM(M_{H'},K)$ 
by the rule \begin{align}\label{galoisaction} \forall a \in \widehat{K}^{\times},
~~~  \rec_K(a) \left[ b', z' \right]_{H'}
&= \left[ \widehat{\iota}'(a)b', z'\right]_{H'}.
\end{align} Here, \begin{align*}
\rec_K: {\bf{A}}_K^{\times}/K^{\times} &\cong G_K^{\ab}\end{align*}
denotes the reciprocity map of class field theory, normalized to 
send uniformizers to their corresponding geometric Frobenius elements. 
\end{remark}

\begin{remark}[CM points of a given conductor.] 

Recall that given an ideal $\mathfrak{c} \subset \mathcal{O}_F$, we let 
$\mathcal{O}_{\mathfrak{c}} = \mathcal{O}_F + \mathfrak{c}\mathcal{O}_K$ 
denote the $\mathcal{O}_F$-order of conductor $\mathfrak{c}$ in $K$. 
Given a maximal order $R' \subset B'$, we call an embedding 
$\iota: K \longrightarrow B'$ an {\it{optimal embedding of 
conductor $\mathfrak{c}$ into $R'$}} if \begin{align*} 
\iota(\mathcal{O}_{\mathfrak{c}}) = \iota(K) \cap R'.
\end{align*} Given a maximal order $R' \subset B'$ and an element 
$b' \in \widehat{B}'^{\times}$, let $R'_{b'}\subset B'$ denote
the maximal order defined by ${b'}^{-1}R'b'\cap B'$. We say that a
point $\left[b', z' \right]_{H'} = \left[b', z'_{\iota'} \right]_{H'}$ 
in $ \CM(M_{H'},K)$ has {\it{conductor $\mathfrak{c}$}} if the 
associated embedding $\iota':K \longrightarrow B' $ is an optimal
embedding of conductor $\mathfrak{c}$ (see \cite[Ch. III]{Vi}). It 
is then simple to see from class field theory that a CM point of 
conductor $\mathfrak{c}$ in $\CM(M_{H'},K)$ is defined over the 
ring class field $K[\mathfrak{c}]$ of conductor $\mathfrak{c}$ 
over $K$. \end{remark}

\begin{remark}[CM points in the $v$-adic uniformization.] 

We have the following description of $\CM$ points by $K$ on 
$M_{H'}({\bf{C}})$ in the $v$-adic uniformization. Let \begin{align*}
\CM(M_{H'},K)_{\operatorname{v-unr}} &= \lbrace \left[b', z'\right]_{H'}: 
b' \in \widehat{B}'^{\times}, b'_v=1 \rbrace \subset \CM(M_{H'},K)\end{align*}
to be the subset of $\CM$ points by $K$ on $M_{H'}({\bf{C}})$ that 
are unramified outside of $v$. The the action of $G_K^{\ab}$ on this 
set of points is given by the rule \begin{align*} \forall a \in 
\widehat{K}^{\times}, ~~~  \rec_K(a) \left[ b', z' \right]_{H'} 
&= \left[ \widehat{\iota}'(a^{v})b', z' \right]_{H'}.
\end{align*} Here, $a^{v}$ denotes the projection of $a$ 
to $\widehat{K}^{v  \times} = \lbrace x \in \widehat{K}^{\times}: x_v =1 \rbrace $.
From this action, we deduce (cf. \cite[1.8.2]{Nek}) that a point 
$x = \left[ b', z'\right]_{H'} \in \CM(M_{H'},K)_{\operatorname{}v-unr}$ 
is defined over the finite abelian extension $K(x)$ of $K$
characterized by the isomorphism \begin{align*} \rec_K:  \widehat{K}^{\times}/
K^{\times} \widehat{\iota}'^{-1}(b'H'b') &\cong \Gal(K(x)/K). \end{align*}
Moreover, as observed in \cite[1.8.2]{Nek}, the prime $v$ splits
completely in $K(x)$ because $\iota_v'(\mathcal{O}_{K_v}^{\times}) \subset 
H_v' = b_v' H_v b_v^{-1}$. Fix an embedding $K_v \rightarrow F_v^{\unr}$,
equivalently an isomorphism $K_v \cong F_{v^2}$ over $F_v$. This 
choice determines one of two fixed points for the action of
$\iota_v(K^{\times}) \subset \iota_v(K_v^{\times})\subset \GL(F_v)$ on
${\bf{P}}^1(K_v)-{\bf{P}}^1(F_v)$, call it $z$ ($=z_{\iota}$). The 
image of $\CM(M_{H'},K)_{\operatorname{v-unr}} $ in $M_{H'}(K_v)$ 
according to either Theorem \ref{cv} is then given by 
\begin{align*} \CM(M_{H'}, K)_{\operatorname{v-unr}} =
\lbrace \left[ d, z \right]_{U^v}:
d \in \widehat{D}^{\times}, d_v = 1 \rbrace \subset M_{H'}(K_v).
\end{align*} 

Let us now write $\mathcal{G}_{v} = (\mathcal{V}(\mathcal{G}_{v}),
\mathcal{E}(\mathcal{G}_{v}))$ to denote the dual graph of 
the special fibre ${\bf{M}}_{H'} \otimes \kappa_{v^2}$, which
is just the special fibre of the basechange to $\mathcal{O}_{F_{v^2}}$ 
of the integral model ${\bf{M}}_{H'}$ over $\mathcal{O}_{F_v}$. 
Let \begin{align*}
\red_v: M_{H'}\otimes_F F_{v^2} \longrightarrow 
\mathcal{V}(\mathcal{G}_{v}) \cup \mathcal{E}(\mathcal{G}_{v})
\end{align*} denote the map that sends a point $x$ to 
either the connected component containing its image in 
${\bf{M}}_{H'} \otimes \kappa_{v^2}$, or else to its image
in ${\bf{M}}_{H'}\otimes \kappa_{v^2}$ (a singular point). 

\begin{proposition}\label{modv} We have that
$\red_v \left( \CM(M_{H'}, K)_{\operatorname{v-unr}}\right) 
\subset \mathcal{V}(\mathcal{G}_{v})$. \end{proposition}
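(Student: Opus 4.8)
The plan is to exploit the $v$-adic uniformization of Theorem \ref{cerednikvarshavsky} together with the explicit description of CM points in the $v$-adic uniformization given just above. Recall that a point $x = [b', z']_{H'} \in \CM(M_{H'},K)_{\operatorname{v-unr}}$ has $b'_v = 1$, and that under the canonical isomorphism identifying the completion of ${\bf{M}}_{H'}$ along its closed fibre with $V_{H'} = \GL(F_v)\backslash \widehat{\Omega}^{\unr} \times D^{\times}\backslash\widehat{D}^{\times}/U^v$, the image of $x$ in $M_{H'}(K_v)$ takes the form $[d, z]_{U^v}$ with $d \in \widehat{D}^{\times}$, $d_v = 1$, where $z = z_\iota$ is one of the two fixed points of $\iota_v(K_v^{\times})$ acting on ${\bf{P}}^1(K_v) - {\bf{P}}^1(F_v) = \Omega({\bf{C}}_v)$. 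So the question reduces to a purely local one: where does the point $z \in \Omega(K_v)$ reduce in the special fibre of $\widehat{\Omega}^{\unr}$, i.e. does $\red_v(z)$ land on a vertex of the Bruhat-Tits tree $\Delta$ (a smooth point of a component) or on an edge (a singular double point)?

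The key step is therefore to show that $z = z_\iota$, the fixed point of the torus $\iota_v(K_v^{\times}) \subset \GL(F_v)$, reduces to a \emph{vertex} of $\Delta$. Since we have chosen our identification so that $\iota_v^{-1}(U_v) = \mathcal{O}_{K_v}^{\times}$ and $K_v \cong F_{v^2}$ is the unramified quadratic extension of $F_v$, the embedding $\iota_v$ is an optimal embedding of $\mathcal{O}_{K_v}$ into the maximal order $\M(\mathcal{O}_{F_v})$. The torus $\iota_v(K_v^{\times})/F_v^{\times}$ then fixes exactly one vertex of $\Delta$ (the homothety class of the lattice $\mathcal{O}_{K_v} \subset K_v \cong F_v^2$, on which $\mathcal{O}_{K_v}^{\times}$ acts by the regular representation), and no edge — this is the standard dichotomy: an unramified torus fixes a unique vertex and a ramified torus fixes a unique edge (midpoint). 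Concretely, the reduction map $\red_v: \Omega(K_v) \to \mathcal{V}(\Delta) \cup \mathcal{E}(\Delta)$ is $\GL(F_v)$-equivariant, and $z$ is $\iota_v(K_v^{\times})$-fixed, so $\red_v(z)$ must be a fixed cell of this torus; since the torus is unramified, that cell is the distinguished vertex, not an edge. Hence $\red_v(x)$ lies in $\mathcal{V}(\mathcal{G}_v)$ for every $x \in \CM(M_{H'},K)_{\operatorname{v-unr}}$, using the identification $(\ref{MK})$ of $\mathcal{G}_v$ with $\GL(F_v)^+\backslash\Delta \times D^{\times}\backslash\widehat{D}^{\times}/U^v$ (and checking that the passage through the connected-component quotient does not move a vertex-reduction to an edge-reduction, which is immediate since that quotient acts on $\mathcal{V}(\Delta)$ by simplicial automorphisms).

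The main obstacle I expect is bookkeeping rather than mathematical depth: one must carefully match the two descriptions of the special fibre — the Cerednik-Varshavsky formal model $\GL(F_v)\backslash\widehat{\Omega}^{\unr} \times D^{\times}\backslash\widehat{D}^{\times}/U^v$ on the one hand, and the dual-graph presentation $(\ref{MK})$ via Theorem \ref{mumfordkurihara} on the other — and verify that under this matching the point $[d,z]_{U^v}$ with $d_v = 1$ genuinely reduces to the component indexed by the fixed vertex of $\iota_v(K_v^{\times})$ rather than to a nearby intersection point. One also needs the compatibility of the embeddings $\iota$, $\iota'$ and $\varphi$ outside $v$ (which we arranged above via Skolem-Noether) to know that the $D^{\times}\backslash\widehat{D}^{\times}/U^v$-coordinate of the reduction is well-defined and unaffected. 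Once these identifications are in place, the statement follows, and this is essentially the totally-real generalization of \cite[Lemma 8.1]{BD} (cf. \cite[1.8.2]{Nek}), so I would cite those for the parallel argument and give only the extra verification needed when $d > 1$.
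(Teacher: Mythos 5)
Your argument is correct, but the key local step is carried out differently from the paper's own proof, so it is worth comparing the two. You reduce, exactly as the paper does, to the local question of where the torus-fixed point $z = z_{\iota}$ of $\iota_v(K_v^{\times})$ reduces in the special fibre of $\widehat{\Omega}$, using the Cerednik--Drinfeld--Varshavsky description and the fact that the $\operatorname{v-unr}$ CM points appear in the uniformization as classes $[d,z]_{U^v}$ with $d_v = 1$. At that point the paper argues via rationality: since $v$ splits completely in the field of definition $K(x)$, the point is $F_{v^2}$-rational, and then it invokes Mumford's canonical correspondence attaching to the triple of distinct points $z_1, z_2, \infty$ of ${\bf{P}}^1(F_{v^2})$ a unique vertex of the Bruhat--Tits tree, combined with the Mumford--Kurihara description of the special fibre (this is the route of \cite[$\S$5]{BD} and \cite[5.2]{Lo}). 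You instead use $\PGL(F_v)$-equivariance of the reduction map together with the fixed-point dichotomy for non-split tori on the tree: the unramified torus $K_v^{\times}/F_v^{\times}$ fixes exactly one vertex and stabilizes no edge (the residue torus acts simply transitively on the $q+1$ edges at the fixed vertex, and fixing two vertices or an edge would contradict uniqueness), so the reduction, being torus-fixed, must be that vertex. Both are standard and correct; your version is more self-contained and makes transparent that inertness of $v$ in $K$ (i.e. unramifiedness of $K_v/F_v$) is precisely what forces nonsingular reduction, whereas the paper's version leans on the rationality statement $K(x)_v \cong K_v$ and on Mumford's triple-to-vertex construction, staying closer to the references it generalizes. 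Your closing remarks on matching the two descriptions of the special fibre and on the passage to the quotient graph address exactly the bookkeeping the paper leaves implicit, so no gap remains.
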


\begin{proof} The result seems to be
well known (see \cite[$\S$ 5]{BD} or \cite[5.2]{Lo}). 
That is, fix a point $x \in \CM(M_{H'}, K)_{\operatorname{v-unr}}$.
We saw above that $x$ is rational over the abelian extension
$K(x)$, where $v$ splits completely. Hence, writing $K(x)_v$
to denote the localization of $K(x)$ at any fixed prime above 
$v$ in $K(x)$, we have the identification $K(x)_v \cong K_v$.
In particular, we may view $x$ as a point in $M_{H'}(K_v) 
\cong M_{H'}(F_{v^2})$. It therefore makes sense to compute 
the image of $x$ in ${\bf{M}}_{H'} \otimes \kappa_{v^2}$.  
 
Now, recall that the image of $x$ in the $v$-adic uniformization
${\bf{M}}_{H'} \otimes \kappa_{v^2}$ is parametrized by the class 
of a pair $(d, \iota)$, where $d \in \widehat{D}^{\times v}$, and 
$\iota: K \rightarrow D$ is a suitably chosen $F$-algebra injection. 
The action of $\iota(K^{\times}) \subset \iota_v(K_v^{\times}) \subset 
D_v^{\times} \cong \GL(F_v)$ on $\Omega({\bf{C}}_v)$ fixes two distinct 
points, $z_1 = z_{1, \iota}$ and $z_2 = z_{2, \iota}$ say.  These
points are contained in ${\bf{P}}^1({K_v}) - {\bf{P}}^1(F_v) \cong
{\bf{P}}^1(F_{v^2}) - {\bf{P}}^1(F_v)$. Let $z_3$ denote the 
point at infinity in ${\bf{P}}^1(K_v) \cong {\bf{P}}^1(F_{v^2})$.
As explained in Mumford \cite[$\S$ 1]{Mum}, any triple of 
distinct points $z_1, z_2, z_3$ in ${\bf{P}}^1(F_{v^2})$ corresponds
canonically to a unique vertex $\mathfrak{v}_{z_1, z_2, z_3}$ in
the Bruhat-Tits tree of $\SL(F_{v^2})$. The inclusion 
$\red_v(\CM(M_{H'}, K)_{\operatorname{v-unr}}) \subset \mathcal{V}
(\mathcal{G}_{v})$ can then be deduced from the Mumford-Kurihara
uniformization of ${\bf{M}}_{H'} \otimes \kappa_{v^2}$ (Theorem
\ref{mumfordkurihara}) with $(\ref{cv})$.   \end{proof}
Note that since any $n$-admissible prime $v \subset \mathcal{O}_F$
with respect to $\ff$ splits completely in $K_{\mathfrak{p}^{\infty}}$,
the argument of Propostion \ref{modv} shows that the CM points
in $M(K_{\mathfrak{p}^{\infty}})$ also satisfy this property under reduction
mod $v$.\end{remark}

\begin{remark}[Hodge classes.]

Let us now explain how divisors on $M_{H'}(\overline{F})$
give rise to classes in the associated jacobian $J_{H'}(\overline{F})$,
following \cite[$\S$5]{Lo} and \cite[$\S$3]{L30}. Recall that
$J_{H'}$ has the structure of a ${\bf{T}}_0(\mathfrak{N}^{+},
v\mathfrak{N}^{-})$-module, as explained above. Following 
Zhang \cite[4.1]{Z0}, we make the following

\begin{definition} The {\it{Hodge class of $M_{H'}$}} is the unique
class $\xi \in \Pic(M_{H'})$ such that:

\begin{itemize}
\item[(i)] The degree on $\xi$ on each connected component of
$M_{H'}$ is one.
\item[(ii)] The action of the operator
$T_{\mathfrak{q}} \in {\bf{T}}_0(\mathfrak{N}^{+}, v\mathfrak{N}^{-})$
for each prime $\mathfrak{q} \nmid \mathfrak{N}$ is 
given by mutliplication by ${\bf{N}}(\mathfrak{q}) +1$. \end{itemize}
\end{definition} The existence and uniqueness of such a class are
shown by Zhang in \cite[4.1]{Z0}. Let $\Pic^{\operatorname{Eis}}(M_{H'})$ 
denote the subgroup of $\Pic(M_{H'})$ generated by divisors whose restriction to
each connected component of $M_{H'}$ is given by a multiple
of the Hodge class $\xi$. Zhang \cite{Z}[6.1] shows that
there is a decomposition \begin{align*} \Pic(M_{H'}) &=
\Pic^0(M_{H'}) \oplus \Pic^{\operatorname{Eis}}(M_{H'}).\end{align*}
Using an argument of Ribet \cite[Theorem 5.2 (c)]{Ri2}, or its
subsequent generalization by Jarvis in \cite[$\S$ 3]{Jar2}, it 
can then be shown that the ${\bf{T}}_0(\mathfrak{N}^{+},
v\mathfrak{N}^{-})$-module  $\Pic^{\operatorname{Eis}}(M_{H'})$ 
is Eisenstein. Since $\mathcal{I}_{{\bf{f}}_v} \subset 
{\bf{T}}_0(\mathfrak{N}^{+}, v\mathfrak{N}^{-})$ is 
not Eisenstein, it can then be deduced by a standard argument that the natural inclusion 
$\Pic^0(M_{H'}) \subset \Pic(M_{H'})$ induces 
an isomorphism \begin{align*} \Pic^0(M_{H'})/
\mathcal{I}_{{\bf{f}}_v} \cong \Pic(M_{H'})/I_{{\bf{f}}_v}.
\end{align*} See for instance \cite[2.5]{L30} (where there is 
a typo on line 21 of p. 15). \end{remark}

\begin{remark}[Construction of classes.]

Let us now assume that $M_{H'} = M(\mathfrak{N}^+, v\mathfrak{N}^{-})$
as above, where $v \subset \mathcal{O}_F$ is an $n$-admissible prime 
with respect to $\ff$. Fix a sequence of points $\lbrace P_m \rbrace_{m \geq 1}$,
where each $P_m$ is a CM point of conductor $\mathfrak{p}^m$ in 
$\CM(M_{H'}, K)$. Let us assume that this sequence is compatible
in the following sense. Each point $P_m$ is given by the class of some 
pair $(g_m', z') = (g_m', z'_{\iota'})$, where $\iota'$ is an optimal
embedding of $\mathcal{O}_{\mathfrak{p}^m} \subset \mathcal{O}_K$ into 
$R_{g_m'} = {g_m'}^{-1}Rg_m' \subset B'$. By assumptions made made above and
throughout, we can fix an isomorphism $\iota_{\mathfrak{p}}: B'_{\mathfrak{p}} \cong
\M(F_{\mathfrak{p}})$. Following the explanation of \cite[3.3]{Lo2},
we can then associate to the the local (Eichler) order 
$(R_{g_m'})_{\mathfrak{p}} \subset B'_{\mathfrak{p}}$ a directed edge $e_{g_m'}
= (s(e_{g_m'}), t(e_{g_m'}))$ in the Bruhat-Tits tree of $B_{\mathfrak{p}}^{' \times}
/F_{\mathfrak{p}}^{\times} \cong \PGL(F_{\mathfrak{p}})$. We then say that 
the sequence $\lbrace P_m \rbrace_{m \geq 1} $  is {\it{compatible}} if $t(e_{g_m'}) = 
s(e_{g_{m+1}'})$ for all $m \geq 1$. Let us now fix such an oriented
sequence $\lbrace P_m \rbrace_{m \geq 1}$. For each point $P_m$ in the 
sequence, let us write $P_m^*$ to 
denote the image of $\alpha_{\mathfrak{p}}^{-m}P_m$ in 
$J_{H'}(K[\mathfrak{p}^m])/\mathcal{I}_{{\bf{f}}_v}$. The points 
$P_m^*$ are norm compatible, and their images under the Kummer maps
\begin{align*}\begin{CD} J_{H'}(K[\mathfrak{p}^m])/\mathcal{I}_{{\bf{f}}_v}
@>{\mathfrak{K}}>> H^1(K[\mathfrak{p}^m], \Ta_p(J_{H'})/\mathcal{I}_{{\bf{f}}_v})
\end{CD}\end{align*} give rise to a sequence of classes $\zeta_m[v]$
that are compatible under corestriction maps. Under the isomorphism
of Corollary \ref{galoisid} (ii), these classes give rise to an element 
$\zeta[v]$ in the cohomology group $\widehat{H}^1(K[\mathfrak{p}^{\infty}], 
T_{{\bf{f}},n}).$ Let $\zeta(v)$ denote image of this class under corestriction from 
$K[\mathfrak{p}^{\infty}]$ to $K_{\mathfrak{p}^{\infty}}$. Hence, we have constructed
from our compatible system of CM points $\lbrace P_m \rbrace_{m \geq 1}$
in $\CM(M_{H'},K)$ a class \begin{align}\label{zetaclass} \zeta(v) 
\in \widehat{H}^1(K_{\mathfrak{p}^{\infty}}, T_{{\bf{f}},n}).\end{align}
\end{remark}

\section{Explicit reciprocity laws}

Putting everything together, we may now at last deduce the 
first and second explicit reciprocity laws introduced above.

\begin{remark}[The first explicit reciprocity law.]

Keep all of the notations and hypotheses above. 
Hence, ${\bf{f}} \in \mathcal{S}_2(\mathfrak{N}^{+},
\mathfrak{N}^{-})$ is a $\mathfrak{p}$-ordinary eigenform, 
and $\zeta(v)$ is the class of $\widehat{H}^1(K_{\mathfrak{p}^{\infty}}, T_{{\bf{f}},n})$
constructed above.

\begin{theorem}\label{ERL1}\emph{(The first explicit reciprocity law)}
Keep all of the hypotheses of Theorem \ref{raiseonefree} and Corollary \ref{galoisid}. 
Then, $\vartheta_v\left(\zeta(v)\right) =0$. Moreover, the equality \begin{align*}\partial_v\left(
\zeta(v) \right) &= \mathcal{L}_{{\bf{f}}}
\end{align*} holds in $\widehat{H}_{\sing}^1(K_{\mathfrak{p}^{\infty}, v}, T_{{\bf{f}},n})
\cong \Lambda/\mathfrak{P}^n,$ up to multiplication
by elements of $\mathcal{O}^{\times}$ or
$G_{\mathfrak{p}^{\infty}}$.
\end{theorem}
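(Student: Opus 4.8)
The plan is to reduce the first explicit reciprocity law to a computation in the $v$-adic (Cerednik--Varshavsky) uniformization of the Shimura curve $M(\mathfrak{N}^+, v\mathfrak{N}^-)$, following the template of Bertolini--Darmon \cite[\S5 and \S8]{BD} and its refinement in Pollack--Weston \cite{PW}, but with all local computations performed on the totally definite quaternion algebra $D$ of discriminant $\mathfrak{N}^-$. First I would establish the vanishing $\vartheta_v(\zeta(v)) = 0$: the class $\zeta(v)$ is built from CM points $P_m$ of conductor $\mathfrak{p}^m$ on $M(\mathfrak{N}^+, v\mathfrak{N}^-)$, and by Proposition \ref{modv} these points reduce mod $v$ to vertices of the dual graph $\mathcal{G}_v$, hence specialize to the component group; but the \emph{unramified} projection $\vartheta_v$ of the Kummer image measures the finite (good-reduction) part at $v$, which vanishes because $v \mid \disc(B')$ forces purely toric reduction at $v$ and the CM points land on the identity component's toric part — concretely, the argument is that the localization at $v$ of the Kummer class of a CM point factors through $\widehat{\Phi}_v/\mathcal{I}_{{\bf f}_v}$ via the commutative diagram (\ref{eulersystemCD}) of Corollary \ref{ESid}, whose singular leg is the relevant one. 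This is essentially \cite[Proposition 9.5 / Lemma 9.6]{BD} and \cite[Proposition 4.4--4.5]{PW}, generalized.

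Next, for the main equality $\partial_v(\zeta(v)) = \mathcal{L}_{\bf f}$ in $\widehat{H}^1_{\sing}(K_{\mathfrak{p}^\infty,v}, T_{{\bf f},n}) \cong \Lambda/\mathfrak{P}^n$: by Corollary \ref{ESid}(ii), $\partial_v(\zeta(v))$ is identified with $\widehat{\partial}_v$ of the norm-compatible system of Kummer images of the $\alpha_{\mathfrak{p}}^{-m} P_m$, i.e. with the image of the CM divisors under the completed specialization map to $\widehat{\Phi}_v(\mathfrak{N}^+, v\mathfrak{N}^-)/\mathcal{I}_{{\bf f}_v} \cong \Lambda/\mathfrak{P}^n$. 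By Proposition \ref{cmpointreduction}, this specialization is computed by $\omega_v \circ \red_v$, so the task becomes: compute $\red_v(P_m)$ as an element of ${\bf Z}[\mathcal{V}(\mathcal{G}_v)]^0$ and push it through $\omega_v$. Using the $v$-adic uniformization (\ref{cv}) together with the strong-approximation identification $\eta_{\mathfrak{p}}$, each $P_m$ corresponds to a vertex in $D^\times\backslash\widehat{D}^\times/U$ determined by the optimal embedding $\iota$ via $\widehat{\iota}$ and $\rec_K$ — exactly the data defining the pairing $[~,~]_\Phi$ and the measure $\vartheta_\Phi$ used to build $\mathcal{L}_\Phi = \mathcal{L}_{\bf f}$ in \S3. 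The factor $\alpha_{\mathfrak{p}}^{-m}$ on $P_m$ matches the factor $\alpha_{\mathfrak{p}}^{-j}$ in the definition of $\vartheta_\Phi(\sigma\mathcal{U}_j)$, so assembling the $m \to \infty$ inverse limit of $\omega_v(\red_v(\alpha_{\mathfrak p}^{-m}P_m))$ reproduces, term by term, the completed group-ring element $\mathcal{L}_{\bf f} \in \Lambda/\mathfrak{P}^n$ — up to the ambiguities $\mathcal{O}^\times$ (choice of the isomorphism $\Ta_p J(\mathfrak N^+, v\mathfrak N^-)/\mathcal I_{{\bf f}_v} \cong T_{{\bf f},n}$ and of Hodge normalization) and $G_{\mathfrak{p}^\infty}$ (choice of base CM point / edge sequence $\{\mathfrak{e}_j\}$). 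The matching of the Hecke action on $\Phi_v$ under $\omega_v$ with the Hecke action defining $\Phi$ is Lemma \ref{relations} and Corollary \ref{galoisid}.

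The main obstacle I expect is the precise bookkeeping in the identification $\widehat{\Phi}_v(\mathfrak{N}^+, v\mathfrak{N}^-)/\mathcal{I}_{{\bf f}_v} \cong \widehat{H}^1_{\sing}(K_{\mathfrak{p}^\infty,v}, T_{{\bf f},n}) \cong \Lambda/\mathfrak{P}^n$: one must check that the Edixhoven-style comparison diagram relating Raynaud's $\mathcal{X}_v \cong {\bf Z}[\mathcal{E}(\mathcal{G}_v)]^0$ description (Corollaries \ref{GOODid}, \ref{BADid}) to Grothendieck's monodromy sequence (Theorem \ref{SGA7}) is compatible with the Galois and Hecke structures \emph{and} with the $\Lambda$-module structure coming from the splitting of $v$ in $K_{\mathfrak{p}^\infty}$, and that the orientation conventions on $\mathcal{G}_v$ fixed earlier are the ones making $\omega_v$ send $\red_v(P_m)$ to the ``correct'' sign of the measure. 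In other words, the conceptual content is already packaged in Corollary \ref{ESid} and Proposition \ref{cmpointreduction}; the work is to verify that the CM-point construction of \S11 feeds into that package so that the resulting element of $\Lambda/\mathfrak{P}^n$ is literally $\mathcal{L}_{\bf f}$ and not some twist or dual of it. A secondary point requiring care is the passage to the $\mathfrak{p}$-anticyclotomic tower: one constructs classes over $K[\mathfrak{p}^\infty]$ first and then corestricts to $K_{\mathfrak{p}^\infty}$, and one must confirm this corestriction is compatible with the inverse limit defining $\mathcal{L}_{\bf f}$ as the image in $\Lambda = \mathcal{O}[[G_{\mathfrak{p}^\infty}]]$ of an element of $\mathcal{O}[[G[\mathfrak{p}^\infty]]]$ — which is where Hypothesis \ref{galrep}(ii) (uniqueness of the prime above $p$) and the finiteness of $G[\mathfrak p^\infty]_{\tors}$ are used. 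I would therefore structure the proof as: (1) recall the diagram (\ref{eulersystemCD}) and reduce to computing $\widehat\partial_v$ of the CM system; (2) apply Proposition \ref{cmpointreduction} to rewrite this as $\omega_v \circ \red_v$; (3) translate $\red_v(P_m)$ into the quaternionic data via $v$-adic uniformization and strong approximation; (4) recognize the resulting inverse limit as the defining formula for $\mathcal{L}_{\bf f}$; (5) note the $\vartheta_v$-vanishing as the toric-reduction statement; and cite \cite{BD}, \cite{PW}, \cite{Lo2}, \cite{Nek} at each step where the $F = {\bf Q}$ argument carries over verbatim.
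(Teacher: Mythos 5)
Your proposal follows essentially the same route as the paper's proof: reduce via the commutative diagram of Corollary \ref{ESid}(ii) to computing $\widehat{\partial}_v$ of the norm-compatible CM system, use Propositions \ref{modv} and \ref{cmpointreduction} to express this as $\omega_v \circ \red_v$ on vertices of the dual graph, then translate through the adelic description of $\mathcal{V}(\mathcal{G}_v)$ and the strong-approximation bijection $\eta_{\mathfrak{p}}$ to recover the defining formula $\alpha_{\mathfrak{p}}^{-m}[\sigma, \mathfrak{e}_j]_{\Phi}$ for $\mathcal{L}_{\bf{f}}$, up to the stated $\mathcal{O}^{\times}$ and $G_{\mathfrak{p}^{\infty}}$ ambiguities. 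The points you flag as requiring care (compatibility of the Raynaud/Grothendieck identifications with Hecke and $\Lambda$-structures, and the vanishing of $\vartheta_v$ via toric reduction) are exactly the ingredients the paper packages into Corollaries \ref{galoisid} and \ref{ESid}, so the proposal is correct and matches the paper's argument.
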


\begin{proof} See \cite[$\S$8]{BD}. By the commutative 
diagram of Corollary \ref{ESid} (ii),
it suffices to show that \begin{align*} \widehat{\partial}_v 
\left( \lbrace P_m^* \rbrace \right) \equiv 
\mathcal{L}_{{\bf{f}}} \mod \mathfrak{P}^n. \end{align*}
Let us write 
$Q_m$ to denote the image of the class $P_m^*$ under the
norm map from $K[\mathfrak{p}^m]$ to $K_{\mathfrak{p}^m}$.
By Proposition \ref{modv}, the image of each class
$Q_m$ in the special fibre ${\bf{M}}^{(v)} \otimes \kappa_{v^2}$
is a nonsingular point, hence given by a vertex
$v_{Q_m}$ in the dual graph $\mathcal{G}_v$ under the map $\red_v: J^{(v)}(K_{\mathfrak{p}^m})/
\mathcal{I}_{{\bf{f}},v} \longrightarrow \mathcal{V}(\mathcal{G}_v)
\cup \mathcal{E}(\mathcal{G}_v)$. On the other hand, recall the
natural map $\omega_v: {\bf{Z}}[\mathcal{V}(\mathcal{G}_v)]^0
\longrightarrow \Phi_v $ constructed in Proposition \ref{omeganat}.
We know by Proposition \ref{cmpointreduction} that \begin{align*}
 \omega_v \circ \red_v(Q_m) = \partial_v (Q_m) \in \Phi_v/
\mathcal{I}_{{\bf{f}}_v} \cong \mathcal{O}_0/\mathfrak{P}_n,\end{align*}
where the isomorphism comes from Corollary \ref{galoisid} (i). The 
result can now be deduced from the adelic description of the 
vertex set $\mathcal{V}(\mathcal{G}_v)$ above, which (via 
Jacquet-Langlands) allows us to view the specialization map 
as a map \begin{align*} D^{\times}\backslash \widehat{D}^{\times}/U &\longrightarrow 
\mathcal{O}_0/\mathfrak{P}_n \end{align*} having the 
same eigenvalues as ${\bf{f}}$. That is, it can be deduced
from the description above of the induced action of 
$G_{\mathfrak{p}^{\infty}}$ on this vertex set, along with
the canonical bijection $\eta_{\mathfrak{p}}$ coming 
from strong approximation at $\mathfrak{p}$, that 
\begin{align*} \partial_{\mathfrak{v}_m}\left( \sigma Q_m  \right) \equiv
\alpha_{\mathfrak{p}}^{-m} [\sigma, \mathfrak{e}_j]_{\Phi}
\mod \mathfrak{P}^n.\end{align*} Here, we have fixed a prime
$\mathfrak{v}_{\infty}$ above $v$ in $K_{\mathfrak{p}^{\infty}}$ and 
let $\mathfrak{v}_m = \mathfrak{v}_{\infty} \cap K_{\mathfrak{p}^m}$.
The result is now clear via the construction of 
$\mathcal{L}_{\bf{f}}$ from these elements. \end{proof}\end{remark}

\begin{remark}[The second explicit reciprocity law.]

Fix two $n$-admissible primes $v_1, v_2 \subset\mathcal{O}_F$ 
with respect to ${\bf{f}}$ such that \begin{align*}{\bf{N}}(v_i) + 1 - \varepsilon_i \cdot
a_{v_1}({\bf{f}}) \equiv 0 \mod \mathfrak{P}_n\end{align*} for each of
$i=1,2$. As usual, we keep all of the setup and hypotheses of Theorem
\ref{cerednikvarshavsky}, taking $v = v_1$ so that the indefinite 
quaternion algebra $B'$ has discriminant $v_1 \mathfrak{N}^{-}$. 

\begin{theorem}\label{ERL2}\emph{(The second explicit reciprocity law)}
Keep the hypotheses of Theorem \ref{raiseonefree} and Corollary \ref{galoisid}. 
Assume additionally that $F$ is linearly disjoint from the cyclotomic field ${\bf{Q}}(\zeta_p)$. Then,
the relation \begin{align*} \vartheta_{v_1}\left( \zeta(v_2) \right)=
\mathcal{L}_{{\bf{g}}}\end{align*} holds in
$\widehat{H}^1_{\fin}(K_{\mathfrak{p}^{\infty}, v_n},
T_{{\bf{f}},n}) \cong \Lambda/\mathfrak{P}^n$, up to
multiplication by elements of $\mathcal{O}^{\times}$ or
$G_{\mathfrak{p}^{\infty}}$.\end{theorem}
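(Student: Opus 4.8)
The plan is to mirror the argument for the first explicit reciprocity law (Theorem \ref{ERL1}), replacing the role of the totally definite quaternion algebra $D$ and its component groups by the indefinite quaternion algebra $B'$ of discriminant $v_1\mathfrak{N}^-$, and the specialization map to connected components by the reduction map at the \emph{good} prime $v_2$. The starting point is the compatible system of CM points $\lbrace P_m \rbrace_{m\geq 1}$ of $\mathfrak{p}$-power conductor on $M_{H'} = M(\mathfrak{N}^+, v_1\mathfrak{N}^-)$ used in $(\ref{zetaclass})$ to build $\zeta(v_2)$. First I would record that $\zeta(v_2)$, being built from CM points rational over ring class fields in which $v_2$ splits completely (Lemma \ref{cft}, since $v_2$ is inert in $K$), is unramified at $v_2$; this is precisely the statement $\partial_{v_2}(\zeta(v_2))=0$, so that $\vartheta_{v_2}(\zeta(v_2))$ is well defined, and symmetrically $\vartheta_{v_1}(\zeta(v_2))$ makes sense once one checks $\partial_{v_1}(\zeta(v_2))=0$ — but here $v_1 \mid \operatorname{disc}(B')$, so the relevant local condition is ordinarity, handled exactly as in the definition of $\widehat H^1_{\mathfrak{S}}$ and Corollary \ref{ESid}.

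The heart of the argument is the identification of $\vartheta_{v_1}\!\left(\zeta(v_2)\right)$ with the reduction of the CM divisor modulo $v_2$. Concretely: under the isomorphism $\Ta_p(J(\mathfrak{N}^+, v_1\mathfrak{N}^-))/\mathcal{I}_{{\bf{f}}_{v_1}} \cong T_{{\bf{f}},n}$ of Corollary \ref{galoisid} (ii), the class $\vartheta_{v_1}(\zeta(v_2))$ corresponds to the image of the system $\lbrace \alpha_{\mathfrak p}^{-m} Q_m\rbrace$ (with $Q_m$ the norm of $P_m$ down to $K_{\mathfrak{p}^m}$) under the composite $\gamma = \phi \circ \mathfrak{K}\circ \red_{v_2}^{-1}$ of Section 11. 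Here one uses Proposition \ref{modv}: the CM points reduce modulo $v_2$ to \emph{nonsingular} (in fact, supersingular in the sense of Carayol) points in the special fibre ${\bf{M}}_{H'}\otimes\kappa_{v_2^2}$, so they land in $\Div\left({\bf{M}}(\mathfrak{N}^+, v_1\mathfrak{N}^-)^{ss}\otimes\kappa_{v_2^2}\right)$, the domain of $\gamma$ in $(\ref{gamma})$. By Proposition \ref{gammaeigenform}, $\gamma$ is surjective and corresponds via Jacquet-Langlands to the mod $\mathfrak{P}_n$ eigenform ${\bf g}$ on the totally definite algebra $D'$ of discriminant $v_1 v_2 \mathfrak{N}^-$; thus the image of the CM system under $\gamma$ is, by construction, a value of the completed group ring element $\mathcal{L}_{\bf g}$ attached to ${\bf g}$.

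The remaining step is the combinatorial/adelic translation that matches the reduction of $Q_m$ at $v_2$ with the defining formula $\vartheta_{\Phi}(\sigma\mathcal{U}_j) = \alpha_{\mathfrak p}^{-j}[\sigma,\mathfrak{e}_j]_{\Phi}$ for the measure giving $\mathcal{L}_{\bf g}$. This goes exactly as in the proof of Theorem \ref{ERL1}: the Galois action of $G_{\mathfrak{p}^\infty}$ on $\CM(M_{H'},K)_{\operatorname{v-unr}}$ given by $(\ref{galoisaction})$, combined with the $v_2$-adic uniformization and strong approximation at $\mathfrak p$ (the bijection $\eta_{\mathfrak p}$), yields $\partial_{\mathfrak{v}_m}(\sigma Q_m) \equiv \alpha_{\mathfrak p}^{-m}[\sigma,\mathfrak{e}_j]_{\bf g} \bmod \mathfrak{P}^n$ for a suitable compatible choice of primes $\mathfrak{v}_m$ above $v_2$ in $K_{\mathfrak{p}^m}$ — equality of measures up to $\mathcal{O}^\times$ and $G_{\mathfrak{p}^\infty}$. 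Passing to the inverse limit over $m$ identifies $\vartheta_{v_1}(\zeta(v_2))$ with $\mathcal{L}_{\bf g}$ in $\widehat H^1_{\fin}(K_{\mathfrak{p}^\infty, v_2}, T_{{\bf{f}},n}) \cong \Lambda/\mathfrak{P}^n$, proving the theorem. The main obstacle, as in \cite{BD} $\S$9, is Lemma \ref{relations}: verifying that $\gamma$ is Hecke-equivariant with the right eigenvalue for $\Frob_{v_2}$ (namely $\varepsilon_2$), which rests on the Eichler-Shimura relation at $v_2$ and the identification $H^1_{\fin}(K_{v_2}, T_{{\bf{f}},n}) \cong T_{{\bf{f}},n}/(\Frob_{v_2}^2-1)$; everything else is a faithful transcription of the $F=\mathbf{Q}$ argument using the totally-real-field inputs assembled in the preceding sections.
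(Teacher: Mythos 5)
Your proposal is correct and is essentially the paper's own argument: the paper's proof of Theorem \ref{ERL2} consists of the single remark that the proof of Theorem \ref{ERL1} goes through with ${\bf{f}}$ replaced by the level-raised eigenform ${\bf{g}}$ of Proposition \ref{gammaeigenform}, and what you have written out (reduction of the CM system at the good prime, the map $\gamma$ of $(\ref{gamma})$, Lemma \ref{relations}, and the strong-approximation computation identifying the resulting values with the measure defining $\mathcal{L}_{{\bf{g}}}$) is exactly the content that remark points to. Two harmless slips, both mirroring the paper's own loose indexing: the class whose CM points live on $M(\mathfrak{N}^+, v_1\mathfrak{N}^{-})$ and get reduced at $v_2$ is $\zeta(v_1)$ in the paper's notation, so you are literally proving $\vartheta_{v_2}(\zeta(v_1)) = \mathcal{L}_{{\bf{g}}}$ (equivalent to the stated form by the symmetry of $v_1, v_2$ that also underlies Corollary \ref{erl2symm}); and the supersingularity of the reduction of the CM points at the inert good prime $v_2$, which is what makes $\gamma$ applicable, is a standard CM fact but is not what Proposition \ref{modv} proves, since that proposition concerns reduction at the Cerednik--Drinfeld prime dividing the discriminant.
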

\begin{proof}
The proof is the same as that for Theorem \ref{ERL1}, replacing
${\bf{f}}$ with the mod $\mathfrak{P}_n$ eigenform ${\bf{g}}$
of Proposition \ref{gammaeigenform}. \end{proof}
\end{remark}

\begin{remark}[Acknowledgement.] It is a pleasure to thank the many people with
whom I discussed various aspects this work, in particular Kevin Buzzard, Allen
Cheng, Henri Darmon, Vladimir Dokchitser, Ben Howard, Chan-Ho Kim, Matteo 
Longo, Chung Pang Mok, Jan Nekovar, James Newton, Rob Pollack, Jay Pottharst 
and Tony Scholl. It is also a pleasure to thank the anonymous referees for their many 
helpful comments. \end{remark}

\end{document}